\titleformat{\section}[hang]{\large\upshape\bfseries}{\large\bfseries\thesection.}{0.7em}{}
\theoremstyle{plain}
\newtheorem{theorem}{Theorem}[section]
\newtheorem{lemma}{Lemma}[section]
\newtheorem{definition}{Definition}[section]
\newtheorem{remark}{Remark}[section]
\newtheorem{corollary}{Corollary}[section]
\newtheorem{proposition}{Proposition}[section]
\newtheorem{notation}{Notation}[section]
\newtheoremstyle{clum}%
{5pt}{5pt}%
{\itshape}%
{0.0ex}{\bfseries}%
{}%
{1.5ex}%
{\thmname{#1}\thmnumber{#2}\thmnote{#3}}%
\theoremstyle{clum}
\newtheorem*{nonumtheorem}{Theorem}
\theoremstyle{plain}%
 \numberwithin{theorem}{section}
 \numberwithin{equation}{section}
\begin{document}
\title{
\textbf{\LARGE
The spectral theory of the Fourier operator
truncated on the positive half-axis.}}
\author{\Large\textbf{Victor Katsnelson}.}
\date{\ }%
\maketitle
\begin{quote}
{\small \textbf{Mathematics Subject Classification: (2000).}
35S30, 43A90.}

 \textbf{Keywords:} {\small  Fourier
operator, spectral analysis.}
\end{quote}

\abstract{\noindent The spectral analysis of the operator Fourier
truncated on the positive half-axis is done.
 }

\vspace{2.0ex}%
\noindent%
\textbf{\normalsize Notation and terminology.}\\
\(\mathbb{R}\) stands for the real axis.\\
\(\mathbb{R}^{+}\) stands for the positive real half-axis:\ %
\(\mathbb{R}^{+}=\{t\in\mathbb{R}:\,t>{}0\}.\)\\
\(\mathbb{C}\) stands for the complex plane.\\
\(\mathbb{N}\) stands for the set of natural numbers, \(\mathbb{N}=\{0,\,1,\,2,\,\ldots\,\,\,\}\)\\
\(\mathfrak{M}_{p,q},\,p\in\mathbb{N},\,q\in\mathbb{N}\), is the set of all \(p\times{}q\) matrices
(\(p\) rows,\,\(q\) columns) with entries from \(\mathbb{C}\).

Assume that \((\mathscr{X},\mathscr{B},m)\) is a measurable space
(\(\mathscr{B}\) is a sigma algebra of subsets of \(\mathscr{X}\),
\(m\) is a non-negative measure defined on \(\sigma\)). If
\(f(t):\,\mathscr{X}\to\mathbb{R}\) is a
\(\mathscr{B}-measurable\) real valued function on
\(\mathscr{X}\), then
\begin{equation*}
\underset{\mathscr{X}}{\textup{ess\,sup}}\,f(t)=\inf_{\substack{E\in\mathscr{B}:\\
m(E)=0}}\sup_{t\in\mathscr{X}\setminus{}E}f(t),\quad
\underset{\mathscr{X}}{\textup{ess\,inf}}\,f(t)=\sup_{\substack{E\in\mathscr{B}:\\
m(E)=0}}\inf_{t\in\mathscr{X}\setminus{}E}f(t)\,.
\end{equation*}

Assume that the set \(\mathscr{X}\) carries two structures:
\(\mathscr{X}\) is a metric space provided by the metric \(d\),
and \(\mathscr{X}\) is a measurable space, where \(m\) is a
non-negative measure defined on the sigma algebra of all Borelian
subsets of \(\mathscr{X}\). If \(a\) is a point of \(\mathscr{X}\)
and \(B\) is a subset of \(\mathscr{X}\), \(B\in\mathscr{B}\),
then
\begin{equation*}
\textup{ess\,dist}\,(a,B)\stackrel{\textup{\tiny
def}}{=}\underset{x\in{}B}{\textup{ess\,inf}}\,\,d(a,x)\,.
\end{equation*}
For a set \(B\), \(B\in\mathscr{B}\), its essential closure
\(\textup{clos}_e(B)\) is defined as
\begin{equation*}
\textup{clos}_e(B)=\{a\in\mathbb{X}:\,\textup{ess\,dist}\,(a,B)=0\}\,.
\end{equation*}
If the set \(E\), \(E\in\mathscr{B}\), is of zero measure:
\(m(E)=0\), we write \(E=\emptyset_e\).

In what follows, the measurable space \(\mathscr{X}\) is an
interval of a straight line in the complex plane, \(\mathscr{B}\)
is the sigma-algebra of Borelian subsets of this straight line,
and \(m\) is the linear (one-dimensional) Lebesgue measure on this
\(\mathscr{B}\).

 \setcounter{section}{0}
\section{The Fourier operator truncated on positive half-axis.}

In this paper we study the truncated Fourier operator
\(\mathscr{F}_{\mathbb{R^{+}}}\),
\begin{equation}%
\label{DTFTr}
(\mathscr{F}_{\mathbb{R^{+}}}x)(t)=\frac{1}{\sqrt{2\pi}}%
\int\limits_{\mathbb{R^{+}}}x(\xi)e^{it\xi}\,d\xi\,,\ \ \
t\in{}{\mathbb{R^{+}}}.
\end{equation}%
The operator \(\mathscr{F}_{\mathbb{R^{+}}}\) is considered as an
operator acting in the space \(L^2(\mathbb{R^{+}})\) of all square
measurable complex valued functions on \(\mathbb{R^{+}}\) provided
with the scalar product
\begin{equation*}
\langle{}x,y\rangle=\int\limits_{\mathbb{R^{+}}}x(t)\overline{y(t)}\,dt\,.
\end{equation*}
The operator \(\mathscr{F}^{ \ast}_{\mathbb{R^{+}}}\) adjoint to
the operator \(\mathscr{F}_{\mathbb{R^{+}}}\) with respect
 to this scalar product is
\begin{equation}%
\label{DTFTrA}
(\mathscr{F}_{\mathbb{R^{+}}}^{\ast}x)(t)=\frac{1}{\sqrt{2\pi}}%
\int\limits_{\mathbb{R^{+}}}x(\xi)e^{-it\xi}\,d\xi\,,\ \ \
t\in{}{\mathbb{R^{+}}}\,.
\end{equation}%
The operator \(\mathscr{F}_{\mathbb{R^{+}}}\) is the operator of
the form
\begin{equation}%
\label{UnDil}%
 \mathscr{F}_{\mathbb{R^{+}}}=
P_{\mathbb{R^{+}}}\,\mathscr{F}\,P_{\mathbb{R^{+}}|_{{\scriptstyle
L}^2{\scriptstyle({\mathbb{R}}^{+})}}}\,,
\end{equation}%
where \(\mathscr{F}\) is the Fourier operator on the whole real
axis:
\begin{equation}
\label{FWRA}
(\mathscr{F}x(t)=\frac{1}{\sqrt{2\pi}}%
\int\limits_{\mathbb{R}}x(\xi)e^{it\xi}\,d\xi\,,\ \ \
t\in{}{\mathbb{R}},
\end{equation}
\begin{equation*}
\mathscr{F}:\,L^2(\mathbb{R})\to{}L^2(\mathbb{R})\,,
\end{equation*}
and \(P_{_{\scriptstyle\mathbb{R^{+}}}}\) is the natural
orthogonal projector from \(L^2(\mathbb{R})\) onto
\(L^2(\mathbb{R}^{+})\):
\begin{equation}
\label{NaPr}%
 (P_{\mathbb{R^{+}}}x)(t)=\mathds{1}_{_{\scriptstyle\mathbb{R}^{+}}}\!(t)\,x(t),\ \
 x\in{}L^2(\mathbb{R}),
\end{equation}
 \(\mathds{1}_{_{\scriptstyle\mathbb{R_{+}}}}\!(t)\) is the indicator
function of the set \(\mathbb{R}^{+}\). For any set \(E\), its
indicator function \(\mathds{1}_{_{\scriptstyle E}}\) is
\begin{equation}
\label{IndF}
\mathds{1}_E(t)= \begin{cases}1,&\ \textup{if}\ t\in{}E,\\
0,&\ \textup{if}\ t\not\in{}E.
\end{cases}
\end{equation}
It should be mention that the Fourier operator \(\mathscr{F}\) is
an unitary operator in \(L^2(\mathbb{R})\):
\begin{equation}
\label{Unit}
\mathscr{F}^{\ast}\mathscr{F}=\mathscr{F}\mathscr{F}^{\ast}=
\mathscr{I}_{L^2(\mathbb{R})},
\end{equation}
\(\mathscr{I}_{L^2(\mathbb{R})}\) is the identity operator in
\(L^2(\mathbb{R})\).%

From \eqref{UnDil} and  \eqref{Unit} it follows that the operators
\(\mathscr{F}_{\mathbb{R^{+}}}\) and
\(\mathscr{F}_{\mathbb{R^{+}}}^{\ast}\) are contractive: %
\(\|\mathscr{F}_{\mathbb{R^{+}}}\|\leq1\),
\(\|\mathscr{F}_{\mathbb{R^{+}}}^{\ast}\|\leq1\). We show later
that actually
\begin{equation}
\label{NorTrF}%
 \|\mathscr{F}_{\mathbb{R^{+}}}\|=1,\quad
\|\mathscr{F}_{\mathbb{R^{+}}}^{\ast}\|=1\,.
\end{equation}
Nevertheless, these operators are strictly contractive:
\begin{equation}
\label{StrCon}%
 \|\mathscr{F}_{\!_{\scriptstyle\mathbb{R_{+}}}}x\|<\|x\|,\quad
 \|\mathscr{F}^{\ast}_{\!_{\scriptstyle\mathbb{R^{+}}}}x\|<\|x\|\,, \quad
 \forall\,x\in{}L^2(\mathbb{R}^{+}),\,x\not=0\,,
\end{equation}
and their spectral radii
\(r(\mathscr{F}_{\!_{\scriptstyle\mathbb{R^{+}}}})\) and
\(r(\mathscr{F}^{\ast}_{\!_{\scriptstyle\mathbb{R^{+}}}})\) are
less that one:
\begin{equation}
\label{SpRad}%
r(\mathscr{F}_{\!_{\scriptstyle\mathbb{R^{+}}}})=
r(\mathscr{F}^{\ast}_{\!_{\scriptstyle\mathbb{R^{+}}}})=1/\sqrt{2}.
\end{equation}
In particular, the operators
\(\mathscr{F}_{\!_{\scriptstyle\mathbb{R^{+}}}}\) and
\(\mathscr{F}^{\ast}_{\!_{\scriptstyle\mathbb{R^{+}}}}\) are
contractions of the class \(C_{00}\) in the sense of \cite{SzNFo}.

In \cite{SzNFo}, a spectral theory of contractions in a Hilbert
space is developed. The starting point of this theory is the
representation of the given contractive operator \(A\) acting is
the Hilbert space \(\mathscr{H}\) in the form
\begin{equation}
\label{ComCon}%
 A=PUP,
\end{equation}
 where
\(U\) is an unitary operator acting is some \emph{ambient} Hilbert
space \(\mathfrak{H}\), \(\mathscr{H}\subset\mathfrak{H}\), and
\(P\) is the orthogonal projector from the whole space
\(\mathfrak{H}\) onto its subspace \(\mathscr{H}\). In the
construction of \cite{SzNFo} there is required that not only the
equality \eqref{ComCon} but also the whole series of the
equalities
\begin{equation}
\label{Dilat}%
 A^n=PU^nP,\quad n\in\mathbb{N},
\end{equation}
hold. The unitary operator \(U\) acting in the ambient Hilbert
space \(\mathfrak{H}\), \(\mathcal{H}\subset\mathfrak{H}\), is
said to be \emph{the unitary dilation of the operator \(A\),}
\(A:\mathcal{H}\to\mathcal{H}\), if the equalities \eqref{Dilat}
hold. In  \cite{SzNFo} it was shown that every contractive
operator \(A\) admits an unitary dilation. Using the unitary
dilation, a functional model of the operator \(A\) is constructed.
This functional model is an operator acting in some Hilbert space
of analytic functions. The functional model of the operator \(A\)
is an operator which is unitary equivalent to \(A\). The spectral
theory of the original operator \(A\) is developed by analyzing
its functional model.

The relation \eqref{UnDil} is of the form \eqref{ComCon}, where
\(\mathscr{H}=L^2(\mathbb{R}^{+})\),
\(\mathfrak{H}=L^2(\mathbb{R})\),
\(U=\mathscr{F},\,A=\mathscr{F}_{_{\scriptstyle\mathbb{R}_{+}}}\),
\(P=P_{\mathbb{R^{+}}}\) is an orthoprojector from \(L^2(\mathbb{R})\) onto
\(L^2(\mathbb{R}^{+})\), \eqref{NaPr}.
However, for these objects the equalities \eqref{Dilat} do not
hold for all \(n\in\mathbb{N}\), but only for \(n=0,\,1\). So, the
operator \(\mathscr{F}\) is not an unitary delation of its
truncation \(\mathscr{F}_{_{\scriptstyle\mathbb{R}_{+}}}\).
Nevertheless, we succeeded in constructing such a functional model
of the operator \(\mathscr{F}_{_{\scriptstyle\mathbb{R}_{+}}}\)
which is easily analyzable. Analyzing this model, we develop the
complete spectral theory of the operator
\(\mathscr{F}_{_{\scriptstyle\mathbb{R}^{+}}}\).

\section{Operator calculus for the truncated Fourier operator \mathversion{bold}%
\({\bm{\mathscr{F}}}_{{\!\scriptstyle{\bm{\mathbb{R}}}^{+}}}\).}
\mathversion{normal}

In this section, we present the formulations of some of our main results.
Proofs will be presented later, in the next sections.

First we set the terminology and notation related to the notion of spectrum of
a linear operator.

Let \(A\) be a linear operator acting in the Hilbert space
\(\mathscr{H}\), with the domain of definition \(\mathcal{D}_A\).
(\(\mathcal{D}_A\) is assumed to be a linear subspace of
\(\mathscr{H}\), not necessary closed.) The \emph{resolvent set}
\(\textup{\large\(\rho\)}(A)\) of the operator \(A\) is the set of
all complex numbers \(\lambda\) for which the operator
\(\lambda\,\mathscr{I}~-~A\) maps one-to-one \(\mathcal{D}_A\) onto
the whole \(\mathscr{H}\) and the inverse operator
\((\lambda\,\mathscr{I}-A)^{-1}:\,\mathscr{H}\to\mathcal{D}_A\) is
a bounded operator in \(\mathscr{H}\). The complement
\(\textup{\large\(\sigma\)}(A)~=~\mathbb{C}~\setminus~\textup{\large\(\rho\)}(A)\)
of the resolvent set \(\textup{\large\(\rho\)}(A)\) is said to be
the \emph{spectrum of the operator} \(A\).

The resolvent set of every closed operator is an open set (may be
empty). If the operator \(A\) is a bounded everywhere defined
operator, that is \(\mathcal{D}_A=\mathscr{H}\), then the
resolvent set \(\textup{\large\(\rho\)}(A)\) is not empty:
\(\textup{\large\(\rho\)}(A)\supseteq\{\lambda:\,|\lambda|>\|A\|\}\).
The spectrum of a bounded everywhere defined linear operator is a
not empty set.

The operator valued function \((z\,\mathscr{I}-A)^{-1}\) of the
variable \(z\) defined on the resolvent set
\(\textup{\large\(\rho\)}(A)\) of the operator \(A\) is said to be
the \emph{resolvent of the operator \(A\)} and denoted by
\(R_A(z)\):
\begin{equation}
\label{Res} R_A(z)=(z\,\mathscr{I}-A)^{-1}\,.
\end{equation}

We adhere to the following classification of  points of the
spectrum of a linear operator. (See \cite[VII.5.1]{DuSch}.)
\begin{definition}
\label{SpClas}%
Let \(A\) be a linear operator in a Hilbert space \(\mathscr{H}\),
with the domain of definition \(\mathcal{D}_A\).
\begin{enumerate}%
\item[\textup{1}.] The set of all \(\lambda\in\textup{\large\(\sigma\)}(A)\)
such that the mapping \(\lambda\,\mathscr{I}-A: \,\mathcal{D}_A\to\mathscr{H}\), is not one-to-one is
called the \emph{point spectrum of \(A\)}, and is denoted by
\(\textup{\large\(\sigma\)}_{\!p}(A)\). Thus,
\(\lambda\in\textup{\large\(\sigma\)}_{\!p}(A)\) if and only if
\(Ax=\lambda{}x\) for some non-zero \(x\in\mathcal{D}_A\).%
\item[\textup{2}.] The set of all
\(\lambda\in\textup{\large\(\sigma\)}(A)\) for which  the subspace
 \((\lambda\mathscr{I}-A)\mathscr{D}_A\) is not dense in \(\mathscr{H}\)
 is called the \emph{residual spectrum of \(A\)}, and  is denoted by
\(\textup{\large\(\sigma\)}_{\!r}(A)\).  Thus,
\(\lambda\in\textup{\large\(\sigma\)}_{\!r}(A)\) if and only if
\(A^{\ast}x=\overline{\lambda}x\) for some non-zero \(x\in\mathcal{D}_{A^{\ast}}\),
where \(A^{\ast}\) is the operator adjoint to the operator \(A\).
\item[\textup{3}.] The set of all
\(\lambda\in\textup{\large\(\sigma\)}(A)\) for which
the mapping \(\lambda\,\mathscr{I}-A: \,\mathcal{D}_A\to\mathscr{H}\), is one-to-one,
 the subspace
 \((\lambda\mathscr{I}-A)\mathscr{D}_A\) is dense in
 \(\mathscr{H}\), but the inverse operator
 \((\lambda\mathscr{I}-A)^{-1}\!:\,(\lambda\mathscr{I}-A)\mathscr{D}_A\to\mathscr{D}_A\)
  is unbounded
 is called  the \emph{continuous spectrum of \(A\)}, and  is denoted by
\(\textup{\large\(\sigma\)}_{\!c}(A)\). Thus,
\(\lambda\in\textup{\large\(\sigma\)}_{\!c}(A)\) if and only if
\(\lambda\not\in\textup{\large\(\sigma\)}_{\!p}(A)\), \(\lambda\not\in\textup{\large\(\sigma\)}_{\!r}(A)\),
and there exists a sequence \(\{x_n\}_{n}\) such that
\[x_n\in\mathscr{D}_A, \ \ \|x_n\|=1,\quad \textup{but} \ \
\|(A-\lambda\mathscr{I}                                 )x_n\|\to0\ \ \textup{as} \ \ n\to\infty.\]
\end{enumerate}%
\end{definition}
If the operator \(A\) is closed, in particular if \(\mathscr{D}_A=\mathscr{H}\) and \(A\) is bounded, then
\[\textup{\large\(\sigma\)}(A)=
\textup{\large\(\sigma\)}_{\!p}(A)\cup%
\textup{\large\(\sigma\)}_{\!r}(A)\cup\textup{\large\(\sigma\)}_{\!c}(A).\]
This classification is rough, sometimes one introduces the more
fine classification, like \emph{essential spectrum,\,approximate
point spectrum,} etc. However we keep in the classification of
Definition \ref{SpClas}.

Let \(a\) and \(b\) be points of \(\mathbb{C}\). By definition,
the interval \([a,\,b]\) is the set:
\([a,\,b]=\{(1-\tau)a+\tau{}b:\,\tau\,\,\textup{runs
over}\,\,[0,\,1]\}\). The open interval \((a,b)\), as well as
half-open intervals are defined analogously.

\paragraph{1.\,The spectrum of
\mathversion{bold}\(\bf{\mathscr{F}_{\!_{\scriptstyle\bm{{\mathbb{R}}^{+}}}}}\).
\mathversion{normal} \\[-3.5ex]}
\begin{theorem} {\ } \\[-3.0ex] %
\label{SpTFO}%
\begin{enumerate}
\item[\textup{1}.] The spectrum
\(\textup{\large\(\sigma\)}(
\mathscr{F}_{\!_{\scriptstyle\mathbb{R}^{+}}})\) of the truncated
Fourier operator \(\mathscr{F}_{\!_{\scriptstyle\mathbb{R}^{+}}}\)
is: \\[-1.0ex]
\begin{equation}%
\label{SpecTFO}
\textup{\large\(\sigma\)}(\mathscr{F}_{\!_{\scriptstyle\mathbb{R}^{+}}})=
[-2^{-1/2}\,e^{i\pi/4},\,2^{-1/2}\,e^{i\pi/4}]
\end{equation}%
\item[\textup{2}.] The truncated
Fourier operator \(\mathscr{F}_{\!_{\scriptstyle\mathbb{R}^{+}}}\)
has no point spectrum and no residual spectrum:
\begin{equation}%
\label{prSpecTFO}
\textup{\large\(\sigma\)}_{\!p}(\mathscr{F}_{\!_{\scriptstyle\mathbb{R}^{+}}})=\emptyset,\quad
\textup{\large\(\sigma\)}_{\!r}(\mathscr{F}_{\!_{\scriptstyle\mathbb{R}^{+}}})=\emptyset\,.
\end{equation}%
Thus, its spectrum is continuous:
\begin{equation*}%
\textup{\large\(\sigma\)}(\mathscr{F}_{\!_{\scriptstyle\mathbb{R}^{+}}})=
\textup{\large\(\sigma\)}_{\!c}(\mathscr{F}_{\!_{\scriptstyle\mathbb{R}^{+}}})\,.
\end{equation*}%
\end{enumerate}
\end{theorem}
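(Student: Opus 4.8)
The plan is to diagonalize \(\mathscr{F}_{\mathbb{R}^{+}}\) by the Mellin transform: this turns it into an explicit ``weighted flip'' operator built from the Gamma function, from which both the spectrum and its type can be read off directly. Concretely, let \(\mathscr{M}\colon L^{2}(\mathbb{R}^{+})\to L^{2}(\mathbb{R})\) be the Mellin transform
\begin{equation*}
(\mathscr{M}x)(\omega)=\frac{1}{\sqrt{2\pi}}\int\limits_{\mathbb{R}^{+}}x(t)\,t^{-\frac12-i\omega}\,dt\,,
\end{equation*}
which is unitary (under \(t=e^{s}\) it becomes the Fourier transform of \(s\mapsto e^{s/2}x(e^{s})\)). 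Applying \(\mathscr{M}\) to \(\mathscr{F}_{\mathbb{R}^{+}}x\), interchanging the order of integration (justified by first truncating \(\mathbb{R}^{+}\) to \([\varepsilon,N]\) and then letting \(\varepsilon\to0\), \(N\to\infty\)), and using the contour-rotation evaluation
\begin{equation*}
\int\limits_{0}^{\infty}r^{-\frac12-i\omega}e^{ir}\,dr=e^{i\pi/4}\,e^{\pi\omega/2}\,\Gamma\!\bigl(\tfrac12-i\omega\bigr)\,,
\end{equation*}
one obtains
\begin{equation*}
(\mathscr{M}\mathscr{F}_{\mathbb{R}^{+}}x)(\omega)=\gamma(\omega)\,(\mathscr{M}x)(-\omega),\qquad
\gamma(\omega)=\frac{1}{\sqrt{2\pi}}\,e^{i\pi/4}\,e^{\pi\omega/2}\,\Gamma\!\bigl(\tfrac12-i\omega\bigr)\,.
\end{equation*}
Thus \(\mathscr{F}_{\mathbb{R}^{+}}\) is unitarily equivalent to the operator \(T\) on \(L^{2}(\mathbb{R})\) given by \((Tf)(\omega)=\gamma(\omega)f(-\omega)\).

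Next I would compute \(\sigma(T)\). Decompose \(L^{2}(\mathbb{R})\cong\int_{(0,\infty)}^{\oplus}\mathbb{C}^{2}\,d\omega\) by grouping each \(\omega>0\) with \(-\omega\); then \(T\) becomes the direct integral of the matrices \(M(\omega)=\begin{pmatrix}0&\gamma(\omega)\\ \gamma(-\omega)&0\end{pmatrix}\), whose spectrum is \(\{\pm\sqrt{\gamma(\omega)\gamma(-\omega)}\}\). The reflection formula \(\Gamma(\tfrac12-i\omega)\Gamma(\tfrac12+i\omega)=\pi/\cosh(\pi\omega)\) yields \(\gamma(\omega)\gamma(-\omega)=i/(2\cosh(\pi\omega))\), hence
\begin{equation*}
\sigma\bigl(M(\omega)\bigr)=\Bigl\{\pm\,\frac{e^{i\pi/4}}{\sqrt{2\cosh(\pi\omega)}}\Bigr\}\,.
\end{equation*}
Since \(\omega\mapsto M(\omega)\) is norm-continuous and uniformly bounded, \(\sigma(T)=\overline{\bigcup_{\omega>0}\sigma(M(\omega))}\). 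As \(\omega\) runs over \((0,\infty)\) the value \(\cosh(\pi\omega)\) runs over \((1,\infty)\), so these eigenvalues sweep out the open segment with endpoints \(\pm2^{-1/2}e^{i\pi/4}\) (minus its midpoint), and its closure is exactly the segment in \eqref{SpecTFO}.

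For part~2, I would use that \(\lambda\) can be an eigenvalue of a direct-integral operator only if \(\{\omega>0:\lambda\in\sigma(M(\omega))\}\) has positive Lebesgue measure. For \(\lambda\) on the open segment with \(\lambda\neq0\), the relation \(\lambda^{2}=i/(2\cosh(\pi\omega))\) forces \(\cosh(\pi\omega)\) to a single value in \((1,\infty)\), hence a single \(\omega>0\); and \(\lambda=0\) lies in no \(\sigma(M(\omega))\) since \(\gamma\) never vanishes (the Gamma function has no zeros). Therefore \(\sigma_{p}(\mathscr{F}_{\mathbb{R}^{+}})=\emptyset\). For the residual spectrum, item~2 of Definition~\ref{SpClas} says \(\lambda\in\sigma_{r}(\mathscr{F}_{\mathbb{R}^{+}})\) iff \(\overline{\lambda}\in\sigma_{p}(\mathscr{F}_{\mathbb{R}^{+}}^{\ast})\); but \((\mathscr{F}_{\mathbb{R}^{+}}^{\ast}x)=\overline{\mathscr{F}_{\mathbb{R}^{+}}\overline{x}}\) and complex conjugation is an anti-unitary involution, so \(\sigma_{p}(\mathscr{F}_{\mathbb{R}^{+}}^{\ast})=\overline{\sigma_{p}(\mathscr{F}_{\mathbb{R}^{+}})}=\emptyset\). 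This proves \eqref{prSpecTFO}, and since \(\mathscr{F}_{\mathbb{R}^{+}}\) is bounded the whole spectrum is then continuous.

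The main obstacle is making the first step rigorous: justifying the interchange of integrations (the inner \(t\)-integral converges only conditionally), evaluating the oscillatory Gamma integral by contour rotation, and verifying that \(T\) is genuinely unitarily equivalent to \(\mathscr{F}_{\mathbb{R}^{+}}\) — most cleanly by checking the intertwining identity \(\mathscr{M}\mathscr{F}_{\mathbb{R}^{+}}=T\mathscr{M}\) on a dense class of \(x\) (e.g. those whose Mellin transform is smooth with compact support, which makes \(e^{s/2}x(e^{s})\) rapidly decreasing and legitimizes all manipulations) and extending by continuity. Once the Mellin model is established, the remaining arguments are routine facts about direct integrals of \(2\times2\) matrices.
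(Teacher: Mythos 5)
Your proposal is correct and follows essentially the same route as the paper: the paper's unitary \(U\) onto the model space \(\mathscr{K}\) is precisely the Mellin transform restricted to the critical line (Remark \ref{Mell}), your matrix \(M(\omega)\) is the paper's \(F(\mu)\) of \eqref{Matr}, and the spectral analysis of the resulting \(2\times2\) multiplication operator (spectrum as the closure of the pointwise spectra, absence of point and residual spectrum because the determinant \(z^2-i/(2\cosh\pi\mu)\) vanishes for at most one \(\mu\)) is exactly the content of Lemmas \ref{psMO}, \ref{csMO} and Theorems \ref{InvCon}, \ref{SpMuOpe}, \ref{CoinSpe}. The only cosmetic differences are that you bypass the commuting differential operator \(\mathcal{L}\), which the paper uses only as heuristic motivation, and you justify the interchange of integrations by truncation on a dense class where the paper uses the regularization \(\mathcal{R}_{\varepsilon}x(t)=e^{-\varepsilon t}x(t)\).
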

\begin{nonumtheorem}[\ \(\textup{\ref{SpTFO}}^{\!\boldsymbol\ast}\)\!.]
{\ } \\[-3.0ex] %
\begin{enumerate}
\item[\textup{1}.] %
The spectrum \(\textup{\large\(\sigma\)}(
\mathscr{F}^{\,\ast}_{\!_{\scriptstyle\mathbb{R}^{+}}})\) of the
operator
\(\mathscr{F}^{\,\ast}_{\!_{\scriptstyle\mathbb{R}^{+}}}\), which
is adjoint to the operator
\(\mathscr{F}_{\!_{\scriptstyle\mathbb{R}^{+}}}\), is:
\begin{equation}%
\label{SpecTFOa}
\textup{\large\(\sigma\)}(\mathscr{F}^{\,\ast}_{\!_{\scriptstyle\mathbb{R}^{+}}})=
[-2^{-1/2}\,e^{-i\pi/4},\,2^{-1/2}\,e^{-i\pi/4}]
\end{equation}%
\item[\textup{2}.] The operator \(\mathscr{F}^{\,\ast}_{\!_{\scriptstyle\mathbb{R}^{+}}}\)
has no point spectrum and no residual spectrum:
\begin{equation}%
\label{prSpecTFOa}
\textup{\large\(\sigma\)}_{\!p}(\mathscr{F}^{\,\ast}_{\!_{\scriptstyle\mathbb{R}^{+}}})=\emptyset,\quad
\textup{\large\(\sigma\)}_{\!r}(\mathscr{F}^{\,\ast}_{\!_{\scriptstyle\mathbb{R}^{+}}})=\emptyset\,.
\end{equation}%
Thus, its spectrum is continuous:
\begin{equation*}%
\textup{\large\(\sigma\)}(\mathscr{F}^{\,\ast}_{\!_{\scriptstyle\mathbb{R}_{+}}})=
\textup{\large\(\sigma\)}_{\!c}(\mathscr{F}^{\,\ast}_{\!_{\scriptstyle\mathbb{R}^{+}}})\,.
\end{equation*}%
\end{enumerate}
\end{nonumtheorem}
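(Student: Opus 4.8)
The plan is to deduce Theorem~\(\textup{\ref{SpTFO}}^{\ast}\) directly from Theorem~\ref{SpTFO}, using nothing beyond the general duality between the spectral parts of a bounded operator and those of its adjoint. Recall that \(\mathscr{F}_{\mathbb{R}^{+}}\) is a bounded, everywhere defined operator on \(\mathscr{H}=L^2(\mathbb{R}^{+})\) (it is contractive, see \eqref{NorTrF}), that its adjoint with respect to the scalar product of \(L^2(\mathbb{R}^{+})\) is precisely the operator \(\mathscr{F}^{\,\ast}_{\mathbb{R}^{+}}\) of \eqref{DTFTrA}, as already noted in Section~1, and that \((\mathscr{F}_{\mathbb{R}^{+}})^{\ast\ast}=\mathscr{F}_{\mathbb{R}^{+}}\). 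Both \(\mathscr{F}_{\mathbb{R}^{+}}\) and \(\mathscr{F}^{\,\ast}_{\mathbb{R}^{+}}\) are closed, so for each of them the point, residual and continuous spectra partition the spectrum, as stated after Definition~\ref{SpClas}.

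First I would record the standard facts (cf.\ \cite[VII.5]{DuSch}): for a bounded operator \(A\) on a Hilbert space one has \(\lambda\in\textup{\large\(\rho\)}(A)\) iff \(\overline{\lambda}\in\textup{\large\(\rho\)}(A^{\ast})\), hence \(\textup{\large\(\sigma\)}(A^{\ast})=\{\overline{\lambda}:\lambda\in\textup{\large\(\sigma\)}(A)\}\). Next, comparing the characterizations in Definition~\ref{SpClas}, the condition \(\lambda\in\textup{\large\(\sigma\)}_{\!r}(A)\) is exactly \(A^{\ast}x=\overline{\lambda}x\) for some nonzero \(x\), i.e.\ \(\overline{\lambda}\in\textup{\large\(\sigma\)}_{\!p}(A^{\ast})\) (an eigenvalue is automatically in the spectrum); thus \(\textup{\large\(\sigma\)}_{\!p}(A^{\ast})=\{\overline{\lambda}:\lambda\in\textup{\large\(\sigma\)}_{\!r}(A)\}\). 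Replacing \(A\) by \(A^{\ast}\) and using \(A^{\ast\ast}=A\) gives in the same way \(\textup{\large\(\sigma\)}_{\!r}(A^{\ast})=\{\overline{\lambda}:\lambda\in\textup{\large\(\sigma\)}_{\!p}(A)\}\).

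Then I would substitute \(A=\mathscr{F}_{\mathbb{R}^{+}}\) and invoke Theorem~\ref{SpTFO}. Its part~2 gives \(\textup{\large\(\sigma\)}_{\!p}(\mathscr{F}_{\mathbb{R}^{+}})=\textup{\large\(\sigma\)}_{\!r}(\mathscr{F}_{\mathbb{R}^{+}})=\emptyset\); by the duality relations above, \(\textup{\large\(\sigma\)}_{\!p}(\mathscr{F}^{\,\ast}_{\mathbb{R}^{+}})=\textup{\large\(\sigma\)}_{\!r}(\mathscr{F}^{\,\ast}_{\mathbb{R}^{+}})=\emptyset\), which is \eqref{prSpecTFOa}, and consequently \(\textup{\large\(\sigma\)}(\mathscr{F}^{\,\ast}_{\mathbb{R}^{+}})=\textup{\large\(\sigma\)}_{\!c}(\mathscr{F}^{\,\ast}_{\mathbb{R}^{+}})\). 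Part~1 gives \(\textup{\large\(\sigma\)}(\mathscr{F}_{\mathbb{R}^{+}})=[-2^{-1/2}e^{i\pi/4},\,2^{-1/2}e^{i\pi/4}]\); since complex conjugation is an \(\mathbb{R}\)-linear involution of \(\mathbb{C}\), it sends the point \((1-\tau)(-2^{-1/2}e^{i\pi/4})+\tau(2^{-1/2}e^{i\pi/4})\) to \((1-\tau)(-2^{-1/2}e^{-i\pi/4})+\tau(2^{-1/2}e^{-i\pi/4})\) for each \(\tau\in[0,1]\), so \(\overline{\textup{\large\(\sigma\)}(\mathscr{F}_{\mathbb{R}^{+}})}=[-2^{-1/2}e^{-i\pi/4},\,2^{-1/2}e^{-i\pi/4}]\), which is \eqref{SpecTFOa}.

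There is no genuine obstacle: the assertion is a formal corollary of Theorem~\ref{SpTFO} and the elementary passage to the adjoint. The only items needing a line of justification are the identity \((\mathscr{F}_{\mathbb{R}^{+}})^{\ast}=\mathscr{F}^{\,\ast}_{\mathbb{R}^{+}}\) (established in Section~1) and the bookkeeping that conjugation maps the closed segment through \(\pm2^{-1/2}e^{i\pi/4}\) onto the closed segment through \(\pm2^{-1/2}e^{-i\pi/4}\). Should a self-contained argument be preferred instead, one could rerun the functional-model analysis of the later sections for \(\mathscr{F}^{\,\ast}_{\mathbb{R}^{+}}\) verbatim; but the adjoint route is by far the shortest and is the one I would present.
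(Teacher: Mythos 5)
Your proposal is correct, but it follows a different route from the paper. The paper does not obtain Theorem \(\textup{\ref{SpTFO}}^{\ast}\) by abstract duality from Theorem \ref{SpTFO}; instead it runs the functional-model machinery for the adjoint in parallel with that for \(\mathscr{F}_{\!_{\scriptstyle\mathbb{R}^{+}}}\): Theorem \ref{SRTFTha} gives \(\mathscr{F}^{\,\ast}_{\!_{\scriptstyle\mathbb{R}^{+}}}=U^{-1}\mathcal{M}_{F^{\ast}}U\) alongside \(\mathscr{F}_{\!_{\scriptstyle\mathbb{R}^{+}}}=U^{-1}\mathcal{M}_{F}U\), and the spectral analysis of the \(2\times2\) matrices (eigenvalues \(\zeta_{\pm}(\mu)\) for \(F(\mu)\), \(\overline{\zeta_{\pm}(\mu)}\) for \(F^{\ast}(\mu)\), cf.\ \eqref{eigPrAd}) together with Theorem \ref{InvCon}, Lemmas \ref{psMO}, \ref{csMO} and Theorem \ref{CoinSpe} yields the spectrum, its location as the conjugated interval, and the absence of point and residual spectrum, exactly as for \(\mathscr{F}_{\!_{\scriptstyle\mathbb{R}^{+}}}\). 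Your argument instead treats the statement as a formal corollary of Theorem \ref{SpTFO}: for a bounded everywhere defined \(A\) one has \(\textup{\large\(\sigma\)}(A^{\ast})=\overline{\textup{\large\(\sigma\)}(A)}\), and with the characterizations in Definition \ref{SpClas} (residual spectrum of \(A\) \(=\) conjugates of eigenvalues of \(A^{\ast}\), and symmetrically via \(A^{\ast\ast}=A\)) the emptiness of \(\textup{\large\(\sigma\)}_{\!p}\) and \(\textup{\large\(\sigma\)}_{\!r}\) transfers to the adjoint; this is sound, since \((\mathscr{F}_{\!_{\scriptstyle\mathbb{R}^{+}}})^{\ast}=\mathscr{F}^{\,\ast}_{\!_{\scriptstyle\mathbb{R}^{+}}}\) is fixed in Section 1 and conjugation carries the segment through \(\pm2^{-1/2}e^{i\pi/4}\) onto the segment through \(\pm2^{-1/2}e^{-i\pi/4}\). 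Your route is shorter and more elementary, at the price of being parasitic on Theorem \ref{SpTFO}; the paper's route costs nothing extra because the model for \(\mathscr{F}^{\,\ast}_{\!_{\scriptstyle\mathbb{R}^{+}}}\) is needed anyway (e.g.\ for the adjoint admissible calculus and the projectors \(E^{\ast}_{\pm}(\mu)\)), and it additionally delivers resolvent formulas and estimates for the adjoint rather than only the bare description of its spectrum.
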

 The point \(\zeta=0\) is in some sense a singular
point for operator
\(\mathscr{F}_{\!_{\scriptstyle\mathbb{R}_{+}}}\). (See in
particular Theorem \ref{TEstRes} and Remark \ref{spSing}). This
point splits the spectrum
\(\textup{\large\(\sigma\)}(\mathscr{F}_{\!_{\scriptstyle\mathbb{R}^{+}}})\)
on two parts:
\(\textup{\large\(\sigma\)}^{+}(\mathscr{F}_{\!_{\scriptstyle\mathbb{R}^{+}}})\)
and
\(\textup{\large\(\sigma\)}^{-}(\mathscr{F}_{\!_{\scriptstyle\mathbb{R}^{+}}})\).
\begin{definition}
\label{SplSpD}
\begin{equation}%
\label{SpFSpl} \textup{\large\(\sigma\)}^{+}(\mathscr{F}_{\!_{\scriptstyle\mathbb{R}^{+}}})%
\stackrel{\textup{\tiny def}}{=}\Big(0,\,
\frac{1}{\sqrt{2}}\,e^{i\pi/4}\Big],\quad
\textup{\large\(\sigma\)}^{-}(\mathscr{F}_{\!_{\scriptstyle\mathbb{R}^{+}}})\stackrel{\textup{\tiny
def}}{=} \Big[-\frac{1}{\sqrt{2}}\,e^{i\pi/4},\,0 \Big)\,.
\end{equation}%
\end{definition}
Thus, the spectrum
\(\textup{\large\(\sigma\)}(\mathscr{F}_{\mathbb{R}^{+}})\) of the
operator \(\mathscr{F}_{\!_{{\scriptstyle\mathbb{R}^{+}}}}\)
splits into the union
\begin{equation}%
\label{SplSp}
\textup{\large\(\sigma\)}(\mathscr{F}_{\mathbb{R}^{+}})=
\textup{\large\(\sigma\)}^{+}(\mathscr{F}_{\mathbb{R}^{+}})
\cup\textup{\large\(\sigma\)}^{-}(\mathscr{F}_{\mathbb{R}^{+}})\cup\{0\}\,.
\end{equation}

\paragraph{2.\,Growth of the resolvent of the operator \(\mathscr{F}_{\!_{\scriptstyle\mathbb{R}_{+}}}\) near the spectrum.\\}
Let us discuss how the resolvent
\((z\mathscr{I}-\mathscr{F}_{\!_{\scriptstyle\mathbb{R}^{+}}})^{-1}\)
growth when \(z\) approaches the spectrum
\(\textup{\large\(\sigma\)}(\mathscr{F}_{\!_{\scriptstyle\mathbb{R}_{+}}})\).
The growth of the resolvent depends on the point
\(\zeta\in\textup{\large\(\sigma\)}(\mathscr{F}_{\!_{\scriptstyle\mathbb{R}_{+}}})\)
which \(z\) approaches. Roughly speaking, for every fixed
\(\zeta\in\textup{\large\(\sigma\)}%
(\mathscr{F}_{\!_{\scriptstyle\mathbb{R}_{+}}})\),
\(\zeta\not=0\),  the norm
\(\|(z\mathscr{I}-\mathscr{F}_{\!_{\scriptstyle\mathbb{R}_{+}}})^{-1}\|\)
growths as \(C(\zeta)/|z-\zeta|\)  as \(z\to\zeta\).  Here
\(C(\zeta),\, 0<C(\zeta)<\infty\), is a constant with respect to
\(z\). However, \(C(\zeta)\to\infty\) as \(\zeta\to{}0\). If
\(\zeta=0\), that is if \(z\to{}0\), then
\(\|(z\,\mathscr{I}-\mathscr{F}_{\!_{\scriptstyle\mathbb{R}_{+}}})^{-1}\|\)
growths  as \(|z|^{-2}\). Let us formulate the precise result.
\begin{theorem}%
\label{TEstResT}%
Let \(\zeta\) be a point of the spectrum
\(\textup{\large\(\sigma\)}(\mathscr{F}_{\!_{\scriptstyle\mathbb{R}^{+}}})\)
of the operator \(\mathscr{F}_{\!_{\scriptstyle\mathbb{R}^{+}}}\),
and let the point \(z\) lie on the normal to the interval
\(\textup{\large\(\sigma\)}(\mathscr{F}_{\!_{\scriptstyle\mathbb{R}^{+}}})\)
at the point \(\zeta\):
\begin{equation}
\label{znorsp}%
 z=\zeta\pm{}|z-\zeta|e^{i3\pi/4}\,.
\end{equation}
 Then
\begin{enumerate}
\item[\textup{1.}]
The resolvent \((z\mathscr{I}-
\mathscr{F}_{\!_{\scriptstyle\mathbb{R}^{+}}})^{-1}\) admits the
estimate from above:
\begin{equation}
\label{UpEsResS} \big\|(z\mathscr{I}-
(\mathscr{F}_{\!_{\scriptstyle\mathbb{R}^{+}}})^{-1}\big\|\leq
A(z)\frac{1}{|\zeta|}\cdot\frac{1}{|z-\zeta|}\,,
\end{equation}
where \(A(z)=\dfrac{(2|z|^2+1)^{1/2}}{2}\).
\item[\textup{2.}]
If moreover the condition \(|z-\zeta|\leq|\zeta|\) is satisfied,
then the resolvent \((z\mathscr{I}-
\mathscr{F}_{\!_{\scriptstyle\mathbb{R}_{+}}})^{-1}\) also
admits the estimate from below:
\begin{equation}
\label{LoEsResS} A(z)\frac{1}{|\zeta|}\cdot\frac{1}{|z-\zeta|}
-B(z)|\zeta||z-\zeta|
\leq\big\|(z\mathscr{I}-\mathscr{F}_E)^{-1}\big\| \,,
\end{equation}
 where \(A(z)\) is the same that in \eqref{UpEsResS} and
\(B(z)=\dfrac{4}{(2|z|^2+1)^{3/2}}\)\,.
\item[\textup{3.}] For \(\zeta=0\), the resolvent
\((z\mathscr{I}-
\mathscr{F}_{\!_{\scriptstyle\mathbb{R}^{+}}})^{-1}\) admits the
estimates
\begin{equation}
\label{EZEZ} 2A(z)\frac{1}{|z|^2}-B(z) \leq\big\|(z\mathscr{I}-
\mathscr{F}_{\!_{\scriptstyle\mathbb{R}^{+}}})^{-1}\big\|\leq{}2A(z)\frac{1}{|z|^2},
\end{equation}
where \(A(z)\) and \(B(z)\) are the same that in \eqref{UpEsResS},
 \eqref{LoEsResS}, and \(z\) is an arbitrary point of the normal.
\end{enumerate}
In particular, if \(\zeta\not=0\), and \(z\) tends to \(\zeta\)
along the normal to the interval
\(\textup{\large\(\sigma\)}(\mathscr{F}_{\!_{\scriptstyle\mathbb{R}^{+}}})\),
then
\begin{equation}
\label{AsNzp}
\big\|(z\mathscr{I}-
\mathscr{F}_{\!_{\scriptstyle\mathbb{R}_{+}}})^{-1}\big\|=C(\zeta)\,\frac{1}{|z-\zeta|}
+O(1)\,,
\end{equation}
where
\begin{equation}
\label{ConAs}%
 C(\zeta)=\frac{\sqrt{1+2|\zeta|^2}}{2|\zeta|}\,.
\end{equation}
If \(\zeta=0\) and \(z\) tends to \(\zeta\) along the normal to
the interval
\(\textup{\large\(\sigma\)}(\mathscr{F}_{\!_{\scriptstyle\mathbb{R}^{+}}})\),
then
\begin{equation}
\label{Aszp}
\big\|(z\mathscr{I}-
\mathscr{F}_{\!_{\scriptstyle\mathbb{R}_{+}}})^{-1}\big\|=|z|^{-2}+O(1)\,,
\end{equation}
where \(O(1)\) is a value which remains bounded as \(z\) tends to
\(\zeta\).
\end{theorem}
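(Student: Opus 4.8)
The plan is to carry out the computation inside the functional model of $\mathscr{F}_{\mathbb{R}^{+}}$. That model is the ``multiplicative'' (Mellin) diagonalisation: the substitution $\xi=e^{s}$ turns $L^2(\mathbb{R}^{+})$ isometrically onto $L^2(\mathbb{R})$ and the kernel $e^{it\xi}$ of $\mathscr{F}_{\mathbb{R}^{+}}$ into a function of the sum $s+u$ of the two logarithmic variables; a Fourier transform in $u$ followed by the splitting $L^2(\mathbb{R})=L^2\bigl((0,\infty);\mathbb{C}^2\bigr)$ (pairing $\omega$ with $-\omega$) then exhibits $\mathscr{F}_{\mathbb{R}^{+}}$ as the direct integral $\int_{(0,\infty)}^{\oplus}M(\omega)\,d\omega$ of the $2\times2$ matrices
\[
M(\omega)=\begin{pmatrix}0 & g(\omega)\\ g(-\omega) & 0\end{pmatrix},\qquad
g(\omega)=\frac{e^{i\pi/4}}{\sqrt{2\pi}}\,\Gamma\!\Bigl(\tfrac12-i\omega\Bigr)e^{\pi\omega/2}.
\]
Euler's reflection formula for $\Gamma$ yields the two identities on which everything rests:
\[
M(\omega)^2=\mu(\omega)\,I_2,\qquad
\mu(\omega):=g(\omega)g(-\omega)=\frac{i}{2\cosh\pi\omega}\in(0,i/2],\qquad
|g(\omega)|^2+|g(-\omega)|^2=1 .
\]
(This unitary equivalence is established in the sections that follow; here I take it as given, together with Theorem~\ref{SpTFO}.)

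Since the resolvent of a direct integral is the direct integral of the resolvents, and the operator norm of a direct integral is the essential supremum of the norms of its fibres (which, by continuity of the integrand in $\omega$, coincides with the plain supremum), for $z\notin\sigma(\mathscr{F}_{\mathbb{R}^{+}})$ one has $\bigl\|(z\mathscr{I}-\mathscr{F}_{\mathbb{R}^{+}})^{-1}\bigr\|=\sup_{\omega>0}\bigl\|(zI_2-M(\omega))^{-1}\bigr\|$, with $(zI_2-M(\omega))^{-1}=(z^2-\mu(\omega))^{-1}(zI_2+M(\omega))$ because $M(\omega)^2=\mu(\omega)I_2$. Writing $N(\omega)=zI_2+M(\omega)$, the identity $|g(\omega)|^2+|g(-\omega)|^2=1$ gives $\operatorname{tr}\bigl(N(\omega)^{\ast}N(\omega)\bigr)=2|z|^2+1$, while $\det\bigl(N(\omega)^{\ast}N(\omega)\bigr)=|\det N(\omega)|^2=|z^2-\mu(\omega)|^2$, so the larger eigenvalue of $N(\omega)^{\ast}N(\omega)$ is explicit and
\[
\bigl\|(zI_2-M(\omega))^{-1}\bigr\|^2=\Phi\bigl(|z^2-\mu(\omega)|\bigr),\qquad
\Phi(\rho):=\frac{(2|z|^2+1)+\sqrt{(2|z|^2+1)^2-4\rho^2}}{2\rho^2}.
\]
A short calculation shows that $\Phi$ is strictly decreasing on $\bigl(0,\,|z|^2+\tfrac12\bigr]$ and that $|z^2-\mu(\omega)|$ ranges in this interval; since $\{\mu(\omega):\omega>0\}$ fills the open segment $\Lambda=(0,i/2)$ of the imaginary axis and $z\notin\sigma(\mathscr{F}_{\mathbb{R}^{+}})$ precisely when $z^2\notin\overline\Lambda=[0,i/2]$, taking the supremum over $\omega$ gives the closed form
\[
\bigl\|(z\mathscr{I}-\mathscr{F}_{\mathbb{R}^{+}})^{-1}\bigr\|
=\sqrt{\Phi\bigl(\operatorname{dist}(z^2,\overline\Lambda)\bigr)} .
\]

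It remains to evaluate the distance when $z$ lies on the normal \eqref{znorsp}. Write $\zeta=\tfrac{t}{\sqrt2}\,e^{i\pi/4}$ with $t\in[-1,1]$ (Theorem~\ref{SpTFO}); then $\zeta^2=i|\zeta|^2\in\overline\Lambda$ and, since $\zeta$ and $z-\zeta$ are orthogonal in $\mathbb{R}^2$, $|z|^2=|\zeta|^2+|z-\zeta|^2$ and $z^2=i\bigl(|\zeta|^2-|z-\zeta|^2\bigr)\mp\sqrt2\,t\,|z-\zeta|$, whose real part has modulus $2|\zeta|\,|z-\zeta|$. Consequently: if $\zeta\ne0$ and $|z-\zeta|\le|\zeta|$, the foot of the perpendicular from $z^2$ onto the imaginary axis lies in $\overline\Lambda$, so $\operatorname{dist}(z^2,\overline\Lambda)=2|\zeta|\,|z-\zeta|$; if $\zeta\ne0$ and $|z-\zeta|>|\zeta|$, or if $\zeta=0$ (where $z^2=-i|z|^2$), the nearest point of $\overline\Lambda$ is $0$ and $\operatorname{dist}(z^2,\overline\Lambda)=|z|^2$. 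Feeding these into the closed form proves everything: \eqref{UpEsResS} follows from $\Phi(\rho)\le(2|z|^2+1)/\rho^2$ together with $\operatorname{dist}(z^2,\overline\Lambda)\ge2|\zeta|\,|z-\zeta|$ (valid on the whole normal, by $|z|^2=|\zeta|^2+|z-\zeta|^2\ge2|\zeta|\,|z-\zeta|$); the lower bounds \eqref{LoEsResS} and \eqref{EZEZ} follow by inserting the exact value of the distance into $\Phi$, using $1+\sqrt{1-w}\ge2-2w+\tfrac12w^2$ for $w=4\operatorname{dist}(z^2,\overline\Lambda)^2/(2|z|^2+1)^2\in[0,1]$ and the exact relation $A(z)B(z)=2/(2|z|^2+1)$; and the asymptotics \eqref{AsNzp}--\eqref{ConAs} and \eqref{Aszp} are obtained by expanding $\Phi$ as $|z-\zeta|\to0$.

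The substance of the argument lies upstream of this theorem: the construction of the functional model and the verification of the collapse identities $M(\omega)^2=\mu(\omega)I_2$ and $|g(\omega)|^2+|g(-\omega)|^2=1$, which are precisely what reduce the operator to a one-parameter family of trivial $2\times2$ matrices and which account for the strikingly clean constants $A(z)$, $B(z)$, $C(\zeta)$. Granting the model, the only delicate points above are (i)~the passage from the direct integral to the scalar extremal problem $\sup_{\omega}\Phi\bigl(|z^2-\mu(\omega)|\bigr)$ --- the supremum is approached as $\omega\to\infty$ when $\zeta=0$ and at the resonance $\cosh\pi\omega=1/t^2$ when $\zeta\ne0$ --- and (ii)~the plane geometry of $\operatorname{dist}(z^2,\overline\Lambda)$ along the normal, where the relevant dichotomy is exactly $|z-\zeta|\le|\zeta|$ versus $|z-\zeta|>|\zeta|$, i.e.\ the hypothesis separating parts~1 and~2 of the statement.
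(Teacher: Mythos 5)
Your proposal is correct and takes essentially the same route as the paper: pass to the multiplication-operator model of $\mathscr{F}_{\!_{\scriptstyle\mathbb{R}^{+}}}$, reduce the resolvent norm to $\sup_{\mu}\big\|(zI-F(\mu))^{-1}\big\|$ controlled by the trace $2|z|^2+1$ and the determinant $z^2-\tfrac{i}{2\cosh\pi\mu}$, identify $\inf_{\mu}|\det(zI-F(\mu))|$ with $\textup{dist}\,(z^2,[0,i/2])$, and conclude with the same plane geometry of the normal (the dichotomy $|z-\zeta|\le|\zeta|$ versus $|z-\zeta|\ge|\zeta|$ of Lemma \ref{DfDi}). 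The only cosmetic difference is that you compute the fibre norm exactly as the largest singular value, whereas the paper uses the two-sided singular-value estimates of Lemma \ref{EMME}; your inequality $1+\sqrt{1-w}\ge 2\,(1-w/2)^2$ then reproduces precisely the paper's bounds with the same constants $A(z)$ and $B(z)$.
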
%
\begin{remark}
\label{nonnorm} From Theorem \ref{TEstRes} it follows that the
operator \(\mathscr{F}_{\!_{\scriptstyle\mathbb{R}^{+}}}\) is not
similar to a normal operator. The similarity of the operator
\(\mathscr{F}_{\!_{\scriptstyle\mathbb{R}^{+}}}\) to a normal
operator is non-compatible with the growth \eqref{Aszp} of its
resolvent.
\end{remark}
\begin{remark}
\label{spSing}%
 The point \(\zeta=0\) is a distinguished point of
the spectrum
\(\textup{\large\(\sigma\)}(\mathscr{F}_{\!_{\scriptstyle\mathbb{R}^{+}}})\).
Near this point the resolvent of the operator
\(\mathscr{F}_{\!_{\scriptstyle\mathbb{R}^{+}}}\) growths faster
than near any other point
\(\zeta\in\textup{\large\(\sigma\)}(\mathscr{F}_{\!_{\scriptstyle\mathbb{R}_{+}}})\).
 The point \(\zeta=0\) is a \emph{spectral singularity}. In what follows we still will face a special role of the point \(\zeta=0\) in the spectral theory of the operator
\(\mathscr{F}_{\!_{\scriptstyle\mathbb{R}^{+}}}\).
\end{remark}
\paragraph{3.\,The multiplicity of the spectrum of \(\bm{\mathscr{F}_{\!_{\scriptstyle\mathbb{R}^{+}}}}\).\\}
For operators which are not normal there is no general full-blooded theory of spectral multiplicity.
For self-adjoint operators, the property of its spectrum to be of multiplicity one, or in other word
the property of the spectrum to be simple, is equivalent
to the property of the operator to possess a cyclic vector. Therefore, for an arbitrary operator,
the property of
the operator to possess a cyclic vector may be accepted as a definition of the simplicity
of its spectrum.

We recall that the \emph{vector \(x,\,x\in\mathscr{H}\), is said to be
cyclic for the operator \(A,\,A:\,\mathscr{H}\to\mathscr{H}\),} if the linear hall of the set of vectors %
\(\{A^nx\}_{n\in\mathbb{N}}\) is dense in \(\mathscr{H}\).

\begin{theorem}
\label{SimSpe} The spectrum of the operator
\(\mathscr{F}_{\!_{\scriptstyle\mathbb{R}^{+}}}\) is simple:
 there exist vectors which are cyclic for \(\mathscr{F}_{\!_{\scriptstyle\mathbb{R}^{+}}}\).
\end{theorem}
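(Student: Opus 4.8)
The plan is to exhibit an explicit cyclic vector for $\mathscr{F}_{\!_{\scriptstyle\mathbb{R}^{+}}}$ by working in a convenient unitarily equivalent model. The natural move is to conjugate by the Mellin transform (or equivalently the change of variable $\xi=e^{s}$), which turns $L^2(\mathbb{R}^{+})$ into $L^2(\mathbb{R})$ and is the standard device for analyzing the full Fourier operator; the Mellin image of $\mathscr{F}$ acts as multiplication by an explicit unimodular factor $\chi(\tau)$ (a ratio of Gamma functions) composed with the reflection $\tau\mapsto-\tau$, while the projection $P_{\mathbb{R^{+}}}$ becomes a Hankel-type projection. Equivalently, one may lean on the functional model that the paper promises to construct in the later sections; in that model $\mathscr{F}_{\!_{\scriptstyle\mathbb{R}^{+}}}$ is realized as (a piece of) multiplication by the independent variable on a weighted $L^2$ space over the segment $\sigma(\mathscr{F}_{\!_{\scriptstyle\mathbb{R}^{+}}})$, possibly with a two-dimensional ``defect'' bundle coming from $\sigma^{+}$ and $\sigma^{-}$. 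I would state precisely which model I use and then reduce the problem to a density statement in that model.

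The key steps, in order, are: (1) fix the model $\mathscr{M}$ unitarily equivalent to $\mathscr{F}_{\!_{\scriptstyle\mathbb{R}^{+}}}$ in which the operator is (essentially) multiplication by $\zeta$ on the interval $[-2^{-1/2}e^{i\pi/4},\,2^{-1/2}e^{i\pi/4}]$; (2) transport a candidate vector $x_0$ into this model — a natural choice is the image of a function whose model-side representative is a nowhere-vanishing, suitably smooth function $g(\zeta)$ on the spectrum, e.g.\ a Gaussian-type function which is an eigenfunction or near-eigenfunction of the full Fourier transform, restricted to $\mathbb{R}^{+}$; (3) observe that $\mathscr{F}_{\!_{\scriptstyle\mathbb{R}^{+}}}^{n}x_0$ corresponds on the model side to $\zeta^{n}g(\zeta)$ (up to lower-order corrections controlled because $\mathscr{F}^{n}=P\mathscr{F}^{n}P$ fails only through harmless finite-rank-type terms — here one uses Theorem \ref{SpTFO}, namely that there is no point spectrum, so no invariant splitting obstructs cyclicity); (4) invoke a Weierstrass/Müntz-type density theorem: polynomials in $\zeta$ times a fixed nonvanishing continuous $g$ are dense in the relevant $L^2$-space over a segment (for the scalar part this is classical; for the two-component defect part one needs the two components of $g$ to be linearly independent as functions, which forces the choice of $x_0$ and is where the argument has real content); (5) conclude that $\{\mathscr{F}_{\!_{\scriptstyle\mathbb{R}^{+}}}^{n}x_0\}_{n\in\mathbb{N}}$ has dense span, i.e.\ $x_0$ is cyclic.

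The main obstacle is step (4) in the presence of multiplicity-two behavior near the two sub-arcs $\sigma^{\pm}$: a single vector must simultaneously ``fill up'' both spectral channels, so one cannot merely quote scalar polynomial density. The remedy is to choose $x_0$ so that its two model-components $g_{+}(\zeta), g_{-}(\zeta)$ are proportional to linearly independent analytic functions (for instance $g_{+}\equiv 1$ and $g_{-}(\zeta)$ a nonconstant analytic function, which should arise naturally from the asymmetry between $e^{i\pi/4}$- and $e^{-i\pi/4}$-type boundary behavior in the model), and then to prove that $\{(\zeta^{n}g_{+},\zeta^{n}g_{-})\}_{n}$ spans a dense subspace — a two-dimensional vector Müntz/Chebyshev argument, provable by noting that a vector orthogonal to all of them yields, via the two independent generating functions, two analytic constraints that can hold only for the zero vector. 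A secondary technical point is handling the spectral singularity at $\zeta=0$ (Remark \ref{spSing}): one must ensure $g$ does not degenerate there and that the model's weight at $0$, governed by the $|z|^{-2}$ resolvent growth of Theorem \ref{TEstResT}, still allows the density argument; this is a local estimate near $0$ rather than a genuine obstruction, since $\{0\}$ has measure zero in the spectrum.
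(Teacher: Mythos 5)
Your overall plan — pass to the eigenfunction/Mellin model and reduce cyclicity to a polynomial-density statement — is indeed what the paper's machinery is built for (Theorem \ref{SRTFTha}: \(\mathscr{F}_{\!_{\scriptstyle\mathbb{R}^{+}}}=U^{-1}\mathcal{M}_{F}U\); the paper itself only announces Theorem \ref{SimSpe} and leaves the cyclicity argument to be read off from this model). But the two steps that carry the weight of your proposal do not hold. In your step (3) there is nothing to correct for: in the model the powers are exact, \(U\mathscr{F}_{\!_{\scriptstyle\mathbb{R}^{+}}}^{\,n}x=F(\mu)^{n}(Ux)(\mu)\), and the justification you offer for the alleged corrections is wrong on both counts — the failure of \(\mathscr{F}_{\!_{\scriptstyle\mathbb{R}^{+}}}^{\,n}=P\mathscr{F}^{n}P\) for \(n\ge2\) is not a finite-rank effect, and emptiness of the point spectrum says nothing about cyclicity or multiplicity.

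The genuine gap is step (4). In the form you state it — scalar multiplication by \(\zeta\) on a \(\mathbb{C}^{2}\)-fibered \(L^{2}\) space over the segment, cured by choosing components \(g_{+},g_{-}\) that are linearly independent functions — the density claim is false, not merely unproven: the field \(w(\zeta)=\big(\overline{g_{-}(\zeta)},\,-\overline{g_{+}(\zeta)}\big)\) (truncated so as to lie in the space) is nonzero and pointwise orthogonal to \(\big(g_{+}(\zeta),g_{-}(\zeta)\big)\), hence orthogonal to every \(\zeta^{n}g\); your ``two analytic constraints'' collapse to the single identity \(g_{+}\overline{w_{+}}+g_{-}\overline{w_{-}}=0\) a.e., which does not force \(w=0\). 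What makes the theorem true is that the model is not of that form: by \eqref{Matr} the matrix \(F(\mu)\) is anti-diagonal with \(F(\mu)^{2}=\zeta(\mu)^{2}I\) (see \eqref{UsEq}), its eigenvalues are \(\zeta_{\pm}(\mu)=\pm\zeta(\mu)\), and by Lemma \ref{Spmm} they are distinct for each \(\mu\) and disjoint for different \(\mu\); so the two channels over \(\mu\in[0,\infty)\) fill the two disjoint halves \(\sigma^{+},\sigma^{-}\) of the spectrum and the total multiplicity is one. A correct argument along your lines runs: choose \(y=(a,b)^{T}\in\mathscr{K}\) with \(\det\big[\,y(\mu),\,F(\mu)y(\mu)\,\big]=f_{-+}(\mu)a(\mu)^{2}-f_{+-}(\mu)b(\mu)^{2}\neq0\) a.e. (for instance \(b\equiv0\), \(a\) nowhere zero); since \(F^{2n}y=\zeta^{2n}y\) and \(F^{2n+1}y=\zeta^{2n}Fy\), the orbit spans \(\{p(\zeta^{2})y+q(\zeta^{2})Fy\}\) over polynomials \(p,q\), and because \(\zeta^{2}(\mu)=i/(2\cosh\pi\mu)\) maps \([0,\infty)\) injectively onto a bounded segment, orthogonality of some \(w\in\mathscr{K}\) to the orbit gives, by the classical Weierstrass/moment argument applied to the summable scalar functions \(\langle y(\mu),w(\mu)\rangle_{\mathbb{C}^{2}}\) and \(\langle F(\mu)y(\mu),w(\mu)\rangle_{\mathbb{C}^{2}}\), that both vanish a.e., whence \(w=0\) by the pointwise independence of \(y(\mu)\) and \(F(\mu)y(\mu)\). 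With this structure the singular point \(\zeta=0\) costs nothing, since the argument never uses a uniform bound on the eigenprojections \(E_{\pm}(\mu)\).
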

\paragraph{4.\,Operator calculus for the operator
\(\mathscr{F}_{\!_{\scriptstyle\mathbb{R}^{+}}}\).} {\ }\\[0.8ex]
\noindent%
\textbf{Holomorphic operator calculus.} We say that a
\emph{function \(h\) is holomorphic on a closed set
\(\textup{\large\(\sigma\)}\),}
\(\textup{\large\(\sigma\)}\in\mathbb{C}\), if the function \(f\)
is defined and holomorphic in an open neighborhood of the set
\(\textup{\large\(\sigma\)}\). (The neighborhood may depend on the
function \(f\).) The set of functions holomorphic on the set
\(\textup{\large\(\sigma\)}\) forms an algebra over the field of
complex numbers. This algebra is denoted by
\(\textup{hol}(\textup{\large\(\sigma\)})\).

According to the general theory of linear operators, for arbitrary
function \(h\) which is holomorphic on the spectrum
\(\textup{\large\(\sigma\)}(\mathscr{F}_{\!_{\scriptstyle\mathbb{R}^{+}}})\)
of the operator \(\mathscr{F}_{\!_{\scriptstyle\mathbb{R}^{+}}}\)
 one can define the operator \(h(\mathscr{F}_{\!_{\scriptstyle\mathbb{R}^{+}}})\)
 by means of the integral
 \begin{equation}
 \label{DunInt}
 h_{\textup{hol}}(\mathscr{F}_{\!_{\scriptstyle\mathbb{R}_{+}}})\stackrel{\textup{\tiny def}}{=}
 \frac{1}{2\pi{}i}\int\limits_{\Gamma}h(z)\,%
 R_{_{\,\scriptstyle\mathscr{F}_{\!_{\mathbb{R}^{+}}}}}(z)\,dz\,,
 \end{equation}
where \(
R_{_{\,\scriptstyle\mathscr{F}_{\!_{\mathbb{R}^{+}}}}}(z)=
 (z\mathscr{I}-\mathscr{F}_{\!_{\scriptstyle\mathbb{R}^{+}}})^{-1}\) is the resolvent
 of the operator
 \(\mathscr{F}_{\!_{\scriptstyle\mathbb{R}^{+}}}\),
\(\Gamma\) is an arbitrary simple contour which encloses the
spectrum \(\textup{\large\(\sigma\)}(
\mathscr{F}_{\!_{\scriptstyle\mathbb{R}^{+}}})\) and is contained
in the domain of holomorphy of the function \(h\). The integral is
taken counterclockwise. The value of this integral does not depend on the contour
\(\Gamma\).

 The operator \(h_{\textup{hol}}(\mathscr{F}_{\!_{\scriptstyle\mathbb{R}^{+}}})\)
 is called \emph{the function \(h\) of the operator
 \(\mathscr{F}_{\!_{\scriptstyle\mathbb{R}^{+}}}\).}

The correspondence
\[h(z)\to{}h_{\textup{hol}}(\mathscr{F}_{\!_{\scriptstyle\mathbb{R}^{+}}}),\quad \textup{where }
h_{\textup{hol}}(\mathscr{F}_{\!_{\scriptstyle\mathbb{R}^{+}}}) \ \textup{is
defined by }\eqref{DunInt}\,,
\]
is said to be the \emph{holomorphic functional calculus for the
operator \(\mathscr{F}_{\!_{\scriptstyle\mathbb{R}^{+}}}\).}

 The holomorphic functional calculus
is a homomorphism of the algebra
\(\textup{hol}(\textup{\large\(\sigma\)}(\mathscr{F}_{\!_{\scriptstyle\mathbb{R}^{+}}}))\)
into the algebra of bounded operators in \(\mathscr{H}=L^2(\mathbb{R}^{+})\): \\

\noindent%
 \textbf{Algebraic properties of the holomorphic
functional calculus:}
\begin{enumerate}
\item[\textup{1}.] If \(h(\zeta)\equiv{}1\), then \(h_{\textup{hol}}(\mathscr{F}_{\!_{\scriptstyle\mathbb{R}^{+}}})=
\mathscr{I}\).
\item[\textup{2}.] If \(h(\zeta)\equiv{}\zeta\), then \(h_{\textup{hol}}(\mathscr{F}_{\!_{\scriptstyle\mathbb{R}^{+}}})=
\mathscr{F}_{\!_{\scriptstyle\mathbb{R}^{+}}}\).
\item[\textup{3}.] If \(h(\zeta)=\alpha_1h_1(\zeta)+\alpha_2h_2(\zeta)\), where \(h_1,\,h_2%
\in\textup{hol}(\textup{\large\(\sigma\)}(\mathscr{F}_{\!_{\scriptstyle\mathbb{R}^{+}}}))\),\\
\(\alpha_1,\,\alpha_2\in\mathbb{C}\), \ then \ %
\(h_{\textup{hol}}(\mathscr{F}_{\!_{\scriptstyle\mathbb{R}^{+}}})%
=\alpha_1(h_1)_{\textup{hol}}(\mathscr{F}_{\!_{\scriptstyle\mathbb{R}^{+}}})+
\alpha_2(h_2)_{\textup{hol}}(\mathscr{F}_{\!_{\scriptstyle\mathbb{R}^{+}}})\).
\item[\textup{4}.] If \(h(\zeta)=h_1(\zeta)\cdot{}h_2(\zeta)\), where \(h_1,\,h_2%
\in\textup{hol}(\textup{\large\(\sigma\)}(\mathscr{F}_{\!_{\scriptstyle\mathbb{R}^{+}}}))\),\ %
 then \\ %
\(h_{\textup{hol}}(\mathscr{F}_{\!_{\scriptstyle\mathbb{R}^{+}}})%
=(h_1)_{\textup{hol}}(\mathscr{F}_{\!_{\scriptstyle\mathbb{R}^{+}}})\cdot{}
(h_2)_{\textup{hol}}(\mathscr{F}_{\!_{\scriptstyle\mathbb{R}^{+}}})\).
\end{enumerate}
From 1\,-\,4 it follows
\begin{enumerate}
\item[\textup{5}.] If \(h(\zeta)\in\textup{hol}%
(\textup{\large\(\sigma\)}(\mathscr{F}_{\!_{\scriptstyle\mathbb{R}^{+}}}))\), %
and \(h(\zeta)\not=0\) for
\(\zeta\in\textup{\large\(\sigma\)}(\mathscr{F}_{\!_{\scriptstyle\mathbb{R}^{+}}})\),
\ 
\(h^{-1}(\zeta)\in\textup{hol}%
(\textup{\large\(\sigma\)}(\mathscr{F}_{\!_{\scriptstyle\mathbb{R}^{+}}})\),
then the operator
\(h_{\textup{hol}}(\mathscr{F}_{\!_{\scriptstyle\mathbb{R}^{+}}})\) is
invertible, and\\
\(\big(h_{\textup{hol}}(\mathscr{F}_{\!_{\scriptstyle\mathbb{R}^{+}}})\big)^{-1}=%
\big(h^{-1}\big)_{\textup{hol}}(\mathscr{F}_{\!_{\scriptstyle\mathbb{R}^{+}}})\)\,.
\end{enumerate}
In particular,
\begin{enumerate}
\item[\textup{6}.] If \(h(\zeta)\equiv{}(z-\zeta)^{-1}\), where
\(z\in\mathbb{C}\setminus\textup{\large\(\sigma\)}(\mathscr{F}_{\!_{\scriptstyle\mathbb{R}^{+}}})\),
then  %
\(h_{\textup{hol}}(\mathscr{F}_{\!_{\scriptstyle\mathbb{R}^{+}}})=
\big(z\mathscr{I}-\mathscr{F}_{\!_{\scriptstyle\mathbb{R}^{+}}}\big)^{-1}\).\\
\end{enumerate}

The holomorphic functional calculus is applicable to an arbitrary
bounded operator \(A\) in a Hilbert space. However the operator
\(\mathscr{F}_{\!_{\scriptstyle\mathbb{R}^{+}}}\) is the very
specific operator. For this operator, it is possible to define
functions \(h(\mathscr{F}_{\!_{\scriptstyle\mathbb{R}^{+}}})\) for
functions \(h\) from much more wider class that the class
\(\textup{hol}(\textup{\large\(\sigma\)}(\mathscr{F}_{\!_{\scriptstyle\mathbb{R}^{+}}}))\).\\[1.2ex]
\textbf{\(\mathbf{\bm{\mathscr{
F}}_{\!_{\scriptstyle\bm{\mathbb{R}}^{\bm{+}}}}}\)-admissible
functions.} The next notion is one of the main notions of this
work.
\begin{definition}%
\label{DFAdF}%
 A function \(h(\zeta)\) is said to be
\({\mathscr{F}}_{\!_{\scriptstyle\mathbb{R}^{+}}}\)--\,\emph{admissible}
if
\begin{enumerate}
\item[\textup{1}.]  \(h(\zeta)\) is a Borel-measurable function which is
 defined almost everywhere with respect to the
Lebesgue measure on the spectrum
\(\textup{\large\(\sigma\)}(\mathscr{F}_{\!_{\scriptstyle\mathbb{R}^{+}}})%
=\Big[-\frac{1}{\sqrt{2}}\,e^{i\pi/4},\,
\frac{1}{\sqrt{2}}\,e^{i\pi/4}\Big]\) of the operator
\(\mathscr{F}_{\!_{\scriptstyle\mathbb{R}^{+}}}\).
\item[\textup{2}.] The norm \(\|h\|_{{}_{\scriptstyle\mathscr{F}_{_{\mathbb{R}^{+}}}}}\) is  finite, where
\begin{equation}
\label{NFAF}%
\|h\|_{{}_{\scriptstyle\mathscr{F}_{_{\mathbb{R}_{+}}}}}\stackrel{\textup{\tiny{}def}}{=}
\underset{\zeta\in%
{\textstyle\sigma}(\mathscr{F}_{\!_{\scriptstyle\mathbb{R}^{+}}})}{\textup{ess
sup}}\,\left(\frac{|h(\zeta)+h(-\zeta)|}{2}+ \frac{|h(\zeta)-h(-\zeta)|}{2|\zeta|}\right)\,.
\end{equation}%
\end{enumerate}%
The set of all
\(\mathscr{F}_{_{{\scriptstyle\mathbb{R}}^{+}}}\)\,-\,admissible
functions provided by natural `pointwise' algebraic operation and
the norm \eqref{NFAF} is denoted by
\(\mathfrak{B}_{_{{\scriptstyle\mathscr{F}}_{_{\!{\mathbb{R}}^{+}}}}}\)\!\!.
\end{definition}%
An analogous definition related to the adjoint operator
\(\mathscr{F}^{\,\ast}_{_{\!{\mathbb{R}}^{+}}}\) is:
\begin{definition}%
\label{DFAdFAd}%
 A function \(h(\zeta)\) is said to be
\(\mathscr{F}^{\,\ast}_{\!_{\scriptstyle\mathbb{R}^{+}}}\)-\emph{admissible}
if
\begin{enumerate}
\item[\textup{1}.]  \(h(\zeta)\) is a Borel-measurable function which is
 defined almost everywhere with respect to the
Lebesgue measure on the spectrum
\(\textup{\large\(\sigma\)}(\mathscr{F}^{\,\ast}_{\!_{\scriptstyle\mathbb{R}_{+}}})%
=\Big[-\frac{1}{\sqrt{2}}\,e^{-i\pi/4},\,
\frac{1}{\sqrt{2}}\,e^{-i\pi/4}\Big]\) of the operator
\(\mathscr{F}^{\,\ast}_{\!_{\scriptstyle\mathbb{R}^{+}}}\).
\item[\textup{2}.] The norm
\(\|h\|_{{}_{\scriptstyle\mathscr{F}^{\,\ast}_{_{\mathbb{R}^{+}}}}}\)
is  finite, where
\begin{equation}
\label{NFAFAd}%
\|h\|_{{}_{\scriptstyle\mathscr{F}^{\,\ast}_{_{\mathbb{R}^{+}}}}}\stackrel{\textup{\tiny{}def}}{=}
\underset{\zeta\in%
{\textstyle\sigma}(\mathscr{F}^{\ast}_{\!_{\scriptstyle\mathbb{R}^{+}}})}{\textup{ess
sup}}\,\left(\frac{|h(\zeta)+h(-\zeta)|}{2}+ \frac{|h(\zeta)-h(-\zeta)|}{2|\zeta|}\right)\,.
\end{equation}%
\end{enumerate}%
The set of all
\(\mathscr{F}^{\,\ast}_{_{{\scriptstyle\mathbb{R}}^{+}}}\)\,-\,admissible
functions provided by natural "pointwise" algebraic operation and
the norm \eqref{NFAF} is denoted by
\(\mathfrak{B}_{_{{\scriptstyle\mathscr{F}}^{\ast}_{_{{\mathbb{R}}^{+}}}}}\)\!\!.
\end{definition}%
\begin{lemma}
\label{BanAl}
 Each of two sets
\(\mathfrak{B}_{_{{\scriptstyle\mathscr{F}}_{_{{\mathbb{R}}^{+}}}}}\),
 \(\mathfrak{B}_{_{{\scriptstyle\mathscr{F}}^{\ast}_{_{{\mathbb{R}}^{+}}}}}\)
 is a Banach algebra, and
\begin{equation}
\|h_1h_2\|_{_{{\scriptstyle\mathscr{F}}_{_{{\mathbb{R}}^{+}}}}}%
\leq\|h_1\|_{_{{\scriptstyle\mathscr{F}}_{_{{\mathbb{R}}^{+}}}}}%
\|h_2\|_{_{{\scriptstyle\mathscr{F}}_{_{{\mathbb{R}}_{+}}}}} \ \
\textup{or} \ \ \|h_1h_2\|_{_{{\scriptstyle\mathscr{F}}^{\ast}_{\mathbb{R}^{+}}}}%
\leq\|h_1\|_{_{{\scriptstyle\mathscr{F}}^{\ast}_{\mathbb{R}_{+}}}}\,%
\|h_2\|_{_{{\scriptstyle\mathscr{F}}^{\ast}_{\mathbb{R}_{+}}}}
\end{equation}
for every
\(h_1,\,h_2\in\mathfrak{B}_{_{{\scriptstyle\mathscr{F}}_{_{{\mathbb{R}}^{+}}}}}\)
or
\(h_1,\,h_2\in\mathfrak{B}_{_{{\scriptstyle\mathscr{F}}^{\ast}_{_{{\mathbb{R}}^{+}}}}}\)
respectively.
\end{lemma}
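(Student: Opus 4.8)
The plan is to verify the two Banach-algebra axioms directly from Definition \ref{DFAdF}: completeness of the normed space and submultiplicativity of the norm. I will carry the argument out for $\mathfrak{B}_{_{{\scriptstyle\mathscr{F}}_{_{{\mathbb{R}}^{+}}}}}$; the case of $\mathfrak{B}_{_{{\scriptstyle\mathscr{F}}^{\ast}_{_{{\mathbb{R}}^{+}}}}}$ is identical after replacing $\sigma(\mathscr{F}_{\!_{\scriptstyle\mathbb{R}^{+}}})$ by $\sigma(\mathscr{F}^{\,\ast}_{\!_{\scriptstyle\mathbb{R}^{+}}})$ (the two intervals differ only by complex conjugation, so nothing changes).

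The conceptual device that makes everything transparent is to split each admissible $h$ into its even and odd parts along the interval $\sigma=[-\tfrac{1}{\sqrt2}e^{i\pi/4},\tfrac{1}{\sqrt2}e^{i\pi/4}]$, which is symmetric about $0$. Write $h_{\mathrm{ev}}(\zeta)=\tfrac12(h(\zeta)+h(-\zeta))$ and $h_{\mathrm{od}}(\zeta)=\tfrac{1}{2\zeta}(h(\zeta)-h(-\zeta))$, so that $h(\zeta)=h_{\mathrm{ev}}(\zeta)+\zeta\,h_{\mathrm{od}}(\zeta)$ and
\[
\|h\|_{_{\scriptstyle\mathscr{F}_{_{\mathbb{R}^{+}}}}}=\underset{\zeta\in\sigma}{\textup{ess\,sup}}\,\bigl(|h_{\mathrm{ev}}(\zeta)|+|h_{\mathrm{od}}(\zeta)|\bigr).
\]
The map $h\mapsto(h_{\mathrm{ev}},h_{\mathrm{od}})$ is a linear bijection of $\mathfrak{B}_{_{{\scriptstyle\mathscr{F}}_{_{{\mathbb{R}}^{+}}}}}$ onto the space of pairs of Borel functions $(u,v)$ on $\sigma$ (with $u$ even, $v$ even) for which $\mathrm{ess\,sup}_{\sigma}(|u|+|v|)<\infty$, carrying the $\mathscr{F}_{\mathbb{R}^+}$-norm to the norm $\mathrm{ess\,sup}_{\sigma}(|u(\zeta)|+|v(\zeta)|)$. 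That target space is just $L^\infty(\sigma;\mathbb{C}^2)$ with the $\ell^1$-norm on $\mathbb{C}^2$; it is complete because $L^\infty$ of a finite measure space is complete and $\mathbb{C}^2$ with $\ell^1$ is a Banach space. Hence $\mathfrak{B}_{_{{\scriptstyle\mathscr{F}}_{_{{\mathbb{R}}^{+}}}}}$ is complete; this handles the Banach-space part. For submultiplicativity I compute, for $h=h_1h_2$, the even and odd parts of the product in terms of those of the factors: $h_{\mathrm{ev}}=(h_1)_{\mathrm{ev}}(h_2)_{\mathrm{ev}}+\zeta^2(h_1)_{\mathrm{od}}(h_2)_{\mathrm{od}}$ and $h_{\mathrm{od}}=(h_1)_{\mathrm{ev}}(h_2)_{\mathrm{od}}+(h_1)_{\mathrm{od}}(h_2)_{\mathrm{ev}}$. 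Since every $\zeta\in\sigma$ has $|\zeta|\le\tfrac1{\sqrt2}$, we get $|\zeta^2|\le\tfrac12\le1$, so pointwise
\[
|h_{\mathrm{ev}}|+|h_{\mathrm{od}}|\le\bigl(|(h_1)_{\mathrm{ev}}|+|(h_1)_{\mathrm{od}}|\bigr)\bigl(|(h_2)_{\mathrm{ev}}|+|(h_2)_{\mathrm{od}}|\bigr),
\]
because the right side expands to $|(h_1)_{\mathrm{ev}}(h_2)_{\mathrm{ev}}|+|(h_1)_{\mathrm{ev}}(h_2)_{\mathrm{od}}|+|(h_1)_{\mathrm{od}}(h_2)_{\mathrm{ev}}|+|(h_1)_{\mathrm{od}}(h_2)_{\mathrm{od}}|$ and the term $|\zeta^2(h_1)_{\mathrm{od}}(h_2)_{\mathrm{od}}|\le|(h_1)_{\mathrm{od}}(h_2)_{\mathrm{od}}|$ is dominated by the last summand while the first two triangle-inequality estimates for $h_{\mathrm{ev}}$ and $h_{\mathrm{od}}$ cover the rest. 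Taking the essential supremum over $\zeta\in\sigma$ and using $\mathrm{ess\,sup}(fg)\le\mathrm{ess\,sup}(f)\,\mathrm{ess\,sup}(g)$ yields $\|h_1h_2\|_{_{\scriptstyle\mathscr{F}_{_{\mathbb{R}^{+}}}}}\le\|h_1\|_{_{\scriptstyle\mathscr{F}_{_{\mathbb{R}^{+}}}}}\|h_2\|_{_{\scriptstyle\mathscr{F}_{_{\mathbb{R}^{+}}}}}$, which is exactly the asserted inequality; in particular the norm of the product is finite, so $\mathfrak{B}_{_{{\scriptstyle\mathscr{F}}_{_{{\mathbb{R}}^{+}}}}}$ is closed under multiplication. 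It remains only to note the unit: the constant function $1$ has $1_{\mathrm{ev}}=1$, $1_{\mathrm{od}}=0$, hence norm $1$, and is a two-sided identity, so $\mathfrak{B}_{_{{\scriptstyle\mathscr{F}}_{_{{\mathbb{R}}^{+}}}}}$ is a unital Banach algebra.

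The only point that needs a little care — and which I regard as the main obstacle, though it is minor — is making the even/odd decomposition rigorous at the measure-theoretic level: the condition $\|h\|_{_{\scriptstyle\mathscr{F}_{_{\mathbb{R}^{+}}}}}<\infty$ already forces $h_{\mathrm{od}}=\tfrac1{2\zeta}(h(\zeta)-h(-\zeta))$ to be essentially bounded near $\zeta=0$, so $h$ is determined a.e.\ by the bounded pair $(h_{\mathrm{ev}},h_{\mathrm{od}})$ and no boundary pathology at $0$ arises; the reflection $\zeta\mapsto-\zeta$ preserves the Lebesgue null sets of $\sigma$, so "even/odd part" is well defined modulo null sets and the bijection above genuinely respects the identification of functions equal a.e. Once this is observed, completeness is inherited from $L^\infty(\sigma)$ verbatim. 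I would also remark in passing that the factor $\zeta^2$ being a contraction on $\sigma$ is precisely why the naive product estimate closes with constant $1$; on a longer interval the same computation would only give submultiplicativity up to a constant.
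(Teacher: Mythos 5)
Your proof is correct and takes essentially the same route as the paper: your submultiplicative estimate is exactly the computation of Lemma \ref{RiPr} (there carried out at each two-point spectrum \(\{\zeta_{+}(\mu),\zeta_{-}(\mu)\}\) via the substitution \(h(\zeta_{\pm})=a\pm b\) and closed by \(|\zeta(\mu)|\leq 1\)), combined with the identification \(\|h\|_{_{\scriptstyle\mathscr{F}_{_{\mathbb{R}^{+}}}}}=\underset{\mu}{\operatorname{ess\,sup}}\,\|h\|_{\mu}\) used in Lemma \ref{CoinsTAd}. Your explicit completeness argument, via the isometric identification \(h\mapsto(h_{\mathrm{ev}},h_{\mathrm{od}})\) with even pairs in \(L^{\infty}\) of the interval valued in \(\mathbb{C}^{2}\) with the \(\ell^{1}\) norm, correctly supplies a point the paper leaves implicit.
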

\begin{lemma}
\label{InvCo}%
 Let \(h(\zeta)\) be an
\({\mathscr{F}}_{\!_{\scriptstyle\mathbb{R}^{+}}}\)\!--\,admissible
function.

The function \(h^{-1}(\zeta)\) is  an
\({\mathscr{F}}_{\!_{\scriptstyle\mathbb{R}^{+}}}\)\!--\,admissible
function if and only if the set of values of the function \(h\) is
separated from zero, that is the following condition
\begin{equation}
\label{sepCo}
\underset{\zeta\in{\textstyle{\sigma}}(\mathscr{F}_{{\mathbb{R}^{+}}})}{\textup{ess
inf}}\,|h(\zeta)|>0\,.
\end{equation}
 holds.

If the condition \eqref{sepCo} holds, then
 \begin{equation}
 \label{InvEst}
 \| h^{-1}(\zeta)\|_{\mathscr{F}_{{\mathbb{R}^{+}}}}\leq
  \| h(\zeta)\|_{\mathscr{F}_{{\mathbb{R}^{+}}}}\cdot
 \Big(\,
 \underset{\zeta\in{\textstyle\sigma}({\mathscr{F}_{{\mathbb{R}^{+}}}})}{\textup{ess inf}}\,|h(\zeta)|
 \Big)^{-2}
\,.
 \end{equation}
\end{lemma}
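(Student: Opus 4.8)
The plan is to reduce the whole statement to two elementary facts about the norm \eqref{NFAF}. Write $\sigma$ for the segment $\textup{\large\(\sigma\)}(\mathscr{F}_{\mathbb{R}^{+}})=\big[-\tfrac{1}{\sqrt2}e^{i\pi/4},\tfrac{1}{\sqrt2}e^{i\pi/4}\big]$, and for a function $f$ defined almost everywhere on $\sigma$ introduce its even and odd parts
\[
f_{+}(\zeta)=\tfrac12\big(f(\zeta)+f(-\zeta)\big),\qquad
f_{-}(\zeta)=\tfrac12\big(f(\zeta)-f(-\zeta)\big),
\]
which are defined a.e.\ because the spectrum is symmetric, $\sigma=-\sigma$; in this notation $\|f\|_{\mathscr{F}_{\mathbb{R}^{+}}}=\operatorname*{ess\,sup}_{\zeta\in\sigma}\big(|f_{+}(\zeta)|+|f_{-}(\zeta)|/|\zeta|\big)$. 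The two facts I would use are: (a)~admissibility forces essential boundedness; (b)~the even/odd parts of $1/h$ are, up to sign, the even/odd parts of $h$ divided by the scalar $h(\zeta)h(-\zeta)$.

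First I would prove (a). On $\sigma$ one has $|\zeta|\le 2^{-1/2}<1$, hence $|f_{-}(\zeta)|\le|f_{-}(\zeta)|/|\zeta|$, and therefore $|f(\zeta)|=|f_{+}(\zeta)+f_{-}(\zeta)|\le|f_{+}(\zeta)|+|f_{-}(\zeta)|/|\zeta|\le\|f\|_{\mathscr{F}_{\mathbb{R}^{+}}}$ for a.e.\ $\zeta\in\sigma$; thus $\operatorname*{ess\,sup}_{\sigma}|f|\le\|f\|_{\mathscr{F}_{\mathbb{R}^{+}}}$. Applied to $f=h^{-1}$ this already gives the ``only if'' direction: if $h^{-1}$ is admissible then $|h^{-1}|\le\|h^{-1}\|_{\mathscr{F}_{\mathbb{R}^{+}}}$ a.e., so $\operatorname*{ess\,inf}_{\sigma}|h|\ge\|h^{-1}\|_{\mathscr{F}_{\mathbb{R}^{+}}}^{-1}>0$, which is \eqref{sepCo}.

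For the converse and the estimate I would assume \eqref{sepCo}, set $\delta=\operatorname*{ess\,inf}_{\sigma}|h|>0$, note that then $h\ne0$ a.e.\ so $g:=h^{-1}$ is a Borel function defined a.e.\ on $\sigma$, and carry out the one-line computation behind (b):
\[
g_{+}(\zeta)=\frac{h_{+}(\zeta)}{h(\zeta)\,h(-\zeta)},\qquad
g_{-}(\zeta)=\frac{-\,h_{-}(\zeta)}{h(\zeta)\,h(-\zeta)},
\]
whence
\[
|g_{+}(\zeta)|+\frac{|g_{-}(\zeta)|}{|\zeta|}
=\frac{1}{|h(\zeta)\,h(-\zeta)|}\Big(|h_{+}(\zeta)|+\frac{|h_{-}(\zeta)|}{|\zeta|}\Big)
\le\frac{\|h\|_{\mathscr{F}_{\mathbb{R}^{+}}}}{\delta^{2}}\qquad\text{for a.e. }\zeta\in\sigma,
\]
using $|h(\pm\zeta)|\ge\delta$ a.e.\ for the scalar factor and the definition of the norm for the parenthesis. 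Passing to the essential supremum gives $\|h^{-1}\|_{\mathscr{F}_{\mathbb{R}^{+}}}\le\delta^{-2}\|h\|_{\mathscr{F}_{\mathbb{R}^{+}}}$, which is simultaneously the admissibility of $h^{-1}$ and the bound \eqref{InvEst}. One may also read this through Lemma \ref{BanAl}: the operations in $\mathfrak{B}_{\mathscr{F}_{\mathbb{R}^{+}}}$ being pointwise, invertibility of $h$ in that Banach algebra amounts to $1/h\in\mathfrak{B}_{\mathscr{F}_{\mathbb{R}^{+}}}$, so the only real content is when $1/h$ has finite norm — which the computation above settles.

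I do not anticipate a genuine obstacle; the single idea is the factorization $g_{\pm}=\mp\,h_{\pm}/(h(\zeta)h(-\zeta))$, available precisely because $\sigma$ is a segment symmetric about the origin and the weight $1/|\zeta|$ in \eqref{NFAF} couples only to the odd part. What remains is routine care: the point $\zeta=0$, where $1/|\zeta|$ is undefined, is Lebesgue-null and hence invisible to the ``ess'' quantities; $\sigma=-\sigma$ makes $\zeta\mapsto f(-\zeta)$ again defined a.e.; and the elementary inequality $|\zeta|\le 2^{-1/2}<1$ on $\sigma$ is what powers step (a). The only mildly technical point is the standard measure-theoretic passage from the pointwise-a.e.\ identity/inequality above to the corresponding bound on the essential supremum, i.e.\ that $\big(|f_{+}|+|f_{-}|/|\zeta|\big)\le\operatorname*{ess\,sup}(\cdots)$ holds a.e.
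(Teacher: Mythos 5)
Your proof is correct. Note that the paper never writes out a proof of Lemma \ref{InvCo}: it is among the Section 2 statements whose proofs are deferred, and in the later development it is effectively absorbed into the model-based calculus, where one would argue that \(h\) is admissible iff \(\operatorname{ess\,sup}_{\mu}\|h(F(\mu))\|<\infty\) (Lemma \ref{CoinsTAd}), that \(h^{-1}(F(\mu))=\big(h(F(\mu))\big)^{-1}\), and then invoke the invertibility criterion for multiplication operators (Theorem \ref{InvCon}) together with the two-sided matrix estimate \eqref{TSEf}/\eqref{TwSiEsh}. Your route is genuinely different and more elementary: you work directly with the norm \eqref{NFAF}, using that on the symmetric segment one has \(|\zeta|\le 2^{-1/2}<1\), so that admissibility forces \(\operatorname{ess\,sup}|h|\le\|h\|_{\mathscr{F}_{\mathbb{R}^{+}}}\) (this gives the necessity of \eqref{sepCo}), and the identity that the even and odd parts of \(1/h\) equal \(\pm h_{\pm}(\zeta)/\big(h(\zeta)h(-\zeta)\big)\), which converts the pointwise quantity defining \(\|1/h\|_{\mathscr{F}_{\mathbb{R}^{+}}}\) into the corresponding quantity for \(h\) divided by \(|h(\zeta)h(-\zeta)|\ge\delta^{2}\). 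This direct computation is exactly what produces the constant \(\big(\operatorname{ess\,inf}|h|\big)^{-2}\) in \eqref{InvEst}; the model route would give the same qualitative statement but, because of the factor \(\tfrac12\) in \eqref{TSEf}, only with an extra numerical constant. The measure-theoretic points you flag — that \(|h(-\zeta)|\ge\delta\) a.e.\ because reflection of the symmetric segment preserves null sets, and that the point \(\zeta=0\) is Lebesgue-null — are indeed the only care needed, and you have handled them.
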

\begin{definition}
\label{DeCoF} If \(h(\zeta)\) is a complex-valued function defined
on a subset \(S\) of the complex plane, than the \emph{conjugated}
function \(\overline{h}(\zeta)\) is a function defined on the
conjugated set \(\overline{S}\) by the equality
\begin{equation}
\label{defcf} \overline{h}(\zeta)\stackrel{\textup{\tiny
def}}{=}\overline{h(\overline{\zeta})}\,.
\end{equation}
\end{definition}
\begin{lemma}
\label{PrCoFu} A function \(h\) belongs to the algebra
\(\mathfrak{B}_{_{{\scriptstyle\mathscr{F}}_{_{{\mathbb{R}}^{+}}}}}\)
if and only if the conjugate function \(\overline{h}\) belongs to
the algebra
 \(\mathfrak{B}_{_{{\scriptstyle\mathscr{F}}^{\ast}_{_{{\mathbb{R}}^{+}}}}}\). Moreover,
 \begin{equation*}
 \|h\|_{_{\scriptstyle\mathfrak{B}_{_{{\mathscr{F}}_{_{{\mathbb{R}}^{+}}}}}}}
 =
 \|\overline{h}\|_{_{\scriptstyle\mathfrak{B}_{_{{\mathscr{F}}_{_{{\mathbb{R}}^{+}}}}}}}\,.
 \end{equation*}
\end{lemma}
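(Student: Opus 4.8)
The final statement is Lemma~\ref{PrCoFu}: a function $h$ lies in $\mathfrak{B}_{\mathscr{F}_{\mathbb{R}^+}}$ iff $\overline h$ lies in $\mathfrak{B}_{\mathscr{F}^*_{\mathbb{R}^+}}$, with equal norms.

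=== PLAN ===

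\textbf{Plan.} The argument is essentially a change of variables combined with a symmetry observation about the two spectra. The plan is to unwind the definitions of the two norms \eqref{NFAF} and \eqref{NFAFAd} and to relate the integrand of one to the integrand of the other under the reflection $\zeta\mapsto\overline\zeta$. The crucial geometric fact is that the spectrum $\textup{\large\(\sigma\)}(\mathscr{F}^{\ast}_{\mathbb{R}^+})=\bigl[-2^{-1/2}e^{-i\pi/4},\,2^{-1/2}e^{-i\pi/4}\bigr]$ is exactly the complex conjugate of $\textup{\large\(\sigma\)}(\mathscr{F}_{\mathbb{R}^+})=\bigl[-2^{-1/2}e^{i\pi/4},\,2^{-1/2}e^{i\pi/4}\bigr]$; this is immediate from \eqref{SpecTFO} and \eqref{SpecTFOa}. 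Moreover the map $\zeta\mapsto\overline\zeta$ restricted to either segment is an isometry for one-dimensional Lebesgue measure (it is a rigid reflection of the line), so it maps Borel sets to Borel sets and null sets to null sets; hence it sets up a bijection between the measurable functions defined a.e.\ on one segment and those defined a.e.\ on the other, and condition~1 of Definition~\ref{DFAdF} for $h$ is equivalent to condition~1 of Definition~\ref{DFAdFAd} for $\overline h$.

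\textbf{Key steps in order.} First I would record that $\overline{\textup{\large\(\sigma\)}(\mathscr{F}_{\mathbb{R}^+})}=\textup{\large\(\sigma\)}(\mathscr{F}^{\ast}_{\mathbb{R}^+})$ and that conjugation is a measure-preserving Borel bijection between the two segments; conclude the equivalence of the measurability/domain conditions. Second, I would compute the integrand in \eqref{NFAFAd} for $\overline h$ at a point $\eta\in\textup{\large\(\sigma\)}(\mathscr{F}^{\ast}_{\mathbb{R}^+})$. Writing $\eta=\overline\zeta$ with $\zeta\in\textup{\large\(\sigma\)}(\mathscr{F}_{\mathbb{R}^+})$, by Definition~\ref{DeCoF} we have $\overline h(\eta)=\overline{h(\overline\eta)}=\overline{h(\zeta)}$ and likewise $\overline h(-\eta)=\overline{h(-\zeta)}$ (using that $-\eta=\overline{-\zeta}$ and that $-\zeta$ again lies in the same segment, since the segments are symmetric about the origin). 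Therefore
\[
\frac{|\overline h(\eta)+\overline h(-\eta)|}{2}+\frac{|\overline h(\eta)-\overline h(-\eta)|}{2|\eta|}
=\frac{\bigl|\overline{h(\zeta)+h(-\zeta)}\bigr|}{2}+\frac{\bigl|\overline{h(\zeta)-h(-\zeta)}\bigr|}{2|\zeta|}
=\frac{|h(\zeta)+h(-\zeta)|}{2}+\frac{|h(\zeta)-h(-\zeta)|}{2|\zeta|},
\]
since $|\overline w|=|w|$ and $|\overline\eta|=|\eta|=|\zeta|$. Third, I would take the essential supremum over $\eta\in\textup{\large\(\sigma\)}(\mathscr{F}^{\ast}_{\mathbb{R}^+})$ of the left-hand side and over $\zeta\in\textup{\large\(\sigma\)}(\mathscr{F}_{\mathbb{R}^+})$ of the right-hand side: because $\eta\mapsto\zeta=\overline\eta$ is a measure-preserving bijection between the two index sets, the two essential suprema coincide. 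This gives $\|\overline h\|_{\mathscr{F}^{\ast}_{\mathbb{R}^+}}=\|h\|_{\mathscr{F}_{\mathbb{R}^+}}$; in particular one side is finite iff the other is, which together with the domain equivalence yields the stated iff and the norm identity. (The displayed equality in the lemma as typeset has a minor notational slip — both subscripts read $\mathscr{F}_{\mathbb{R}^+}$ — but the intended statement $\|h\|_{\mathfrak{B}_{\mathscr{F}_{\mathbb{R}^+}}}=\|\overline h\|_{\mathfrak{B}_{\mathscr{F}^{\ast}_{\mathbb{R}^+}}}$ is what the computation delivers.)

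\textbf{Main obstacle.} There is no deep obstacle; the content is bookkeeping. The one point that deserves genuine care is the measure-theoretic step: one must check that $\zeta\mapsto\overline\zeta$ carries the class of functions ``defined a.e.\ on $\textup{\large\(\sigma\)}(\mathscr{F}_{\mathbb{R}^+})$'' bijectively onto the class ``defined a.e.\ on $\textup{\large\(\sigma\)}(\mathscr{F}^{\ast}_{\mathbb{R}^+})$'' and respects null sets, so that ``ess sup'' on the two sides really refers to corresponding objects; once it is observed that conjugation is an affine isometry of the ambient line this is routine. A secondary bookkeeping point is to make sure $-\zeta$ stays in the relevant segment whenever $\zeta$ does — true because both $\textup{\large\(\sigma\)}(\mathscr{F}_{\mathbb{R}^+})$ and $\textup{\large\(\sigma\)}(\mathscr{F}^{\ast}_{\mathbb{R}^+})$ are segments centered at $0$ — so that the symmetrization $h(\zeta)\pm h(-\zeta)$ appearing in the norm is well defined after the change of variable.
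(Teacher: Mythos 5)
Your proposal is correct, and it is exactly the routine definitional verification that the paper itself treats as immediate (no separate proof of Lemma \ref{PrCoFu} is given there): one uses that $\overline{\textup{\large\(\sigma\)}(\mathscr{F}_{\!_{\scriptstyle\mathbb{R}^{+}}})}=\textup{\large\(\sigma\)}(\mathscr{F}^{\,\ast}_{\!_{\scriptstyle\mathbb{R}^{+}}})$, that reflection $\zeta\mapsto\overline{\zeta}$ is a measure-preserving bijection of the two segments, and that the integrand in \eqref{NFAFAd} for $\overline{h}$ at $\eta=\overline{\zeta}$ equals the integrand in \eqref{NFAF} for $h$ at $\zeta$ since conjugation preserves absolute values. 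Your handling of the null-set/measurability bookkeeping and of the symmetry $-\zeta\in\textup{\large\(\sigma\)}(\mathscr{F}_{\!_{\scriptstyle\mathbb{R}^{+}}})$ is precisely what makes the essential suprema correspond, so nothing is missing.
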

\noindent \textbf{The resolvent-based functional
 calculus for the operator \textbf{\({\bm{\mathscr{
F}}_{\!_{\scriptstyle\bm{\mathbb{R}}_{\bm{+}}}}}\)}.}\\
 The following theorem is one of the main results of
this work:
\begin{theorem}
\label{ExFCal}%
 Let \(h(\zeta)\) be an \(\mathscr{F}_{_{{\scriptstyle\mathbb{R}}^{+}}}\)\,-\,admissible
 function:
 \(h(\zeta)\in\mathfrak{B}_{_{{\scriptstyle\mathscr{F}}_{_{{\mathbb{R}}^{+}}}}}\).\\
Then there exists the strong limit
 \begin{align}
 \label{FAdFOfOp}
 h({\mathscr{F}}_{_{{\!\scriptstyle\mathbb{R}}_{+}}})&\stackrel{\textup{\tiny def}}{=} \\
 =\lim_{\varepsilon\to+0}\,\,&\frac{1}{2\pi{}i}
 \int\limits_{{\textstyle\sigma}(\mathscr{F}_{\!_{\scriptstyle\mathbb{R}^{+}}})}\hspace{-2.0ex}
 h(\zeta)\,\Big(
 R_{_{\scriptstyle\mathscr{F}_{\!_{\mathbb{R}^{+}}}}}\!\big(\zeta-\varepsilon{}ie^{i\pi/4}\big)-
 R_{_{\scriptstyle\mathscr{F}_{\!_{\mathbb{R}^{+}}}}}\!\big(\zeta+\varepsilon{}ie^{i\pi/4}\big)\Big)\,d\zeta\,,
 \notag
 \end{align}
 where \( R_{_{\,\scriptstyle\mathscr{F}_{\!_{\mathbb{R}^{+}}}}}(z)=
 (z\mathscr{I}-\mathscr{F}_{\!_{\scriptstyle\mathbb{R}^{+}}})^{-1}\) is the resolvent
 of the operator
 \(\mathscr{F}_{\!_{\scriptstyle\mathbb{R}^{+}}}\),
 and the integral is taken along the interval
 \(\textup{\large\(\sigma\)}(\mathscr{F}_{\!_{\scriptstyle\mathbb{R}^{+}}})%
=\Big[-\frac{1}{\sqrt{2}}\,e^{i\pi/4},\,
\frac{1}{\sqrt{2}}\,e^{i\pi/4}\Big]\) from the point
\(-\frac{1}{\sqrt{2}}\,e^{i\pi/4}\) to the point
\(\frac{1}{\sqrt{2}}\,e^{i\pi/4}\).
\end{theorem}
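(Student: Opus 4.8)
The plan is to reduce the statement to a scalar computation after diagonalizing $\mathscr{F}_{\!_{\scriptstyle\mathbb{R}^{+}}}$ by a suitable spectral transform. Concretely, I would first build the functional model promised in Section 1: a unitary map $\mathscr{U}$ from $L^2(\mathbb{R}^{+})$ onto a multiplication-type model space in which $\mathscr{F}_{\!_{\scriptstyle\mathbb{R}^{+}}}$ becomes an explicit operator $M$. Because the operator is not normal, the right model is not plain multiplication by $\zeta$ on $L^2$ of the spectral interval, but rather a $2\times 2$-matrix multiplication model on $L^2$ of the half-interval $(0,\,2^{-1/2}e^{i\pi/4}]$: the natural decomposition $x(\xi)\mapsto(x_{\mathrm{even}},x_{\mathrm{odd}})$ (even/odd parts relative to the scaling symmetry $\xi\mapsto 1/\xi$, which is what the Mellin transform linearizes) turns $\mathscr{F}_{\!_{\scriptstyle\mathbb{R}^{+}}}$ into multiplication by a matrix $\mathfrak{m}(\zeta)$ whose eigenvalues are $\pm\zeta$ and which is responsible for the two terms $\tfrac{h(\zeta)\pm h(-\zeta)}{2}$ in the norm \eqref{NFAF}. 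In this model the resolvent $R_{_{\scriptstyle\mathscr{F}_{\!_{\mathbb{R}^{+}}}}}(z)$ becomes multiplication by $(z\mathscr{I}-\mathfrak{m}(\zeta))^{-1}$, an explicit rational matrix function of $\zeta$ with poles exactly on $\textup{\large\(\sigma\)}$.

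With the model in hand, the contour integral in \eqref{FAdFOfOp} is computed pointwise in $\zeta$. Fix $\zeta_0$ in the interior of the spectral interval and look at the integrand $h(\zeta)\big((z_-\mathscr{I}-\mathfrak{m}(\zeta))^{-1}-(z_+\mathscr{I}-\mathfrak{m}(\zeta))^{-1}\big)$ with $z_\pm=\zeta\pm\varepsilon i e^{i\pi/4}$, i.e. $z_\pm$ on the normal to the interval. As $\varepsilon\to+0$ the difference of the two matrix resolvents, integrated against $h$, is a Poisson/Sokhotski–Plemelj type kernel: the jump of $(z\mathscr{I}-\mathfrak{m}(\zeta))^{-1}$ across the segment, computed via partial fractions in the variable running along the segment, produces, after the standard $\tfrac{1}{2\pi i}\int \tfrac{d\zeta}{\zeta-\zeta_0-i0}-\tfrac{d\zeta}{\zeta-\zeta_0+i0}=\delta(\zeta-\zeta_0)$ identity, exactly the matrix $h(\mathfrak{m}(\zeta_0))$ understood via the eigenvalues $\pm\zeta_0$; that is, multiplication by the matrix with "even part" $\tfrac{h(\zeta_0)+h(-\zeta_0)}{2}$ and "odd part" coefficient $\tfrac{h(\zeta_0)-h(-\zeta_0)}{2\zeta_0}$. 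This is precisely why the $\mathscr{F}_{\!_{\scriptstyle\mathbb{R}^{+}}}$-admissibility norm \eqref{NFAF} is the operator norm of the resulting multiplication operator, so the limit operator is bounded with $\|h(\mathscr{F}_{\!_{\scriptstyle\mathbb{R}^{+}}})\|\le\|h\|_{_{\scriptstyle\mathscr{F}_{_{\mathbb{R}^{+}}}}}$.

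Existence of the \emph{strong} limit (not merely weak, and without assuming $h$ continuous) I would get by a uniform-boundedness plus density argument: for $h$ rational with poles off $\textup{\large\(\sigma\)}$ the limit \eqref{FAdFOfOp} exists and agrees with the Riesz–Dunford integral \eqref{DunInt} by contour deformation (collapsing $\Gamma$ onto the slit), hence coincides with $h_{\textup{hol}}(\mathscr{F}_{\!_{\scriptstyle\mathbb{R}^{+}}})$; such $h$ are dense in $\mathfrak{B}_{_{{\scriptstyle\mathscr{F}}_{_{{\mathbb{R}}^{+}}}}}$ and, crucially, the operators $\tfrac{1}{2\pi i}\int_{\textup{\large\(\sigma\)}}h(\zeta)(R(\zeta-\varepsilon ie^{i\pi/4})-R(\zeta+\varepsilon ie^{i\pi/4}))\,d\zeta$ are uniformly bounded in $\varepsilon$ by (a fixed constant times) $\|h\|_{_{\scriptstyle\mathscr{F}_{_{\mathbb{R}^{+}}}}}$ — this is where the resolvent estimate of Theorem \ref{TEstResT}, in particular the $C(\zeta)/|z-\zeta|$ growth with the explicit constant \eqref{ConAs}, is used to control the kernel. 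A standard $\varepsilon/3$ argument then upgrades convergence on a dense set to strong convergence on all of $\mathfrak{B}_{_{{\scriptstyle\mathscr{F}}_{_{{\mathbb{R}}^{+}}}}}$.

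The main obstacle I anticipate is the interchange of the $\varepsilon\to+0$ limit with the $\zeta$-integration and the application to a general vector $x\in L^2(\mathbb{R}^{+})$: near the endpoints $\pm 2^{-1/2}e^{i\pi/4}$ and, more delicately, near the spectral singularity $\zeta=0$ (Remark \ref{spSing}) the resolvent kernel blows up faster, so the pointwise Plemelj computation must be accompanied by an honest dominated-convergence or approximate-identity estimate in the model space that survives the $|z|^{-2}$ growth at the origin. I expect this is handled precisely by the weight $1/(2|\zeta|)$ built into the norm \eqref{NFAF}: admissibility forces $h(\zeta)-h(-\zeta)=O(|\zeta|)$ near $0$, which exactly cancels the extra factor of $|\zeta|^{-1}$ coming from the odd-part eigenvalue $1/\zeta$ in $(z\mathscr{I}-\mathfrak{m}(\zeta))^{-1}$, rendering the kernel integrable uniformly in $\varepsilon$. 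Making that cancellation quantitative near $\zeta=0$ is the technical heart of the argument; everything else is the routine Mellin-transform bookkeeping and the standard functional-calculus homomorphism checks (items 1–6 above), which follow from the scalar identities $h\mapsto h(\mathfrak{m}(\zeta))$ being an algebra homomorphism pointwise in $\zeta$.
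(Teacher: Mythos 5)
Your overall strategy coincides with the paper's: pass to the Mellin-type model in which \(\mathscr{F}_{\!_{\scriptstyle\mathbb{R}^{+}}}\) becomes multiplication by the \(2\times2\) matrix \(F(\mu)\) with eigenvalues \(\pm\zeta(\mu)\), compute the jump of the resolvent across the spectral segment pointwise in \(\mu\) as a Plemelj/Poisson-type limit, and recognize that the limit matrix has entries built from \(\tfrac{h(\zeta)+h(-\zeta)}{2}\) and \(\tfrac{h(\zeta)-h(-\zeta)}{2\zeta}\), whose essential suprema constitute the admissibility norm \eqref{NFAF}. Up to that point your proposal matches the construction of Sections 5--8 and the computation carried out in the proof of Theorem \ref{ExFCalM} (Lemma \ref{Jptl}).

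The gap is in how you pass from this pointwise picture to the existence of the strong limit for an \emph{arbitrary} admissible \(h\). You propose: (a) establish the limit for rational \(h\) holomorphic near the spectrum, where it agrees with \eqref{DunInt}; (b) claim such \(h\) are dense in \(\mathfrak{B}_{_{{\scriptstyle\mathscr{F}}_{_{{\mathbb{R}}^{+}}}}}\); (c) upgrade by uniform boundedness and an \(\varepsilon/3\) argument. Step (b) is false: the norm \eqref{NFAF} is of essential-supremum type, and the algebra contains, for instance, indicator functions of sets whose asymmetric part is separated from zero (Lemma \ref{CryAdm}); these are not approximable in that norm by rational or continuous functions, so your density argument cannot reach precisely the functions the calculus is built for (the spectral projectors of Section 3). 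Moreover, the uniform bound in \(\varepsilon\) cannot be extracted from the norm estimates of Theorem \ref{TEstResT} alone: on the integration path one has \(|z-\zeta|=\varepsilon\), so the bound \(C(\zeta)/|z-\zeta|\) only yields \(O(1/\varepsilon)\) after integration; the boundedness comes from the cancellation visible only in the explicit kernel of the model. The paper avoids density altogether: after the change of variables the two scalar integrals \(I_p\), \(I_q\) are expressed through the Poisson kernel \(P\) and a kernel \(Q\) with \(0<Q\leq 4P\), giving the bounds \(|I_p|\leq C_p(h)\), \(|I_q|\leq 4C_q(h)\) uniformly in \(\varepsilon\) (exactly the two halves of \eqref{NFAF}), while the Lebesgue-point theorem for the Poisson integral gives almost-everywhere convergence of the matrix entries to those of \(h(F(\mu))\); Lemma \ref{StrConL} (dominated convergence for uniformly bounded matrix multipliers) then produces the strong limit for every measurable admissible \(h\). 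If you replace your steps (b)--(c) by that dominated-convergence lemma -- which you in fact hint at in your last paragraph -- your argument closes.
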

 The following result \textsf{s}upplements Theorem \ref{ExFCal}\,.
 \begin{nonumtheorem}[\ \(\textup{\ref{ExFCal}}^{\textup{\,\textmd{s}}}\)\!.] Let the function \(h\), which appears
 in \textup{Theorem \ref{ExFCal}}, satisfy the extra conditions:
 \begin{enumerate}
 \item[\textup{1.}]\,\(h(\zeta)\) is continuous for
 \(\zeta\in\Big[-\frac{1}{\sqrt{2}}\,e^{i\pi/4},\,
\frac{1}{\sqrt{2}}\,e^{i\pi/4}\Big]\)\textup{;}
\item[\textup{2.}]\,\(\dfrac{h(\zeta)-h(-\zeta)}{\zeta}\) \ is continuous for
\(\Big[-\frac{1}{\sqrt{2}}\,e^{i\pi/4},\,
\frac{1}{\sqrt{2}}\,e^{i\pi/4}\Big]\)\textup{;}
\item[\textup{3.}]
\(h\Big(-\frac{1}{\sqrt{2}}\,e^{i\pi/4}\Big)=0\,,\, h\Big(\frac{1}{\sqrt{2}}\,e^{i\pi/4}\Big)=0\)\,.
 \end{enumerate}
 Then the limit in \eqref{FAdFOfOp} exists in the uniform operator topology.
 \end{nonumtheorem}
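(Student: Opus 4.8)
\medskip
\noindent\textbf{Sketch of the intended proof.}
The plan is to build an explicit functional model by Mellin analysis and reduce everything to scalar Poisson‑kernel estimates. Evaluating \(\mathscr{F}_{\mathbb{R}^{+}}\) on the dilation characters \(\xi\mapsto\xi^{-1/2+is}\) (\(s\in\mathbb{R}\)) gives, through the Euler integral for \(\Gamma\), \(\mathscr{F}_{\mathbb{R}^{+}}\colon\xi^{-1/2+is}\mapsto c(s)\,t^{-1/2-is}\) with \(c(s)=\tfrac1{\sqrt{2\pi}}\Gamma(\tfrac12+is)e^{i\pi/4}e^{-\pi s/2}\); hence in the Mellin realisation \(L^{2}(\mathbb{R}^{+})\cong L^{2}(\mathbb{R},ds)\) the operator \(\mathscr{F}_{\mathbb{R}^{+}}\) is unitarily equivalent to the ``weighted reflection'' \(\widehat{\mathscr{F}}\), \((\widehat{\mathscr{F}}f)(s)=c(-s)f(-s)\). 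Then \(\widehat{\mathscr{F}}^{\,2}=M_{\gamma}\), multiplication by the even function \(\gamma(s)=c(s)c(-s)=\tfrac{i}{2\cosh\pi s}\), whose range has closure \([0,\tfrac{i}{2}]\), and from \((z-\widehat{\mathscr{F}})(z+\widehat{\mathscr{F}})=z^{2}-M_{\gamma}\) the resolvent is explicit,
\[
\bigl(R_{\widehat{\mathscr{F}}}(z)f\bigr)(s)=\frac{z\,f(s)+c(-s)\,f(-s)}{z^{2}-\gamma(s)}\,,\qquad z\notin\sigma(\mathscr{F}_{\mathbb{R}^{+}})\,.
\]
All the relevant operators therefore leave each two–dimensional coordinate subspace \(\{f(s),f(-s)\}\) invariant, and one only has to track \(2\times2\) symbols.

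\medskip
\noindent Put \(\zeta(s)=\sqrt{\gamma(s)}=e^{i\pi/4}/\sqrt{2\cosh\pi s}\), which runs over \(\bigl(0,\tfrac1{\sqrt2}e^{i\pi/4}\bigr]\), and \(u_{0}=u_{0}(s)=|\zeta(s)|\). Using the partial fractions \(\tfrac{\zeta}{\zeta^{2}-\gamma}=\tfrac12\bigl(\tfrac1{\zeta-\zeta(s)}+\tfrac1{\zeta+\zeta(s)}\bigr)\), \(\tfrac1{\zeta^{2}-\gamma}=\tfrac1{2\zeta(s)}\bigl(\tfrac1{\zeta-\zeta(s)}-\tfrac1{\zeta+\zeta(s)}\bigr)\) and the substitution \(\zeta=u e^{i\pi/4}\), the kernel in \eqref{FAdFOfOp} collapses to ordinary Poisson kernels \(P_{\varepsilon}(t)=\tfrac1\pi\tfrac{\varepsilon}{t^{2}+\varepsilon^{2}}\) on the interval \([-\tfrac1{\sqrt2},\tfrac1{\sqrt2}]\): the operator \(T_{\varepsilon}(h)\) under the limit in \eqref{FAdFOfOp} acts on \((f(s),f(-s))\) by the matrix \(\bigl(\begin{smallmatrix}a_{\varepsilon}(s)&c(-s)b_{\varepsilon}(s)\\ c(s)b_{\varepsilon}(s)&a_{\varepsilon}(s)\end{smallmatrix}\bigr)\), where \(a_{\varepsilon}(s)\) is the Poisson average of \(h\) about the pair \(\pm\zeta(s)\), tending to \(h_{\mathrm{ev}}(\zeta(s))\), and
\[
b_{\varepsilon}(s)=\frac1{u_{0}}\int_{-1/\sqrt2}^{1/\sqrt2}u\,\frac{h_{\mathrm{od}}(u e^{i\pi/4})}{u e^{i\pi/4}}\,P_{\varepsilon}(u-u_{0})\,du\ \longrightarrow\ \frac{h_{\mathrm{od}}(\zeta(s))}{\zeta(s)}\,.
\]
Since \(h(\mathscr{F}_{\mathbb{R}^{+}})\) has the same block form with \(a_{\varepsilon},b_{\varepsilon}\) replaced by their limits, operator–norm convergence \(T_{\varepsilon}(h)\to h(\mathscr{F}_{\mathbb{R}^{+}})\) is \emph{equivalent} to uniform‑in‑\(s\) convergence of the two scalar symbols. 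The \(L^{1}\)-normalisation of \(P_{\varepsilon}\), together with the identity \(|c(s)|^{2}+|c(-s)|^{2}=1\) and the cancellation \(|\zeta(s)|\,b_{\varepsilon}(s)=\int u(\cdots)P_{\varepsilon}(u-u_{0})\,du\), already gives the uniform bound \(\|T_{\varepsilon}(h)\|\le C\|h\|_{\mathscr{F}_{\mathbb{R}^{+}}}\) for \(\varepsilon\in(0,\varepsilon_{0}]\) (it is also a by‑product of the proof of Theorem \ref{ExFCal}); granting it and \(\|h(\mathscr{F}_{\mathbb{R}^{+}})\|\le\|h\|_{\mathscr{F}_{\mathbb{R}^{+}}}\) (Lemma \ref{BanAl}), the set of \(h\) for which uniform convergence holds is \(\|\cdot\|_{\mathscr{F}_{\mathbb{R}^{+}}}\)-closed, so it suffices to verify uniform convergence for \(h\) in a class dense, in \(\|\cdot\|_{\mathscr{F}_{\mathbb{R}^{+}}}\), in the subspace \(\mathfrak{B}_{0}\subset\mathfrak{B}_{\mathscr{F}_{\mathbb{R}^{+}}}\) of functions meeting conditions 1--3 — e.g.\ polynomials vanishing at \(\pm\tfrac1{\sqrt2}e^{i\pi/4}\), which are dense there by the Weierstrass theorem applied separately to the even functions \(h_{\mathrm{ev}}\) and \(h_{\mathrm{od}}/\zeta\) (followed by a small linear correction to restore the endpoint zeros). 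This is precisely where conditions 1--3 enter: they describe exactly the closure of that polynomial class.

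\medskip
\noindent For the even symbol, and for \(s=0\) (where the two Poisson poles \(\pm\zeta(0)=\pm\tfrac1{\sqrt2}e^{i\pi/4}\) sit at the very endpoints of \(\sigma(\mathscr{F}_{\mathbb{R}^{+}})\)), uniform convergence is the classical fact that the Poisson integral of a continuous function \emph{vanishing at the endpoints} of the interval converges uniformly up to the endpoints — this uses conditions 1 and 3. The delicate point is \(b_{\varepsilon}(s)\) as \(s\to\pm\infty\), i.e.\ as \(u_{0}\to0\): the spectral singularity \(\zeta=0\) of Remark \ref{spSing} (where \(\|R_{\mathscr{F}_{\mathbb{R}^{+}}}(z)\|\) grows like \(|z|^{-2}\), part 3 of Theorem \ref{TEstResT}) shows up here as the divergent prefactor \(u_{0}^{-1}\) in \(b_{\varepsilon}\). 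Writing \(h_{\mathrm{od}}(u e^{i\pi/4})=u e^{i\pi/4}\,\psi(u)\), so that \(\psi\) is the restriction of \(h_{\mathrm{od}}/\zeta\) (continuous, by condition 2, and vanishing at \(\pm\tfrac1{\sqrt2}\) by condition 3) and is \emph{even}, a short computation gives
\[
b_{\varepsilon}(s)-\psi(u_{0})=\bigl[(P_{\varepsilon}*\psi)(u_{0})-\psi(u_{0})\bigr]+\frac{\varepsilon}{u_{0}}\,(Q_{\varepsilon}*\psi)(u_{0}),\qquad Q_{\varepsilon}(t)=\frac1\pi\frac{t}{t^{2}+\varepsilon^{2}},
\]
\(Q_{\varepsilon}\) the conjugate Poisson kernel (convolutions taken on \([-\tfrac1{\sqrt2},\tfrac1{\sqrt2}]\)). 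The first bracket is uniformly small; and since \(\psi\) is even the odd function \(Q_{\varepsilon}*\psi\) vanishes at \(0\), so \((Q_{\varepsilon}*\psi)(u_{0})=\int_{0}^{u_{0}}(Q_{\varepsilon}'*\psi)(t)\,dt\), and for the smooth \(\psi\) of the dense class (smooth, vanishing at the endpoints) one bounds \(\sup_{0\le t\le u_{0}}|(Q_{\varepsilon}'*\psi)(t)|\le C(\psi)\) uniformly in \(\varepsilon\) — a boundedness estimate for the truncated conjugate function, for which near \(t=0\) the evenness and smoothness of \(\psi\) are used (the would‑be logarithmic growth sits only at the endpoints \(\pm\tfrac1{\sqrt2}\), away from the range of \(t\)). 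Hence \(\tfrac{\varepsilon}{u_{0}}|(Q_{\varepsilon}*\psi)(u_{0})|\le\varepsilon\,C(\psi)\to0\) uniformly in \(u_{0}\). With the closure argument this finishes the proof.

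\medskip
\noindent The main obstacle is exactly this last estimate near \(\zeta=0\): the \(|z|^{-2}\) blow‑up of the resolvent there (Remark \ref{spSing}) appears in the model as the factor \(u_{0}^{-1}\) in \(b_{\varepsilon}\), and uniformity is rescued only because the norm \eqref{NFAF}, hence condition 2, attaches the compensating factor \(\zeta^{-1}\) to the odd part — so that the quantity which really governs the operator, \(|\zeta(s)|\,b_{\varepsilon}(s)\), stays controlled — while condition 3 both keeps the pole \(s=0\) innocuous and cancels the conjugate‑Poisson defect at the endpoints of the interval. The even part and the uniform bound are routine; pinning down this \(\zeta^{-1}\)-cancellation at the spectral singularity is the one essential step.
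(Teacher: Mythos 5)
Your argument is correct in substance and rests on the same backbone as the paper's own proof: your Mellin realization of \(\mathscr{F}_{\!_{\scriptstyle\mathbb{R}^{+}}}\) as a weighted reflection is exactly the model operator \(\mathcal{M}_{_{\scriptstyle F}}\) (cf.\ Remark \ref{Mell} and Theorem \ref{SRTFTha}), you reduce norm convergence to uniform convergence of the \(2\times2\) symbol via \(\|\mathcal{M}_{_{\scriptstyle G}}\|=\textup{ess\,sup}\,\|G(\mu)\|\) (property 5 of Lemma \ref{Homom}), and your scalar quantities \(a_{\varepsilon},b_{\varepsilon}\) are the integrals \(J_{P},J_{Q}\) of Lemma \ref{Jptl}. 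Where you genuinely diverge is the odd-part symbol: the paper observes that the kernel \(Q(r,\rho;\varepsilon)\) of \eqref{SIiqT}, \eqref{KerQ} is itself an approximative identity (nonnegative for \(\rho>0\), total mass one by \eqref{ExCa}, concentrated at \(\rho=|r|\)), so uniform convergence for the continuous, endpoint-vanishing function \((h(\zeta)-h(-\zeta))/\zeta\) follows directly (Remarks \ref{pkq} and \ref{ComTe}); you instead decompose \(b_{\varepsilon}\) into a genuine Poisson term plus the correction \((\varepsilon/u_{0})(Q_{\varepsilon}\ast\psi)(u_{0})\) with the conjugate Poisson kernel, kill the correction using the evenness of \(\psi=h_{\mathrm{od}}/\zeta\), and then pass from a dense class of polynomials to all \(h\) satisfying conditions 1--3 by a closure argument based on the uniform bound \(\|T_{\varepsilon}(h)\|\leq C\,\|h\|_{_{\scriptstyle\mathscr{F}_{\mathbb{R}^{+}}}}\). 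Your route is longer, but it makes explicit the \(\zeta^{-1}\)-cancellation at the spectral singularity which the paper's approximative-identity claim leaves implicit; the paper's route needs no density step and works with conditions 1--3 directly.

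One step needs repair as written. The bound \(\sup_{0\leq t\leq u_{0}}|(Q_{\varepsilon}\ast\psi)'(t)|\leq C(\psi)\) uniformly in \(\varepsilon\) fails for \(u_{0}\) near \(1/\sqrt{2}\): writing \((Q_{\varepsilon}\ast\psi)'=Q_{\varepsilon}\ast\psi'\) (the boundary terms vanish since \(\psi(\pm1/\sqrt2)=0\)) and splitting \(\psi'(u)=\psi'(t)+(\psi'(u)-\psi'(t))\), the first piece contributes \(\psi'(t)\cdot\tfrac{1}{2\pi}\log\frac{(t+1/\sqrt2)^{2}+\varepsilon^{2}}{(t-1/\sqrt2)^{2}+\varepsilon^{2}}\), which grows like \(\log(1/\varepsilon)\) when \(t\) is within \(\varepsilon\) of the endpoint; and \(t\) does range up to \(u_{0}\), which may equal \(1/\sqrt2\), while a polynomial vanishing at the endpoints need not have vanishing derivative there --- so your parenthetical that the logarithmic growth sits ``away from the range of \(t\)'' is not true in that regime. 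The fix is immediate: split the range of \(u_{0}\). For \(u_{0}\geq\delta\) (say \(\delta=1/4\)) use the crude bound \(|(Q_{\varepsilon}\ast\psi)(u_{0})|\leq C\|\psi\|_{\infty}\log(1/\varepsilon)\), so that \((\varepsilon/u_{0})\,|(Q_{\varepsilon}\ast\psi)(u_{0})|\leq C\delta^{-1}\varepsilon\log(1/\varepsilon)\to0\); for \(u_{0}<\delta\) the interval \([0,u_{0}]\) stays away from the endpoints and your estimate \(\leq C(\psi)\,\varepsilon\) is valid. With this two-regime splitting the uniform convergence of \(b_{\varepsilon}\) (the even symbol and the closure argument are fine as you have them) and hence the existence of the limit in \eqref{FAdFOfOp} in the uniform operator topology go through as intended.
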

\begin{definition}
\label{DefFadFp}%
 Let \(h(\zeta)\) be an
\(\mathscr{F}_{_{{\scriptstyle\mathbb{R}}^{+}}}\)\,-\,admissible
 function:
 \(h(\zeta)\in\mathfrak{B}_{_{{\scriptstyle\mathscr{F}}_{_{{\mathbb{R}}^{+}}}}}\).
The operator \(h(\mathscr{F}_{\!_{\scriptstyle\mathbb{R}^{+}}})\)
 defined by \eqref{FAdFOfOp}
 is called \emph{the function \(h\) of the operator
 \(\mathscr{F}_{\!_{\scriptstyle\mathbb{R}^{+}}}\).}

The correspondence
\[h(z)\to{}h(\mathscr{F}_{\!_{\scriptstyle\mathbb{R}^{+}}}),\quad \textup{where }
h(\mathscr{F}_{\!_{\scriptstyle\mathbb{R}^{+}}}) \ \textup{is
defined by } \eqref{FAdFOfOp}\,,
\]
is said to be the
\emph{resolvent-based functional calculus for the operator~%
\(\mathscr{F}_{\!_{\scriptstyle\mathbb{R}^{+}}}\).}\\
\end{definition}

\noindent%
 \textbf{Properties of the resolvent-based functional
 calculus.}\\[-4.0ex]
\begin{lemma}
\label{compat}%
The resolvent-based
 based calculus extends the holomorphic functional
 calculus:
 \begin{enumerate}%
 \item[\textup{1}.]
The algebra
\(\textup{hol}(\textup{\large\(\sigma\)}(\mathscr{F}_{\!_{\scriptstyle\mathbb{R}^{+}}}))\)
is contained in the algebra
\(\mathfrak{B}_{_{{\scriptstyle\mathscr{F}}_{_{{\mathbb{R}}^{+}}}}}\).
\item[\textup{2}.]
For
\(h\in\textup{hol}(\textup{\large\(\sigma\)}(\mathscr{F}_{\!_{\scriptstyle\mathbb{R}^{+}}})\),
both definitions of the function \(h\) of the operator  of the
operator \(\mathscr{F}_{\!_{\scriptstyle\mathbb{R}^{+}}}\), the
definition \eqref{DunInt} and the definition \eqref{FAdFOfOp},
yield the same result, i.e. the integral in the right hand side of
\eqref{DunInt} coincides with the limit of the integrals in the
right hand side of \eqref{FAdFOfOp}\,:
\begin{equation}
\label{Coinci}
h_{\textup{hol}}(\mathscr{F}_{\!_{\scriptstyle\mathbb{R}^{+}}})=
h(\mathscr{F}_{\!_{\scriptstyle\mathbb{R}^{+}}})\,.
\end{equation}
\end{enumerate}
\end{lemma}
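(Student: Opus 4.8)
The plan is to establish the two claims in sequence: first that $\textup{hol}(\textup{\large\(\sigma\)}(\mathscr{F}_{\!_{\scriptstyle\mathbb{R}^{+}}})) \subseteq \mathfrak{B}_{_{{\scriptstyle\mathscr{F}}_{_{{\mathbb{R}}^{+}}}}}$, and second that for such $h$ the resolvent-based integral \eqref{FAdFOfOp} reduces to the Dunford--Riesz integral \eqref{DunInt}. For the inclusion, let $h$ be holomorphic in an open neighborhood $\Omega$ of the segment $\sigma=[-\tfrac{1}{\sqrt2}e^{i\pi/4},\tfrac{1}{\sqrt2}e^{i\pi/4}]$. I would check finiteness of the norm \eqref{NFAF} termwise: the first term $\tfrac12|h(\zeta)+h(-\zeta)|$ is plainly bounded on the compact set $\sigma$ (note $-\zeta\in\sigma$ whenever $\zeta\in\sigma$, so $h(-\zeta)$ makes sense), while for the second term one writes, using that $h$ is holomorphic on a neighborhood of the segment joining $-\zeta$ and $\zeta$,
\[
\frac{h(\zeta)-h(-\zeta)}{2\zeta}=\frac{1}{2\zeta}\int_{-\zeta}^{\zeta}h'(w)\,dw=\frac12\int_{0}^{1}h'\big((2\tau-1)\zeta\big)\,d\tau,
\]
which is bounded in modulus by $\tfrac12\sup_{\sigma}|h'|<\infty$. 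Hence $\|h\|_{\mathscr{F}_{\mathbb{R}^+}}<\infty$ and measurability is automatic from continuity, giving claim~1.

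For claim~2, fix $h\in\textup{hol}(\sigma)$ and let $\Omega$ be its domain of holomorphy. The strategy is a contour-deformation argument. For small $\varepsilon>0$ consider the closed rectangular-type contour $\Gamma_\varepsilon$ consisting of the two segments $\zeta\mapsto\zeta\mp\varepsilon i e^{i\pi/4}$ (i.e. the segment $\sigma$ pushed off to either side along the normal direction $\pm ie^{i\pi/4}$) together with two short end-caps near the endpoints $\pm\tfrac{1}{\sqrt2}e^{i\pi/4}$; for $\varepsilon$ small enough $\Gamma_\varepsilon\subset\Omega\setminus\sigma$ and $\Gamma_\varepsilon$ is a simple closed contour enclosing $\sigma$, traversed counterclockwise. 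By the contour-independence built into \eqref{DunInt},
\[
h_{\textup{hol}}(\mathscr{F}_{\!_{\scriptstyle\mathbb{R}^{+}}})=\frac{1}{2\pi i}\int_{\Gamma_\varepsilon}h(z)\,R_{_{\scriptstyle\mathscr{F}_{\!_{\mathbb{R}^{+}}}}}(z)\,dz .
\]
The integral over $\Gamma_\varepsilon$ splits into the two long sides — which reparametrize exactly as $\tfrac{1}{2\pi i}\int_\sigma h(\zeta)\big(R(\zeta-\varepsilon i e^{i\pi/4})-R(\zeta+\varepsilon i e^{i\pi/4})\big)\,d\zeta$, the integrand in \eqref{FAdFOfOp} — plus the two end-cap contributions. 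So it remains to show the end-cap contributions tend to $0$ as $\varepsilon\to+0$. This is where one invokes the resolvent growth near the endpoints: by Theorem \ref{TEstResT} (in particular the estimate \eqref{UpEsResS}, valid at a generic spectral point $\zeta$, and even the $|z|^{-2}$ estimate at $0$), on a cap of length $O(\varepsilon)$ near an endpoint $\zeta_0\ne0$ the resolvent norm is $O(1/\mathrm{dist}(z,\sigma))=O(1/\varepsilon)$, so the cap integral is $O(\varepsilon)\cdot O(1/\varepsilon)=O(1)$ — not obviously vanishing. To get genuine decay one uses that $h$ is holomorphic across the endpoint: shrink the caps so that near $\zeta_0$ one integrates $h(z)R(z)$ over an arc at distance $\sim\varepsilon$ from $\sigma$ but of arclength that can be taken $o(\varepsilon)$ after first deforming the long sides to meet much closer to $\zeta_0$; alternatively, keep the caps of size $\varepsilon$ but observe that since $\Gamma_\varepsilon$ may be chosen as a genuinely closed curve for every $\varepsilon$, the sum (two long sides + two caps) is exactly $2\pi i\,h_{\textup{hol}}(\mathscr{F}_{\mathbb{R}^+})$ independent of $\varepsilon$, hence letting $\varepsilon\to0$ the two-long-sides term has a limit iff the cap term does, and one shows the cap term is actually $o(1)$ by exploiting $\int_{\text{cap}}h(z)R(z)\,dz$ against a fixed vector and the fact that $R(z)x$ has boundary values in a suitable sense.

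The main obstacle I anticipate is precisely controlling the end-cap contributions: the resolvent estimate of Theorem \ref{TEstResT} gives only $\|R(z)\|=O(1/\mathrm{dist}(z,\sigma))$ away from $0$ and $O(|z|^{-2})$ near $0$, so a naive length-times-sup bound on an $\varepsilon$-scale cap gives only $O(1)$, not $o(1)$, and near the origin it is worse. The clean way around this is to note that the statement to be proved, \eqref{Coinci}, is an equality of operators that does not require term-by-term limits: choose $\Gamma_\varepsilon$ to be a fixed simple closed contour $\Gamma$ (independent of $\varepsilon$) lying in $\Omega\setminus\sigma$ near $\sigma$, split $\Gamma$ itself into an ``upper'' and ``lower'' arc meeting at the two endpoints, deform each arc toward $\sigma$ by the $\varepsilon$-normal-shift on the portion away from the endpoints while leaving fixed small fixed-size neighborhoods of the endpoints, and then identify the limit of the deformed integral with \eqref{FAdFOfOp} using that on the fixed end-neighborhoods $h$ and $R$ are both continuous (so those pieces converge to their $\varepsilon=0$ value, which then recombines with the long-side limit to give back the closed-contour integral $2\pi i\,h_{\textup{hol}}$). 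Carrying out this bookkeeping carefully — matching orientations, verifying the two representations of $h(\mathscr{F}_{\mathbb{R}^+})$ agree on the end-neighborhoods, and confirming the middle portions converge in the strong (here even uniform, by Theorem $\ref{ExFCal}^{\,\textmd{s}}$, since a holomorphic $h$ automatically satisfies its hypotheses 1–3 after subtracting the obvious interpolating correction, or directly because $h$ extends holomorphically past the endpoints) operator topology — is the substance of the argument; the rest is routine.
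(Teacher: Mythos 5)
Your part 1 is fine: boundedness of \(\tfrac{h(\zeta)+h(-\zeta)}{2}\) on the compact segment plus the difference-quotient identity \(\tfrac{h(\zeta)-h(-\zeta)}{2\zeta}=\tfrac12\int_0^1 h'((2\tau-1)\zeta)\,d\tau\) indeed gives \(\|h\|_{\mathscr{F}_{\mathbb{R}^+}}\le \sup_\sigma|h|+\tfrac12\sup_\sigma|h'|<\infty\). Be aware, though, that your part 2 takes a genuinely different route from the paper: the paper never collapses the Dunford contour directly. It proves \eqref{Coinci} through the functional model, first establishing \(h_{\textup{hol}}(\mathscr{F}_{\!_{\mathbb{R}^{+}}})=U^{-1}\mathcal{M}_{h(F)}U\) by pushing the Dunford integral through \(U\), inserting the partial-fraction form \eqref{ElFrDe} of \((zI-F(\mu))^{-1}\) and applying Cauchy's formula entrywise (Statement 4 of Theorem \ref{PrMofFuCa}), and then identifying the limit in \eqref{FAdFOfOp} with the same operator \(U^{-1}\mathcal{M}_{h(F)}U\) via the Poisson-kernel analysis of Theorem \ref{ExFCalM} and Lemma \ref{Jptl}.

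As written, your contour argument has a genuine gap exactly where you flag it: the endpoint caps. The estimate \eqref{UpEsResS} gives \(\|R(z)\|=O(1/\varepsilon)\) on a cap of length comparable to \(\varepsilon\), hence only an \(O(1)\) contribution, and none of your proposed repairs closes this. A cap must join two points lying \(2\varepsilon\) apart (one on each shifted segment), so its arclength cannot be \(o(\varepsilon)\); the claim that "\(R(z)x\) has boundary values in a suitable sense" is precisely the delicate fact this paper only obtains through the model and cannot be invoked for free; and Theorem \(\textup{\ref{ExFCal}}^{\textup{s}}\) does not apply "directly because \(h\) extends holomorphically past the endpoints", since its hypothesis 3 demands \(h\big(\pm\tfrac{1}{\sqrt2}e^{i\pi/4}\big)=0\), while subtracting a linear interpolant \(a+b\zeta\) forces you to know \eqref{Coinci} for the functions \(1\) and \(\zeta\), i.e.\ Statements 1–2 of Theorem \ref{HomomFr}, which are not available at this point (and in the paper are themselves proved via the model). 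If you want to keep the contour strategy, the clean fix is to let the two shifted segments overshoot each endpoint by a fixed \(\delta>0\) into the resolvent set (inside the domain of holomorphy) and close the contour there with caps of length \(2\varepsilon\): on those caps and overshoot pieces \(\|R(z)\|\) is bounded uniformly in \(\varepsilon\), so the caps contribute \(O(\varepsilon)\) and the overshoot contributions vanish because \(R(\zeta-\varepsilon ie^{i\pi/4})-R(\zeta+\varepsilon ie^{i\pi/4})\to0\) uniformly on compact subsets of the resolvent set; then, granting the existence of the strong limit asserted in Theorem \ref{ExFCal}, the closed-contour identity yields \eqref{Coinci}. Without some such device, the step "cap term \(=o(1)\)" is unproved and the argument does not go through.
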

\begin{theorem}
\label{HomomFr}
 The resolvent-based
functional calculus
is a homomorphism of the algebra
\(\mathfrak{B}_{_{{\scriptstyle\mathscr{F}}_{_{{\mathbb{R}}^{+}}}}}\)\!\!
of \ %
\(\mathscr{F}_{\!_{\scriptstyle\mathbb{R}^{+}}}\)-\,admissible
functions into the algebra of bounded operators in
\(\mathscr{H}=L^2(\mathbb{R}^{+})\):
\begin{enumerate}
\item[\textup{1}.] If \(h(\zeta)\equiv{}1\), then \(h(\mathscr{F}_{\!_{\scriptstyle\mathbb{R}^{+}}})=
\mathscr{I}\).
\item[\textup{2}.] If \(h(\zeta)\equiv{}\zeta\), then \(h(\mathscr{F}_{\!_{\scriptstyle\mathbb{R}^{+}}})=
\mathscr{F}_{\!_{\scriptstyle\mathbb{R}^{+}}}\).
\item[\textup{3}.]If \(h(\zeta)=\alpha_1h_1(\zeta)+\alpha_2h_2(\zeta)\), where \(h_1,\,h_2%
\in\mathfrak{B}_{_{{\scriptstyle\mathscr{F}}_{_{{\mathbb{R}}_{+}}}}}\),
\(\alpha_1,\,\alpha_2\in\mathbb{C}\), then %
\(h(\mathscr{F}_{\!_{\scriptstyle\mathbb{R}^{+}}})%
=\alpha_1h_1(\mathscr{F}_{\!_{\scriptstyle\mathbb{R}^{+}}})+
\alpha_2h_2(\mathscr{F}_{\!_{\scriptstyle\mathbb{R}^{+}}})\).
\item[\textup{4}.] If \(h(\zeta)=h_1(\zeta)\cdot{}h_2(\zeta)\), where \(h_1,\,h_2%
\in\mathfrak{B}_{_{{\scriptstyle\mathscr{F}}_{_{{\mathbb{R}}^{+}}}}}\),
 then %
\(h(\mathscr{F}_{\!_{\scriptstyle\mathbb{R}^{+}}})%
=h_1(\mathscr{F}_{\!_{\scriptstyle\mathbb{R}^{+}}})\cdot{}
h_2(\mathscr{F}_{\!_{\scriptstyle\mathbb{R}^{+}}})\).
\item[\textup{5}.] If \(h%
\in\mathfrak{B}_{_{{\scriptstyle\mathscr{F}}_{_{{\mathbb{R}}_{+}}}}}\)\!\!, and
\(h^{-1}\in\mathfrak{B}_{_{{\scriptstyle\mathscr{F}}_{_{{\mathbb{R}}_{+}}}}}\)
\textup{\small(see Lemma \ref{InvCo})},
then the operator
\(h(\mathscr{F}_{\!_{\scriptstyle\mathbb{R}^{+}}})\) is
invertible, and\\
\[\big(h(\mathscr{F}_{\!_{\scriptstyle\mathbb{R}^{+}}})\big)^{-1}=%
\big(h^{-1}\big)(\mathscr{F}_{\!_{\scriptstyle\mathbb{R}^{+}}})\,.\]
\end{enumerate}
\end{theorem}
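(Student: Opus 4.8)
The plan is to deduce all five assertions from the functional model of \(\mathscr{F}_{\mathbb{R}^{+}}\) constructed in the later sections, which turns the resolvent-based calculus \eqref{FAdFOfOp} into ordinary (Borel) functional calculus on a field of fixed \(2\times2\) matrices, where the homomorphism property is transparent. First I would transport everything by the model unitary: there is a unitary \(\mathscr{U}\colon L^{2}(\mathbb{R}^{+})\to\widehat{\mathscr{H}}\), where \(\widehat{\mathscr{H}}\) is a space of \(\mathbb{C}^{2}\)-valued functions on the half-segment \(J=[0,\,2^{-1/2}e^{i\pi/4}]\) square-integrable against a suitable weight, such that \(\mathscr{U}\mathscr{F}_{\mathbb{R}^{+}}\mathscr{U}^{-1}\) is multiplication by a \(2\times2\) matrix function \(\mathscr{N}(s)\), \(s\in J\), whose eigenvalues are \(s\) and \(-s\); by Theorem \ref{SpTFO} this is consistent with \(\mathrm{spec}(\mathscr{N})=\sigma(\mathscr{F}_{\mathbb{R}^{+}})\). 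For \(z\notin\sigma(\mathscr{F}_{\mathbb{R}^{+}})\) the resolvent \(R_{\mathscr{F}_{\mathbb{R}^{+}}}(z)\) then becomes multiplication by \((zI_{2}-\mathscr{N}(s))^{-1}\), and since \eqref{FAdFOfOp} is assembled solely from \(R_{\mathscr{F}_{\mathbb{R}^{+}}}\) and scalar integration in the spectral variable, it suffices to establish the five claims for \(\mathscr{N}\).

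Next I would identify \(h(\mathscr{F}_{\mathbb{R}^{+}})\) inside the model. Fix \(s\in J\), \(s\neq0\), with eigenprojections \(P_{\pm}(s)\) of \(\mathscr{N}(s)\), so that \((zI_{2}-\mathscr{N}(s))^{-1}=\dfrac{P_{+}(s)}{z-s}+\dfrac{P_{-}(s)}{z+s}\). Substituting \(z=\zeta\mp\varepsilon ie^{i\pi/4}\) and integrating \(\zeta\) over \(\sigma(\mathscr{F}_{\mathbb{R}^{+}})\) — which lies on the line \(\ell=e^{i\pi/4}\mathbb{R}\) and is crossed transversally by the normal direction \(ie^{i\pi/4}=e^{i3\pi/4}\) — the Sokhotski--Plemelj formula for the Cauchy integral over the segment gives, at every Lebesgue point \(s\) of \(h\) (hence a.e.\ on \(J\)),
\[
\lim_{\varepsilon\to+0}\frac{1}{2\pi i}\int_{\sigma(\mathscr{F}_{\mathbb{R}^{+}})}h(\zeta)\left(\frac{1}{\zeta-\varepsilon ie^{i\pi/4}-\lambda}-\frac{1}{\zeta+\varepsilon ie^{i\pi/4}-\lambda}\right)d\zeta=h(\lambda),\qquad \lambda\in\{s,-s\}.
\]
Thus the \(\varepsilon\)-regularized integrand of \eqref{FAdFOfOp} tends a.e.\ to \(h(s)P_{+}(s)+h(-s)P_{-}(s)=h(\mathscr{N}(s))\), which is \(h\) of the fixed matrix \(\mathscr{N}(s)\) in the elementary sense. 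Writing this as \(h(\mathscr{N}(s))=\tfrac{h(s)+h(-s)}{2}\,I_{2}+\tfrac{h(s)-h(-s)}{2s}\,\mathscr{N}(s)\) shows \(\|h(\mathscr{N}(s))\|\leq\|h\|_{\mathfrak{B}_{\mathscr{F}_{\mathbb{R}^{+}}}}\) (using \(\|\mathscr{N}(s)\|\leq1\) a.e., which holds because \(\|\mathscr{F}_{\mathbb{R}^{+}}\|=1\) by \eqref{NorTrF}); this uniform bound lets me promote the a.e.\ convergence to strong convergence of the corresponding multiplication operators by dominated convergence in \(\widehat{\mathscr{H}}\). Hence, in the model, \(h(\mathscr{F}_{\mathbb{R}^{+}})\) is precisely multiplication by \(s\mapsto h(\mathscr{N}(s))\); in particular \(h\mapsto h(\mathscr{F}_{\mathbb{R}^{+}})\) maps \(\mathfrak{B}_{\mathscr{F}_{\mathbb{R}^{+}}}\) into bounded operators.

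The five assertions are then formal. For each fixed \(s\), the map \(h\mapsto h(\mathscr{N}(s))\) is an algebra homomorphism from Borel functions on \(\{s,-s\}\) into \(\mathfrak{M}_{2,2}\): it sends \(1\) to \(I_{2}\) and \(h(\zeta)=\zeta\) to \(\mathscr{N}(s)\), and functions of one fixed matrix add and multiply on the eigenvalues. Passing to multiplication operators and back through \(\mathscr{U}\) preserves each identity, giving items 1--4 (items 1 and 2 also agree with Lemma \ref{compat}). For item 5, if \(h,h^{-1}\in\mathfrak{B}_{\mathscr{F}_{\mathbb{R}^{+}}}\) then by Lemma \ref{InvCo} the values of \(h\) are bounded away from \(0\), so \(h(\mathscr{N}(s))\) is invertible with \(\|h(\mathscr{N}(s))^{-1}\|\) uniformly bounded; multiplication by \(s\mapsto h(\mathscr{N}(s))^{-1}=(h^{-1})(\mathscr{N}(s))\) is the bounded inverse, whence \((h(\mathscr{F}_{\mathbb{R}^{+}}))^{-1}=(h^{-1})(\mathscr{F}_{\mathbb{R}^{+}})\). (Alternatively item 5 is immediate from items 1--4 applied to \(h\cdot h^{-1}\equiv1\).)

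The hard part will be the second step: justifying the passage from the \(\varepsilon\)-regularized Cauchy-type integral to the pointwise matrix \(h(\mathscr{N}(s))\) and then to a strong-operator limit for a merely Borel-measurable \(h\), in the presence of the spectral singularity at \(\zeta=0\), where by Theorem \ref{TEstResT} the resolvent norm grows like \(|z|^{-2}\). Controlling the relevant integrals near \(s=0\) will require the explicit behaviour of the model weight there together with the precise form of the norm \eqref{NFAF}, whose two terms are tailored exactly to the even and odd parts of \(h\) occurring in \(h(\mathscr{N}(s))\). The construction of the model itself (the unitary \(\mathscr{U}\) and the multiplier \(\mathscr{N}\)) is the prerequisite, carried out in the subsequent sections; once it is in hand, Theorem \ref{HomomFr} follows as above.
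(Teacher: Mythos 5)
Your plan is correct and follows essentially the same route as the paper: pass to the \(2\times2\) matrix-multiplication model, identify the \(\varepsilon\)-regularized resolvent integral pointwise with the elementary matrix calculus \(h(s)P_{+}(s)+h(-s)P_{-}(s)\) via Poisson-type boundary limits, promote this to a strong operator limit by dominated convergence, and then read off assertions 1--5 fibrewise. The ``hard part'' you defer --- the uniform-in-\(\varepsilon\) control of the even and odd singular integrals near the spectral singularity at \(\zeta=0\) --- is exactly the content of the paper's Lemma \ref{Jptl} (the kernels \(P\) and \(Q\) with \(0<Q\leq 4P\)), and your observation that the two terms of the norm \eqref{NFAF} are tailored to the even and odd parts of \(h\) is precisely how it is handled there.
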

\begin{theorem}
\label{Bicont}%
  The two-sides estimate
\begin{equation}
\label{TSE}
\tfrac{1}{2}\,\|h\|_{_{\scriptstyle{\mathscr{F}}_{\mathbb{R}^{+}}}}
\leq\|(h(\mathscr{F}_{_{{\scriptstyle\mathbb{R}}^{+}}})\|%
\leq\|h\|_{_{\scriptstyle\mathscr{F}_{\mathbb{R}^{+}}}}\,,
\end{equation}
holds  for every function
\(h\in\mathfrak{B}_{_{{\scriptstyle\mathscr{F}}_{_{{\mathbb{R}}^{+}}}}}\),
where \(\|h(\mathscr{F}_{\mathbb{R}^{+}})\|\) is the norm of the
operator \(h(\mathscr{F}_{\mathbb{R}^{+}})\) considered as an
operator from \(L^2(\mathbb{R}^{+})\) into
\(L^2(\mathbb{R}^{+})\), and the norm \(\|h\|_{_{\scriptstyle\mathscr{F}_{\mathbb{R}^{+}}}}\)
of the function \(h\) is defined in \textup{Definition \ref{DFAdF}}.
\end{theorem}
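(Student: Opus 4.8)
The idea is to read \eqref{TSE} off the functional model of $\mathscr{F}_{\mathbb{R}^{+}}$. Since the kernel of $\mathscr{F}_{\mathbb{R}^{+}}$ depends on $t\xi$ only, $\mathscr{F}_{\mathbb{R}^{+}}$ is a convolution on the multiplicative group $\mathbb{R}^{+}$; the Mellin transform, followed by the even/odd splitting, identifies $\mathscr{F}_{\mathbb{R}^{+}}$, via a unitary map $U$, with the operator of multiplication by the $2\times2$ matrix function
\[
\mathcal{G}(\zeta)=\begin{pmatrix}0&g_{+}(\zeta)\\ g_{-}(\zeta)&0\end{pmatrix},\qquad \zeta\in\sigma_{+},
\]
on $L^{2}(\sigma_{+};\mathbb{C}^{2})$ (with the natural measure, equivalent to arc length on $\sigma_{+}$), where $\sigma_{+}=\big(0,\tfrac{1}{\sqrt2}e^{i\pi/4}\big]$ is the upper half of $\sigma(\mathscr{F}_{\mathbb{R}^{+}})$ (Definition~\ref{SplSpD}), the $g_{\pm}$ are built from the Gamma function, $g_{+}(\zeta)g_{-}(\zeta)=\zeta^{2}$, and the ratio $\rho(\zeta):=\sqrt{\,|g_{+}(\zeta)/g_{-}(\zeta)|\,}\ge1$ obeys the key identity $\rho(\zeta)^{2}+\rho(\zeta)^{-2}=|\zeta|^{-2}$. (The eigenvalues of $\mathcal{G}(\zeta)$ are $\pm\zeta$, and as $\zeta$ runs over $\sigma_{+}$ they fill the spectrum of Theorem~\ref{SpTFO}.) The eigen-idempotents of $\mathcal{G}(\zeta)$ are $P_{\pm}(\zeta)=\tfrac12\big(\mathscr{I}\pm\mathcal{G}(\zeta)/\zeta\big)$. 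Feeding the model into the resolvent formula \eqref{FAdFOfOp} and applying, fibre by fibre, the Sokhotski--Plemelj jump relation on the line carrying $\sigma(\mathscr{F}_{\mathbb{R}^{+}})$, one gets that $U\,h(\mathscr{F}_{\mathbb{R}^{+}})\,U^{-1}$ is multiplication by
\[
M_{h}(\zeta)=h(\zeta)\,P_{+}(\zeta)+h(-\zeta)\,P_{-}(\zeta)=h_{e}(\zeta)\,\mathscr{I}+\frac{h_{o}(\zeta)}{\zeta}\,\mathcal{G}(\zeta),
\]
where $h_{e}(\zeta)=\tfrac12\big(h(\zeta)+h(-\zeta)\big)$ and $h_{o}(\zeta)=\tfrac12\big(h(\zeta)-h(-\zeta)\big)$; hence $\|h(\mathscr{F}_{\mathbb{R}^{+}})\|=\operatorname{ess\,sup}_{\zeta\in\sigma_{+}}\|M_{h}(\zeta)\|$, the right side being the $\mathbb{C}^{2}$-operator norm of the indicated matrix.

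Next I would turn \eqref{TSE} into a fibre-wise estimate. From $g_{+}(\zeta)g_{-}(\zeta)=\zeta^{2}$ and the definition of $\rho$ one has $|g_{+}(\zeta)|=|\zeta|\rho(\zeta)$, $|g_{-}(\zeta)|=|\zeta|/\rho(\zeta)$, so $M_{h}(\zeta)=\begin{pmatrix}a&b\\ c&a\end{pmatrix}$ with $a=h_{e}(\zeta)$, $|b|=r\rho$, $|c|=r/\rho$, where $r=|h_{o}(\zeta)|$ and $\rho=\rho(\zeta)\ge1$; moreover, by the key identity, $\dfrac{|h_{o}(\zeta)|}{|\zeta|}=r\sqrt{\rho^{2}+\rho^{-2}}=:w$. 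The weight in \eqref{NFAF} is even in $\zeta$, whence $\|h\|_{\mathscr{F}_{\mathbb{R}^{+}}}=\operatorname{ess\,sup}_{\zeta\in\sigma_{+}}\big(|a|+w\big)$, and \eqref{TSE} is reduced to the purely $2\times2$ claim
\[
\tfrac12\big(|a|+w\big)\ \le\ \Big\|\begin{pmatrix}a&b\\ c&a\end{pmatrix}\Big\|\ \le\ |a|+w,\qquad |b|=r\rho,\ |c|=r/\rho,\ \rho\ge1 .
\]

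The upper bound follows from the triangle inequality: $\big\|\begin{pmatrix}a&b\\ c&a\end{pmatrix}\big\|\le|a|+\big\|\begin{pmatrix}0&b\\ c&0\end{pmatrix}\big\|=|a|+\max(|b|,|c|)=|a|+r\rho\le|a|+w$. For the lower bound I would use that for a $2\times2$ matrix $M$ the square of the operator norm — the larger eigenvalue of $M^{\ast}M$ — is at least the average of the two eigenvalues, i.e. $\|M\|^{2}\ge\tfrac12\operatorname{tr}(M^{\ast}M)$. Here $\operatorname{tr}(M_{h}^{\ast}M_{h})=2|a|^{2}+|b|^{2}+|c|^{2}=2|a|^{2}+r^{2}(\rho^{2}+\rho^{-2})$, so $\|M_{h}(\zeta)\|^{2}\ge|a|^{2}+\tfrac12 w^{2}$, and then
\[
|a|^{2}+\tfrac12 w^{2}-\tfrac14\big(|a|+w\big)^{2}=\tfrac14\big(2|a|^{2}+(|a|-w)^{2}\big)\ \ge\ 0 ,
\]
so $\|M_{h}(\zeta)\|\ge\tfrac12(|a|+w)$. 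Taking $\operatorname{ess\,sup}_{\zeta\in\sigma_{+}}$ in the displayed pointwise estimate yields \eqref{TSE}.

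The genuinely substantial step is the first one — the functional model, and in particular the identity $\rho(\zeta)^{2}+\rho(\zeta)^{-2}=|\zeta|^{-2}$, which is exactly what makes the $\tfrac1{|\zeta|}$-weight in the norm \eqref{NFAF} appear; this rests on the Mellin/Gamma-function description of $\mathscr{F}_{\mathbb{R}^{+}}$ and on the analysis of its resolvent underlying \eqref{FAdFOfOp}, carried out in the following sections. Once the model is in place the remaining work is elementary, the only subtlety being the sharp constant: obtaining exactly $\tfrac12$ in \eqref{TSE} requires the combination of $\|M\|^{2}\ge\tfrac12\operatorname{tr}(M^{\ast}M)$ with the algebraic identity above — bounding $\|M_{h}(\zeta)\|$ merely by the largest matrix entry would cost another factor.
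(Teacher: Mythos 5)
Your proposal is correct and follows essentially the same route as the paper: reduce via the Mellin/model unitary $U$ to the multiplication operator by the $2\times2$ matrix $h(F(\mu))$ (the paper's \eqref{WoEx}), note that the resolvent-based definition \eqref{FAdFOfOp} coincides with this model calculus (the paper's Sokhotski--Plemelj/Poisson-kernel argument in Theorem \ref{ExFCalM}), and then prove the fibre-wise two-sided bound $\tfrac12\|h\|_{\mu}\le\|h(F(\mu))\|\le\|h\|_{\mu}$ before taking the essential supremum, exactly as in Lemmas \ref{emhm}, \ref{CoinsTAd} and \eqref{NoP}. The only (immaterial) deviation is the elementary $2\times2$ lemma used for the lower bound: you use $\|M\|^{2}\ge\tfrac12\operatorname{trace}(M^{\ast}M)$ together with your algebraic identity, while the paper uses the $\tfrac{1}{\sqrt2}\times$(row/column-sum) bound of Lemma \ref{Estm1} combined with $|f_{+-}(\mu)|>1/\sqrt2$; both yield the sharp constant $\tfrac12$.
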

\begin{theorem}%
\label{strCont}%
Let \(\{h_n\}_{n\in\mathbb{N}}\) be a sequence of functions from
\(\mathfrak{B}_{_{{\scriptstyle\mathscr{F}}_{_{{\mathbb{R}}^{+}}}}}\)
which satisfies the conditions:
\begin{enumerate}
\item[\textup{1}.] The norms \(\|h_n\|_{_{{}_{\scriptstyle\mathscr{F}_{_{\mathbb{R}^{+}}}}}}\)
are uniformly bounded:
\begin{equation}
\label{unBound}%
 \sup_{n\in\mathbb{N}}\|h_n\|_{_{{}_{\scriptstyle\mathscr{F}_{_{\mathbb{R}^{+}}}}}}<\infty\,.
\end{equation}
\item[\textup{2}.] For \(m\)\,-\,almost every \(\zeta\in\Big[-\frac{1}{\sqrt{2}}\,e^{i\pi/4},\,
\frac{1}{\sqrt{2}}\,e^{i\pi/4}\Big]\), there exist the limit
\begin{equation}
\label{limfu} h(\zeta)=\lim_{n\to\infty}h_n(\zeta)\,.
\end{equation}
\end{enumerate}
Then
\(h\in\mathfrak{B}_{_{{\scriptstyle\mathscr{F}}_{_{{\mathbb{R}}^{+}}}}}\),
and
\begin{equation}
h(\mathscr{F}_{_{{\mathbb{R}}_{+}}})=\lim_{n\to\infty}h_n(\mathscr{F}_{_{{\mathbb{R}}^{+}}}),
\end{equation}
where the limit stands for the strong convergence of a sequence of
operators.
\end{theorem}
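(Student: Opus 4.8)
The plan is to split the statement into the membership assertion $h\in\mathfrak B_{\mathscr F_{\mathbb R^+}}$ and the convergence assertion, to get the first by a Fatou‑type estimate on the ess‑sup norm \eqref{NFAF}, and to get the second from Lebesgue's dominated convergence theorem after passing to the functional model of $\mathscr F_{\mathbb R^+}$ constructed in the subsequent sections.

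\emph{Membership and a uniform bound.} Put $G[g](\zeta)=\tfrac12|g(\zeta)+g(-\zeta)|+\tfrac1{2|\zeta|}|g(\zeta)-g(-\zeta)|$, so that $\|g\|_{\mathscr F_{\mathbb R^+}}=\operatorname{ess\,sup}_{\zeta\in\sigma(\mathscr F_{\mathbb R^+})}G[g](\zeta)$. The segment $\sigma(\mathscr F_{\mathbb R^+})$ carries Lebesgue measure invariant under $\zeta\mapsto-\zeta$, so \eqref{limfu} gives, for $m$‑a.e.\ $\zeta$, both $h_n(\zeta)\to h(\zeta)$ and $h_n(-\zeta)\to h(-\zeta)$, whence $G[h_n](\zeta)\to G[h](\zeta)$ for $m$‑a.e.\ $\zeta$ (the map $(a,b)\mapsto\tfrac12|a+b|+\tfrac1{2|\zeta|}|a-b|$ is continuous and $\zeta\neq0$ a.e.). Discarding the countable union of the null sets on which $G[h_n]>\|h_n\|_{\mathscr F_{\mathbb R^+}}$, we obtain $G[h](\zeta)\le\sup_n\|h_n\|_{\mathscr F_{\mathbb R^+}}=:M$ for $m$‑a.e.\ $\zeta$, so by \eqref{unBound} $\|h\|_{\mathscr F_{\mathbb R^+}}\le M<\infty$; thus $h\in\mathfrak B_{\mathscr F_{\mathbb R^+}}$, and Theorem \ref{Bicont} bounds $\|h_n(\mathscr F_{\mathbb R^+})\|$ and $\|h(\mathscr F_{\mathbb R^+})\|$ by $M$.

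\emph{Strong convergence.} Since all these operators have norm $\le M$, a standard $\varepsilon/3$ argument reduces the claim to checking $h_n(\mathscr F_{\mathbb R^+})x\to h(\mathscr F_{\mathbb R^+})x$ on a dense subset — and in fact for every $x$ once one works inside the functional model. In that model $L^2(\mathbb R^+)$ is realized as an $L^2$‑space of (vector) functions on $\sigma(\mathscr F_{\mathbb R^+})$ and, for each admissible $g$, the operator $g(\mathscr F_{\mathbb R^+})$ becomes multiplication by a bounded (matrix) symbol $\widetilde g(\zeta)$ whose value depends \emph{continuously} on the pair $\big(g(\zeta),g(-\zeta)\big)$ — precisely the structure encoded in \eqref{NFAF} and reflected by \eqref{TSE}. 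Hence $h_n(\mathscr F_{\mathbb R^+})-h(\mathscr F_{\mathbb R^+})$ is multiplication by $\widetilde h_n-\widetilde h$, where $\widetilde h_n(\zeta)\to\widetilde h(\zeta)$ for $m$‑a.e.\ $\zeta$ and $\|\widetilde h_n(\zeta)\|\le M$. For a fixed $x$ in the model space, $\big\|(h_n(\mathscr F_{\mathbb R^+})-h(\mathscr F_{\mathbb R^+}))x\big\|^{2}=\int\big\|(\widetilde h_n(\zeta)-\widetilde h(\zeta))x(\zeta)\big\|^{2}\,dm(\zeta)$; the integrand tends to $0$ $m$‑a.e.\ and is dominated by $4M^{2}\|x(\zeta)\|^{2}\in L^{1}$, so dominated convergence finishes the proof.

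\emph{What is routine and what is not.} The membership of $h$, the uniform norm bound, and the reduction to a dense set are routine; the substance is the functional model in which $h(\mathscr F_{\mathbb R^+})$ acts as a multiplication operator, which is why this theorem is stated here but proved only after the model has been built. If one insisted on arguing from Theorems \ref{HomomFr} and \ref{Bicont} alone, the obstacle would be to upgrade \emph{weak} operator convergence — obtainable by representing $h\mapsto\langle h(\mathscr F_{\mathbb R^+})x,y\rangle$ as integration against a finite measure and invoking dominated convergence — to \emph{strong} convergence; that step would require the adjoint identity $h(\mathscr F_{\mathbb R^+})^{\ast}=\overline h(\mathscr F^{\ast}_{\mathbb R^+})$ (consistent with Lemma \ref{PrCoFu}) together with a joint functional calculus for $\mathscr F_{\mathbb R^+}$ and $\mathscr F^{\ast}_{\mathbb R^+}$, which the model supplies for free.
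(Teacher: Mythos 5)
Your proposal is correct and follows essentially the same route as the paper: the paper proves this statement through the functional model (Theorem \ref{PrMofFuCa}, Statement 3), where $h(\mathscr{F}_{\!_{\scriptstyle\mathbb{R}^{+}}})$ becomes multiplication by the matrix symbol $h(F(\mu))$ of \eqref{WoEx}, the uniform bound comes from the two-sided estimate of Lemma \ref{emhm}, and strong convergence is obtained exactly as in your last display via the dominated-convergence argument of Lemma \ref{StrConL}. Your Fatou-type derivation of $h\in\mathfrak{B}_{_{{\scriptstyle\mathscr{F}}_{_{{\mathbb{R}}^{+}}}}}$ and your closing remark about why the model (rather than Theorems \ref{HomomFr} and \ref{Bicont} alone) carries the weight match the paper's reasoning.
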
%
The next result is a spectral mapping theorem for the
\(\mathscr{F}_{\!_{{\scriptstyle\mathbb{R}}^{+}}}\)\,-\,admissible
functional calculus.
 Given a Borelian-measurable complex-valued  function \(h\),
\(h:\,\textup{\large\(\sigma\)}(\mathscr{F}_{\!_{\scriptstyle\mathbb{R}^{+}}})\to\mathbb{C}\),
 the \emph{essential \(h\)-image}
\(\big(h(\textup{\large\(\sigma\)}(\mathscr{F}_{\!_{\scriptstyle\mathbb{R}^{+}}})\big)_e\)
of the spectrum \(\textup{\large\(\sigma\)}(\mathscr{F}_{\!_{\scriptstyle\mathbb{R}^{+}}})\)
is defined as
\begin{equation}
\label{EDIS}
\big(h(\textup{\large\(\sigma\)}(\mathscr{F}_{\!_{\scriptstyle\mathbb{R}^{+}}})\big)_e%
\stackrel{\textup{\tiny def}}{=}\{z\in\mathbb{C}:\,
\underset{\zeta\in
{\textstyle\sigma}(\mathscr{F}_{\!_{\scriptstyle\mathbb{R}^{+}}})}{\textup{ess\,inf}}\,|z-h(\zeta)|=0\}\,,
\end{equation}
where the interval
\(\textup{\large\(\sigma\)}(\mathscr{F}_{\!_{\scriptstyle\mathbb{R}^{+}}})\)
is provided by the one-dimensional Lebesgue measure \(m\).
(The essential \(h\)-image \(\big(h(\textup{\large\(\sigma\)}(\mathscr{F}_{\!_{\scriptstyle\mathbb{R}^{+}}})\big)_e)\)
is determined by the \emph{mapping} \(h\) rather by the set
\(h(\textup{\large\(\sigma\)}(\mathscr{F}_{\!_{\scriptstyle\mathbb{R}^{+}}}))\).
\begin{theorem}
Let \(h\) be an
\(\mathscr{F}_{_{\scriptstyle\mathbb{R}^{+}}}\)\,-\,admissible
function. Then the spectrum
\(\textup{\large\(\sigma\)}\big(h(\mathscr{F}_{\!_{\scriptstyle\mathbb{R}^{+}}})\big)\)
of the operator
\(h(\mathscr{F}_{\!_{\scriptstyle\mathbb{R}^{+}}})\) coincides with the essential \(h\)-image
of the spectrum \(\textup{\large\(\sigma\)}(\mathscr{F}_{\!_{\scriptstyle\mathbb{R}^{+}}})\):
\begin{equation}
\label{Sfecl}%
\textup{\large\(\sigma\)}\big(h(\mathscr{F}_{\!_{\scriptstyle\mathbb{R}^{+}}})\big)=
\big(h(\textup{\large\(\sigma\)}(\mathscr{F}_{\!_{\scriptstyle\mathbb{R}^{+}}})\big)_e\,.
\end{equation}
If \(z\notin\textup{\large\(\sigma\)}\big(h(\mathscr{F}_{\!_{\scriptstyle\mathbb{R}^{+}}})\big)\),
then the resolvent
\(R_{_{\,\scriptstyle{}h(\mathscr{F}_{\!_{\mathbb{R}^{+}}})}}(z)=
 \big(z\mathscr{I}-h(\mathscr{F}_{\!_{\scriptstyle\mathbb{R}^{+}}})\big)^{-1}\) of the operator
 \(h(\mathscr{F}_{\!_{\scriptstyle\mathbb{R}^{+}}})\) is:
 \begin{subequations}
 \label{RFOOp}
 \begin{equation}
 \label{RFOOpa}
 R_{_{\,\scriptstyle{}h(\mathscr{F}_{\!_{\mathbb{R}^{+}}})}}(z)=
 r(\mathscr{F}_{\!_{\scriptstyle\mathbb{R}^{+}}})\,,
 \end{equation}
 where
 \begin{equation}
 \label{RFOOpb}
 r(\zeta)=(z-h(\zeta))^{-1}\,.
 \end{equation}
 \end{subequations}
\end{theorem}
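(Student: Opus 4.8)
The plan is to establish the two inclusions between $\sigma\big(h(\mathscr{F}_{\mathbb{R}^{+}})\big)$ and the essential $h$-image $\big(h(\sigma(\mathscr{F}_{\mathbb{R}^{+}}))\big)_e$ separately; the resolvent formula \eqref{RFOOp} will come out of one of them for free. The only tools needed are the algebra homomorphism of Theorem~\ref{HomomFr}, the invertibility criterion of Lemma~\ref{InvCo}, and the two-sided norm estimate \eqref{TSE} of Theorem~\ref{Bicont}.

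\textbf{Step 1} ($\sigma\big(h(\mathscr{F}_{\mathbb{R}^{+}})\big)\subseteq\big(h(\sigma(\mathscr{F}_{\mathbb{R}^{+}}))\big)_e$, and \eqref{RFOOp}). I would fix $z\notin\big(h(\sigma(\mathscr{F}_{\mathbb{R}^{+}}))\big)_e$; by \eqref{EDIS} this says $\operatorname{ess\,inf}_{\zeta}|z-h(\zeta)|>0$. The constant function $z$ lies in $\mathfrak{B}_{\mathscr{F}_{\mathbb{R}^{+}}}$ (its norm \eqref{NFAF} equals $|z|$), and since $\mathfrak{B}_{\mathscr{F}_{\mathbb{R}^{+}}}$ is a linear space (Lemma~\ref{BanAl}) so is $g(\zeta):=z-h(\zeta)$. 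Because $\operatorname{ess\,inf}_{\zeta}|g(\zeta)|>0$, Lemma~\ref{InvCo} shows that $g^{-1}$, that is the function $r(\zeta)=(z-h(\zeta))^{-1}$, is again $\mathscr{F}_{\mathbb{R}^{+}}$-admissible. By items~1 and~3 of Theorem~\ref{HomomFr} one has $g(\mathscr{F}_{\mathbb{R}^{+}})=z\mathscr{I}-h(\mathscr{F}_{\mathbb{R}^{+}})$, and by item~5 of the same theorem this operator is invertible with bounded inverse $r(\mathscr{F}_{\mathbb{R}^{+}})$. Hence $z\in\rho\big(h(\mathscr{F}_{\mathbb{R}^{+}})\big)$ and the resolvent is exactly \eqref{RFOOp}, which proves the inclusion and the resolvent formula simultaneously.

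\textbf{Step 2} ($\big(h(\sigma(\mathscr{F}_{\mathbb{R}^{+}}))\big)_e\subseteq\sigma\big(h(\mathscr{F}_{\mathbb{R}^{+}})\big)$, by a test-function argument). Let $z\in\big(h(\sigma(\mathscr{F}_{\mathbb{R}^{+}}))\big)_e$, so $\operatorname{ess\,inf}_{\zeta}|z-h(\zeta)|=0$. Parametrizing the segment by $\zeta=t\,e^{i\pi/4}$, $t\in[-2^{-1/2},2^{-1/2}]$, the reflection $\zeta\mapsto-\zeta$ becomes $t\mapsto-t$ and $|\zeta|=|t|$. For each $\varepsilon>0$ the set $\{\zeta:|z-h(\zeta)|<\varepsilon\}$ has positive measure, so I can choose a positive-measure subset $I_\varepsilon$ of it lying in one of the two halves into which $\zeta=0$ splits $\sigma(\mathscr{F}_{\mathbb{R}^{+}})$; then $I_\varepsilon\cap(-I_\varepsilon)=\emptyset$. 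Put
\[
\phi_\varepsilon(\zeta):=\frac{2|\zeta|}{1+|\zeta|}\,\mathds{1}_{I_\varepsilon}(\zeta).
\]
Since $I_\varepsilon$ and $-I_\varepsilon$ are disjoint, a direct evaluation of \eqref{NFAF} yields $\|\phi_\varepsilon\|_{\mathscr{F}_{\mathbb{R}^{+}}}=1$ — the weight $\tfrac{2|\zeta|}{1+|\zeta|}$ is chosen precisely so that $\big(\tfrac12+\tfrac1{2|\zeta|}\big)\tfrac{2|\zeta|}{1+|\zeta|}\equiv1$ on $I_\varepsilon$ — while the same computation gives $\big\|(z-h)\phi_\varepsilon\big\|_{\mathscr{F}_{\mathbb{R}^{+}}}=\operatorname{ess\,sup}_{\zeta\in I_\varepsilon}|z-h(\zeta)|\le\varepsilon$; note $\phi_\varepsilon$ and $(z-h)\phi_\varepsilon$ belong to $\mathfrak{B}_{\mathscr{F}_{\mathbb{R}^{+}}}$ because it is a Banach algebra (Lemma~\ref{BanAl}). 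Using the homomorphism property (Theorem~\ref{HomomFr}) and \eqref{TSE},
\[
\big\|(z\mathscr{I}-h(\mathscr{F}_{\mathbb{R}^{+}}))\,\phi_\varepsilon(\mathscr{F}_{\mathbb{R}^{+}})\big\|=\big\|\big((z-h)\phi_\varepsilon\big)(\mathscr{F}_{\mathbb{R}^{+}})\big\|\le\varepsilon,\qquad\big\|\phi_\varepsilon(\mathscr{F}_{\mathbb{R}^{+}})\big\|\ge\tfrac12 .
\]
If $z\mathscr{I}-h(\mathscr{F}_{\mathbb{R}^{+}})$ admitted a bounded inverse $T$, then $\tfrac12\le\|\phi_\varepsilon(\mathscr{F}_{\mathbb{R}^{+}})\|=\|T(z\mathscr{I}-h(\mathscr{F}_{\mathbb{R}^{+}}))\phi_\varepsilon(\mathscr{F}_{\mathbb{R}^{+}})\|\le\|T\|\,\varepsilon$ for every $\varepsilon>0$, a contradiction; hence $z\in\sigma\big(h(\mathscr{F}_{\mathbb{R}^{+}})\big)$.

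Steps 1 and 2 together give \eqref{Sfecl}, and \eqref{RFOOp} was already proved in Step 1. I expect the construction in Step 2 to be the main obstacle: one must produce approximate null vectors adapted to the non-standard symmetrized norm \eqref{NFAF}, and the two devices that make it work are (i) supporting the test function on a single half of the parametrized segment, which decouples the symmetric and antisymmetric parts appearing in \eqref{NFAF}, and (ii) the pointwise renormalization by $\tfrac{2|\zeta|}{1+|\zeta|}$, which in addition neutralizes the spectral singularity at $\zeta=0$, where the weight $\tfrac1{2|\zeta|}$ in \eqref{NFAF} is unbounded. The remaining points — admissibility of $g$, $r$, $\phi_\varepsilon$ and $(z-h)\phi_\varepsilon$, and the pointwise norm computations — are routine.
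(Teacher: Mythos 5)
Your proof is correct, but it follows a different route from the one the paper is set up to use. You argue entirely at the level of the abstract properties of the resolvent-based calculus announced in Section 2: Step 1 combines Lemma \ref{InvCo} with items 1, 3, 5 of Theorem \ref{HomomFr} to invert $z\mathscr{I}-h(\mathscr{F}_{\!_{\scriptstyle\mathbb{R}^{+}}})$ by $r(\mathscr{F}_{\!_{\scriptstyle\mathbb{R}^{+}}})$ whenever $\operatorname{ess\,inf}|z-h|>0$, and Step 2 manufactures approximate null elements $\phi_\varepsilon$ inside the algebra $\mathfrak{B}_{_{{\scriptstyle\mathscr{F}}_{_{{\mathbb{R}}^{+}}}}}$ and plays the lower bound of \eqref{TSE} against the upper bound; your two devices (supporting $\phi_\varepsilon$ on one half of the segment so that $\phi_\varepsilon(-\zeta)=0$, and the weight $\tfrac{2|\zeta|}{1+|\zeta|}$ that makes $\bigl(\tfrac12+\tfrac1{2|\zeta|}\bigr)\phi_\varepsilon\equiv1$ there) are exactly what is needed to control the norm \eqref{NFAF}, and I checked the computations: $\|\phi_\varepsilon\|_{{}_{\scriptstyle\mathscr{F}_{_{\mathbb{R}^{+}}}}}=1$ and $\|(z-h)\phi_\varepsilon\|_{{}_{\scriptstyle\mathscr{F}_{_{\mathbb{R}^{+}}}}}\le\varepsilon$ are right, and the contradiction with a bounded inverse is sound. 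The paper instead proves such spectral statements through the functional model: $h(\mathscr{F}_{\!_{\scriptstyle\mathbb{R}^{+}}})=U^{-1}\mathcal{M}_{h(F)}U$, so $z\mathscr{I}-h(\mathscr{F}_{\!_{\scriptstyle\mathbb{R}^{+}}})$ is unitarily equivalent to the multiplication operator $\mathcal{M}_{zI-h(F)}$, and invertibility is read off from the pointwise matrix criterion of Theorem \ref{InvCon} together with the explicit formula \eqref{WoEx} and the two-sided estimate \eqref{TSEf}, which tie $\|(zI-h(F(\mu)))^{-1}\|$ to $|z-h(\zeta_{\pm}(\mu))|$ and hence to $\operatorname{ess\,inf}|z-h|$ (this is how the case $h(\zeta)=\zeta$ is handled in Lemmas \ref{psMO}, \ref{csMO} and Theorem \ref{SpMuOpe}). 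The trade-off: your argument is softer and would apply verbatim to any functional calculus satisfying a homomorphism property, an inversion lemma and a two-sided norm estimate, so it cleanly isolates what the spectral mapping theorem really depends on; the model-based route of the paper requires the machinery of Sections 5--8 but delivers finer information at the same time (e.g.\ that the spectrum of $h(\mathscr{F}_{\!_{\scriptstyle\mathbb{R}^{+}}})$ contains no point or residual part, and the explicit form of the resolvent as a multiplication operator), which your abstract argument does not by itself recover; note also that the resolvent identity \eqref{RFOOpa} you obtain in Step 1 is the same in both treatments. One logical remark: the Section 2 results you invoke are themselves proved in the paper via the model, so your proof is not more self-contained in an absolute sense, but it is not circular, since none of those results uses the spectral mapping theorem.
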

Proofs of the results formulated in this section will be presented
in next sections.

\section{Spectral projectors which correspond to the operator
\mathversion{bold}%
\(\bm{\mathscr{F}}_{\!_{\scriptstyle\mathbb{R}^{+}}}\).}
\mathversion{normal}
 Though the operators \(\mathscr{F}_E\)
and \(\mathscr{F}^{\ast}_E\) are non-normal, the
\({\mathscr{F}}_{\!_{\scriptstyle\mathbb{R}^{+}}}\)\,-\,ad\-mis\-sible
functional calculus allows to some extent to work with these
operators as if they are self-adjoint. In particular we construct
objects which may be considered
 as resolutions of identity related to the operators \(\mathscr{F}_E\)
and \(\mathscr{F}^{\ast}_E\). The resolution of identity related
to the operator \(\mathscr{F}_E\) is a family of its spectral
projectors. We construct this family of spectral projectors as the
family of functions of the operator \(\mathscr{F}_E\) which
functions  are the indicator functions of subsets of the spectrum
\(\textup{\large\(\sigma\)}_{\mathscr{F}_E}\). Though this family
of subsets is not so rich as in the case of self-adjoint operator
and does not contain \emph{all} Borelian subsets of
\(\textup{\large\(\sigma\)}_{\mathscr{F}_E}\), it is rich enough
for our goal.
\begin{definition}
\label{DeSySe}%
 For a subset \(\Delta\) of the complex plane, we
define its \emph{symmetric part} \(\Delta_s\) and \emph{asymmetric
part} \(\Delta_a\):
\begin{equation}
\Delta_s=\Delta\cap(-\Delta), \quad
\Delta_a=\Delta\setminus(-\Delta)\,.
\end{equation}
\end{definition}

\vspace{2.0ex}
\noindent%
Here, as usual,
\(-\Delta=\lbrace\,z\in\mathbb{C}:\,-z\in\Delta\,\rbrace\). So,
\begin{equation}
\label{PrSDec} %
\Delta=\Delta_s\cup\Delta_a,\quad \Delta_s\cap\Delta_a=\emptyset,
\quad \Delta_s=-\Delta_s,\quad
\Delta_a\cap(-\Delta_a)=\emptyset\,.
\end{equation}

With every subset of \(\Delta\in\mathbb{C}\), we associate its
indicator function \(\mathds{1}_{{}_\Delta}(z)\):
\begin{equation*}
\mathds{1}_{{}_\Delta}(z)=1\ \ \textup{if} \ \ z\in\Delta,\,\,
\mathds{1}_{{}_\Delta}(z)=0\ \ \textup{if} \ \ z\not\in\Delta\,.
\end{equation*}

\begin{definition}
\label{DeEsSep}
 The set
\(S,\,S\subseteq\textup{\large\(\sigma\)}_{\mathscr{F}_E}\) is
\emph{essentially separated from zero} if
\begin{equation*}
\textup{ess\,dist}\,(S,0)>0\,.
\end{equation*}
\end{definition}

\begin{lemma}
\label{CryAdm}%
 Let \(\Delta\) be a subset of the spectrum
\(\textup{\large\(\sigma\)}_{\mathscr{F}_E}\) of the operator
\(\mathscr{F}_E\). The indicator function \(\mathds{1}_{{}_\Delta}\)
is \(\mathscr{F}_E\)-admissible if and only if the
asymmetric part \(\Delta_a\) of the set \(\Delta\) is essentially
separated from zero.
\end{lemma}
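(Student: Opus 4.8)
The plan is to unwind Definition \ref{DFAdF} when $h=\mathds{1}_{{}_\Delta}$ and see exactly when the norm $\|\mathds{1}_{{}_\Delta}\|_{\mathscr{F}_{\mathbb{R}^{+}}}$ is finite; measurability being automatic for an indicator of a Borel set, the whole content is condition 2 of Definition \ref{DFAdF}. First I would compute $h(\zeta)+h(-\zeta)$ and $h(\zeta)-h(-\zeta)$ pointwise. Writing $\zeta\in\sigma(\mathscr{F}_{\mathbb{R}^{+}})$, the symmetry of the spectrum $\sigma=-\sigma$ means that $-\zeta$ also lies in $\sigma$, so both terms make sense for a.e.\ $\zeta$. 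There are exactly three cases: if $\zeta\in\Delta_s$ (so both $\zeta\in\Delta$ and $-\zeta\in\Delta$) then $h(\zeta)=h(-\zeta)=1$, whence $|h(\zeta)+h(-\zeta)|/2=1$ and $|h(\zeta)-h(-\zeta)|/(2|\zeta|)=0$; if $\zeta\in\Delta_a$ then $h(\zeta)=1$, $h(-\zeta)=0$, so the integrand under the ess sup in \eqref{NFAF} equals $\tfrac12+\tfrac{1}{2|\zeta|}$; and if $\zeta\notin\Delta$ and $-\zeta\notin\Delta$ the integrand is $0$. (The case $-\zeta\in\Delta_a$, i.e.\ $\zeta\in-\Delta_a$, also gives $\tfrac12+\tfrac{1}{2|\zeta|}$ by symmetry.) Hence
\begin{equation*}
\|\mathds{1}_{{}_\Delta}\|_{\mathscr{F}_{\mathbb{R}^{+}}}
=\underset{\zeta\in\sigma(\mathscr{F}_{\mathbb{R}^{+}})}{\textup{ess sup}}\,
g(\zeta),\qquad
g(\zeta)=\begin{cases}
\tfrac12+\tfrac{1}{2|\zeta|}, & \zeta\in\Delta_a\cup(-\Delta_a),\\
1,& \zeta\in\Delta_s,\\
0,&\text{otherwise.}
\end{cases}
\end{equation*}

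Next I would read off the finiteness condition. Since $\Delta_s\subseteq\sigma$ is bounded and $\sigma$ has finite length, the value $1$ contributes nothing dangerous; the only way for the ess sup to be infinite is for the factor $\tfrac{1}{2|\zeta|}$ to be essentially unbounded on $\Delta_a\cup(-\Delta_a)$, i.e.\ for $\Delta_a\cup(-\Delta_a)$ to come essentially arbitrarily close to the point $\zeta=0$. By Definition \ref{DeEsSep} and the definition of $\textup{ess\,dist}$ in the Notation section, $\underset{\zeta\in\Delta_a}{\textup{ess inf}}\,|\zeta|=\textup{ess\,dist}(\Delta_a,0)$, and because $|-\zeta|=|\zeta|$ the set $-\Delta_a$ has the same essential distance to $0$. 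Therefore
\begin{equation*}
\underset{\zeta\in\Delta_a\cup(-\Delta_a)}{\textup{ess sup}}\,\Big(\tfrac12+\tfrac{1}{2|\zeta|}\Big)<\infty
\iff \textup{ess\,dist}(\Delta_a,0)>0,
\end{equation*}
and when $\Delta_a=\emptyset_e$ the ess sup over that set is $0$ (or the set is empty), which is again finite. Combining, $\|\mathds{1}_{{}_\Delta}\|_{\mathscr{F}_{\mathbb{R}^{+}}}<\infty$ if and only if $\Delta_a$ is essentially separated from zero, which is precisely the assertion of the lemma.

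The one point that needs a little care — and the only place the argument is not purely formal — is the interaction between the essential sup in \eqref{NFAF} and the decomposition $\Delta=\Delta_s\cup\Delta_a$: I must check that modifying $\Delta$ on a set of linear measure zero changes neither side of the equivalence, and that $g$ is itself Borel measurable so that its essential sup is well defined. Measurability of $g$ follows because $\Delta$, $-\Delta$, $\Delta_s=\Delta\cap(-\Delta)$ and $\Delta_a=\Delta\setminus(-\Delta)$ are all Borel, and $\zeta\mapsto 1/|\zeta|$ is Borel on $\sigma\setminus\{0\}$. The null-set robustness is immediate from the fact that both $\textup{ess sup}$ and $\textup{ess\,dist}$, by their very definitions, ignore sets $E$ with $m(E)=0$. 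With these routine verifications in hand the chain of equivalences above is complete, which proves the lemma.
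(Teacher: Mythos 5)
Your proof is correct and follows essentially the same route as the paper: both arguments reduce the question to whether \(1/|\zeta|\) is essentially bounded on \(\Delta_a\cup(-\Delta_a)\) (you do this by computing the norm \eqref{NFAF} pointwise in the three cases \(\Delta_s\), \(\Delta_a\cup(-\Delta_a)\), and the complement, while the paper notes boundedness of \(\mathds{1}_{{}_\Delta}\) and then bounds the quotient \(\big(\mathds{1}_{{}_\Delta}(\zeta)-\mathds{1}_{{}_\Delta}(-\zeta)\big)/\zeta\), whose absolute numerator is \(\mathds{1}_{{}_{\Delta_a\cup(-\Delta_a)}}(\zeta)\)). Your closing observation that \(\textup{ess\,dist}\) and \(\textup{ess\,sup}\) ignore null sets, and that \(-\Delta_a\) has the same essential distance to \(0\) as \(\Delta_a\), matches the paper's final step, so no gap remains.
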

\begin{proof}
Since the function \(\mathds{1}_{{}_\Delta}(\zeta)\) is bounded (either
\(|\mathds{1}_{{}_\Delta}(\zeta)|=1\) or
\(|\mathds{1}_{{}_\Delta}(\zeta)|=0\)), the function
\(\mathds{1}_{{}_\Delta}(\zeta)\) is \(\mathscr{F}_E\)-admissible if and
only if the function
\(\dfrac{\mathds{1}_{{}_\Delta}(\zeta)-\mathds{1}_{{}_\Delta}(-\zeta)}{\zeta}\)
is essentially bounded. In view of \eqref{PrSDec},
\(\mathds{1}_{{}_\Delta}(\zeta)-\mathds{1}_{{}_\Delta}(-\zeta)=
\mathds{1}_{{}_{\Delta_a}}(\zeta)-\mathds{1}_{{}_{\Delta_{-a}}}(-\zeta)\).
From the other hand, %
\[|\mathds{1}_{\Delta_a}(\zeta)-\mathds{1}_{\Delta_{-a}}(-\zeta)|=
\mathds{1}_{{}_{\Delta_a}\cup(-\Delta_a)}(\zeta)\,,\] so the function
\(\dfrac{\mathds{1}_{{}_{\Delta}}(\zeta)-\mathds{1}_{{}_\Delta}(-\zeta)}{\zeta}\)
is essentially bounded if and only if the function
\(\dfrac{\mathds{1}_{{}_{\Delta_a\cup(-\Delta_a)}}(\zeta)}{\zeta}\) is
essentially bounded. The last function is essentially bounded if
and only if the set \(\Delta_a\cup(-\Delta_a)\) is essentially
separated from zero. From the structure of the set
\(\Delta_a\cup(-\Delta_a)\) it is clear that the set
\(\Delta_a\cup(-\Delta_a)\) is essentially separated from zero if
and only if the set \(\Delta_a\) is essentially separated from
zero.
\end{proof}

\begin{definition}
The Borelian subset \(\Delta\) of the spectrum
\(\textup{\large\(\sigma\)}(\mathscr{F}_{\!_{\scriptstyle\mathbb{R}^{+}}})\)
is said to be
\emph{\(\mathscr{F}_{\!_{\scriptstyle\mathbb{R}^{+}}}\)-admissible
set} if the indicator function \(\mathds{1}_{{}_{\Delta}}(\zeta)\) of \(\Delta\) is a
\(\mathscr{F}_{\!_{\scriptstyle\mathbb{R}^{+}}}\)-admissible
function.
\end{definition}

\begin{lemma}
Let \(\Delta_1\) and \(\Delta_2\) be
\(\mathscr{F}_{\!_{\scriptstyle\mathbb{R}^{+}}}\)-admissible
subsets of the spectrum
\(\text{\large\(\sigma\)}(\mathscr{F}_{\!_{\scriptstyle\mathbb{R}^{+}}})\).
Then the sets \(\Delta_1\cup{}\Delta_2\),
\(\Delta_1\cap{}\Delta_2\) and \(\Delta_1\setminus{}\Delta_2\) are
\mbox{\(\mathscr{F}_{\!_{\scriptstyle\mathbb{R}^{+}}}\)-admissible} sets
as well. In particular if the set \(\Delta\) is \(\mathscr{F}_{\!_{\scriptstyle\mathbb{R}^{+}}}\)-admissible, then its
complement, the set
\(\textup{\large\(\sigma\)}({\mathscr{F}_{\!_{\scriptstyle\mathbb{R}^{+}}}})\setminus\Delta\),
is \(\mathscr{F}_{\!_{\scriptstyle\mathbb{R}^{+}}}\)-admissible as well.
\end{lemma}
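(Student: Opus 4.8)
The plan is to reduce the statement to the algebraic structure already set up. Recall that admissibility of a Borelian set $\Delta\subseteq\sigma(\mathscr{F}_{\mathbb{R}^{+}})$ means precisely that its indicator $\mathds{1}_{\Delta}$ lies in $\mathfrak{B}_{\mathscr{F}_{\mathbb{R}^{+}}}$, and that by Lemma~\ref{BanAl} the set $\mathfrak{B}_{\mathscr{F}_{\mathbb{R}^{+}}}$ is an algebra under the pointwise operations, while by part~1 of Lemma~\ref{compat} it contains $\textup{hol}(\sigma(\mathscr{F}_{\mathbb{R}^{+}}))$ and in particular the constant function $1$ (alternatively one checks directly that $1$ is admissible: for $h\equiv 1$ one has $h(\zeta)-h(-\zeta)\equiv 0$, so $\|h\|_{\mathscr{F}_{\mathbb{R}^{+}}}=1<\infty$). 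Thus it suffices to exhibit the indicator functions of the three combinations, and of a complement, as polynomials in $\mathds{1}_{\Delta_1}$, $\mathds{1}_{\Delta_2}$ and $1$.

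The key identities, all valid pointwise on $\sigma(\mathscr{F}_{\mathbb{R}^{+}})$, are
\[
\mathds{1}_{\Delta_1\cap\Delta_2}=\mathds{1}_{\Delta_1}\,\mathds{1}_{\Delta_2},\qquad
\mathds{1}_{\Delta_1\cup\Delta_2}=\mathds{1}_{\Delta_1}+\mathds{1}_{\Delta_2}-\mathds{1}_{\Delta_1}\,\mathds{1}_{\Delta_2},
\]
\[
\mathds{1}_{\Delta_1\setminus\Delta_2}=\mathds{1}_{\Delta_1}-\mathds{1}_{\Delta_1}\,\mathds{1}_{\Delta_2},\qquad
\mathds{1}_{\sigma(\mathscr{F}_{\mathbb{R}^{+}})\setminus\Delta}=1-\mathds{1}_{\Delta}.
\]
Each right-hand side is a sum of scalar multiples of products of elements of $\mathfrak{B}_{\mathscr{F}_{\mathbb{R}^{+}}}$, hence lies in $\mathfrak{B}_{\mathscr{F}_{\mathbb{R}^{+}}}$ by Lemma~\ref{BanAl}; since $\Delta_1,\Delta_2$ are Borelian so are all four sets, and therefore $\Delta_1\cap\Delta_2$, $\Delta_1\cup\Delta_2$, $\Delta_1\setminus\Delta_2$ and $\sigma(\mathscr{F}_{\mathbb{R}^{+}})\setminus\Delta$ are $\mathscr{F}_{\mathbb{R}^{+}}$-admissible. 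This settles the theorem; the ``in particular'' assertion is the last identity (or, equivalently, the difference identity with $\Delta_1=\sigma(\mathscr{F}_{\mathbb{R}^{+}})$, the latter set being admissible because its asymmetric part is empty).

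I do not anticipate a real obstacle; the only point requiring a word of care is the presence of the unit $1$ in $\mathfrak{B}_{\mathscr{F}_{\mathbb{R}^{+}}}$, needed for the complement. As a fully self-contained alternative that avoids invoking the algebra property, one can argue directly from Lemma~\ref{CryAdm}: a set $\Delta$ is admissible iff its asymmetric part $\Delta_a=\Delta\setminus(-\Delta)$ is essentially separated from zero, and a short set-chase yields
\[
(\Delta_1\cup\Delta_2)_a\subseteq(\Delta_1)_a\cup(\Delta_2)_a,\quad
(\Delta_1\cap\Delta_2)_a\subseteq(\Delta_1)_a\cup(\Delta_2)_a,\quad
(\Delta_1\setminus\Delta_2)_a\subseteq(\Delta_1)_a\cup\bigl(-(\Delta_2)_a\bigr),
\]
while $\bigl(\sigma(\mathscr{F}_{\mathbb{R}^{+}})\setminus\Delta\bigr)_a=-\Delta_a$ (using that $\sigma(\mathscr{F}_{\mathbb{R}^{+}})$ is symmetric). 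Since $z\mapsto-z$ is an isometry fixing the origin and a finite union of sets essentially separated from zero is again essentially separated from zero, each right-hand side is essentially separated from zero whenever $(\Delta_1)_a$ and $(\Delta_2)_a$ are, and Lemma~\ref{CryAdm} again gives the conclusion. I would take the algebraic argument as the main proof and relegate the second one to a remark.
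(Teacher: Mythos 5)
Your proposal is correct; note that the paper states this lemma without giving any proof at all, so there is nothing to match it against except the surrounding machinery, and both of your arguments use exactly that machinery. The algebraic route is sound: the indicator identities you list are valid pointwise, the constant function $1$ is $\mathscr{F}_{\!_{\scriptstyle\mathbb{R}^{+}}}$-admissible (its odd part vanishes, so its norm \eqref{NFAF} equals $1$), and Lemma \ref{BanAl} then closes $\mathfrak{B}_{_{{\scriptstyle\mathscr{F}}_{_{{\mathbb{R}}^{+}}}}}$ under the sums and products appearing on the right-hand sides; the only points deserving the word of care you already give are the presence of the unit in the algebra (Lemma \ref{BanAl} asserts a Banach algebra but does not explicitly mention a unit, so your direct check of $h\equiv 1$ is the right move) and the trivial observation that the resulting indicators are Borel and defined a.e. The second route, via Lemma \ref{CryAdm}, is arguably the one closest to the author's intent, since that lemma immediately precedes the statement and reduces admissibility of a set to its asymmetric part being essentially separated from zero; your inclusions $(\Delta_1\cup\Delta_2)_a\subseteq(\Delta_1)_a\cup(\Delta_2)_a$, $(\Delta_1\cap\Delta_2)_a\subseteq(\Delta_1)_a\cup(\Delta_2)_a$, $(\Delta_1\setminus\Delta_2)_a\subseteq(\Delta_1)_a\cup\bigl(-(\Delta_2)_a\bigr)$ and the identity $\bigl(\textup{\large\(\sigma\)}(\mathscr{F}_{\!_{\scriptstyle\mathbb{R}^{+}}})\setminus\Delta\bigr)_a=-\Delta_a$ all check out (the last one does use the symmetry of the interval $\textup{\large\(\sigma\)}(\mathscr{F}_{\!_{\scriptstyle\mathbb{R}^{+}}})$ about $0$, which you correctly flag), and essential distance to $0$ is preserved under $z\mapsto-z$ and under finite unions. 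Either argument would serve as the missing proof; the set-theoretic one buys independence from Lemma \ref{BanAl}, while the algebraic one is shorter and generalizes verbatim to any finite Boolean combination.
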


\begin{definition}
\label{DeSpPro}%
 Let \(\Delta\) be a \(\mathscr{F}_{\!_{\scriptstyle\mathbb{R}^{+}}}\)-admissible
subset of the spectrum
\(\text{\large\(\sigma\)}(\mathscr{F}_{\!_{\scriptstyle\mathbb{R}^{+}}})\).
The operator
\(\mathscr{P}_{\!_{\scriptstyle\mathscr{F}_{{\mathbb{R}^{+}}}}}(\Delta)\)
is defined as
\begin{equation}
\label{DSpPro}
\mathscr{P}_{\!_{\scriptstyle\mathscr{F}_{{\mathbb{R}^{+}}}}}(\Delta)%
\stackrel{\textup{\tiny
def}}{=}\mathds{1}_{{}_\Delta}(\mathscr{F}_{\!_{\scriptstyle\mathbb{R}^{+}}}),
\end{equation}
where \(\mathds{1}_{{}_\Delta}(\zeta)\) is the indicator function of the
set \(\Delta\) and the function
\(\text{\large\(\mathds{1}\)}_{{}_\Delta}(\mathscr{F}_{\!_{\scriptstyle\mathbb{R}^{+}}})\)
of the operator
\(\mathscr{F}_{\!_{\scriptstyle\mathbb{R}^{+}}}\) is understood in the sense of %
\textup{Definition~\ref{DefFadFp}}.
\end{definition}
\begin{theorem}
\label{PrSpPr}%
 The family of the operators
 \(\lbrace{}\mathscr{P}_{\!_{\scriptstyle\mathscr{F}_{{\mathbb{R}^{+}}}}}(\Delta)\rbrace_{\Delta}\),
 where \(\Delta\) runs over the family of all
 \(\mathscr{F}_{\!_{\scriptstyle\mathbb{R}^{+}}}\)-admissible sets, possesses the following
 properties:
 \begin{enumerate}
 \item[\textup{1.}]
 If the sets \(\Delta_1\) and \(\Delta_2\) are
 \(\mathscr{F}_{\!_{\scriptstyle\mathbb{R}^{+}}}\)-admissible, then
\begin{equation}
\label{PrSpPr1}%
 \mathscr{P}_{\!_{{\scriptstyle\mathscr{F}}_{\mathbb{R}^{+}}}}(\Delta_1\cap\Delta_2)=
\mathscr{P}_{\!_{{\scriptstyle\mathscr{F}}_{\mathbb{R}^{+}}}}(\Delta_1)
\cdot%
\mathscr{P}_{\!_{{\scriptstyle\mathscr{F}}_{\mathbb{R}^{+}}}}(\Delta_2);
\end{equation}
In particular, for every \(\mathscr{F}_{\!_{\scriptstyle\mathbb{R}^{+}}}\)\!-\,admissible set
\(\Delta\), the operator \\
\(\mathscr{P}_{\!_{{\scriptstyle\mathscr{F}}_{\mathbb{R}^{+}}}}(\Delta)\)
is a projector\footnote{The projector \(\mathscr{P}_{\!_{{\scriptstyle\mathscr{F}}_{\mathbb{R}^{+}}}}(\Delta)\)
may be not orthogonal. See Theorem \ref{SAdPrTh} below.}
\begin{equation}
\label{PrSpPrpr}%
\mathscr{P}^{\,2}_{\!_{{\scriptstyle\mathscr{F}}_{\mathbb{R}^{+}}}}(\Delta)=
\mathscr{P}^{\phantom{\,2}}_{\!_{{\scriptstyle\mathscr{F}}_{\mathbb{R}^{+}}}}(\Delta)\,;
\end{equation}
\item[\textup{2.}]
The projectors corresponding to the \(\mathscr{F}_{_{\scriptstyle\mathbb{R}^{+}}}\)-admissible
sets \(\emptyset\) and
\(\textup{\large\(\sigma\)}({\mathscr{F}_{{_{\scriptstyle\mathbb{R}^{+}}}}})\) are:
\begin{equation}
\label{PrSpPr2}%
\mathscr{P}_{\!_{{\scriptstyle\mathscr{F}}_{\mathbb{R}^{+}}}}(\emptyset)=0\,;\quad
\mathscr{P}_{\!_{{\scriptstyle\mathscr{F}}_{\mathbb{R}^{+}}}}%
(\textup{\large\(\sigma\)}({\mathscr{F}_{{_{\scriptstyle\mathbb{R}^{+}}}}})
=\mathscr{I},
\end{equation}
where \(\mathscr{I}\) is the identity operator in the space
\(L^2(\mathbb{R}^{+})\).
\item[\textup{3.}] The correspondence \(\Delta\to\mathscr{P}_{\!_{{\scriptstyle\mathscr{F}}_{\mathbb{R}^{+}}}}(\Delta)
L^2(\mathbb{R}^{+})\)
 between subsets of the spectrum \(\textup{\large\(\sigma\)}({\mathscr{F}_{{_{\scriptstyle\mathbb{R}^{+}}}}})\)
  and subspaces of the space \(L^2(\mathbb{R}^{+})\)
 preserves the order:
\begin{equation}
\label{PrSpPr3}%
\textup{If} \ \ \Delta_1\subset\Delta_2, \ \ \textup{then} \ \
\mathscr{P}_{\!_{{\scriptstyle\mathscr{F}}_{\mathbb{R}^{+}}}}(\Delta_1)L^2(\mathbb{R}^{+})\subseteq%
\mathscr{P}_{\!_{{\scriptstyle\mathscr{F}}_{\mathbb{R}^{+}}}}(\Delta_2)L^2(\mathbb{R}^{+})\,.
\end{equation}
 \item[\textup{4.}]
 If the sets \(\Delta_1\) and \(\Delta_2\) are
 \(\mathscr{F}_{\!_{\scriptstyle\mathbb{R}^{+}}}\)-admissible, and
 \(\Delta_1\cap\Delta_2=\emptyset\), then
\begin{subequations}
\label{AdPrSpPr}%
\begin{gather}
\label{AdPrSpPr1}%
\mathscr{P}_{\!_{{\scriptstyle\mathscr{F}}_{\mathbb{R}^{+}}}}(\Delta_1\cup\Delta_2)
=
\mathscr{P}_{\!_{{\scriptstyle\mathscr{F}}_{\mathbb{R}^{+}}}}(\Delta_1)
+
\mathscr{P}_{\!_{{\scriptstyle\mathscr{F}}_{\mathbb{R}^{+}}}}(\Delta_2)
\,;\\
\label{AdPrSpPr2}%
\mathscr{P}_{\!_{{\scriptstyle\mathscr{F}}_{\mathbb{R}^{+}}}}(\Delta_1)
\cdot
\mathscr{P}_{\!_{{\scriptstyle\mathscr{F}}_{\mathbb{R}^{+}}}}(\Delta_2)=
\mathscr{P}_{\!_{{\scriptstyle\mathscr{F}}_{\mathbb{R}^{+}}}}(\Delta_2)
\cdot
\mathscr{P}_{\!_{{\scriptstyle\mathscr{F}}_{\mathbb{R}^{+}}}}(\Delta_1)=0\,.
\end{gather}
\end{subequations}
 In particular, for every \(\mathscr{F}_{\!_{\scriptstyle\mathbb{R}^{+}}}\)\!-\,admissible set
 \(\Delta\),
 the equalities
\begin{subequations}
\label{ColCom}
\begin{gather}
\label{ColCom1}
\mathscr{P}_{\!_{{\scriptstyle\mathscr{F}}_{\mathbb{R}^{+}}}}(\Delta)+
\mathscr{P}_{\!_{{\scriptstyle\mathscr{F}}_{\mathbb{R}^{+}}}}(
\textup{\large\(\sigma\)}(\mathscr{F}_{\!_{\mathbb{R}^{+}}})\setminus\Delta)=\mathscr{I}\,;\\
\label{ColCom2}
\mathscr{P}_{\!_{{\scriptstyle\mathscr{F}}_{\mathbb{R}^{+}}}}(\Delta)\cdot
\mathscr{P}_{\!_{{\scriptstyle\mathscr{F}}_{\mathbb{R}^{+}}}}(
\textup{\large\(\sigma\)}(\mathscr{F}_{\!_{\mathbb{R}^{+}}})\setminus\Delta)=
\mathscr{P}_{\!_{{\scriptstyle\mathscr{F}}_{\mathbb{R}^{+}}}}(
\textup{\large\(\sigma\)}(\mathscr{F}_{\!_{\mathbb{R}^{+}}})\setminus\Delta)
\cdot
\mathscr{P}_{\!_{{\scriptstyle\mathscr{F}}_{\mathbb{R}^{+}}}}(\Delta)
=0
\end{gather}
\end{subequations}
hold.
\end{enumerate}
\end{theorem}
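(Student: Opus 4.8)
The plan is to derive every assertion from the homomorphism property of the resolvent-based functional calculus (Theorem \ref{HomomFr}), together with the closure of the class of \(\mathscr{F}_{\!_{\scriptstyle\mathbb{R}^{+}}}\)-admissible sets under finite Boolean operations (the lemma immediately preceding Definition \ref{DeSpPro}, which itself rests on Lemma \ref{CryAdm}). The underlying principle is that every identity among the projectors \(\mathscr{P}_{\!_{{\scriptstyle\mathscr{F}}_{\mathbb{R}^{+}}}}(\Delta)\) is simply the image, under the calculus \(h\mapsto h(\mathscr{F}_{\!_{\scriptstyle\mathbb{R}^{+}}})\), of an elementary pointwise identity among indicator functions, the admissibility of all the sets involved guaranteeing that the calculus is applicable to those indicators.

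First I would prove part 1. Since \(\Delta_1,\Delta_2\) are admissible, so is \(\Delta_1\cap\Delta_2\), and \(\mathds{1}_{\Delta_1\cap\Delta_2}(\zeta)=\mathds{1}_{\Delta_1}(\zeta)\,\mathds{1}_{\Delta_2}(\zeta)\) for all \(\zeta\in\textup{\large\(\sigma\)}(\mathscr{F}_{\!_{\scriptstyle\mathbb{R}^{+}}})\); the multiplicativity in item 4 of Theorem \ref{HomomFr} then gives \eqref{PrSpPr1}. Taking \(\Delta_1=\Delta_2=\Delta\) and using \(\Delta\cap\Delta=\Delta\) yields the idempotency \eqref{PrSpPrpr}, so each \(\mathscr{P}_{\!_{{\scriptstyle\mathscr{F}}_{\mathbb{R}^{+}}}}(\Delta)\) is a projector; and \eqref{PrSpPr1} with the roles of \(\Delta_1,\Delta_2\) interchanged shows that any two such projectors commute, a fact used below. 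For part 2, \(\mathds{1}_{\emptyset}\) is the zero function on \(\textup{\large\(\sigma\)}(\mathscr{F}_{\!_{\scriptstyle\mathbb{R}^{+}}})\), so linearity of the calculus (item 3 of Theorem \ref{HomomFr}) gives \(\mathds{1}_{\emptyset}(\mathscr{F}_{\!_{\scriptstyle\mathbb{R}^{+}}})=0\), while \(\mathds{1}_{\textup{\large\(\sigma\)}(\mathscr{F}_{\!_{\scriptstyle\mathbb{R}^{+}}})}\equiv 1\), so item 1 gives \(\mathscr{P}_{\!_{{\scriptstyle\mathscr{F}}_{\mathbb{R}^{+}}}}(\textup{\large\(\sigma\)}(\mathscr{F}_{\!_{\scriptstyle\mathbb{R}^{+}}}))=\mathscr{I}\).

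For part 3, if \(\Delta_1\subseteq\Delta_2\) then \(\Delta_1\cap\Delta_2=\Delta_1\), so part 1 (with the factors in the order \(\Delta_2,\Delta_1\)) gives \(\mathscr{P}_{\!_{{\scriptstyle\mathscr{F}}_{\mathbb{R}^{+}}}}(\Delta_1)=\mathscr{P}_{\!_{{\scriptstyle\mathscr{F}}_{\mathbb{R}^{+}}}}(\Delta_2)\,\mathscr{P}_{\!_{{\scriptstyle\mathscr{F}}_{\mathbb{R}^{+}}}}(\Delta_1)\); hence every \(y\) in the range of \(\mathscr{P}_{\!_{{\scriptstyle\mathscr{F}}_{\mathbb{R}^{+}}}}(\Delta_1)\) satisfies \(y=\mathscr{P}_{\!_{{\scriptstyle\mathscr{F}}_{\mathbb{R}^{+}}}}(\Delta_2)y\) and therefore lies in the range of \(\mathscr{P}_{\!_{{\scriptstyle\mathscr{F}}_{\mathbb{R}^{+}}}}(\Delta_2)\), which is \eqref{PrSpPr3}. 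For part 4, when \(\Delta_1\cap\Delta_2=\emptyset\) the pointwise identities \(\mathds{1}_{\Delta_1\cup\Delta_2}=\mathds{1}_{\Delta_1}+\mathds{1}_{\Delta_2}\) and \(\mathds{1}_{\Delta_1}\mathds{1}_{\Delta_2}=\mathds{1}_{\emptyset}=0\) hold on the spectrum; items 3 and 4 of Theorem \ref{HomomFr} then give \eqref{AdPrSpPr1} and \(\mathscr{P}_{\!_{{\scriptstyle\mathscr{F}}_{\mathbb{R}^{+}}}}(\Delta_1)\,\mathscr{P}_{\!_{{\scriptstyle\mathscr{F}}_{\mathbb{R}^{+}}}}(\Delta_2)=\mathscr{P}_{\!_{{\scriptstyle\mathscr{F}}_{\mathbb{R}^{+}}}}(\emptyset)=0\), and applying item 4 in either order yields \eqref{AdPrSpPr2}. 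Finally, specializing to \(\Delta_1=\Delta\), \(\Delta_2=\textup{\large\(\sigma\)}(\mathscr{F}_{\!_{\mathbb{R}^{+}}})\setminus\Delta\) — admissible by the closure lemma, disjoint, and with union the whole spectrum — and invoking part 2 gives \eqref{ColCom1} and \eqref{ColCom2}.

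The only non-formal ingredient is the admissibility of \(\Delta_1\cap\Delta_2\), \(\Delta_1\cup\Delta_2\) and \(\textup{\large\(\sigma\)}(\mathscr{F}_{\!_{\mathbb{R}^{+}}})\setminus\Delta\), which is needed before the functional calculus can be applied to their indicator functions at all; this is exactly the content of the lemma preceding Definition \ref{DeSpPro}, reduced via Lemma \ref{CryAdm} to the fact that the asymmetric parts of these sets stay essentially separated from zero. Thus the genuine difficulty has already been isolated in that lemma, and the present theorem is a bookkeeping exercise transporting Boolean identities across the homomorphism \(h\mapsto h(\mathscr{F}_{\!_{\scriptstyle\mathbb{R}^{+}}})\); I expect the only place demanding a little care is keeping track, in part 3, of the order of the two factors so that the range inclusion comes out in the correct direction.
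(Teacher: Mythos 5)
Your proposal is correct and takes essentially the same route as the paper: every assertion is obtained by transporting the Boolean identities of the indicator functions through the homomorphism properties of the resolvent-based calculus (Theorem \ref{HomomFr}), with the admissibility of the derived sets supplied by the closure lemma preceding Definition \ref{DeSpPro}. The only minor variation is in \eqref{PrSpPr2}, where you invoke item 1 of Theorem \ref{HomomFr} directly, whereas the paper obtains $\mathds{1}_{\sigma(\mathscr{F}_{\!_{\scriptstyle\mathbb{R}^{+}}})}(\mathscr{F}_{\!_{\scriptstyle\mathbb{R}^{+}}})=\mathscr{I}$ via property 1 of the holomorphic calculus together with the compatibility Lemma \ref{compat}; both arguments are legitimate.
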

\begin{proof} The mapping \(\Delta\to\textup{\normalsize\(\mathds{1}\)}_{\Delta}(\zeta)\)
possesses the properties:\\[-3.0ex]
\begin{center}
\(\mathds{1}_{\Delta_1\cap\Delta_2}(\zeta)=\mathds{1}_{\Delta_1}(\zeta)\cdot\mathds{1}_{\Delta_2}(\zeta)\)
for every \(\Delta_1,\,\Delta_2\),\\[0.5ex]
\(\mathds{1}_{\Delta_1\cup\Delta_2}(\zeta)=\mathds{1}_{\Delta_1}(\zeta)+\mathds{1}_{\Delta_2}(\zeta)\)
if \(\Delta_1\cap\Delta_2=\emptyset\),\\[0.5ex]
\(\mathds{1}_{{}_\emptyset}(\zeta)\equiv0\),  and
\(\mathds{1}_{{}_{{\textstyle\sigma}\!({\scriptstyle\mathscr{F}}_{\!_{\mathbb{R}^{+}}})}}(\zeta)\equiv{}1\) for
\(\zeta\!\in{\textup{\large\(\sigma\)}}({\mathscr{F}_{\!_{\mathbb{R}^{+}}})}\).
\end{center}
Statements 1\,,3,\,4 of the present Theorem are consequences of
these properties of the mapping \(\Delta\to\mathds{1}_{\Delta}(\zeta)\)
and the properties of the mapping
\(\mathds{1}_{\Delta}(\zeta)\to\mathds{1}_{\Delta}(\mathscr{F}_E)\), which are
particular cases of the properties formulated as Statements
1\,-\,2 of the Theorem \ref{Homom}. The property
\(\mathscr{P}_{\mathscr{F}_E}(\emptyset)=0\) is evident.
\noindent
The property
\(\mathscr{P}_{\mathscr{F}_E}(\textup{\large\(\sigma\)}({\mathscr{F}_{\mathbb{R}^{+}}}))=\mathscr{I}\),
that is the equality
\(\text{\normalsize\(\mathds{1}\)}_{\textstyle{\sigma}({\scriptstyle\mathscr{F}_{\mathbb{R}^{+}}})}(\mathscr{F}_{\mathbb{R}^{+}})
=\mathscr{I}\)
is a consequence of the property 1 of the holomorphic functional
calculus.
 The function
\(\text{\normalsize\(\mathds{1}\)}_{\textstyle{\sigma}({\scriptstyle\mathscr{F}_{\mathbb{R}^{+}}})}(\zeta)\)
can be considered as the restriction of the function
\(h(\zeta)\equiv{}1\),
\(h\in\textup{hol}\,(\textup{\large\(\sigma\)}(\mathscr{F}_{\!_{\scriptstyle\mathbb{R}^{+}}}))\),
on the set
\(\textup{\large\(\sigma\)}(\mathscr{F}_{\!_{\scriptstyle\mathbb{R}^{+}}})\).
The property \(h({\mathscr{F}_{\mathbb{R}^{+}}})=\mathscr{I}\) is
the property 1 of the holomorphic operator calculus. According to
the holomorphic functional calculus,
 \(\big(\text{\normalsize\(\mathds{1}\)}_{\textstyle{\sigma}%
 ({\scriptstyle\mathscr{F}_{\mathbb{R}^{+}}})}\big)_{\textup{hol}}(\mathscr{F}_{\mathbb{R}^{+}})=\mathscr{I}\).
 According to  Lemma \ref{compat},
  \(\big(\text{\normalsize\(\mathds{1}\)}_{\textstyle{\sigma}%
 ({\scriptstyle\mathscr{F}_{\mathbb{R}^{+}}})}\big)_{\textup{hol}}(\mathscr{F}_{\mathbb{R}^{+}})=
 \text{\normalsize\(\mathds{1}\)}_{\textstyle{\sigma}%
 ({\scriptstyle\mathscr{F}_{\mathbb{R}^{+}}})}(\mathscr{F}_{\mathbb{R}^{+}})\).
 \end{proof}

\begin{theorem}
\label{SAdPrTh}{\ }\\[-3.0ex] %
\begin{enumerate}
\item[\textup{1.}]
 If the set
\(\Delta,\,\Delta\subseteq\textup{\large\(\sigma\)}_{_{\!{\scriptstyle\mathscr{F}}_{\mathbb{R}^{+}}}}\),
is symmetric, that is  \(\Delta_a=\emptyset_e\), then the
projector \(\mathscr{P}_{_{\!{\scriptstyle\mathscr{F}}_{\mathbb{R}^{+}}}}(\Delta)\) is an orthogonal
projector, i.e.
\begin{equation}
\label{OrPr}
\mathscr{P}_{_{\!{\scriptstyle\mathscr{F}}_{\mathbb{R}^{+}}}}(\Delta)
=\mathscr{P}_{_{\!{\scriptstyle\mathscr{F}}_{\mathbb{R}^{+}}}}(\Delta)\,.
\end{equation}
\item[\textup{2.}] If the set \(\Delta,\,\Delta\subseteq\textup{\large\(\sigma\)}_{_{\!{\scriptstyle\mathscr{F}}_{\mathbb{R}^{+}}}}\), is not
    symmetric, i.e. \(\Delta_a\not=\emptyset_e\), then the projector
    \(\mathscr{P}_{_{\!{\scriptstyle\mathscr{F}}_{\mathbb{R}^{+}}}}(\Delta)\) is  not orthogonal, i.e.
    \(\mathscr{P}_{_{\!{\scriptstyle\mathscr{F}}_{\mathbb{R}^{+}}}}(\Delta)~\not=%
    ~\mathscr{P}_{_{\!{\scriptstyle\mathscr{F}}_{\mathbb{R}^{+}}}}^{\,\ast}(\Delta)\).
\end{enumerate}
\end{theorem}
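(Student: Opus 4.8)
The two assertions are proved by quite different devices. For Statement~1 the point is that a Hilbert--space idempotent of norm $\le1$ is automatically an orthogonal projector, the norm bound being supplied by Theorem~\ref{Bicont}. For Statement~2 one reduces to an \emph{asymmetric} set, assumes orthogonality, and collides with the precise resolvent asymptotics of Theorem~\ref{TEstResT}.

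\smallskip\noindent\emph{Statement~1.} Let $\Delta_a=\emptyset_e$; discarding the trivial case $m(\Delta)=0$ (where $\mathscr{P}(\Delta):=\mathscr{P}_{\!_{{\scriptstyle\mathscr{F}}_{\mathbb{R}^{+}}}}(\Delta)=0$), we have $\mathds{1}_{{}_\Delta}(-\zeta)=\mathds{1}_{{}_\Delta}(\zeta)$ for $m$--a.e.\ $\zeta\in\textup{\large\(\sigma\)}(\mathscr{F}_{\!_{\scriptstyle\mathbb{R}^{+}}})$, so the second summand in the norm \eqref{NFAF} vanishes identically and $\|\mathds{1}_{{}_\Delta}\|_{_{\scriptstyle\mathscr{F}_{\mathbb{R}^{+}}}}=\underset{\zeta}{\textup{ess\,sup}}\,|\mathds{1}_{{}_\Delta}(\zeta)|=1$. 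Hence by Theorem~\ref{Bicont} the operator $P:=\mathscr{P}(\Delta)$, which is idempotent by \eqref{PrSpPrpr}, satisfies $\|P\|\le1$. If such a $P$ were oblique, i.e.\ $\operatorname{ran}P\not\perp\ker P$, we could take $u\in\operatorname{ran}P$ and $v\in\ker P$ with $\langle u,v\rangle\ne0$, choose $t\in\mathbb{C}$ minimising $\|u-tv\|$, and get $\|u-tv\|<\|u\|=\|P(u-tv)\|$, contradicting $\|P\|\le1$. So $\operatorname{ran}\mathscr{P}(\Delta)\perp\ker\mathscr{P}(\Delta)$, i.e.\ $\mathscr{P}(\Delta)=\mathscr{P}^{\,\ast}(\Delta)$.

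\smallskip\noindent\emph{Statement~2.} Suppose $\Delta_a\ne\emptyset_e$. By Lemma~\ref{CryAdm} the set $\Delta_a$ is $\mathscr{F}_{\mathbb{R}^{+}}$--admissible, and $\mathds{1}_{{}_\Delta}=\mathds{1}_{{}_{\Delta_s}}+\mathds{1}_{{}_{\Delta_a}}$; by Theorem~\ref{HomomFr}(3) together with Statement~1 (which makes $\mathscr{P}(\Delta_s)$ self-adjoint) it suffices to show $\mathscr{P}(\Delta_a)$ is not self-adjoint, so we may assume $\Delta=\Delta_a$, i.e.\ $\Delta\cap(-\Delta)=\emptyset_e$, $\Delta$ essentially separated from zero, $m(\Delta)>0$. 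Assume for contradiction that $\mathscr{P}(\Delta)$ is self-adjoint, hence the orthogonal projector onto a subspace $M$. Then $\mathscr{P}(\Delta)$ commutes with both $\mathscr{F}_{\mathbb{R}^{+}}$ and $\mathscr{F}^{\,\ast}_{\mathbb{R}^{+}}$, so $M$ and, by \eqref{ColCom1}, $M^{\perp}=\operatorname{ran}\mathscr{P}(\textup{\large\(\sigma\)}(\mathscr{F}_{\mathbb{R}^{+}})\setminus\Delta)$ both reduce $\mathscr{F}_{\mathbb{R}^{+}}$. Applying the spectral mapping theorem \eqref{Sfecl} to the admissible function $h(\zeta)=\zeta\mathds{1}_{{}_\Delta}(\zeta)$, for which $h(\mathscr{F}_{\mathbb{R}^{+}})=\mathscr{F}_{\mathbb{R}^{+}}\mathscr{P}(\Delta)$ by Theorem~\ref{HomomFr}, and similarly to $\zeta\mathds{1}_{{}_{\sigma\setminus\Delta}}(\zeta)$, one identifies $\textup{\large\(\sigma\)}(\mathscr{F}_{\mathbb{R}^{+}}|_{M})$ with $\textup{clos}_e(\Delta)$ (up to the possible extra point $0$) and $\textup{\large\(\sigma\)}(\mathscr{F}_{\mathbb{R}^{+}}|_{M^{\perp}})$ with a subset of $\textup{clos}_e(\textup{\large\(\sigma\)}(\mathscr{F}_{\mathbb{R}^{+}})\setminus\Delta)\cup\{0\}$. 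Finally, a direct computation gives
\[
(\mathscr{F}_{\mathbb{R}^{+}}^{2}x)(t)=\frac{i}{2\pi}\int\limits_{\mathbb{R}^{+}}\frac{x(\eta)}{t+\eta}\,d\eta ,
\]
so $\mathscr{F}_{\mathbb{R}^{+}}^{2}$ is $\tfrac{i}{2\pi}$ times a bounded nonnegative self-adjoint operator, hence normal, with $\textup{\large\(\sigma\)}(\mathscr{F}_{\mathbb{R}^{+}}^{2})=[0,\tfrac{i}{2}]$; it commutes with $\mathscr{F}_{\mathbb{R}^{+}}$ and (Fuglede) is reduced by $M$.

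\smallskip Now fix a point $\zeta_0\in\textup{clos}_e(\Delta)$ with $0\ne\zeta_0\ne\pm2^{-1/2}e^{i\pi/4}$ for which $-\zeta_0\notin\textup{clos}_e(\Delta)$ and $\zeta_0\notin\textup{clos}_e(\textup{\large\(\sigma\)}(\mathscr{F}_{\mathbb{R}^{+}})\setminus\Delta)$ --- e.g.\ a measure-theoretic interior point of $\Delta$ in $\textup{\large\(\sigma\)}(\mathscr{F}_{\mathbb{R}^{+}})$, which exists when $\Delta$ is, up to null sets, a finite union of subintervals. Let $E_0$ be the spectral projector of the normal operator $\mathscr{F}_{\mathbb{R}^{+}}^{2}$ for a neighbourhood $V$ of $\zeta_0^{\,2}$ so small that the two square-root preimages of $V$ separate $\zeta_0$ from $-\zeta_0$, the one near $-\zeta_0$ missing $\textup{clos}_e(\Delta)$. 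Then $\mathscr{G}:=\mathscr{F}_{\mathbb{R}^{+}}|_{\operatorname{ran}(E_0)\cap M}$ is a direct summand of $\mathscr{F}_{\mathbb{R}^{+}}$ with $\textup{\large\(\sigma\)}(\mathscr{G})$ confined to a small neighbourhood of $\zeta_0$ and with $\mathscr{G}^{2}$ normal. Since $\mathscr{G}$ commutes with $\mathscr{G}^{2}$, Fuglede's theorem makes $\mathscr{G}$ commute with $f(\mathscr{G}^{2})$, $f$ being the branch of $\sqrt{\ }$ holomorphic near $\zeta_0^{\,2}$ with $f(\zeta_0^{\,2})=\zeta_0$; as $(\mathscr{G}-f(\mathscr{G}^{2}))(\mathscr{G}+f(\mathscr{G}^{2}))=\mathscr{G}^{2}-f(\mathscr{G}^{2})^{2}=0$ and $\mathscr{G}+f(\mathscr{G}^{2})$ has spectrum near $2\zeta_0\ne0$, hence is invertible, we get $\mathscr{G}=f(\mathscr{G}^{2})$ --- a function of a normal operator, so $\mathscr{G}$ is normal, $\zeta_0\in\textup{\large\(\sigma\)}(\mathscr{G})$, while $\zeta_0$ lies in the resolvent set of $\mathscr{F}_{\mathbb{R}^{+}}$ restricted to $(\operatorname{ran}(E_0)\cap M)^{\perp}$. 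Therefore, for $z$ tending to $\zeta_0$ along the normal \eqref{znorsp} to the segment $\textup{\large\(\sigma\)}(\mathscr{F}_{\mathbb{R}^{+}})$,
\[
\bigl\|(z\mathscr{I}-\mathscr{F}_{\mathbb{R}^{+}})^{-1}\bigr\|=\bigl\|(z\mathscr{I}-\mathscr{G})^{-1}\bigr\|=\operatorname{dist}\!\bigl(z,\textup{\large\(\sigma\)}(\mathscr{G})\bigr)^{-1}=|z-\zeta_0|^{-1},
\]
the middle equality by normality of $\mathscr{G}$ and the last because $\textup{\large\(\sigma\)}(\mathscr{G})$ lies on the segment whose point nearest $z$ is $\zeta_0$. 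This contradicts \eqref{AsNzp}--\eqref{ConAs}, which give $\bigl\|(z\mathscr{I}-\mathscr{F}_{\mathbb{R}^{+}})^{-1}\bigr\|=C(\zeta_0)|z-\zeta_0|^{-1}+O(1)$ with $C(\zeta_0)=\frac{\sqrt{1+2|\zeta_0|^{2}}}{2|\zeta_0|}>1$, the strict inequality holding precisely because $|\zeta_0|<2^{-1/2}$. Hence $\mathscr{P}(\Delta)\ne\mathscr{P}^{\,\ast}(\Delta)$.

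\smallskip The step I expect to be the real obstacle is guaranteeing the separation of the two square-root branches. If $\textup{clos}_e(\Delta)$ happens to be symmetric --- which can occur even when $\Delta$ is essentially asymmetric --- no such $\zeta_0$ exists, and the reducing-subspace argument must be replaced by its fibrewise form: one uses that $\mathscr{F}_{\mathbb{R}^{+}}^{2}=\tfrac{i}{2\pi}T$ with $T:x\mapsto\int_{\mathbb{R}^{+}}\frac{x(\eta)}{t+\eta}d\eta$ unitarily equivalent to convolution by $(2\cosh\tfrac{s}{2})^{-1}$ on $L^{2}(\mathbb{R})$, hence of absolutely continuous spectrum of multiplicity two, so that $\mathscr{F}_{\mathbb{R}^{+}}$ is a direct integral of $2\times2$ matrices $M(w)$, $w\in\textup{\large\(\sigma\)}(\mathscr{F}_{\mathbb{R}^{+}}^{2})$, with $M(w)^{2}=wI$; the asymptotics \eqref{ConAs} force $M(w)$ to be non-normal (its eigenvectors for $\pm\sqrt w$ are not orthogonal) for a.e.\ $w\ne0$, and on the positive-measure set of fibres over which exactly one of $\pm\sqrt w$ lies in $\Delta$ --- a set that is non-null precisely when $\Delta_a\ne\emptyset_e$ --- the operator $\mathds{1}_{{}_\Delta}(\mathscr{F}_{\mathbb{R}^{+}})$ acts as the oblique eigenprojection of $M(w)$, which is not self-adjoint. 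Making this direct-integral picture precise (equivalently, appealing to the functional model of the subsequent sections) is the one non-formal ingredient; the identity for $\mathscr{F}_{\mathbb{R}^{+}}^{2}$, the norm computation of Statement~1, and the bookkeeping with $\Delta_s,\Delta_a$ are routine.
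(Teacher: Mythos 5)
Your proof of Statement 1 is correct, and it takes a genuinely more elementary route than the one implicit in the paper: instead of checking self-adjointness fibrewise in the functional model (where, for symmetric \(\Delta\), the fibre \(\mathds{1}_{{}_\Delta}(F(\mu))\) in \eqref{WoEx} is \(0\) or \(I\) almost everywhere), you only use that the second summand in \eqref{NFAF} vanishes, the bound \(\|\mathscr{P}(\Delta)\|\leq 1\) from Theorem \ref{Bicont}, idempotency \eqref{PrSpPrpr}, and the standard fact that a contractive idempotent on a Hilbert space is an orthogonal projector. That part stands on its own.

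The genuine gap is in Statement 2, and you have in effect named it yourself. Your contradiction argument needs a point \(\zeta_0\in\textup{clos}_e(\Delta)\) with \(-\zeta_0\notin\textup{clos}_e(\Delta)\) and \(\zeta_0\notin\textup{clos}_e\big(\textup{\large\(\sigma\)}(\mathscr{F}_{\!_{\scriptstyle\mathbb{R}^{+}}})\setminus\Delta\big)\), so that a spectral window of the normal operator \(\mathscr{F}_{\!_{\scriptstyle\mathbb{R}^{+}}}^{2}\) around \(\zeta_0^{2}\) isolates one square-root branch inside \(M\) and yields, via Fuglede and the holomorphic branch \(f\), a normal direct summand with exact resolvent growth \(|z-\zeta_0|^{-1}\) to play off against \eqref{AsNzp}--\eqref{ConAs}. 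For a general admissible set with \(\Delta_a\neq\emptyset_e\) no such \(\zeta_0\) need exist: one can choose \(\Delta=\Delta_a\), essentially separated from zero, so that \(\Delta\) and \(-\Delta\) are interleaved sets of positive measure whose essential closures coincide (and coincide with \(\textup{clos}_e\) of the complement as well); then the branch-separation step, and with it the whole localization, collapses. The fallback you sketch --- the direct-integral picture in which \(\mathds{1}_{{}_\Delta}(\mathscr{F}_{\!_{\scriptstyle\mathbb{R}^{+}}})\) acts on a positive-measure set of fibres as the oblique eigenprojection --- is precisely the paper's functional model and is exactly the ingredient you leave unproved: by Theorem \ref{SRTFTha} and Definition \ref{ModFuCalDe}, \(\mathscr{P}(\Delta)=U^{-1}\mathcal{M}_{\mathds{1}_{\Delta}(F)}U\); by \eqref{WoEx} the fibre \(\mathds{1}_{{}_\Delta}(F(\mu))\) equals \(0\), \(I\), \(E_{+}(\mu)\) or \(E_{-}(\mu)\) according to which of \(\pm\zeta(\mu)\) lie in \(\Delta\); the adjoint is taken fibrewise, \((\mathcal{M}_{G})^{\ast}=\mathcal{M}_{G^{\ast}}\) with \(\|\mathcal{M}_{G}\|\) equal to the essential supremum of \(\|G(\mu)\|\) (Lemma \ref{Homom}); and \eqref{SpPr}, \eqref{SpPrAd} together with \eqref{AuEq} show \(E_{\pm}(\mu)\neq E_{\pm}^{\ast}(\mu)\) for every \(\mu>0\), since the off-diagonal entries of \(E_{\pm}(\mu)\) have the distinct moduli \(\tfrac{1}{2}e^{\mu\pi/2}\) and \(\tfrac{1}{2}e^{-\mu\pi/2}\). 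The set of \(\mu\) on which the fibre is \(E_{+}(\mu)\) or \(E_{-}(\mu)\) has positive measure exactly when \(\Delta_a\neq\emptyset_e\), which is what forces \(\mathscr{P}(\Delta)\neq\mathscr{P}^{\,\ast}(\Delta)\). Since you declare making this picture precise to be the one non-formal ingredient and do not supply it, your argument proves Statement 2 only for sets that have an essential-interior point avoiding \(-\Delta\) (e.g.\ finite unions of intervals up to null sets), not for arbitrary admissible Borel sets, which is what the theorem asserts; moreover, once the model is invoked, the Carleman-kernel identity for \(\mathscr{F}_{\!_{\scriptstyle\mathbb{R}^{+}}}^{2}\) and the Fuglede machinery become unnecessary.
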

\begin{theorem}
\label{PrDSOrTh}%
 Let \(\Delta_1\) and \(\Delta_2\) are subsets of the spectrum
 \(\textup{\large\(\sigma\)}_{_{\!{\scriptstyle\mathscr{F}}_{\mathbb{R}^{+}}}}\) which satisfy the condition
 \begin{equation}
 \label{NonInt}
 \Big(\Delta_1\cup(-\Delta_1)\Big)\bigcap\Big(\Delta_2\cup(-\Delta_2)\Big)=\emptyset\,.
 \end{equation}
 Then the image subspaces \(\mathscr{P}_{_{\!{\scriptstyle\mathscr{F}}_{\mathbb{R}^{+}}}}(\Delta_1)L^2(E)\) and
 \(\mathscr{P}_{_{\!{\scriptstyle\mathscr{F}}_{\mathbb{R}^{+}}}}(\Delta_2)L^2(E)\) are mutually orthogonal, that is
 \begin{equation}
 \label{MutOrt}%
 \mathscr{P}_{_{\!{\scriptstyle\mathscr{F}}_{\mathbb{R}^{+}}}}^{\,\ast}(\Delta_2)\,%
 \mathscr{P}_{_{\!{\scriptstyle\mathscr{F}}_{\mathbb{R}^{+}}}}(\Delta_1)=0\,.
 \end{equation}
 In particular,  if
 \begin{subequations}
 \label{InPartDis}
 \begin{equation}%
 \label{InPartDis1}
 \textup{either}\ \Delta_1\subset\textup{\large\(\sigma\)}_{_{\!{\scriptstyle\mathscr{F}}_{\mathbb{R}^{+}}}}^{\,+},\  %
  \Delta_2\subset\textup{\large\(\sigma\)}_{_{\!{\scriptstyle\mathscr{F}}_{\mathbb{R}^{+}}}}^{\,+},\ | \textup{or}\ \ %
   \Delta_1\subset\textup{\large\(\sigma\)}_{_{\!{\scriptstyle\mathscr{F}}_{\mathbb{R}^{+}}}}^{\,-},\ %
 \Delta_2\subset\textup{\large\(\sigma\)}_{_{\!{\scriptstyle\mathscr{F}}_{\mathbb{R}^{+}}}}^{\,-},\
 \end{equation}
 and moreover
 \begin{equation}%
 \label{InPartDis2}
 \Delta_1\cap\Delta_2=\emptyset,
 \end{equation}
 \end{subequations}
 then \eqref{MutOrt} holds.
 \end{theorem}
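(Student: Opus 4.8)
The plan is to deduce the statement from two facts established earlier: that the map $\Delta\mapsto\mathscr{P}_{\mathscr{F}_{\mathbb{R}^{+}}}(\Delta)$ is multiplicative on intersections (identity \eqref{PrSpPr1}, together with $\mathscr{P}_{\mathscr{F}_{\mathbb{R}^{+}}}(\emptyset)=0$ from \eqref{PrSpPr2}), and that the projector attached to a \emph{symmetric} admissible set is self-adjoint (first assertion of Theorem \ref{SAdPrTh}). Once these are granted the argument is purely algebraic. Throughout, $\Delta_{1}$ and $\Delta_{2}$ are understood to be $\mathscr{F}_{\mathbb{R}^{+}}$-admissible, so that the projectors occurring in the statement are defined. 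I would first symmetrize: put $\widetilde\Delta_{j}=\Delta_{j}\cup(-\Delta_{j})$ for $j=1,2$. Since $z\mapsto-z$ is a homeomorphism of the straight line carrying the interval $\sigma(\mathscr{F}_{\mathbb{R}^{+}})$, and since that interval is itself symmetric about the origin, each $\widetilde\Delta_{j}$ is a Borelian subset of $\sigma(\mathscr{F}_{\mathbb{R}^{+}})$ with $\widetilde\Delta_{j}=-\widetilde\Delta_{j}$; in particular its asymmetric part is empty, so by Lemma \ref{CryAdm} the set $\widetilde\Delta_{j}$ is $\mathscr{F}_{\mathbb{R}^{+}}$-admissible and by the first assertion of Theorem \ref{SAdPrTh} the projector $\mathscr{P}_{\mathscr{F}_{\mathbb{R}^{+}}}(\widetilde\Delta_{j})$ is self-adjoint. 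This self-adjointness is the one genuinely analytic input; everything after it is formal manipulation.

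Next I would record, from \eqref{PrSpPr1} applied to the pairs $(\widetilde\Delta_{j},\Delta_{j})$ and using $\Delta_{j}\subseteq\widetilde\Delta_{j}$, the compression identities $\mathscr{P}_{\mathscr{F}_{\mathbb{R}^{+}}}(\widetilde\Delta_{1})\,\mathscr{P}_{\mathscr{F}_{\mathbb{R}^{+}}}(\Delta_{1})=\mathscr{P}_{\mathscr{F}_{\mathbb{R}^{+}}}(\Delta_{1})$ and $\mathscr{P}_{\mathscr{F}_{\mathbb{R}^{+}}}(\widetilde\Delta_{2})\,\mathscr{P}_{\mathscr{F}_{\mathbb{R}^{+}}}(\Delta_{2})=\mathscr{P}_{\mathscr{F}_{\mathbb{R}^{+}}}(\Delta_{2})$; taking the adjoint of the second one and invoking the self-adjointness of $\mathscr{P}_{\mathscr{F}_{\mathbb{R}^{+}}}(\widetilde\Delta_{2})$ yields $\mathscr{P}^{\,\ast}_{\mathscr{F}_{\mathbb{R}^{+}}}(\Delta_{2})\,\mathscr{P}_{\mathscr{F}_{\mathbb{R}^{+}}}(\widetilde\Delta_{2})=\mathscr{P}^{\,\ast}_{\mathscr{F}_{\mathbb{R}^{+}}}(\Delta_{2})$. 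The hypothesis \eqref{NonInt} is precisely $\widetilde\Delta_{1}\cap\widetilde\Delta_{2}=\emptyset$, so \eqref{PrSpPr1} and \eqref{PrSpPr2} give $\mathscr{P}_{\mathscr{F}_{\mathbb{R}^{+}}}(\widetilde\Delta_{2})\,\mathscr{P}_{\mathscr{F}_{\mathbb{R}^{+}}}(\widetilde\Delta_{1})=\mathscr{P}_{\mathscr{F}_{\mathbb{R}^{+}}}(\emptyset)=0$. Combining the three identities,
\[
\mathscr{P}^{\,\ast}_{\mathscr{F}_{\mathbb{R}^{+}}}(\Delta_{2})\,\mathscr{P}_{\mathscr{F}_{\mathbb{R}^{+}}}(\Delta_{1})
=\mathscr{P}^{\,\ast}_{\mathscr{F}_{\mathbb{R}^{+}}}(\Delta_{2})\Big[\mathscr{P}_{\mathscr{F}_{\mathbb{R}^{+}}}(\widetilde\Delta_{2})\,\mathscr{P}_{\mathscr{F}_{\mathbb{R}^{+}}}(\widetilde\Delta_{1})\Big]\mathscr{P}_{\mathscr{F}_{\mathbb{R}^{+}}}(\Delta_{1})=0,
\]
which is exactly \eqref{MutOrt}. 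Equivalently, one may phrase this geometrically: $\mathscr{P}_{\mathscr{F}_{\mathbb{R}^{+}}}(\widetilde\Delta_{1})$ and $\mathscr{P}_{\mathscr{F}_{\mathbb{R}^{+}}}(\widetilde\Delta_{2})$ are orthogonal projectors with vanishing product, hence have mutually orthogonal ranges, while $\mathscr{P}_{\mathscr{F}_{\mathbb{R}^{+}}}(\Delta_{j})L^{2}(\mathbb{R}^{+})\subseteq\mathscr{P}_{\mathscr{F}_{\mathbb{R}^{+}}}(\widetilde\Delta_{j})L^{2}(\mathbb{R}^{+})$ by \eqref{PrSpPr3}.

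Finally, for the ``in particular'' part it suffices to check that \eqref{InPartDis} forces \eqref{NonInt}. If $\Delta_{1},\Delta_{2}\subset\sigma^{+}(\mathscr{F}_{\mathbb{R}^{+}})$, then $-\Delta_{1},-\Delta_{2}\subset\sigma^{-}(\mathscr{F}_{\mathbb{R}^{+}})$, and since $\sigma^{+}(\mathscr{F}_{\mathbb{R}^{+}})\cap\sigma^{-}(\mathscr{F}_{\mathbb{R}^{+}})=\emptyset$, the set $(\Delta_{1}\cup(-\Delta_{1}))\cap(\Delta_{2}\cup(-\Delta_{2}))$ collapses to $(\Delta_{1}\cap\Delta_{2})\cup\big(-(\Delta_{1}\cap\Delta_{2})\big)$, which is empty by \eqref{InPartDis2}; the case $\Delta_{1},\Delta_{2}\subset\sigma^{-}(\mathscr{F}_{\mathbb{R}^{+}})$ follows by applying $z\mapsto-z$. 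Thus \eqref{NonInt} holds and \eqref{MutOrt} follows from the previous paragraphs. I do not expect a serious obstacle in this proof, since the whole substance is concentrated in Theorem \ref{SAdPrTh}; the single point that needs attention is the bookkeeping of admissibility, namely checking that all four sets $\Delta_{1},\Delta_{2},\widetilde\Delta_{1},\widetilde\Delta_{2}$ are $\mathscr{F}_{\mathbb{R}^{+}}$-admissible before \eqref{PrSpPr1} and Theorem \ref{SAdPrTh} may be invoked --- admissibility of the $\widetilde\Delta_{j}$ being automatic from their symmetry via Lemma \ref{CryAdm}, and admissibility of the $\Delta_{j}$ being part of the standing hypothesis.
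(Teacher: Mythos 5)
Your argument is correct, and all the steps you invoke are available at the point where Theorem \ref{PrDSOrTh} is stated: the symmetrized sets $\widetilde\Delta_j=\Delta_j\cup(-\Delta_j)$ are Borelian subsets of the (origin-symmetric) interval $\textup{\large\(\sigma\)}(\mathscr{F}_{\!_{\scriptstyle\mathbb{R}^{+}}})$ with empty asymmetric part, hence admissible by Lemma \ref{CryAdm}; Theorem \ref{SAdPrTh} makes $\mathscr{P}_{_{\!{\scriptstyle\mathscr{F}}_{\mathbb{R}^{+}}}}(\widetilde\Delta_j)$ self-adjoint; the compression identities and $\mathscr{P}_{_{\!{\scriptstyle\mathscr{F}}_{\mathbb{R}^{+}}}}(\widetilde\Delta_2)\mathscr{P}_{_{\!{\scriptstyle\mathscr{F}}_{\mathbb{R}^{+}}}}(\widetilde\Delta_1)=0$ follow from \eqref{PrSpPr1}--\eqref{PrSpPr2}, and the sandwich gives \eqref{MutOrt}; the reduction of \eqref{InPartDis} to \eqref{NonInt} is elementary. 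You are also right to flag that admissibility of $\Delta_1,\Delta_2$ must be read into the hypotheses for the projectors to exist. Note, however, that the paper states this theorem without a written proof, and within its own scheme the natural verification is done on the functional model: writing $\mathscr{P}_{_{\!{\scriptstyle\mathscr{F}}_{\mathbb{R}^{+}}}}(\Delta_j)=U^{-1}\mathcal{M}_{\mathds{1}_{\Delta_j}(F)}U$ and checking pointwise in $\mu$ that $\big(\mathds{1}_{\Delta_2}(F(\mu))\big)^{\ast}\,\mathds{1}_{\Delta_1}(F(\mu))=0$, because $\textup{\large\(\sigma\)}(F(\mu))=\{\zeta(\mu),-\zeta(\mu)\}$ is a symmetric pair, so $\mathds{1}_{\Delta_j}(F(\mu))\neq0$ only when $\widetilde\Delta_j$ meets $\{\pm\zeta(\mu)\}$, and \eqref{NonInt} forbids this for both $j$ at once; this pointwise route is what really carries the content, since the $\mu$-wise eigenprojectors themselves satisfy $E_{+}^{\ast}(\mu)E_{-}(\mu)\neq0$ and give no orthogonality for free. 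Your derivation is shorter and purely formal, but it concentrates the entire analytic burden in Theorem \ref{SAdPrTh}, which the paper likewise leaves to the model-based calculus; so the two approaches are equivalent in substance, yours simply packaging the model computation through previously stated results rather than redoing it.
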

 By induction with respect to \(n\), from \eqref{AdPrSpPr} the
following statement can be derived:

\begin{proposition}
\label{AddiPro}%
 Let
\(\Delta_k,\,\Delta_k\subset\textup{\large\(\sigma\)}_{_{\!{\scriptstyle\mathscr{F}}_{\mathbb{R}^{+}}}}\),
\,\(1\leq{}k\leq{}n\,\,\), be a finite sequence of sets possessing
the properties:\\[-3.0ex]
\begin{enumerate}
\item[\textup{a)}.] Each of the sets \(\Delta_k,\,1\leq{}k\leq{}n\), is
\(\mathscr{F}_{_{{\scriptstyle\mathbb{R}}^{+}}}\)-admissible;
\item[\textup{b)}.] The sets \(\Delta_k,\,1\leq{}k\leq{}n\), are
disjoint. This means that
\begin{equation*}
\Delta_p\cap\Delta_q=\emptyset,\quad
\forall\,p,\,q:\,\,1\leq{}p,\,q\leq{}n,\,p\not=q\,.
\end{equation*}
\end{enumerate}
\label{AddPropM}%
 Then the set
\(\Delta=\bigcup\limits_{1\leq{}k\leq{}n}\Delta_k\) is
\(\mathscr{F}_{_{\scriptstyle\mathbb{R}^{+}}}\)- admissible,
and
\begin{equation}
\label{AddMapp}%
 \mathscr{P}_{_{\!{\scriptstyle\mathscr{F}}_{\mathbb{R}^{+}}}}(\Delta)=
\sum\limits_{1\leq{}k\leq{}n}\mathscr{P}_{_{\!{\scriptstyle\mathscr{F}}_{\mathbb{R}^{+}}}}(\Delta_k)\,.
\end{equation}
\end{proposition}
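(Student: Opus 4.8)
The plan is to argue by induction on \(n\), using the two-set additivity \eqref{AdPrSpPr1} as the engine. For \(n=1\) there is nothing to prove: \(\Delta=\Delta_1\) is admissible by hypothesis a), and the asserted identity \eqref{AddMapp} reduces to a tautology.

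For the inductive step, suppose the statement holds for every family of \(n-1\) pairwise disjoint \(\mathscr{F}_{_{\scriptstyle\mathbb{R}^{+}}}\)-admissible sets, and let \(\Delta_1,\dots,\Delta_n\) be as in the Proposition. Set \(\Delta'=\bigcup_{k=1}^{n-1}\Delta_k\). By the induction hypothesis \(\Delta'\) is \(\mathscr{F}_{_{\scriptstyle\mathbb{R}^{+}}}\)-admissible and \(\mathscr{P}_{\!_{{\scriptstyle\mathscr{F}}_{\mathbb{R}^{+}}}}(\Delta')=\sum_{k=1}^{n-1}\mathscr{P}_{\!_{{\scriptstyle\mathscr{F}}_{\mathbb{R}^{+}}}}(\Delta_k)\). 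Because the given family is pairwise disjoint, \(\Delta'\cap\Delta_n=\bigcup_{k=1}^{n-1}(\Delta_k\cap\Delta_n)=\emptyset\), so \(\Delta'\) and \(\Delta_n\) form a disjoint pair of admissible sets. The lemma stating that a union of two \(\mathscr{F}_{\!_{\scriptstyle\mathbb{R}^{+}}}\)-admissible sets is again admissible then shows that \(\Delta=\Delta'\cup\Delta_n\) is admissible, and \eqref{AdPrSpPr1}, applied to the disjoint pair \((\Delta',\Delta_n)\), gives \(\mathscr{P}_{\!_{{\scriptstyle\mathscr{F}}_{\mathbb{R}^{+}}}}(\Delta)=\mathscr{P}_{\!_{{\scriptstyle\mathscr{F}}_{\mathbb{R}^{+}}}}(\Delta')+\mathscr{P}_{\!_{{\scriptstyle\mathscr{F}}_{\mathbb{R}^{+}}}}(\Delta_n)=\sum_{k=1}^{n}\mathscr{P}_{\!_{{\scriptstyle\mathscr{F}}_{\mathbb{R}^{+}}}}(\Delta_k)\), which closes the induction.

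There is essentially no analytic difficulty here; the only point requiring a moment's care is the bookkeeping — checking that grouping the first \(n-1\) sets preserves disjointness, so that \eqref{AdPrSpPr1} genuinely applies at the last step, and noting that admissibility of the partial union \(\Delta'\) is legitimately available, partly from the induction hypothesis and partly from the closure lemma. If one preferred to avoid induction altogether, one could instead use the pointwise identity \(\mathds{1}_{{}_{\Delta}}(\zeta)=\sum_{k=1}^{n}\mathds{1}_{{}_{\Delta_k}}(\zeta)\) for \(m\)-almost every \(\zeta\in\textup{\large\(\sigma\)}(\mathscr{F}_{\!_{\scriptstyle\mathbb{R}^{+}}})\) together with additivity of the resolvent-based functional calculus (item 3 of Theorem \ref{HomomFr}); but given that \eqref{AdPrSpPr} is already established, the inductive argument is the shorter route.
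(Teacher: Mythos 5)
Your proof is correct and follows exactly the route the paper intends: the paper states that Proposition \ref{AddiPro} is derived by induction on \(n\) from the two-set additivity \eqref{AdPrSpPr}, which is precisely your inductive step, with admissibility of the partial union supplied by the lemma on unions of \(\mathscr{F}_{\!_{\scriptstyle\mathbb{R}^{+}}}\)-admissible sets. Nothing further is needed.
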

 The property of the mapping \(\Delta\to\mathscr{P}_{_{\!{\scriptstyle\mathscr{F}}_{\mathbb{R}^{+}}}}(\Delta)\)
  expressed as Proposition \ref{AddiPro} can be naturally considered as \emph{the additivity
 of this mapping with respect to \(\Delta\).}

\emph{In general, the mapping
\(\Delta\to\mathscr{P}_{_{\!{\scriptstyle\mathscr{F}}_{\mathbb{R}^{+}}}}(\Delta)\) is not countably
additive.}If
\(\Delta_k,\,\Delta_k\subset\textup{\large\(\sigma\)}_{\mathscr{F}_E}\),
\,\(1\leq{}k<\infty,\,\) is a countable sequence of
\(\mathscr{F}_{_{{\scriptstyle\mathbb{R}}^{+}}}\)-admissible sets, then their union
\begin{equation}
\label{CounUn}
\Delta=\!\!\bigcup\limits_{1\leq{}k<\infty}\!\!\Delta_k
\end{equation}
may be a non-admissible set. Even if the set \(\Delta\),
\eqref{CounUn}, is admissible and the sets \(\Delta_k\)
 are pairwise disjoint, the equality
\begin{equation*}
 \mathscr{P}_{_{\!{\scriptstyle\mathscr{F}}_{\mathbb{R}^{+}}}}(\Delta)=
\sum\limits_{1\leq{}k<\infty}\mathscr{P}_{_{\!{\scriptstyle\mathscr{F}}_{\mathbb{R}^{+}}}}(\Delta_k)\,.
\end{equation*}
may be violated. And what is more, it may happen that despite all
the  sets \(\Delta_k,\,1\leq{}k<\infty,\) and their union
\(\Delta\) are \(\mathscr{F}_{\mathbb{R}^{+}}\)-admissible, the series in the
right hand side of of the last formula may diverge in any
reasonable sense and even
\(\|\mathscr{P}_{_{\!{\scriptstyle\mathscr{F}}_{\mathbb{R}^{+}}}}\!\!(\Delta_k)\|\to\infty\) as
\(k\to\infty\). The fact that the union \(\Delta\) of the
countable sequence of \(\mathscr{F}_{\mathbb{R}^{+}}\)-admissible sets
\(\Delta_k\) is a
 \(\mathscr{F}_E\)-admissible set does not forbid the following pathology: \emph{the
property of the sets \(\Delta_k\) be
 essentially separated from zero
 may be not uniform with respect to \(k\).} Each of the sets
 \(\Delta_k\) may be fully asymmetric: \(\Delta_k=(\Delta_k)_a\),
 but their union \(\Delta\) may be symmetric:
 \(\Delta=\Delta_s\),
 hence \(\mathscr{F}_{\mathbb{R}^{+}}\)-admissible, Lemma \ref{CryAdm}.
 However if the  property of the sets \(\Delta_k\) be essentially separated
 from zero is \emph{not uniform with respect to \(k\),} then the sequence
 \(\|\mathscr{P}_{_{\!{\scriptstyle\mathscr{F}}_{\mathbb{R}^{+}}}}(\Delta_k)\|,\,\,1\leq{}k<\infty,\)
  is unbounded.

  Nevertheless, some restricted property of countable additivity
  of the mapping \(\Delta\to\mathscr{P}_{_{\!{\scriptstyle\mathscr{F}}_{\mathbb{R}^{+}}}}(\Delta)\)
  takes place.

 \begin{theorem}
  \label{RCoAddPr}
  Let \(\lbrace{\Delta_k}\rbrace_{1\leq{}k<\infty}\) be a sequence of Borelian
  subsets of the spectrum
  \(\textup{\large\(\sigma\)}_{_{\!{\scriptstyle\mathscr{F}}_{\mathbb{R}^{+}}}}\) possessing the
  following properties:
  \begin{enumerate}
  \item[\textup{1.}] The sets \(\lbrace{}\Delta_k\rbrace_{1\leq{}k<\infty}\) are
   pairwise disjoint:
   \begin{equation}
   \label{PaWaDi}%
   \Delta_p\cap\Delta_q=\emptyset \ \ \forall\,\,p,\,q:
   \,1\leq{}p,\,q<\infty,\,p\not=q\,.
   \end{equation}
\item[\textup{2.}] The sequence
\(\lbrace{(\Delta_k)_a}\rbrace_{1\leq{}k<\infty}\)
 of the asymmetric parts  \((\Delta_k)_a\) of the sets \(\Delta_k\)
is uniformly essentially separated from zero\,, that is
\begin{equation}%
\label{USFZ}%
 \inf_k\,\textup{ess\,dist}\big((\Delta_k)_a,0\big)>0\,.
\end{equation}
\end{enumerate}
Then the set
\(\Delta=\!\!\bigcup\limits_{1\leq{}k<\infty}\!\!\Delta_k\) is
\(\mathscr{F}_{_{{\scriptstyle\mathbb{R}}^{+}}}\)-admissible  and the equality
\begin{equation}
\label{CoAdPr}
 \mathscr{P}_{_{\!{\scriptstyle\mathscr{F}}_{\mathbb{R}^{+}}}}(\Delta)=
\sum\limits_{1\leq{}k<\infty}\mathscr{P}_{_{\!{\scriptstyle\mathscr{F}}_{\mathbb{R}^{+}}}}(\Delta_k)\,.
\end{equation}
holds, where the series in the right hand side of \eqref{CoAdPr}
converges strongly.
  \end{theorem}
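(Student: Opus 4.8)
The plan is to obtain \eqref{CoAdPr} as a strong operator limit of the finite partial sums $\sum_{k=1}^{n}\mathscr{P}_{_{\!{\scriptstyle\mathscr{F}}_{\mathbb{R}^{+}}}}(\Delta_k)$, the limit being governed by the strong-continuity Theorem \ref{strCont} applied to the indicator functions of the finite unions $S_n:=\bigcup_{k=1}^{n}\Delta_k$. As a preliminary remark, hypothesis~2 forces $\textup{ess\,dist}\big((\Delta_k)_a,0\big)\geq\delta$ for every $k$, where $\delta:=\inf_k\textup{ess\,dist}\big((\Delta_k)_a,0\big)>0$; in particular each $(\Delta_k)_a$ is essentially separated from zero, so by Lemma~\ref{CryAdm} every $\Delta_k$ is $\mathscr{F}_{\!_{\scriptstyle\mathbb{R}^{+}}}$-admissible, and Proposition~\ref{AddiPro} applies to each finite subfamily $\{\Delta_1,\dots,\Delta_n\}$.

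The one genuinely new point is a geometric observation about asymmetric parts: for any index set $I$, writing $\Delta_I=\bigcup_{k\in I}\Delta_k$, one has $(\Delta_I)_a\subseteq\bigcup_{k\in I}(\Delta_k)_a$. Indeed, if $\zeta\in(\Delta_I)_a$ then $\zeta\in\Delta_{k_0}$ for some $k_0\in I$ while $-\zeta\notin\Delta_I$, hence in particular $-\zeta\notin\Delta_{k_0}$, so $\zeta\in(\Delta_{k_0})_a$. Since a countable union of null sets is null, the essential infimum of $|\zeta|$ over a countable union equals the infimum of the essential infima, so
\[
\textup{ess\,dist}\big((\Delta_I)_a,0\big)\;\geq\;\inf_{k\in I}\textup{ess\,dist}\big((\Delta_k)_a,0\big)\;\geq\;\delta\,.
\]
Taking $I=\mathbb{N}$ shows $\textup{ess\,dist}(\Delta_a,0)\geq\delta>0$, so $\Delta$ is $\mathscr{F}_{\!_{\scriptstyle\mathbb{R}^{+}}}$-admissible by Lemma~\ref{CryAdm}; taking $I=\{1,\dots,n\}$ shows likewise that each $S_n$ is admissible with $\textup{ess\,dist}\big((S_n)_a,0\big)\geq\delta$.

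Next I would estimate the admissibility norms uniformly in $n$. Using the identity $|\mathds{1}_{\Sigma}(\zeta)-\mathds{1}_{\Sigma}(-\zeta)|=\mathds{1}_{\Sigma_a\cup(-\Sigma_a)}(\zeta)$ (which is exactly the computation carried out in the proof of Lemma~\ref{CryAdm}) together with Definition~\ref{DFAdF}, one gets, for any Borel set $\Sigma\subseteq\textup{\large\(\sigma\)}(\mathscr{F}_{\!_{\scriptstyle\mathbb{R}^{+}}})$,
\[
\|\mathds{1}_{\Sigma}\|_{_{\scriptstyle\mathscr{F}_{\mathbb{R}^{+}}}}
=\underset{\zeta}{\textup{ess\,sup}}\,\Big(\tfrac{|\mathds{1}_{\Sigma}(\zeta)+\mathds{1}_{\Sigma}(-\zeta)|}{2}+\tfrac{\mathds{1}_{\Sigma_a\cup(-\Sigma_a)}(\zeta)}{2|\zeta|}\Big)
\;\leq\;\max\Big(1,\ \tfrac12+\tfrac{1}{2\,\textup{ess\,dist}(\Sigma_a,0)}\Big),
\]
because off $\Sigma_a\cup(-\Sigma_a)$ the second summand vanishes and the first is $\leq1$, while on $\Sigma_a\cup(-\Sigma_a)$ exactly one of $\zeta,-\zeta$ lies in $\Sigma$, so the first summand equals $\tfrac12$. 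Applying this with $\Sigma=S_n$ and the lower bound $\textup{ess\,dist}\big((S_n)_a,0\big)\geq\delta$ gives $\sup_n\|\mathds{1}_{S_n}\|_{_{\scriptstyle\mathscr{F}_{\mathbb{R}^{+}}}}\leq\max\big(1,\tfrac12+\tfrac{1}{2\delta}\big)<\infty$. Since $S_n\uparrow\Delta$, we also have $\mathds{1}_{S_n}(\zeta)\to\mathds{1}_{\Delta}(\zeta)$ for every $\zeta$, hence $m$-almost everywhere. Thus $\{\mathds{1}_{S_n}\}$ satisfies both hypotheses of Theorem~\ref{strCont}, whence $\mathds{1}_{\Delta}\in\mathfrak{B}_{_{{\scriptstyle\mathscr{F}}_{_{{\mathbb{R}}^{+}}}}}$ and $\mathds{1}_{S_n}(\mathscr{F}_{\!_{\scriptstyle\mathbb{R}^{+}}})\to\mathds{1}_{\Delta}(\mathscr{F}_{\!_{\scriptstyle\mathbb{R}^{+}}})$ strongly. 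By Proposition~\ref{AddiPro}, $\mathds{1}_{S_n}(\mathscr{F}_{\!_{\scriptstyle\mathbb{R}^{+}}})=\mathscr{P}_{_{\!{\scriptstyle\mathscr{F}}_{\mathbb{R}^{+}}}}(S_n)=\sum_{k=1}^{n}\mathscr{P}_{_{\!{\scriptstyle\mathscr{F}}_{\mathbb{R}^{+}}}}(\Delta_k)$, and by Definition~\ref{DeSpPro} the strong limit is $\mathscr{P}_{_{\!{\scriptstyle\mathscr{F}}_{\mathbb{R}^{+}}}}(\Delta)$; this is precisely \eqref{CoAdPr}.

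The main obstacle is the uniform control of $\|\mathds{1}_{S_n}\|_{_{\scriptstyle\mathscr{F}_{\mathbb{R}^{+}}}}$: the crude triangle-inequality bound $\|\mathds{1}_{S_n}\|\leq\sum_{k=1}^{n}\|\mathds{1}_{\Delta_k}\|$ grows with $n$ and is useless here. What rescues the argument is the inclusion $(S_n)_a\subseteq\bigcup_k(\Delta_k)_a$, which lets the uniform essential separation from zero of hypothesis~2 pass to the asymmetric parts of all finite partial unions simultaneously; this is the sole place where hypothesis~2 is used in full strength, whereas the pairwise disjointness of hypothesis~1 enters only through the finite-additivity Proposition~\ref{AddiPro}.
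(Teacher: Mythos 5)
Your proposal is correct and follows exactly the route the paper's machinery is built for: the inclusion \((S_n)_a\subseteq\bigcup_k(\Delta_k)_a\) converts the uniform separation hypothesis \eqref{USFZ} into a uniform bound on \(\|\mathds{1}_{S_n}\|_{_{\scriptstyle\mathscr{F}_{\mathbb{R}^{+}}}}\), after which Proposition \ref{AddiPro} and Theorem \ref{strCont} yield the strong convergence of the partial sums to \(\mathscr{P}_{_{\!{\scriptstyle\mathscr{F}}_{\mathbb{R}^{+}}}}(\Delta)\). The paper states Theorem \ref{RCoAddPr} without writing out a proof, but this is evidently the intended argument, and your norm computation is the same as the one underlying Lemma \ref{CryAdm} and Lemma \ref{EsNoPr}.
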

  \begin{lemma}
\label{EsNoPr} Let the set \(\Delta,\
\Delta\subseteq\textup{\large\(\sigma\)}({{{\mathscr{F}}_{\mathbb{R}^{+}}}})\) be
non-empty: \(\Delta\not=\emptyset_e\).
\begin{enumerate}
\item[\textup{1.}]
If the set \(\Delta\) is symmetric, i.e.
\(\textup{mes}\,\Delta_a=0\), then
\[\|\mathscr{P}_{_{\!{\scriptstyle\mathscr{F}}_{\mathbb{R}^{+}}}}(\Delta)\|=1\,.\]
\item[\textup{2.}]
If the set is not symmetric, i.e \(\Delta_a\not=\emptyset_e\),
 then
\[\|\mathscr{P}_{_{\!{\scriptstyle\mathscr{F}}_{\mathbb{R}^{+}}}}(\Delta)\|=\frac{1}{2d}\sqrt{1+2d^2},\]
where \(d=\textup{ess\,dist}\,(\Delta_a,0)\,.\) In particular,
\[\|\mathscr{P}_{_{\!{\scriptstyle\mathscr{F}}_{\mathbb{R}^{+}}}}(\Delta)\|>1\,.\]
\end{enumerate}
\end{lemma}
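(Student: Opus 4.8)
My plan is to split the indicator $\mathds{1}_{\Delta}$ into its symmetric and asymmetric pieces, reduce the norm computation to that of the "asymmetric'' projector, and evaluate the latter via the fiberwise functional model. Concretely, write $\mathds{1}_{\Delta}=\mathds{1}_{\Delta_s}+\mathds{1}_{\Delta_a}$. The set $\Delta_s$ is symmetric, so $(\Delta_s)_a=\emptyset_e$ and $\mathds{1}_{\Delta_s}$ is $\mathscr{F}_{\mathbb{R}^+}$--admissible by Lemma~\ref{CryAdm}; the asymmetric part of $\Delta_a$ is $\Delta_a$ itself, and it is essentially separated from zero because $\Delta$ is admissible (Lemma~\ref{CryAdm} again), so $\mathds{1}_{\Delta_a}$ is admissible as well. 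Since $\Delta_s\cap\Delta_a=\emptyset$, the additivity \eqref{AdPrSpPr1} of the resolvent-based calculus gives $\mathscr{P}(\Delta)=\mathscr{P}(\Delta_s)+\mathscr{P}(\Delta_a)$ and \eqref{AdPrSpPr2} gives $\mathscr{P}(\Delta_s)\mathscr{P}(\Delta_a)=\mathscr{P}(\Delta_a)\mathscr{P}(\Delta_s)=0$, where I abbreviate $\mathscr{P}(\cdot)=\mathscr{P}_{\!_{\scriptstyle\mathscr{F}_{\mathbb{R}^+}}}(\cdot)$. By Theorem~\ref{SAdPrTh} the projector $\mathscr{P}(\Delta_s)$ is \emph{orthogonal}, onto $\mathscr{R}_s:=\mathscr{P}(\Delta_s)L^2(\mathbb{R}^+)$; then $\mathscr{P}(\Delta_s)\mathscr{P}(\Delta_a)=0$ forces $\operatorname{range}\mathscr{P}(\Delta_a)\subseteq\mathscr{R}_s^{\perp}$, and $\mathscr{P}(\Delta_a)\mathscr{P}(\Delta_s)=0$ says $\mathscr{P}(\Delta_a)$ kills $\mathscr{R}_s$. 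Hence, in the orthogonal splitting $L^2(\mathbb{R}^+)=\mathscr{R}_s\oplus\mathscr{R}_s^{\perp}$, the operator $\mathscr{P}(\Delta)$ is block diagonal, equal to the identity on $\mathscr{R}_s$ and to $\mathscr{P}(\Delta_a)|_{\mathscr{R}_s^{\perp}}$ on $\mathscr{R}_s^{\perp}$, so that $\|\mathscr{P}(\Delta)\|=\max\{\,1\text{ if }\mathscr{R}_s\neq\{0\}\text{ else }0,\ \|\mathscr{P}(\Delta_a)\|\,\}$. Everything then comes down to $\|\mathscr{P}(\Delta_a)\|$.

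For part~1 ($\Delta$ symmetric) this is immediate: $\Delta_a=\emptyset_e$, so $\mathds{1}_{\Delta_a}=0$ $m$-a.e., $\mathscr{P}(\Delta_a)=0$, and $\mathscr{P}(\Delta)=\mathscr{P}(\Delta_s)$ is an orthogonal projector. It is nonzero, for otherwise the lower bound in \eqref{TSE} would give $\|\mathds{1}_{\Delta}\|_{\mathscr{F}_{\mathbb{R}^+}}=0$, hence $\mathds{1}_{\Delta}=0$ $m$-a.e. on $\sigma(\mathscr{F}_{\mathbb{R}^+})$, i.e.\ $\Delta=\emptyset_e$, contrary to the hypothesis. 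A nonzero orthogonal projector has norm $1$, which is part~1.

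For part~2 I would invoke the functional model of $\mathscr{F}_{\mathbb{R}^+}$: it realizes $\mathscr{F}_{\mathbb{R}^+}$ fiberwise over the pairs $\{\zeta,-\zeta\}$ with $\zeta\in\sigma^{+}(\mathscr{F}_{\mathbb{R}^+})$, the fiber over such a pair being two-dimensional with $\mathscr{F}_{\mathbb{R}^+}$ acting as a matrix $M(\zeta)$ whose two simple eigenvalues are $\zeta$ and $-\zeta$; correspondingly an admissible $h$ acts on the fiber as $h(M(\zeta))=\tfrac{h(\zeta)+h(-\zeta)}{2}I+\tfrac{h(\zeta)-h(-\zeta)}{2\zeta}M(\zeta)$, and $\|h(\mathscr{F}_{\mathbb{R}^+})\|$ is the $m$-essential supremum over $\zeta$ of the fiber norms $\|h(M(\zeta))\|$. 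For $h=\mathds{1}_{\Delta_a}$, on a pair disjoint (up to $m$-null sets) from $\Delta_a$ the fiber operator is $0$, and on a pair meeting $\Delta_a$ (exactly one of $\zeta,-\zeta$ lies in $\Delta_a$) it is the Riesz spectral projector of $M(\zeta)$ onto the one-dimensional eigenspace of the eigenvalue lying in $\Delta_a$. The norm of such a rank-one idempotent is $1/\sin\theta(\zeta)$, with $\theta(\zeta)$ the angle between the two eigenlines of $M(\zeta)$; writing $\bigl(zI-M(\zeta)\bigr)^{-1}=\tfrac{P_{+}(\zeta)}{z-\zeta}+\tfrac{P_{-}(\zeta)}{z+\zeta}$ with $\|P_{\pm}(\zeta)\|=1/\sin\theta(\zeta)$ and comparing with the resolvent asymptotics \eqref{AsNzp}--\eqref{ConAs} identifies this norm as $1/\sin\theta(\zeta)=C(|\zeta|)=\dfrac{\sqrt{1+2|\zeta|^{2}}}{2|\zeta|}=\tfrac12\sqrt{2+|\zeta|^{-2}}$, which is strictly decreasing in $|\zeta|$. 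The eigenvalues contributed by the pairs meeting $\Delta_a$ fill, up to $m$-null sets, the set $\Delta_a\cup(-\Delta_a)$, whose essential distance to $0$ equals $d=\operatorname{ess\,dist}(\Delta_a,0)$; by monotonicity the essential supremum of $C(|\zeta|)$ over that set is $C(d)$, so $\|\mathscr{P}(\Delta_a)\|=C(d)=\tfrac{1}{2d}\sqrt{1+2d^{2}}$. Finally, since $\Delta_a\neq\emptyset_e$ and $\sigma(\mathscr{F}_{\mathbb{R}^+})\subset\{|\zeta|\le 2^{-1/2}\}$ by \eqref{SpecTFO}, we get $0<d<2^{-1/2}$, hence $C(d)>C(2^{-1/2})=1$; combined with the reduction of the first paragraph, $\|\mathscr{P}(\Delta)\|=\max\{1,C(d)\}=C(d)=\tfrac{1}{2d}\sqrt{1+2d^{2}}>1$.

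The one genuinely non-routine point, and the main obstacle, is pinning down the \emph{exact} constant $\|\mathscr{P}(\Delta_a)\|=C(d)$: the bicontinuity estimate \eqref{TSE} only controls it up to the factor $2$ (indeed $\|\mathds{1}_{\Delta_a}\|_{\mathscr{F}_{\mathbb{R}^+}}=\tfrac12+\tfrac1{2d}$, which is strictly larger than $C(d)$), so one must use the fiberwise description and the identification of the fiber spectral-projector norm with $C(|\zeta|)$ via the resolvent asymptotics. The lower bound $\|\mathscr{P}(\Delta_a)\|\ge C(d)$ in particular has to be produced by test vectors concentrated on the fibers with $|\zeta|$ arbitrarily close to $d$; the upper bound $\le C(d)$ is the uniform fiber estimate together with monotonicity of $C$.
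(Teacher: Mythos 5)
Your overall strategy is sound and is essentially the paper's (implicit) route to this lemma: pass to the model operator \(\mathcal{M}_F\) via \eqref{ModFuCal}, use that \(\|h(\mathscr{F}_{\!_{\scriptstyle\mathbb{R}^{+}}})\|=\underset{\mu}{\textup{ess\,sup}}\,\|h(F(\mu))\|\) (statement 5 of Lemma \ref{Homom}), observe from \eqref{WoEx} that for \(h=\mathds{1}_{\Delta}\) the fiber matrix is \(0\), \(I\), or one of the rank-one spectral projectors \(E_{\pm}(\mu)\) of \eqref{SpPr}, and take the essential supremum using the monotonicity of \(\frac{\sqrt{1+2|\zeta|^{2}}}{2|\zeta|}\) in \(|\zeta|\). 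Your preliminary splitting \(\mathds{1}_{\Delta}=\mathds{1}_{\Delta_s}+\mathds{1}_{\Delta_a}\) with the block-diagonal reduction (via Theorem \ref{SAdPrTh} and \eqref{AdPrSpPr}) is correct but not needed, since the fiberwise computation treats symmetric and asymmetric fibers simultaneously; Part 1, including the non-vanishing argument via \eqref{TSE}, is fine.

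The genuine gap is in how you pin down the exact fiber norm \(\|E_{\pm}(\mu)\|\). You identify it with \(C(|\zeta(\mu)|)=\frac{\sqrt{1+2|\zeta(\mu)|^{2}}}{2|\zeta(\mu)|}\) "by comparing with the resolvent asymptotics \eqref{AsNzp}--\eqref{ConAs}", but those asymptotics concern the \emph{global} resolvent, i.e.\ the supremum over all fibers. From "global \(\geq\) fiber" one only extracts \(\|E_{\pm}(\mu)\|\leq C(|\zeta(\mu)|)\); the reverse inequality — which is exactly what the lower bound \(\|\mathscr{P}_{_{\!{\scriptstyle\mathscr{F}}_{\mathbb{R}^{+}}}}(\Delta)\|\geq\frac{1}{2d}\sqrt{1+2d^{2}}\) needs — does not follow without an extra continuity/uniformity argument showing that the coefficient in \eqref{AsNzp} is not produced by neighbouring fibers. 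Your fallback of "test vectors concentrated on fibers with \(|\zeta|\) close to \(d\)" is circular: such vectors only yield \(\|\mathscr{P}(\Delta)\|\geq\|E_{\pm}(\mu)\|\), which helps only if the equality \(\|E_{\pm}(\mu)\|=C(|\zeta(\mu)|)\) is already known. The clean completion is the direct computation that the paper supplies: Lemma \ref{EsMaPrLe} gives \(\|E_{\pm}(\mu)\|=\cosh\frac{\pi\mu}{2}\) (via \(\det E_{\pm}^{\ast}E_{\pm}=0\), the trace, and \eqref{AuEq}), and \eqref{TrEq} converts this to \(\frac{\sqrt{1+2|\zeta(\mu)|^{2}}}{2|\zeta(\mu)|}\); equivalently, compute \(\sin\theta\) from Lemma \ref{Abev} and use your \(1/\sin\theta\) formula for an oblique rank-one projector. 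A minor additional point: to conclude \(\|\mathscr{P}(\Delta)\|>1\) you should note why \(d<2^{-1/2}\) strictly — if \(\textup{ess\,dist}(\Delta_a,0)=2^{-1/2}\), then \(|\zeta|=2^{-1/2}\) for almost every \(\zeta\in\Delta_a\), a null subset of the interval, contradicting \(\Delta_a\neq\emptyset_e\).
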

\begin{definition}
\label{AdmSub} To every \(\mathscr{F}_{_{\!{\scriptstyle\mathbb{R}}^{+}}}\)-admissible set
\(\Delta\)\,\,,\
\(\Delta\subseteq\textup{\large\(\sigma\)}(\mathscr{F}_E)\)\,, we
relate the subspace
\begin{equation}
\mathcal{H}_{_{\!{\scriptstyle\mathscr{F}}_{\mathbb{R}^{+}}}}(\Delta)\stackrel{\textup{\tiny def}}{=}%
 \mathscr{P}_{_{\!{\scriptstyle\mathscr{F}}_{\mathbb{R}^{+}}}}(\Delta)L^2(\mathbb{R}^+)\,,
\end{equation}
which is the image of the projector
\(\mathscr{P}_{_{\!{\scriptstyle\mathscr{F}}_{\mathbb{R}^{+}}}}\).
\end{definition}

\begin{remark}
For every admissible \(\Delta\), the subspace
\(\mathcal{H}_{_{\!{\scriptstyle\mathscr{F}}_{\mathbb{R}^{+}}}}(\Delta)\) is closed because it is
the null-subspace of the bounded operator
\(\mathcal{I}~-~\mathscr{P}_{_{\!{\scriptstyle\mathscr{F}}_{\mathbb{R}^{+}}}}(\Delta)\).
\end{remark}

Since \(\mathscr{P}_{_{\!{\scriptstyle\mathscr{F}}_{\mathbb{R}^{+}}}}(\emptyset)=0\) and
\(\mathscr{P}_{_{\!{\scriptstyle\mathscr{F}}_{\mathbb{R}^{+}}}}%
(\textup{\large\(\sigma\)}({\mathscr{F}_{\mathbb{R}^{+}}}))%
=\mathscr{I}\),
\begin{equation*}
\mathcal{H}_{_{\!{\scriptstyle\mathscr{F}}_{\mathbb{R}^{+}}}}(\emptyset)=0,\quad
\mathcal{H}_{_{{\scriptstyle\mathscr{F}}_{\mathbb{R}^{+}}}}\!%
(\textup{\large\(\sigma\)}({\mathscr{F}}_{\mathbb{R}^{+}}))=
L^2(\mathbb{R}^{+})\,.
\end{equation*}
In view of \eqref{ColCom},
 the subspace %
\(\mathcal{H}_{_{{\scriptstyle\mathscr{F}}_{\mathbb{R}^{+}}}}%
(\textup{\large\(\sigma\)}({\mathscr{F}}_{\mathbb{R}^{+}})\setminus~\Delta)\) is
the null-space of the projector \(\mathscr{P}_{_{\!{\scriptstyle\mathscr{F}}_{\mathbb{R}^{+}}}}\):
\begin{equation}
\label{NSpSP}
\mathcal{H}_{_{\!{\scriptstyle\mathscr{F}}_{\mathbb{R}^{+}}}}%
(\textup{\large\(\sigma\)}({\mathscr{F}}_{\mathbb{R}^{+}})\setminus~\Delta)=
\lbrace{}x\in{}L^2(\mathbb{R}_+):\,\mathscr{P}_{_{\!{\scriptstyle\mathscr{F}}_{\mathbb{R}^{+}}}}(\Delta)x=0\rbrace\,
\end{equation}
and the subspaces \(\mathcal{H}_{_{{\scriptstyle\mathscr{F}}_{\mathbb{R}^{+}}}}%
(\Delta)\) and
\(\mathcal{H}_{_{{\scriptstyle\mathscr{F}}_{\mathbb{R}^{+}}}}%
(\textup{\large\(\sigma\)}(\mathscr{F}_{\mathbb{R}^{+}})\setminus\Delta)\) are
complementary:
\begin{equation}
\label{CompSu} \mathcal{H}_{_{\!{\scriptstyle\mathscr{F}}_{\mathbb{R}^{+}}}}(\Delta)\dotplus
\mathcal{H}_{_{{\scriptstyle\mathscr{F}}_{\mathbb{R}^{+}}}}%
(\textup{\large\(\sigma\)}(\mathscr{F}_{\mathbb{R}^{+}})\setminus\Delta)=L^2(\mathbb{R}_+)\,.
\end{equation}
(The sum in \eqref{CompSu} is  direct).

Since the projection operator
\(\mathscr{P}_{_{\!{\scriptstyle\mathscr{F}}_{\mathbb{R}^{+}}}}(\Delta)\) is a function of the
operator \(\mathscr{F}_{_{{\scriptstyle\mathbb{R}}^{+}}}\), it commutes with \(\mathscr{F}_{_{{\scriptstyle\mathbb{R}}^{+}}}\):
\begin{equation}
\label{ComRel} \mathscr{P}_{_{\!{\scriptstyle\mathscr{F}}_{\mathbb{R}^{+}}}}(\Delta)\,%
\mathscr{F}_{_{\scriptstyle\mathbb{R}^{+}}}=
\mathscr{F}_{_{{\scriptstyle\mathbb{R}}^{+}}}%
\,\mathscr{P}_{_{\!{\scriptstyle\mathscr{F}}_{\mathbb{R}^{+}}}}(\Delta)\,.
\end{equation}
From \eqref{ComRel} it follows that the pair of complementary
subspaces \(\mathcal{H}_{_{{\scriptstyle\mathscr{F}}_{\mathbb{R}^{+}}}}(\Delta)\) and
\(\mathcal{H}_{_{{\scriptstyle\mathscr{F}}_{\mathbb{R}^{+}}}}%
(\textup{\large\(\sigma\)}({\mathscr{F}}_{\mathbb{R}^{+}})\setminus\Delta)\), %
\eqref{CompSu}, %
reduces the operator \(\mathscr{F}_{_{\!{\scriptstyle\mathbb{R}}^{+}}}\):
\begin{multline}
\label{Redu}%
\mathscr{F}_{_{\!{\scriptstyle\mathbb{R}}^{+}}}%
\mathcal{H}_{_{{\scriptstyle\mathscr{F}}_{\mathbb{R}^{+}}}}(\Delta)\subseteq
\mathcal{H}_{_{{\scriptstyle\mathscr{F}}_{\mathbb{R}^{+}}}}(\Delta),\\
\mathscr{F}_{_{{\scriptstyle\mathbb{R}}^{+}}}%
\mathcal{H}_{_{{\scriptstyle\mathscr{F}}_{\mathbb{R}^{+}}}}%
(\textup{\large\(\sigma\)}(\mathscr{F}_{_{{\scriptstyle\mathbb{R}}^{+}}})\setminus\Delta)\subseteq%
\mathcal{H}_{_{{\scriptstyle\mathscr{F}}_{\mathbb{R}^{+}}}}%
(\textup{\large\(\sigma\)}(\mathscr{F}_{_{\!{\scriptstyle\mathbb{R}}^{+}}})\setminus\Delta)\,.
\end{multline}
In particular,  the  subspace
\(\mathcal{H}_{_{{\scriptstyle\mathscr{F}}_{\mathbb{R}^{+}}}}(\Delta)\) is invariant with  respect
to the operator \(\mathscr{F}_{_{\!{\scriptstyle\mathbb{R}}^{+}}}\), and one can consider the
restriction \(\mathscr{F}_{_{\!{\scriptstyle\mathbb{R}}^{+}}}(\Delta)\) of the operator
\(\mathscr{F}_{_{\!{\scriptstyle\mathbb{R}}^{+}}}\) onto its invariant subspace
\(\mathcal{H}_{_{{\scriptstyle\mathscr{F}}_{\mathbb{R}^{+}}}}(\Delta)\):
\begin{equation}%
\label{Restr} \mathscr{F}_{_{\!{\scriptstyle\mathbb{R}}^{+}}}(\Delta)\stackrel{\textup{\tiny def}}{=}
\mathscr{F}_{_{{\scriptstyle\mathbb{R}^{+}}_{{\scriptstyle|}\mathcal{H}_{_{{\mathscr{F}}_{\mathbb{R}^{+}}}}(\Delta)}}}\,.
\end{equation}
\begin{theorem}
\label{SpeResTe}%
Let \(\Delta\) be an admissible subset of the spectrum
\(\textup{\large\(\sigma\)}(\mathscr{F}_{_{{\scriptstyle\mathbb{R}}^{+}}})\) of the operator
\(\mathscr{F}_{_{{\scriptstyle\mathbb{R}}^{+}}}\). The spectrum of the operator
\(\mathscr{F}_{_{{\scriptstyle\mathbb{R}}^{+}}}(\Delta)\), which acts in the space
\(\mathcal{H}_{_{{\scriptstyle\mathscr{F}}_{\mathbb{R}^{+}}}}(\Delta)\), is the essential closure
\(\textup{ess\,clos}\,(\Delta)\) of the set \(\Delta\):
\begin{equation}
\label{SpeRes}%
\textup{\large\(\sigma\)}(\mathscr{F}_{_{{\scriptstyle\mathbb{R}}^{+}}}(\Delta))=\textup{ess\,clos}\,(\Delta)\,.
\end{equation}
\end{theorem}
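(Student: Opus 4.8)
The plan is to establish the two inclusions of \eqref{SpeRes} separately, after disposing of the degenerate cases. If \(\textup{mes}\,\Delta=0\) then \(\mathscr{P}_{\mathscr{F}_{\mathbb{R}^{+}}}(\Delta)=0\), \(\mathcal{H}_{\mathscr{F}_{\mathbb{R}^{+}}}(\Delta)=\{0\}\), and both sides of \eqref{SpeRes} are empty; if \(\textup{mes}\big(\sigma(\mathscr{F}_{\mathbb{R}^{+}})\setminus\Delta\big)=0\) then \(\mathscr{F}_{\mathbb{R}^{+}}(\Delta)=\mathscr{F}_{\mathbb{R}^{+}}\) and \eqref{SpeRes} is Theorem \ref{SpTFO}. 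So I assume that both \(\Delta\) and its complement in \(\sigma(\mathscr{F}_{\mathbb{R}^{+}})\) have positive measure. I will use throughout that, by the homomorphism property (Theorem \ref{HomomFr}), \(\mathscr{P}_{\mathscr{F}_{\mathbb{R}^{+}}}(\Delta)\) and every \(h(\mathscr{F}_{\mathbb{R}^{+}})\), \(h\in\mathfrak{B}_{\mathscr{F}_{\mathbb{R}^{+}}}\), commute with one another, so that \(\mathcal{H}_{\mathscr{F}_{\mathbb{R}^{+}}}(\Delta)\) is invariant under all of them; and that \(\mathscr{P}_{\mathscr{F}_{\mathbb{R}^{+}}}(\Delta_1)\mathscr{P}_{\mathscr{F}_{\mathbb{R}^{+}}}(\Delta_2)=\mathscr{P}_{\mathscr{F}_{\mathbb{R}^{+}}}(\Delta_1\cap\Delta_2)\), whence \(\mathcal{H}_{\mathscr{F}_{\mathbb{R}^{+}}}(\Delta')\subseteq\mathcal{H}_{\mathscr{F}_{\mathbb{R}^{+}}}(\Delta)\) for \(\Delta'\subseteq\Delta\).

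\emph{The inclusion \(\sigma(\mathscr{F}_{\mathbb{R}^{+}}(\Delta))\subseteq\textup{ess\,clos}\,(\Delta)\).} Fix \(z\notin\textup{ess\,clos}\,(\Delta)\), so \(d:=\textup{ess\,dist}\,(z,\Delta)>0\), and set \(r(\zeta):=\mathds{1}_{{}_{\Delta}}(\zeta)\,(z-\zeta)^{-1}\). I would first check that \(r\) is \(\mathscr{F}_{\mathbb{R}^{+}}\)-admissible: \(r\) is bounded by \(d^{-1}\), and \(\big(r(\zeta)-r(-\zeta)\big)/\zeta\) is essentially bounded — a short case analysis over \(\zeta\in\Delta_s\), \(\zeta\in\Delta_a\), \(\zeta\in-\Delta_a\) and \(\zeta\notin\Delta\cup(-\Delta)\), using that \(\Delta_a\) is essentially separated from zero (Lemma \ref{CryAdm}, since \(\Delta\) is admissible) and that both \(\textup{ess\,dist}\,(z,\Delta_s)\) and \(\textup{ess\,dist}\,(-z,\Delta_s)\) are \(\geq d\) because \(\Delta_s=-\Delta_s\subseteq\Delta\). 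Since \((z-\zeta)\,r(\zeta)=\mathds{1}_{{}_{\Delta}}(\zeta)\) almost everywhere, the homomorphism property gives \((z\mathscr{I}-\mathscr{F}_{\mathbb{R}^{+}})\,r(\mathscr{F}_{\mathbb{R}^{+}})=r(\mathscr{F}_{\mathbb{R}^{+}})\,(z\mathscr{I}-\mathscr{F}_{\mathbb{R}^{+}})=\mathscr{P}_{\mathscr{F}_{\mathbb{R}^{+}}}(\Delta)\). Restricting this identity to the invariant subspace \(\mathcal{H}_{\mathscr{F}_{\mathbb{R}^{+}}}(\Delta)\), on which \(\mathscr{P}_{\mathscr{F}_{\mathbb{R}^{+}}}(\Delta)\) is the identity, exhibits a bounded two-sided inverse of \(z\mathscr{I}-\mathscr{F}_{\mathbb{R}^{+}}(\Delta)\), so \(z\in\rho(\mathscr{F}_{\mathbb{R}^{+}}(\Delta))\).

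\emph{The inclusion \(\textup{ess\,clos}\,(\Delta)\subseteq\sigma(\mathscr{F}_{\mathbb{R}^{+}}(\Delta))\).} Fix \(z_0\in\textup{ess\,clos}\,(\Delta)\) and put \(\Delta_n:=\{\zeta\in\Delta:|\zeta-z_0|<1/n\}\); by definition of the essential closure \(\textup{mes}\,\Delta_n>0\), so \(\mathscr{P}_{\mathscr{F}_{\mathbb{R}^{+}}}(\Delta_n)\neq0\) (Lemma \ref{EsNoPr}) and \(\mathcal{H}_{\mathscr{F}_{\mathbb{R}^{+}}}(\Delta_n)\neq\{0\}\). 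Because \(\Delta_a\) is essentially separated from zero, for all large \(n\) the set \(\Delta_n\) is essentially asymmetric and essentially separated from zero (if \(z_0\neq0\)) or essentially symmetric (if \(z_0=0\)); in either case it is \(\mathscr{F}_{\mathbb{R}^{+}}\)-admissible by Lemma \ref{CryAdm}. The idempotent \(\mathscr{P}_{\mathscr{F}_{\mathbb{R}^{+}}}(\Delta_n)\) commutes with \(\mathscr{F}_{\mathbb{R}^{+}}(\Delta)\) and has range \(\mathcal{H}_{\mathscr{F}_{\mathbb{R}^{+}}}(\Delta_n)\subseteq\mathcal{H}_{\mathscr{F}_{\mathbb{R}^{+}}}(\Delta)\); hence any resolvent of \(\mathscr{F}_{\mathbb{R}^{+}}(\Delta)\) commutes with it and leaves that range invariant, which forces \(\sigma(\mathscr{F}_{\mathbb{R}^{+}}(\Delta_n))\subseteq\sigma(\mathscr{F}_{\mathbb{R}^{+}}(\Delta))\), where \(\mathscr{F}_{\mathbb{R}^{+}}(\Delta_n)\) is the restriction of \(\mathscr{F}_{\mathbb{R}^{+}}(\Delta)\) to \(\mathcal{H}_{\mathscr{F}_{\mathbb{R}^{+}}}(\Delta_n)\). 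The key estimate is that \(h_n(\zeta):=(z_0-\zeta)^2\mathds{1}_{{}_{\Delta_n}}(\zeta)\) has \(\|h_n\|_{\mathscr{F}_{\mathbb{R}^{+}}}\leq C(z_0)\,n^{-2}\): if \(\Delta_n\) is essentially symmetric, \(h_n\) is even, the second term in \eqref{NFAF} vanishes, and \(\|h_n\|_{\mathscr{F}_{\mathbb{R}^{+}}}=\textup{ess\,sup}_{\zeta\in\Delta_n}|\zeta|^2\leq n^{-2}\); if \(\Delta_n\) is essentially asymmetric, \(h_n(-\zeta)=0\) a.e.\ and \(|z_0-\zeta|^2<n^{-2}\) while \(|\zeta|^{-1}\) is bounded on \(\Delta_n\) for \(n\) large, so again \(\|h_n\|_{\mathscr{F}_{\mathbb{R}^{+}}}=O(n^{-2})\). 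Since \((z_0-\zeta)^2\in\textup{hol}(\sigma(\mathscr{F}_{\mathbb{R}^{+}}))\), Lemma \ref{compat} together with the homomorphism property gives \(h_n(\mathscr{F}_{\mathbb{R}^{+}})=(z_0\mathscr{I}-\mathscr{F}_{\mathbb{R}^{+}})^2\,\mathscr{P}_{\mathscr{F}_{\mathbb{R}^{+}}}(\Delta_n)\), whose restriction to \(\mathcal{H}_{\mathscr{F}_{\mathbb{R}^{+}}}(\Delta_n)\) is \(\big(z_0\mathscr{I}-\mathscr{F}_{\mathbb{R}^{+}}(\Delta_n)\big)^2\); by Theorem \ref{Bicont} this operator has norm \(\leq\|h_n\|_{\mathscr{F}_{\mathbb{R}^{+}}}\leq C(z_0)\,n^{-2}\). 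Consequently the spectral radius of \(z_0\mathscr{I}-\mathscr{F}_{\mathbb{R}^{+}}(\Delta_n)\) is \(\leq\sqrt{C(z_0)}\,/n\); since that spectrum is non-empty there is \(\lambda_n\in\sigma(\mathscr{F}_{\mathbb{R}^{+}}(\Delta_n))\subseteq\sigma(\mathscr{F}_{\mathbb{R}^{+}}(\Delta))\) with \(|\lambda_n-z_0|\leq\sqrt{C(z_0)}\,/n\), and letting \(n\to\infty\) (the spectrum of \(\mathscr{F}_{\mathbb{R}^{+}}(\Delta)\) being closed) yields \(z_0\in\sigma(\mathscr{F}_{\mathbb{R}^{+}}(\Delta))\).

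\emph{Main obstacle.} The delicate case is \(z_0=0\). The naive route — building approximate eigenvectors for \(z_0\) from \(g_n(\zeta)=(z_0-\zeta)\mathds{1}_{{}_{\Delta_n}}(\zeta)\) — collapses there, because for \(z_0=0\) one has \(\|g_n\|_{\mathscr{F}_{\mathbb{R}^{+}}}=1\) for every \(n\): the weight \(|\zeta|^{-1}\) in \eqref{NFAF} cancels the factor \(|\zeta|\) exactly. This is the quantitative reflection of \(0\) being a spectral singularity (Remark \ref{spSing}). Squaring repairs the argument, since \(\|(z_0\mathscr{I}-\mathscr{F}_{\mathbb{R}^{+}}(\Delta_n))^2\|\to0\) even when \(\|z_0\mathscr{I}-\mathscr{F}_{\mathbb{R}^{+}}(\Delta_n)\|\) does not, and the spectral points \(\lambda_n\) are then harvested from non-emptiness of the spectrum instead of from honest approximate eigenvectors. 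A minor technical point is the admissibility verification for \(r\) in the first inclusion, where one must keep track of \(\Delta_s\) and of the fact that \(z\) and \(-z\) both stay essentially away from it.
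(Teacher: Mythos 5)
Your proof is correct, and it takes a different route from the one the paper's machinery is built for (the paper in fact states Theorem \ref{SpeResTe} without printing a proof). In the resolvent direction your key computation works: \(r(\zeta)=\mathds{1}_{\Delta}(\zeta)(z-\zeta)^{-1}\) is \(\mathscr{F}_{\!_{\scriptstyle\mathbb{R}^{+}}}\)-admissible whenever \(\textup{ess\,dist}(z,\Delta)>0\), because \(\Delta_s=-\Delta_s\subseteq\Delta\) keeps both \(z\) and \(-z\) essentially away from \(\Delta_s\) while \(\Delta_a\cup(-\Delta_a)\) is essentially separated from zero, and multiplicativity (Theorem \ref{HomomFr}) turns \((z-\zeta)r(\zeta)=\mathds{1}_{\Delta}(\zeta)\) into a bounded two-sided inverse on \(\mathcal{H}_{_{\!{\scriptstyle\mathscr{F}}_{\mathbb{R}^{+}}}}(\Delta)\). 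In the opposite direction the passage to \(\mathcal{H}_{_{\!{\scriptstyle\mathscr{F}}_{\mathbb{R}^{+}}}}(\Delta_n)\) is legitimate precisely because it is the range of a bounded idempotent commuting with the operator (for a general invariant subspace the spectral inclusion you invoke would fail); admissibility of \(\Delta_n\) for large \(n\) is exactly Lemma \ref{CryAdm}; and the bound \(\|(z_0\mathscr{I}-\mathscr{F}_{\!_{\scriptstyle\mathbb{R}^{+}}}(\Delta_n))^{2}\|\leq\|h_n\|_{{}_{\scriptstyle\mathscr{F}_{_{\mathbb{R}^{+}}}}}=O(n^{-2})\) from Theorem \ref{Bicont}, together with the estimate of the spectral radius by \(\|T^{2}\|^{1/2}\) and non-emptiness of the spectrum on the non-zero space (Lemma \ref{EsNoPr}), yields \(z_0\in\textup{\large\(\sigma\)}(\mathscr{F}_{\!_{\scriptstyle\mathbb{R}^{+}}}(\Delta))\); the squaring is exactly what cancels the weight \(|\zeta|^{-1}\) in \eqref{NFAF} at the singular point \(\zeta=0\). (A phrasing quibble only: for \(z_0\neq0\), \(h_n(-\zeta)=0\) a.e.\ only for \(\zeta\in\Delta_n\); on \(-\Delta_n\) the two terms trade places, and the same \(O(n^{-2})\) bound results since the expression in \eqref{NFAF} is invariant under \(\zeta\to-\zeta\).) The paper's intended argument would instead transport everything by the unitary \(U\) of \eqref{UnEq} to the model space \(\mathscr{K}\), where \(\mathscr{P}_{_{\!{\scriptstyle\mathscr{F}}_{\mathbb{R}^{+}}}}(\Delta)\) becomes multiplication by \(\mathds{1}_{\Delta}(F(\mu))=\mathds{1}_{\Delta}(\zeta_{+}(\mu))E_{+}(\mu)+\mathds{1}_{\Delta}(\zeta_{-}(\mu))E_{-}(\mu)\); the restricted operator acts fibrewise, and invertibility of \(z\mathscr{I}\) minus it is read off pointwise in \(\mu\) from the criterion of Theorem \ref{InvCon}, as in Lemmas \ref{psMO} and \ref{csMO}, gluing the matrix spectra into \(\textup{ess\,clos}\,(\Delta)\) with explicit (approximate) eigenvectors built from \(u_{\pm}(\mu)\). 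Your version buys abstraction — it uses only the stated properties of the resolvent-based calculus (Theorems \ref{HomomFr}, \ref{Bicont}, Lemmas \ref{compat}, \ref{CryAdm}, \ref{EsNoPr}) and would apply to any operator carrying such a calculus — at the price of harvesting spectral points indirectly from non-emptiness of the spectrum, whereas the model route computes everything in closed form.
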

Theorem \ref{SpeResTe} justifies the following
\begin{definition}
\label{SpePro} Let
\(\Delta,\,\Delta\in\textup{\large\(\sigma\)}({\mathscr{F}_{_{{\scriptstyle\mathbb{R}}^{+}}}})\),
be a \(\mathscr{F}_{_{{\scriptstyle\mathbb{R}}^{+}}}\)-admissible set. The projector
\(\mathscr{P}_{_{{\scriptstyle\mathscr{F}}_{\mathbb{R}^{+}}}}(\Delta)\)
 defined by \eqref{DSpPro} is said to be the \(\mathscr{F}_{_{{\scriptstyle\mathbb{R}}^{+}}}\)
 \emph{spectral projector corresponding
to  the set \(\Delta\).}

The subspace \(\mathcal{H}_{_{{\scriptstyle\mathscr{F}}_{\mathbb{R}^{+}}}}(\Delta)\)\,--\,the
image of the operator
\(\mathscr{P}_{_{{\scriptstyle\mathscr{F}}_{\mathbb{R}^{+}}}}(\Delta)\)\,--\,is said to be the
\(\mathscr{F}_{_{{\scriptstyle\mathbb{R}}^{+}}}\) \emph{spectral subspace corresponding to  the
set \(\Delta\).}
\end{definition}

\begin{definition}
\label{DefSpMes}%
 The operator-valued function
\(\Delta\to\mathscr{P}_{_{{\scriptstyle\mathscr{F}}_{\mathbb{R}^{+}}}}(\Delta)\) which is defined
on the set  of all
\(\mathscr{F}_{\!_{{\scriptstyle\mathbb{R}^{+}}}}\) admissible
subsets \(\Delta\) of the spectrum
\(\textup{\large\(\sigma\)}({\mathscr{F}_{\!_{\scriptstyle\mathbb{R}^{+}}}})\)
and whose values are spectral projectors
\(\mathscr{P}_{\!_{{\scriptstyle\mathscr{F}}_{\scriptstyle\mathbb{R}^{+}}}}\!\!(\Delta)\)
of the operator
\(\mathscr{F}_{\!_{{\scriptstyle\mathbb{R}^{+}}}}\) is said to be
the \emph{spectral measure of the operator
\(\mathscr{F}_{\!_{{\scriptstyle\mathbb{R}^{+}}}}\).}
\end{definition}
We recall that the spectral measure of
\(\mathscr{F}_{\!_{{\scriptstyle\mathbb{R}^{+}}}}\) possesses some
property of sigma additivity. See Theorem \ref{RCoAddPr}.

 For \(0<\varepsilon\leq 1/\sqrt{2}\), let
\begin{equation}
\label{DeASiE}
\Delta_{+}(\varepsilon)=e^{i\pi/4}\big[\varepsilon,\,1/\sqrt{2}\big],\quad
\Delta_{-}(\varepsilon)=e^{i\pi/4}\big[-1/\sqrt{2},-\varepsilon\big]\,.
\end{equation}
Each of the sets \(\Delta_{+}(\varepsilon),
\Delta_{-}(\varepsilon)\) with \(\varepsilon>0\) is
\(\mathscr{F}_{_{\!{\scriptstyle\mathbb{R}}^{+}}}\)-admissible, however the norms of spectral
projectors
\(\mathscr{P}_{_{{\scriptstyle\mathscr{F}}_{\mathbb{R}^{+}}}}(\Delta_{+}(\varepsilon))\),
\(\mathscr{P}_{_{{\scriptstyle\mathscr{F}}_{\mathbb{R}^{+}}}}(\Delta_{-}(\varepsilon))\) tend to
\(\infty\) as \(\varepsilon\to+0\). Indeed, the sets
\(\Delta_{\pm}(\varepsilon))\) are fully
asymmetric:%
 \[\Delta_{+}(\varepsilon)=(\Delta_{+}(\varepsilon))_{a},\ \
\Delta_{-}(\varepsilon)=(\Delta_{+}(\varepsilon))_{a}.\] It is
clear that
\[\textup{ess\,dist}\big(\Delta_{+}(\varepsilon))_{a},0\big)=\varepsilon,\ \ \textup{ess\,dist}\big(\Delta_{-}(\varepsilon))_{a},0\big)=\varepsilon\]
According to Lemma \ref{EsNoPr},
\begin{equation}
\label{NOPE}
\|\mathscr{P}_{_{{\scriptstyle\mathscr{F}}_{\mathbb{R}^{+}}}}(\Delta_{+}(\varepsilon))\|=%
\frac{1}{2\varepsilon}\,\sqrt{1+2\varepsilon^2},\ \ %
\|\mathscr{P}_{_{{\scriptstyle\mathscr{F}}_{\mathbb{R}^{+}}}}(\Delta_{-}(\varepsilon))\|=%
\frac{1}{2\varepsilon}\,\sqrt{1+2\varepsilon^2}\,.
\end{equation}
In particular,
\begin{equation}
\label{SpSing}
\|\mathscr{P}_{_{{\scriptstyle\mathscr{F}}_{\mathbb{R}^{+}}}}(\Delta_{+}(\varepsilon))\|\to+\infty,\,\
\ \
\|\mathscr{P}_{_{{\scriptstyle\mathscr{F}}_{\mathbb{R}^{+}}}}(\Delta_{-}(\varepsilon))\|\to+\infty\
\ \textup{as}\ \ \varepsilon\to+0\,.
\end{equation}
At the same time, the set
\begin{equation}
\label{DeSiE}
\Delta(\varepsilon)=\Delta_{+}(\varepsilon)\cup\Delta_{-}(\varepsilon)
\end{equation}
is symmetric: \(\big(\Delta(\varepsilon)\big)_{a}=\emptyset\).
According to Lemma \ref{EsNoPr},
\begin{equation*}
\|\mathscr{P}_{_{{\scriptstyle\mathscr{F}}_{\mathbb{R}^{+}}}}(\Delta(\varepsilon))\|=1\quad
\textup{for every}\ \ \ \varepsilon>0,
\end{equation*}
or
\begin{equation}
\label{SpSingP}%
\|\mathscr{P}_{_{{\scriptstyle\mathscr{F}}_{\mathbb{R}^{+}}}}(\Delta_{+}(\varepsilon))+
\mathscr{P}_{_{{\scriptstyle\mathscr{F}}_{\mathbb{R}^{+}}}}(\Delta_{-}(\varepsilon))\|=1 \quad
\textup{for every}\ \ \ \varepsilon>0.
\end{equation}

\vspace{1.0ex} \noindent \emph{Sums of projectors from two
unbounded families form a bounded~family of projectors.}

\vspace{1.0ex} The family
\(\lbrace\Delta(\varepsilon)\rbrace_{\varepsilon>0}\) is
monotonic: \(\Delta(\varepsilon_1)\subseteq\Delta(\varepsilon_1)\)
if \(\varepsilon_1>\varepsilon_2\). Moreover,
\(\bigcup\limits_{\varepsilon>0}\Delta(\varepsilon)=
\textup{\large\(\sigma\)}(\mathscr{F}_{_{{\scriptstyle\mathbb{R}}^{+}}})\setminus0\). Since
\(\mathscr{P}_{_{{\scriptstyle\mathscr{F}}_{\mathbb{R}^{+}}}}%
(\textup{\large\(\sigma\)}(\mathscr{F}_{_{{\scriptstyle\mathbb{R}}^{+}}})\setminus0)=\\
\mathscr{P}_{_{{\scriptstyle\mathscr{F}}_{\mathbb{R}^{+}}}}%
(\textup{\large\(\sigma\)}(\mathscr{F}_{_{{\scriptstyle\mathbb{R}}^{+}}}))=\mathscr{I}\),
then, according to \textup{Theorem \ref{RCoAddPr}}, the following
assertion holds:
\begin{lemma}
\label{StCon}%
The estimate
\begin{equation}%
\label{CorEst}
\|\mathscr{P}_{_{{\scriptstyle\mathscr{F}}_{\mathbb{R}^{+}}}}(\Delta(\varepsilon))\|=1\quad
\textup{for every}\ \ \ \varepsilon>0.
\end{equation}%
and the limiting relation
\begin{equation}
\label{AprId}
\lim_{\varepsilon\to+0}\mathscr{P}_{_{{\scriptstyle\mathscr{F}}_{\mathbb{R}^{+}}}}(\Delta(\varepsilon))=\mathscr{I}\,,
\end{equation}
hold, where convergence is the strong convergence of operators.
\end{lemma}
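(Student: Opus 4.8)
\emph{Overview and the norm identity \eqref{CorEst}.} The plan is to read off both assertions from the structural properties of the spectral measure $\Delta\mapsto\mathscr{P}_{\mathscr{F}_{\mathbb{R}^{+}}}(\Delta)$ already established, the decisive feature being that every set $\Delta(\varepsilon)$ is symmetric. For $0<\varepsilon\le 1/\sqrt2$ the set $\Delta(\varepsilon)=\Delta_{+}(\varepsilon)\cup\Delta_{-}(\varepsilon)$ satisfies $\big(\Delta(\varepsilon)\big)_a=\emptyset_e$ and $\Delta(\varepsilon)\neq\emptyset_e$, so case~1 of Lemma~\ref{EsNoPr} gives $\|\mathscr{P}_{\mathscr{F}_{\mathbb{R}^{+}}}(\Delta(\varepsilon))\|=1$ directly. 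By Theorem~\ref{SAdPrTh} the symmetry of $\Delta(\varepsilon)$ additionally forces $\mathscr{P}_{\mathscr{F}_{\mathbb{R}^{+}}}(\Delta(\varepsilon))$ to be an \emph{orthogonal} projector; I use this below.

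\emph{The strong limit \eqref{AprId}.} Fix a strictly decreasing sequence $\tfrac{1}{\sqrt2}=\varepsilon_0>\varepsilon_1>\varepsilon_2>\cdots\to 0$ and set $D_n=\Delta(\varepsilon_n)\setminus\Delta(\varepsilon_{n-1})$, $n\ge 1$. Since $\varepsilon\mapsto\Delta(\varepsilon)$ is inclusion-decreasing, the sets $D_n$ are pairwise disjoint, and modulo a set of Lebesgue measure zero $D_n=e^{i\pi/4}\big([\varepsilon_n,\varepsilon_{n-1})\cup(-\varepsilon_{n-1},-\varepsilon_n]\big)$; in particular each $D_n$ is symmetric, hence $\mathscr{F}_{\mathbb{R}^{+}}$-admissible (Lemma~\ref{CryAdm}) with $(D_n)_a=\emptyset_e$, so hypothesis \eqref{USFZ} of Theorem~\ref{RCoAddPr} holds vacuously. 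Furthermore $\bigcup_{n\ge1}D_n=\bigcup_{n\ge1}\Delta(\varepsilon_n)=\sigma(\mathscr{F}_{\mathbb{R}^{+}})\setminus\{0\}$ up to a null set, and since $\{0\}$ together with the discarded endpoints is $\emptyset_e$, we have $\mathscr{P}_{\mathscr{F}_{\mathbb{R}^{+}}}\!\big(\bigcup_n D_n\big)=\mathscr{P}_{\mathscr{F}_{\mathbb{R}^{+}}}\!\big(\sigma(\mathscr{F}_{\mathbb{R}^{+}})\big)=\mathscr{I}$. Applying Theorem~\ref{RCoAddPr} therefore gives $\mathscr{I}=\sum_{n\ge1}\mathscr{P}_{\mathscr{F}_{\mathbb{R}^{+}}}(D_n)$ with the series converging strongly, while by the finite additivity of Proposition~\ref{AddiPro} its $N$-th partial sum equals $\mathscr{P}_{\mathscr{F}_{\mathbb{R}^{+}}}\!\big(\bigcup_{n=1}^{N}D_n\big)=\mathscr{P}_{\mathscr{F}_{\mathbb{R}^{+}}}(\Delta(\varepsilon_N))$. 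Hence $\mathscr{P}_{\mathscr{F}_{\mathbb{R}^{+}}}(\Delta(\varepsilon_N))\to\mathscr{I}$ strongly as $N\to\infty$.

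\emph{From the subsequence to the continuous parameter.} Given $\varepsilon$ with $\varepsilon_{N+1}<\varepsilon\le\varepsilon_N$, the inclusion $\Delta(\varepsilon_N)\subseteq\Delta(\varepsilon)$ and \eqref{PrSpPr3} show that the range of the orthogonal projector $\mathscr{P}_{\mathscr{F}_{\mathbb{R}^{+}}}(\Delta(\varepsilon))$ contains the range of $\mathscr{P}_{\mathscr{F}_{\mathbb{R}^{+}}}(\Delta(\varepsilon_N))$; consequently $\big\|\mathscr{P}_{\mathscr{F}_{\mathbb{R}^{+}}}(\Delta(\varepsilon))x-x\big\|\le\big\|\mathscr{P}_{\mathscr{F}_{\mathbb{R}^{+}}}(\Delta(\varepsilon_N))x-x\big\|$ for each $x\in L^2(\mathbb{R}^{+})$ (the orthogonal projection onto the larger subspace is at least as close to $x$). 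As $\varepsilon\to+0$ one has $N\to\infty$, so the right-hand side tends to $0$, which proves \eqref{AprId}.

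\emph{Main obstacle.} No analytic difficulty is left; everything rests on Lemma~\ref{EsNoPr}, the orthogonality statement of Theorem~\ref{SAdPrTh}, and the restricted countable additivity of Theorem~\ref{RCoAddPr}. The two points requiring care are purely organisational: verifying that the telescoping differences $D_n$ are symmetric, so that the ``uniformly essentially separated from $0$'' hypothesis \eqref{USFZ} of Theorem~\ref{RCoAddPr} is satisfied automatically; and upgrading strong convergence along the sequence $\{\varepsilon_N\}$ to strong convergence as $\varepsilon\to+0$, for which the orthogonality of the projectors $\mathscr{P}_{\mathscr{F}_{\mathbb{R}^{+}}}(\Delta(\varepsilon))$ — hence the monotonicity of the distances from $x$ to their nested ranges — is exactly what one needs.
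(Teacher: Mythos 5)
Your proposal is correct and takes essentially the same route as the paper: the norm identity \eqref{CorEst} comes from case 1 of Lemma~\ref{EsNoPr} applied to the symmetric sets \(\Delta(\varepsilon)\), and the strong limit \eqref{AprId} comes from the restricted countable additivity of Theorem~\ref{RCoAddPr} together with \(\mathscr{P}_{_{{\scriptstyle\mathscr{F}}_{\mathbb{R}^{+}}}}\big(\textup{\large\(\sigma\)}(\mathscr{F}_{\!_{\scriptstyle\mathbb{R}^{+}}})\setminus\{0\}\big)=\mathscr{I}\), which is exactly the argument the paper sketches just before the lemma. Your telescoping decomposition into the symmetric annular pieces \(D_n\) (for which \eqref{USFZ} holds trivially) and the orthogonality/nested-range argument passing from the sequence \(\varepsilon_N\) to the continuous parameter \(\varepsilon\) merely supply details that the paper leaves implicit.
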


\begin{corollary}
\label{SiOrFa} As we saw,
\begin{equation}%
\label{Unb}
\sup_{\Delta}\|\mathscr{P}_{_{{\scriptstyle\mathscr{F}}_{\mathbb{R}^{+}}}}(\Delta)\|=\infty\,,
\end{equation}%
where \(\Delta\) runs over the class of all
\(\mathscr{F}_{_{\!{\scriptstyle\mathbb{R}}^{+}}}\)-admissible sets. From this it follows that the
family
\(\big\lbrace\mathscr{P}_{_{{\scriptstyle\mathscr{F}}_{\mathbb{R}^{+}}}}(\Delta)\big\rbrace\) of
spectral projectors is not similar to an orthogonal family of
projectors.
\end{corollary}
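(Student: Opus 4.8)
The plan is to argue by contradiction, exploiting the elementary fact that every orthogonal projector in a Hilbert space has operator norm at most one. So first I would make the quantifiers in the statement explicit: to say that the family \(\{\mathscr{P}_{_{{\scriptstyle\mathscr{F}}_{\mathbb{R}^{+}}}}(\Delta)\}_{\Delta}\) is \emph{similar to an orthogonal family of projectors} means that there is \emph{one} bounded operator \(S\) on \(L^2(\mathbb{R}^{+})\), with bounded inverse \(S^{-1}\), such that \(S\,\mathscr{P}_{_{{\scriptstyle\mathscr{F}}_{\mathbb{R}^{+}}}}(\Delta)\,S^{-1}\) is an orthogonal projector for \emph{every} \(\mathscr{F}_{_{\scriptstyle\mathbb{R}^{+}}}\)-admissible set \(\Delta\). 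I would state this as the hypothesis to be refuted.

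Next I would carry out the norm estimate. Assume such an \(S\) exists. For each admissible \(\Delta\) the operator \(S\,\mathscr{P}_{_{{\scriptstyle\mathscr{F}}_{\mathbb{R}^{+}}}}(\Delta)\,S^{-1}\) is an orthogonal projector, hence either zero or of norm exactly one; in any case \(\|S\,\mathscr{P}_{_{{\scriptstyle\mathscr{F}}_{\mathbb{R}^{+}}}}(\Delta)\,S^{-1}\|\leq 1\). Writing \(\mathscr{P}_{_{{\scriptstyle\mathscr{F}}_{\mathbb{R}^{+}}}}(\Delta)=S^{-1}\bigl(S\,\mathscr{P}_{_{{\scriptstyle\mathscr{F}}_{\mathbb{R}^{+}}}}(\Delta)\,S^{-1}\bigr)S\) and using submultiplicativity of the operator norm yields \(\|\mathscr{P}_{_{{\scriptstyle\mathscr{F}}_{\mathbb{R}^{+}}}}(\Delta)\|\leq \|S^{-1}\|\cdot 1\cdot\|S\|=\|S\|\,\|S^{-1}\|\), a bound independent of \(\Delta\). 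Taking the supremum over all admissible \(\Delta\) gives \(\sup_{\Delta}\|\mathscr{P}_{_{{\scriptstyle\mathscr{F}}_{\mathbb{R}^{+}}}}(\Delta)\|\leq\|S\|\,\|S^{-1}\|<\infty\), which contradicts \eqref{Unb}. Therefore no such \(S\) exists.

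The only genuine input here is \eqref{Unb}, which has already been obtained above: by Lemma \ref{EsNoPr} applied to the fully asymmetric sets \(\Delta_{\pm}(\varepsilon)\) of \eqref{DeASiE} one has \(\|\mathscr{P}_{_{{\scriptstyle\mathscr{F}}_{\mathbb{R}^{+}}}}(\Delta_{+}(\varepsilon))\|=\tfrac{1}{2\varepsilon}\sqrt{1+2\varepsilon^{2}}\to\infty\) as \(\varepsilon\to+0\). So the corollary is a direct consequence, with no delicate step remaining. The one point I would be careful to emphasize — and the only place a reader might stumble — is the quantifier: similarity of the \emph{family} requires a single common intertwiner \(S\), whereas each individual idempotent \(\mathscr{P}_{_{{\scriptstyle\mathscr{F}}_{\mathbb{R}^{+}}}}(\Delta)\) is, separately, trivially similar to an orthogonal projector. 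I do not anticipate any obstacle beyond making this distinction explicit.
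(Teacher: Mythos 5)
Your proof is correct and follows exactly the route the paper intends: the unboundedness \eqref{Unb} comes from Lemma \ref{EsNoPr} applied to the fully asymmetric sets $\Delta_{\pm}(\varepsilon)$ (as in \eqref{NOPE}, \eqref{SpSing}), and the similarity claim is refuted by the standard observation that a single bounded invertible intertwiner $S$ would force $\|\mathscr{P}_{_{{\scriptstyle\mathscr{F}}_{\mathbb{R}^{+}}}}(\Delta)\|\leq\|S\|\,\|S^{-1}\|$ uniformly in $\Delta$, since orthogonal projectors have norm at most one. Your explicit emphasis on the quantifier (one common $S$ for the whole family) is exactly the right point to make, and nothing is missing.
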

\begin{lemma}
By contrast with \eqref{Unb},
\begin{equation}%
\label{EFPr}
\|\mathscr{F}_{_{\!{\scriptstyle\mathbb{R}}^{+}}}\mathscr{P}_{_{{\scriptstyle\mathscr{F}}_{\mathbb{R}^{+}}}}(\Delta)\|\leq \frac{\sqrt{2}+1}{2\sqrt{2}} \ \
\textup{for every admissible} \ \ \Delta\,.
\end{equation}%
\end{lemma}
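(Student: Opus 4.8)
The plan is to express $\mathscr{F}_{\!_{\scriptstyle\mathbb{R}^{+}}}\mathscr{P}_{\!_{{\scriptstyle\mathscr{F}}_{\mathbb{R}^{+}}}}(\Delta)$ as a single function of $\mathscr{F}_{\!_{\scriptstyle\mathbb{R}^{+}}}$ in the resolvent-based calculus and then to invoke the upper half of the two-sided estimate \eqref{TSE}. The function $g(\zeta)=\zeta$ lies in $\textup{hol}(\textup{\large\(\sigma\)}(\mathscr{F}_{\!_{\scriptstyle\mathbb{R}^{+}}}))\subset\mathfrak{B}_{_{{\scriptstyle\mathscr{F}}_{_{{\mathbb{R}}^{+}}}}}$ by Lemma~\ref{compat}, and $g(\mathscr{F}_{\!_{\scriptstyle\mathbb{R}^{+}}})=\mathscr{F}_{\!_{\scriptstyle\mathbb{R}^{+}}}$ by item~2 of Theorem~\ref{HomomFr}; the indicator $\mathds{1}_{{}_\Delta}$ lies in $\mathfrak{B}_{_{{\scriptstyle\mathscr{F}}_{_{{\mathbb{R}}^{+}}}}}$ by admissibility of $\Delta$, with $\mathds{1}_{{}_\Delta}(\mathscr{F}_{\!_{\scriptstyle\mathbb{R}^{+}}})=\mathscr{P}_{\!_{{\scriptstyle\mathscr{F}}_{\mathbb{R}^{+}}}}(\Delta)$. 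Since $\mathfrak{B}_{_{{\scriptstyle\mathscr{F}}_{_{{\mathbb{R}}^{+}}}}}$ is a Banach algebra (Lemma~\ref{BanAl}), the product $h(\zeta):=\zeta\,\mathds{1}_{{}_\Delta}(\zeta)$ is again $\mathscr{F}_{\!_{\scriptstyle\mathbb{R}^{+}}}$-admissible, and by multiplicativity (item~4 of Theorem~\ref{HomomFr}) one has $h(\mathscr{F}_{\!_{\scriptstyle\mathbb{R}^{+}}})=\mathscr{F}_{\!_{\scriptstyle\mathbb{R}^{+}}}\mathscr{P}_{\!_{{\scriptstyle\mathscr{F}}_{\mathbb{R}^{+}}}}(\Delta)$. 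Thus \eqref{TSE} reduces \eqref{EFPr} to the scalar estimate $\|h\|_{_{\scriptstyle\mathscr{F}_{\mathbb{R}^{+}}}}\le\tfrac{\sqrt{2}+1}{2\sqrt{2}}$.

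To estimate $\|h\|_{_{\scriptstyle\mathscr{F}_{\mathbb{R}^{+}}}}$ I would substitute $h(\zeta)=\zeta\,\mathds{1}_{{}_\Delta}(\zeta)$ and $h(-\zeta)=-\zeta\,\mathds{1}_{{}_\Delta}(-\zeta)$ into \eqref{NFAF}. One gets
\begin{gather*}
\tfrac{|h(\zeta)+h(-\zeta)|}{2}=\tfrac{|\zeta|}{2}\,\bigl|\mathds{1}_{{}_\Delta}(\zeta)-\mathds{1}_{{}_\Delta}(-\zeta)\bigr|,\\
\tfrac{|h(\zeta)-h(-\zeta)|}{2|\zeta|}=\tfrac{1}{2}\bigl(\mathds{1}_{{}_\Delta}(\zeta)+\mathds{1}_{{}_\Delta}(-\zeta)\bigr),
\end{gather*}
so the quantity under the essential supremum in \eqref{NFAF} equals $\tfrac{|\zeta|}{2}\,\bigl|\mathds{1}_{{}_\Delta}(\zeta)-\mathds{1}_{{}_\Delta}(-\zeta)\bigr|+\tfrac{1}{2}\bigl(\mathds{1}_{{}_\Delta}(\zeta)+\mathds{1}_{{}_\Delta}(-\zeta)\bigr)$. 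Running over the four value-pairs of $(\mathds{1}_{{}_\Delta}(\zeta),\mathds{1}_{{}_\Delta}(-\zeta))\in\{0,1\}^{2}$ and using $|\zeta|\le 2^{-1/2}$ on $\textup{\large\(\sigma\)}(\mathscr{F}_{\!_{\scriptstyle\mathbb{R}^{+}}})$, this is $0$ when $\zeta,-\zeta\notin\Delta$; it is $\tfrac{|\zeta|+1}{2}\le\tfrac{2^{-1/2}+1}{2}=\tfrac{\sqrt{2}+1}{2\sqrt{2}}$ when exactly one of $\zeta,-\zeta$ lies in $\Delta$; and it is $1$ when both lie in $\Delta$. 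Hence $\|h\|_{_{\scriptstyle\mathscr{F}_{\mathbb{R}^{+}}}}\le\tfrac{\sqrt{2}+1}{2\sqrt{2}}$ as soon as the overlap $\Delta\cap(-\Delta)$ is $m$-null — in particular for the fully asymmetric sets $\Delta_{\pm}(\varepsilon)$ of \eqref{DeASiE}, whose own projector norms \eqref{NOPE} are unbounded; so this computation already exhibits the contrast between \eqref{Unb} and \eqref{EFPr}.

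The main obstacle is the symmetric overlap $\Delta_{s}=\Delta\cap(-\Delta)$: on $\Delta_{s}$ one has $h(\zeta)=\zeta$, the integrand in \eqref{NFAF} is identically $1$, and so \eqref{TSE} alone yields only $\|\mathscr{F}_{\!_{\scriptstyle\mathbb{R}^{+}}}\mathscr{P}_{\!_{{\scriptstyle\mathscr{F}}_{\mathbb{R}^{+}}}}(\Delta)\|\le 1$. To push past this I would split $\Delta=\Delta_{s}\sqcup\Delta_{a}$ — both pieces are admissible, $\Delta_{a}$ being essentially separated from $0$ (Lemma~\ref{CryAdm}) and $\Delta_{s}$ being symmetric — use additivity of the spectral measure (Proposition~\ref{AddiPro}) to write $\mathscr{F}_{\!_{\scriptstyle\mathbb{R}^{+}}}\mathscr{P}(\Delta)=\mathscr{F}_{\!_{\scriptstyle\mathbb{R}^{+}}}\mathscr{P}(\Delta_{s})+\mathscr{F}_{\!_{\scriptstyle\mathbb{R}^{+}}}\mathscr{P}(\Delta_{a})$, and exploit that the ranges $\mathcal{H}_{_{{\scriptstyle\mathscr{F}}_{\mathbb{R}^{+}}}}(\Delta_{s})$ and $\mathcal{H}_{_{{\scriptstyle\mathscr{F}}_{\mathbb{R}^{+}}}}(\Delta_{a})$ are mutually orthogonal (Theorem~\ref{PrDSOrTh}) and each $\mathscr{F}_{\!_{\scriptstyle\mathbb{R}^{+}}}$-reducing. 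The asymmetric summand is controlled by the computation above; the remaining task is to bound $\|\mathscr{F}_{\!_{\scriptstyle\mathbb{R}^{+}}}\mathscr{P}(\Delta_{s})\|=\|\mathscr{F}_{\!_{\scriptstyle\mathbb{R}^{+}}}|_{\mathcal{H}_{_{{\scriptstyle\mathscr{F}}_{\mathbb{R}^{+}}}}(\Delta_{s})}\|$. This is precisely the step where I expect the genuine difficulty: it cannot be read off from \eqref{TSE}, but only from the explicit functional model of $\mathscr{F}_{\!_{\scriptstyle\mathbb{R}^{+}}}$, using that the spectrum of $\mathscr{F}_{\!_{\scriptstyle\mathbb{R}^{+}}}|_{\mathcal{H}(\Delta_{s})}$ lies in the disc $|\zeta|\le 2^{-1/2}$ together with the precise off-diagonal form of the model. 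Weighing the symmetric and asymmetric contributions against each other inside the single constant $\tfrac{\sqrt{2}+1}{2\sqrt{2}}$ is, I expect, the heart of the proof.
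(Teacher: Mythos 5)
Your reduction is correct as far as it goes: $h(\zeta)=\zeta\,\mathds{1}_{{}_\Delta}(\zeta)$ is admissible (Lemma~\ref{BanAl}), $h(\mathscr{F}_{\!_{\scriptstyle\mathbb{R}^{+}}})=\mathscr{F}_{\!_{\scriptstyle\mathbb{R}^{+}}}\mathscr{P}_{\!_{{\scriptstyle\mathscr{F}}_{\mathbb{R}^{+}}}}(\Delta)$ by Theorem~\ref{HomomFr}, and your evaluation of the quantity in \eqref{NFAF} is exact: it is $0$, \ $\tfrac{|\zeta|+1}{2}\le\tfrac{\sqrt2+1}{2\sqrt2}$, or $1$ according to whether none, exactly one, or both of $\zeta,-\zeta$ lie in $\Delta$. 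So via \eqref{TSE} you have genuinely proved the stated constant whenever $m(\Delta_s)=0$ (in particular for $\Delta_{\pm}(\varepsilon)$ of \eqref{DeASiE}, which is enough to exhibit the contrast with \eqref{Unb} and \eqref{NOPE}), and the bound $\le 1$ for general admissible $\Delta$.

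The step you defer to "the heart of the proof", however, cannot be supplied, because on the symmetric part the claimed constant is simply wrong. In the model of Theorem~\ref{SRTFTha}, $\mathscr{F}_{\!_{\scriptstyle\mathbb{R}^{+}}}\mathscr{P}_{\!_{{\scriptstyle\mathscr{F}}_{\mathbb{R}^{+}}}}(\Delta_s)$ becomes multiplication by $F(\mu)$ on the $\mu$-set corresponding to $\Delta_s$ (for a symmetric set the matrix $\mathds{1}_{{}_{\Delta_s}}(F(\mu))$ is either $I$ or $0$), and $\|F(\mu)\|=|f_{+-}(\mu)|=(1+e^{-2\pi\mu})^{-1/2}$ increases to $1$ as $\zeta(\mu)\to0$; see \eqref{Matr}, \eqref{AVMe}, \eqref{MaFCSu}, \eqref{EsNoF2}. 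Hence as soon as $\Delta_s$ has positive measure clustering at $\zeta=0$, one gets $\|\mathscr{F}_{\!_{\scriptstyle\mathbb{R}^{+}}}\mathscr{P}_{\!_{{\scriptstyle\mathscr{F}}_{\mathbb{R}^{+}}}}(\Delta)\|$ arbitrarily close to (indeed equal to) $1>\tfrac{\sqrt2+1}{2\sqrt2}$. The extreme case is $\Delta=\textup{\large\(\sigma\)}(\mathscr{F}_{\!_{\scriptstyle\mathbb{R}^{+}}})$: then $\mathscr{P}_{\!_{{\scriptstyle\mathscr{F}}_{\mathbb{R}^{+}}}}(\Delta)=\mathscr{I}$ by \eqref{PrSpPr2}, so the left-hand side of \eqref{EFPr} equals $\|\mathscr{F}_{\!_{\scriptstyle\mathbb{R}^{+}}}\|=1$ by \eqref{NorTrF}. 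So the lemma as printed is inconsistent with the paper's own assertions (the same conflict appears in \eqref{EFPrS3} with $h\equiv1$), and the paper supplies no proof of it that could resolve the matter. The correct conclusions available by your method are: the constant $\tfrac{\sqrt2+1}{2\sqrt2}$ for admissible $\Delta$ with $\Delta_s=\emptyset_e$, and the uniform bound $\|\mathscr{F}_{\!_{\scriptstyle\mathbb{R}^{+}}}\mathscr{P}_{\!_{{\scriptstyle\mathscr{F}}_{\mathbb{R}^{+}}}}(\Delta)\|\le\max\bigl(1,\tfrac{\sqrt2+1}{2\sqrt2}\bigr)=1$ for all admissible $\Delta$, which is what actually contrasts with \eqref{Unb}. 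You should report the discrepancy rather than search for a "weighing" argument that does not exist.
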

In fact, following estimate holds.
\begin{lemma}
Let \(h(\zeta)\) be any \(\mathscr{F}_{_{\!{\scriptstyle\mathbb{R}}^{+}}}\)-admissible complex-valued function.
Then
\begin{subequations}
\label{EFPrS   }
\begin{enumerate}
\item[\textup{1}.]
\begin{equation}
\label{EFPrS1} \|\mathscr{F}_{_{\!{\scriptstyle\mathbb{R}}^{+}}}h(\mathscr{F}_{_{\!{\scriptstyle\mathbb{R}}^{+}}})\|\leq
\sqrt{\frac{3}{2}}\cdot\underset{\zeta\in\textup{\small\(\sigma\)}(\mathscr{F}_{_{\!{\scriptstyle\mathbb{R}}^{+}}})}{\textup{ess
sup}}\,|h(\zeta)|\,.
\end{equation}
\item[\textup{2}.] If moreover the function \(h\) is real-valued: \(h(\zeta)\in\mathbb{R}\) for
 \(\zeta\in\textup{\large\(\sigma\)}(\mathscr{F}_{_{\!{\scriptstyle\mathbb{R}}^{+}}})\),
then
\begin{equation}
\label{EFPrS2} \|\mathscr{F}_{_{\!{\scriptstyle\mathbb{R}}^{+}}}h(\mathscr{F}_{_{\!{\scriptstyle\mathbb{R}}^{+}}})\|\leq
\underset{\zeta\in\textup{\small\(\sigma\)}(\mathscr{F}_{_{\!{\scriptstyle\mathbb{R}}^{+}}})}{\textup{ess
sup}}\,|h(\zeta)|\,.
\end{equation}
\item[\textup{3}.] If the function \(h\) takes non-negative values: \(h(\zeta)\in[0,\infty)\) for
 \(\zeta\in\textup{\large\(\sigma\)}(\mathscr{F}_{_{\!{\scriptstyle\mathbb{R}}^{+}}})\),
then
\begin{equation}
\label{EFPrS3} \|\mathscr{F}_{_{\!{\scriptstyle\mathbb{R}}^{+}}}h(\mathscr{F}_{_{\!{\scriptstyle\mathbb{R}}^{+}}})\|\leq
\frac{\sqrt{2}+1}{2\sqrt{2}}\,\underset{\zeta\in\textup{\small\(\sigma\)}%
(\mathscr{F}_{_{\!{\scriptstyle\mathbb{R}}^{+}}})}{\textup{ess
sup}}\,|h(\zeta)|\,.
\end{equation}
\end{enumerate}
\end{subequations}
\end{lemma}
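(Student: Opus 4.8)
The plan is to reduce the whole lemma to the functional-calculus homomorphism (Theorem \ref{HomomFr}) together with the two-sided norm estimate (Theorem \ref{Bicont}). Put $g(\zeta)=\zeta$; then $g$ is $\mathscr{F}_{\mathbb{R}^+}$-admissible with $\|g\|_{\mathscr{F}_{\mathbb{R}^+}}=1$ (directly from \eqref{NFAF}, since $g(\zeta)+g(-\zeta)\equiv 0$ and $g(\zeta)-g(-\zeta)=2\zeta$), so by Lemma \ref{BanAl} the product $\psi:=gh$ lies in $\mathfrak{B}_{\mathscr{F}_{\mathbb{R}^+}}$ whenever $h$ does, with $\|\psi\|_{\mathscr{F}_{\mathbb{R}^+}}\le\|h\|_{\mathscr{F}_{\mathbb{R}^+}}$. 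By items~2 and~4 of Theorem \ref{HomomFr}, $\mathscr{F}_{\mathbb{R}^+}h(\mathscr{F}_{\mathbb{R}^+})=g(\mathscr{F}_{\mathbb{R}^+})\,h(\mathscr{F}_{\mathbb{R}^+})=\psi(\mathscr{F}_{\mathbb{R}^+})$, and Theorem \ref{Bicont} gives $\|\mathscr{F}_{\mathbb{R}^+}h(\mathscr{F}_{\mathbb{R}^+})\|=\|\psi(\mathscr{F}_{\mathbb{R}^+})\|\le\|\psi\|_{\mathscr{F}_{\mathbb{R}^+}}$. Hence it suffices to bound $\|\psi\|_{\mathscr{F}_{\mathbb{R}^+}}$ from above by the right-hand sides of \eqref{EFPrS1}--\eqref{EFPrS3}.

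The decisive point is that multiplying $h$ by $g$ kills the singular factor $1/|\zeta|$ in \eqref{NFAF}: since $\zeta h(\zeta)+(-\zeta)h(-\zeta)=\zeta\bigl(h(\zeta)-h(-\zeta)\bigr)$ and $\zeta h(\zeta)-(-\zeta)h(-\zeta)=\zeta\bigl(h(\zeta)+h(-\zeta)\bigr)$, one obtains
$$\|\psi\|_{\mathscr{F}_{\mathbb{R}^+}}=\operatorname{ess\,sup}_{\zeta\in\sigma(\mathscr{F}_{\mathbb{R}^+})}\Bigl(\tfrac{|\zeta|}{2}\,|h(\zeta)-h(-\zeta)|+\tfrac12\,|h(\zeta)+h(-\zeta)|\Bigr),$$
a quantity involving only the values of $h$, with weight $|\zeta|\le 1/\sqrt2$ on the spectrum. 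Write $M=\operatorname{ess\,sup}_{\zeta}|h(\zeta)|$; for $m$-a.e.\ $\zeta$ both $|h(\zeta)|\le M$ and $|h(-\zeta)|\le M$, as $\sigma(\mathscr{F}_{\mathbb{R}^+})$ is symmetric and Lebesgue measure is invariant under $\zeta\mapsto-\zeta$. For item~1, apply the parallelogram law $|h(\zeta)-h(-\zeta)|^2+|h(\zeta)+h(-\zeta)|^2=2|h(\zeta)|^2+2|h(-\zeta)|^2\le 4M^2$ together with Cauchy--Schwarz with weights $\bigl(|\zeta|/2,\,1/2\bigr)$; the pointwise bound comes out as $\sqrt{|\zeta|^2+1}\,M\le\sqrt{3/2}\,M$, which is \eqref{EFPrS1}.

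Items~2 and~3 need a sign/phase-sensitive pointwise estimate. Fix $\zeta$ and set $p=h(\zeta)$, $q=h(-\zeta)$. If $h$ is real-valued then $p,q\in\mathbb{R}$, and exactly one of $|p+q|,\,|p-q|$ equals $|p|+|q|$ and the other equals $\bigl||p|-|q|\bigr|$. In the case $pq\ge0$ the quantity is $\tfrac{|\zeta|}{2}\bigl||p|-|q|\bigr|+\tfrac12(|p|+|q|)=\tfrac12\bigl((1+|\zeta|)\max(|p|,|q|)+(1-|\zeta|)\min(|p|,|q|)\bigr)\le M$, because both coefficients are nonnegative ($|\zeta|<1$) and the maximum is attained at $|p|=|q|=M$; in the case $pq<0$ it is $\tfrac{|\zeta|}{2}(|p|+|q|)+\tfrac12\bigl||p|-|q|\bigr|\le\tfrac12(1+|\zeta|)M\le\tfrac{\sqrt2+1}{2\sqrt2}M$. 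Either way the bound is $\le M$, which gives \eqref{EFPrS2}. For item~3 the hypothesis $h\ge0$ forces $p,q\ge0$, i.e.\ $|p-q|\le p+q$, and one optimizes $\tfrac{|\zeta|}{2}|p-q|+\tfrac12|p+q|$ over $p,q\in[0,M]$ and $|\zeta|\in[0,1/\sqrt2]$ to extract the constant $\tfrac{\sqrt2+1}{2\sqrt2}$ of \eqref{EFPrS3}.

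The step I expect to be the real obstacle is pinning down the sharp constants of items~2 and especially~3 (the constant $\tfrac{\sqrt2+1}{2\sqrt2}<1$): the majorant $\|\psi(\mathscr{F}_{\mathbb{R}^+})\|\le\|\psi\|_{\mathscr{F}_{\mathbb{R}^+}}$ supplied by Theorem \ref{Bicont} is in general lossy, and to get below it one presumably has to pass to the functional model of $\mathscr{F}_{\mathbb{R}^+}$, in which $\psi(\mathscr{F}_{\mathbb{R}^+})$ is unitarily equivalent to multiplication by an explicit $2\times2$ matrix-valued function built from the even part $h_{\mathrm e}(\zeta)=\tfrac12\bigl(h(\zeta)+h(-\zeta)\bigr)$ and the odd part $h_{\mathrm o}(\zeta)=\tfrac12\bigl(h(\zeta)-h(-\zeta)\bigr)$ of $h$, and then compute the supremum of the operator norm of that matrix under the constraints imposed by the hypotheses ($h_{\mathrm e},h_{\mathrm o}$ real for item~2; $|h_{\mathrm o}|\le|h_{\mathrm e}|$ for item~3). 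That matrix optimization, together with the elementary $|\zeta|\le1/\sqrt2$ bookkeeping, is where the bulk of the computation lies.
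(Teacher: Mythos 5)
Your reduction is the right one and, for items 1 and 2, it actually works with no loss: with $g(\zeta)=\zeta$ and $\psi=gh$, Theorem \ref{HomomFr} gives $\mathscr{F}_{\mathbb{R}^{+}}h(\mathscr{F}_{\mathbb{R}^{+}})=\psi(\mathscr{F}_{\mathbb{R}^{+}})$, your identity $\|\psi\|_{\mathscr{F}_{\mathbb{R}^{+}}}=\operatorname{ess\,sup}\bigl(\tfrac12|h(\zeta)+h(-\zeta)|+\tfrac{|\zeta|}{2}|h(\zeta)-h(-\zeta)|\bigr)$ is correct, the parallelogram law plus Cauchy--Schwarz gives exactly the factor $\sqrt{|\zeta|^{2}+1}\le\sqrt{3/2}$, and your sign analysis for real-valued $h$ gives the bound $M:=\operatorname{ess\,sup}|h|$. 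So for \eqref{EFPrS1} and \eqref{EFPrS2} the majorant $\|\psi(\mathscr{F}_{\mathbb{R}^{+}})\|\le\|\psi\|_{\mathscr{F}_{\mathbb{R}^{+}}}$ of Theorem \ref{Bicont} is already sharp enough, and your worry that one must pass to the functional model is unnecessary there; the model would only replace $\|\psi\|_{\mathscr{F}_{\mathbb{R}^{+}}}$ by the exact quantity $\operatorname{ess\,sup}_{\mu}\|F(\mu)h(F(\mu))\|$, with $F(\mu)$ from \eqref{Matr} and $h(F(\mu))$ from \eqref{WoEx}.

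The genuine gap is item 3, and it is not a matter of a cleverer optimization. For $p,q\in[0,M]$ one has $\tfrac{|\zeta|}{2}|p-q|+\tfrac12(p+q)=\tfrac12\bigl((1+|\zeta|)\max(p,q)+(1-|\zeta|)\min(p,q)\bigr)$, whose supremum is $M$, attained at the symmetric point $p=q=M$; so the optimization you appeal to only reproves \eqref{EFPrS2} and cannot ``extract'' $\tfrac{\sqrt2+1}{2\sqrt2}$. That constant is what the asymmetric configuration produces: if at a.e.\ pair $\{\zeta,-\zeta\}$ at most one of $h(\zeta),h(-\zeta)$ is nonzero (as for indicators of fully asymmetric sets), then $F(\mu)h(F(\mu))=\tfrac{h(\zeta)}{2}\bigl[\begin{smallmatrix}\zeta & f_{+-}(\mu)\\ f_{-+}(\mu) & \zeta\end{smallmatrix}\bigr]$ and a Schur-type bound gives $\tfrac12\bigl(|\zeta|+|f_{+-}(\mu)|\bigr)M\le\tfrac12\bigl(\tfrac1{\sqrt2}+1\bigr)M=\tfrac{\sqrt2+1}{2\sqrt2}M$; mere non-negativity does not exclude the symmetric worst case. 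Nor can the functional model rescue the step: $\|\mathscr{F}_{\mathbb{R}^{+}}h(\mathscr{F}_{\mathbb{R}^{+}})\|=\operatorname{ess\,sup}_{\mu}\|F(\mu)h(F(\mu))\|$, and already for $h\equiv1$ (non-negative and admissible) this equals $\operatorname{ess\,sup}_{\mu}\|F(\mu)\|=1$ by \eqref{EsNoF2} (equivalently \eqref{NorTrF}), which exceeds $\tfrac{\sqrt2+1}{2\sqrt2}\approx0.854$. So \eqref{EFPrS3}, read with the sole hypothesis $h\ge0$, is incompatible with the rest of the paper: under $h\ge0$ the best general bound is \eqref{EFPrS2} (sharp, witnessed by $h\equiv1$), and the constant $\tfrac{\sqrt2+1}{2\sqrt2}$ requires an additional asymmetry-type hypothesis of the kind just described. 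You should state this obstruction explicitly rather than defer it to a matrix optimization that cannot deliver the claimed constant.
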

\section{Functions of the operator
\mathversion{bold}%
\(\bm{\mathscr{F}}_{\!_{\scriptstyle\mathbb{R}^{+}}}\) as
integrals over its spectral measure.} \mathversion{normal}
 After the spectral projectors
\(\mathscr{P}_{_{{\scriptstyle\mathscr{F}}_{\mathbb{R}^{+}}}}(\Delta)\) were introduced,
\eqref{DSpPro}, and investigated, (see in particular Lemma
\eqref{SpeResTe}), the question arises: how to represent the
original operator \(\mathscr{F}_{_{\!{\scriptstyle\mathbb{R}}^{+}}}\) in terms of these spectral
projectors. Our goal here is to give a meaning to the
representation
\begin{equation}
\label{SpRepr}
\mathscr{F}_{_{\!{\scriptstyle\mathbb{R}}^{+}}}=\int\limits_{\textup{\small\(\sigma\)}_{\!\mathscr{F}_{\!E}}}%
\zeta\,\mathscr{P}_{_{{\scriptstyle\mathscr{F}}_{\mathbb{R}^{+}}}}(d\zeta)\,,
\end{equation}
and more generally,
\begin{equation}
\emph{\emph{}}\label{SpReprf}
f(\mathscr{F}_{_{\!{\scriptstyle\mathbb{R}}^{+}}})=\int\limits_{\textup{\small\(\sigma\)}_{\!\mathscr{F}_{\!E}}}%
f(\zeta)\,\mathscr{P}_{_{{\scriptstyle\mathscr{F}}_{\mathbb{R}^{+}}}}(d\zeta)\,.
\end{equation}
We emphasize  that the operator \(\mathscr{F}_{_{\!{\scriptstyle\mathbb{R}}^{+}}}\) is non-normal,\
the family
\(\big\lbrace\mathscr{P}_{_{{\scriptstyle\mathscr{F}}_{\mathbb{R}^{+}}}}(\Delta)\big\rbrace\) is
not orthogonal and even unbounded: \eqref{Unb}. However it turns
out that if the interval \(\Delta\) is essentially separated from
zero:
\begin{equation}%
\label{EsDi}%
\textup{ess\,dist}(\Delta,0)>0,\,
\end{equation}
and the function \(f(\zeta)\) is bounded on \(\Delta\), then the
integral
\(\int\limits_{\Delta}f(\zeta)\,\mathscr{P}_{_{{\scriptstyle\mathscr{F}}_{\mathbb{R}^{+}}}}(d\zeta)\)
can be provided with a meaning.

Namely, let \(g(\zeta)\) be a simple function,  that is the
function of the form
\begin{equation}%
\label{SimRe}%
 g(\zeta)=\sum\limits_{k}a_k\mathds{1}_{\Delta_k}(\zeta)\,,
\end{equation}%
where \(a_k\) are come complex numbers, and the collections
\(\Delta_k\) of sets forms a partition (finite) of the original
set \(\Delta\): \(\Delta=\bigcup\limits_k\Delta_k,\ \
\Delta_p\cap\Delta_q=\emptyset,\,\,p\not=q\). We define the
integral
\(\int\limits_{\Delta}g(\zeta)\,\,\mathscr{P}_{_{{\scriptstyle\mathscr{F}}_{\mathbb{R}^{+}}}}(d\zeta)\)
as
\begin{equation}%
\label{DInSF}%
\int\limits_{\Delta}g(\zeta)\,\,\mathscr{P}_{_{{\scriptstyle\mathscr{F}}_{\mathbb{R}^{+}}}}(d\zeta)
\stackrel{\textup{\tiny
def}}{=}\sum\limits_{k}a_k\mathscr{P}_{_{{\scriptstyle\mathscr{F}}_{\mathbb{R}^{+}}}}(\Delta_k)\,.
\end{equation}%
The value of the sum in the right hand side of \eqref{DInSF} does
not depend on the representation of the function \(g\) in the form
\eqref{SimRe}. So, the value in the left hand side of
\eqref{DInSF} is well defined. From the other hand, decoding
definition of \(\mathscr{P}_{_{{\scriptstyle\mathscr{F}}_{\mathbb{R}^{+}}}}(\Delta_k)\) as
\(\mathds{1}_{\Delta_k}(\mathscr{F}_{_{\!{\scriptstyle\mathbb{R}}^{+}}})\), we have
\begin{equation*}
\sum\limits_{k}a_k\mathscr{P}_{_{{\scriptstyle\mathscr{F}}_{\mathbb{R}^{+}}}}(\Delta_k)=
\sum\limits_{k}a_k\mathds{1}_{\Delta_k}(\mathscr{F}_{_{\!{\scriptstyle\mathbb{R}}^{+}}})=
\big(\sum\limits_{k}a_k\mathds{1}_{\Delta_k}\big)(\mathscr{F}_{_{\!{\scriptstyle\mathbb{R}}^{+}}})=
g(\mathscr{F}_{_{\!{\scriptstyle\mathbb{R}}^{+}}})
 \,,
\end{equation*}
and finally,
\begin{equation}%
\label{DvTrak}
\int\limits_{\Delta}g(\zeta)\,\,\mathscr{P}_{_{{\scriptstyle\mathscr{F}}_{\mathbb{R}^{+}}}}(d\zeta)=
g(\mathscr{F}_{_{\!{\scriptstyle\mathbb{R}}^{+}}})\,.
\end{equation}
So \emph{for any simple  function \(g(\zeta)\)  vanishing outside
the set \(\Delta\)\,, where \(\Delta\) is separated from zero, the
integral
\(\int\limits_{\Delta}g(\zeta)\,\,\mathscr{P}_{_{{\scriptstyle\mathscr{F}}_{\mathbb{R}^{+}}}}(d\zeta)\)
is well defined and is interpreted as a function  of the operator
\(\mathscr{F}_{_{\!{\scriptstyle\mathbb{R}}^{+}}}\) in the sense of the above introduced functional
calculus. }

Given a function \(f\) bounded on \(\Delta\) and vanishing outside
of \(\Delta\), there exists sequence \(f_n\) of simple functions
vanishing outside of \(\Delta\) which converges to \(f\) uniformly
on \(\Delta\):
\begin{equation*}
\underset{n\to\infty}{\overline{\textup{lim}}}\, \underset{\zeta\in\Delta}{\textup{sup}}\,%
\big|f(\zeta)-f_n(\zeta)\big|=0\,.
\end{equation*}

\emph{The integral
\(\int\limits_{\Delta}f(\zeta)\,\,\mathscr{P}_{\mathscr{F}_E}(d\zeta)\)
will be defined as the limit of integrals
\(\int\limits_{\Delta}f_n(\zeta)\,\,\mathscr{P}_{_{{\scriptstyle\mathscr{F}}_{\mathbb{R}^{+}}}}(d\zeta)\)
of simple functions \(f_n\)} if we justify that such a limit
exists and does not depend on the approximating sequence
\(\lbrace{}f_n\rbrace\).

If \(h(\zeta)\) be a function essentially bounded on  \(\Delta\)
and vanishing outside of \(\Delta\), then
\[\|h(\mathscr{F})\|\leq{}\big(1+1/d\big)\sup_{\zeta\in\Delta}|h(\zeta)|,\]
where \(d=\textup{ess\,dist}\,(\Delta,0)\). (See \eqref{TSE} and
\eqref{NFAFAd}). Applying this estimate to \(h=f-f_n\), we see
that \(\|f(\mathscr{F}_{_{\!{\scriptstyle\mathbb{R}}^{+}}})-f_n(\mathscr{F}_{_{\!{\scriptstyle\mathbb{R}}^{+}}})\|\to0\) as
\(n\to\infty\). The convergence here is a convergence in the uniform operator topology.
According to \eqref{DvTrak}, this can be presented
as
\begin{equation*}
\bigg\|f(\mathscr{F}_{_{\!{\scriptstyle\mathbb{R}}^{+}}})-
\int\limits_{\Delta}f_n(\zeta)\,\,\mathscr{P}_{_{{\scriptstyle\mathscr{F}}_{\mathbb{R}^{+}}}}(d\zeta)\bigg\|\to0
\ \ \ \textup{as} \ \ \ n\to\infty\,.
\end{equation*} %
Thus, there exists the limit of integrals
\(\int\limits_{\Delta}f_n(\zeta)\,\,\mathscr{P}_{_{{\scriptstyle\mathscr{F}}_{\mathbb{R}^{+}}}}(d\zeta)\).
We declear this limit as
\(\int\limits_{\Delta}f(\zeta)\,\,\mathscr{P}_{_{{\scriptstyle\mathscr{F}}_{\mathbb{R}^{+}}}}(d\zeta)\):
\begin{equation*}
\int\limits_{\Delta}f(\zeta)\,\,\mathscr{P}_{_{{\scriptstyle\mathscr{F}}_{\mathbb{R}^{+}}}}(d\zeta)\stackrel{\textup{\tiny
def}}{=}
\lim_{n\to\infty}\int\limits_{\Delta}f_n(\zeta)\,\,\mathscr{P}_{_{{\scriptstyle\mathscr{F}}_{\mathbb{R}^{+}}}}(d\zeta),
\end{equation*}
where convergence is the convergence in the norm of operators
acting in \(L^2(\mathbb{R}_{+})\).

So \emph{the integral
\(\int\limits_{\Delta}f(\zeta)\,\,\mathscr{P}_{_{{\scriptstyle\mathscr{F}}_{\mathbb{R}^{+}}}}(d\zeta)\)
is defined if \(\Delta\) is any subset of
\({\text{\normalsize\(\sigma\)}}(\mathscr{F}_{\mathbb{R}^{+}})\)
 separated from zero and \(f\) is any
function  bounded  on \(\Delta\) and vanishing outside
\(\Delta\).}
Moreover, this integral can be interpreted as a function \(f\) of the operator %
\(\mathscr{F}_E\) in the sense of Definition \ref{DefFadFp}:
\begin{equation*}
\int\limits_{\Delta}f(\zeta)\,\,\mathscr{P}_{_{{\scriptstyle\mathscr{F}}_{\mathbb{R}^{+}}}}(d\zeta)=
f(\mathscr{F}_{_{\!{\scriptstyle\mathbb{R}}^{+}}}).
\end{equation*}

Let now \(f\) be any bounded function defined on the spectrum
\({\text{\normalsize\(\sigma\)}}(\mathscr{F}_{\mathbb{R}^{+}})\). (We emphasize
that the spectrum
\({\text{\normalsize\(\sigma\)}}(\mathscr{F}_{\mathbb{R}^{+}})\) is not
separated from zero, but  contains the zero point, which is  the
singular point in some sense: see \eqref{SpSing}.)
The integral \(\int\limits_{\textup{\small\(\sigma\)}%
_{\!\mathscr{F}_{\!E}}}f(\zeta)\,\,\mathscr{P}_{_{{\scriptstyle\mathscr{F}}_{\mathbb{R}^{+}}}}(d\zeta)\)
will be defined as an unproper integral. We remove a
\emph{symmetric} \(\varepsilon\)\,-\,neighborhood
\(V_{\varepsilon}\) of zero
\begin{equation}
\label{epsn}%
 V_{\varepsilon}=\big(-\varepsilon{}e^{i\pi/4},\,\varepsilon{}e^{i\pi/4}\big),
\end{equation}
 from the spectrum %
\({\text{\normalsize\(\sigma\)}}(\mathscr{F}_{\mathbb{R}^{+}})\) and  integrate
\(f\) over the set \(\Delta(\varepsilon)=
{\text{\normalsize\(\sigma\)}}(\mathscr{F}_{\mathbb{R}^{+}})~\setminus~V_{\epsilon}
\). (This is the same set \(\Delta(\varepsilon)\) that was already
defined in \eqref{DeASiE},\,\eqref{DeSiE}.)
 The set \(\Delta(\varepsilon)\) is  separated from zero, so
the integral
\(\int\limits_{\Delta(\varepsilon)}f(\zeta)\,\,\mathscr{P}_{_{{\scriptstyle\mathscr{F}}_{\mathbb{R}^{+}}}}(d\zeta)\)
is already defined.
 Then we pass to the limit
as \(\varepsilon\to+0\). If the limits exists in some sense, we
decare
the limiting operator as the integral \(\int\limits_%
{{\text{\small\(\sigma\)}}(\mathscr{F}_{\mathbb{R}^{+}})}f(\zeta)\,%
\,\mathscr{P}_{_{{\scriptstyle\mathscr{F}}_{\mathbb{R}^{+}}}}(d\zeta)\):
\begin{equation}
\label{ValPr}
\int\limits_{{\text{\small\(\sigma\)}}(\mathscr{F}_{\mathbb{R}^{+}})}f(\zeta)\,%
\,\mathscr{P}_{_{{\scriptstyle\mathscr{F}}_{\mathbb{R}^{+}}}}(d\zeta)x\,\stackrel{\textup{\tiny
def}}{=}
\lim_{\varepsilon\to+0}\int\limits_%
{{\text{\small\(\sigma\)}}(\mathscr{F}_{\mathbb{R}^{+}})\setminus{}V_{\varepsilon}}f(\zeta)\,\,%
\mathscr{P}_{_{{\scriptstyle\mathscr{F}}_{\mathbb{R}^{+}}}}(d\zeta)\,.
\end{equation}

\begin{lemma}
We assume that \(f\) is a \(\mathscr{F}_{_{\!{\scriptstyle\mathbb{R}}^{+}}}\)-admissible function.

Then the limit in \eqref{ValPr} exists in the sense of strong
convergence, that is for every \(x\in{}L^2(\mathbb{R}_{+})\),
 \begin{equation}
\label{ValPrS}
\bigg\|\int\limits_{{\text{\small\(\sigma\)}}(\mathscr{F}_{\mathbb{R}^{+}})}%
f(\zeta)\,\,\mathscr{P}_{_{{\scriptstyle\mathscr{F}}_{\mathbb{R}^{+}}}}(d\zeta)x\,-
\int\limits_{{\text{\small\(\sigma\)}}(\mathscr{F}_{\mathbb{R}^{+}})%
\setminus{}V_{\varepsilon}}f(\zeta)\,\,%
\mathscr{P}_{_{{\scriptstyle\mathscr{F}}_{\mathbb{R}^{+}}}}(d\zeta)x\bigg\|_{L^2(\mathbb{R}^{+})}
\to0 \ \ \textup{as} \ \ \varepsilon\to+0\,.
\end{equation}
Moreover
\begin{equation}
\label{InteF}
\int\limits_{{\text{\small\(\sigma\)}}(\mathscr{F}_{\mathbb{R}^{+}})}%
f(\zeta)\,\,\mathscr{P}_{_{{\scriptstyle\mathscr{F}}_{\mathbb{R}^{+}}}}(d\zeta)=
f(\mathscr{F}_{_{\!{\scriptstyle\mathbb{R}}^{+}}})\,,
\end{equation}
where the operator \(f(\mathscr{F}_{_{\!{\scriptstyle\mathbb{R}}^{+}}})\) is defined in the sense of
\textup{Definition \ref{DefFadFp}}.
\end{lemma}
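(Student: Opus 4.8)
The plan is to reduce the statement to facts already assembled about the resolvent-based functional calculus. Recall that in the discussion preceding the lemma the integral over the set \(\Delta(\varepsilon)=\textup{\large\(\sigma\)}(\mathscr{F}_{\mathbb{R}^{+}})\setminus V_{\varepsilon}\) has already been given a meaning: writing \(f_{\varepsilon}=f\cdot\mathds{1}_{\Delta(\varepsilon)}\) for the function equal to \(f\) on \(\Delta(\varepsilon)\) and to \(0\) off \(\Delta(\varepsilon)\), one has
\[
\int\limits_{\textup{\large\(\sigma\)}(\mathscr{F}_{\mathbb{R}^{+}})\setminus V_{\varepsilon}}f(\zeta)\,\mathscr{P}_{_{{\scriptstyle\mathscr{F}}_{\mathbb{R}^{+}}}}(d\zeta)=f_{\varepsilon}(\mathscr{F}_{\mathbb{R}^{+}})\,,
\]
the right-hand side being understood in the sense of Definition~\ref{DefFadFp}. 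This is legitimate since \(\Delta(\varepsilon)\) is essentially separated from zero, \(\textup{ess\,dist}(\Delta(\varepsilon),0)=\varepsilon>0\), and since every \(\mathscr{F}_{\mathbb{R}^{+}}\)-admissible function is essentially bounded on \(\textup{\large\(\sigma\)}(\mathscr{F}_{\mathbb{R}^{+}})\) (immediate from \eqref{NFAF} together with \(|\zeta|\le 1/\sqrt2<1\) on the spectrum). Thus the lemma amounts to showing \(f_{\varepsilon}(\mathscr{F}_{\mathbb{R}^{+}})x\to f(\mathscr{F}_{\mathbb{R}^{+}})x\) in \(L^{2}(\mathbb{R}^{+})\) for every \(x\) as \(\varepsilon\to+0\).

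The next step is to factor \(f_{\varepsilon}(\mathscr{F}_{\mathbb{R}^{+}})\). Because the deleted neighbourhood \(V_{\varepsilon}\) is symmetric, so is \(\Delta(\varepsilon)\); hence its asymmetric part is empty and, by Lemma~\ref{CryAdm}, the indicator \(\mathds{1}_{\Delta(\varepsilon)}\) is \(\mathscr{F}_{\mathbb{R}^{+}}\)-admissible. Consequently \(f_{\varepsilon}=f\cdot\mathds{1}_{\Delta(\varepsilon)}\) is admissible, as a product of two admissible functions (Lemma~\ref{BanAl}), and the multiplicativity of the resolvent-based calculus (Theorem~\ref{HomomFr}, item~4) combined with Definition~\ref{DeSpPro} gives
\[
f_{\varepsilon}(\mathscr{F}_{\mathbb{R}^{+}})=f(\mathscr{F}_{\mathbb{R}^{+}})\cdot\mathds{1}_{\Delta(\varepsilon)}(\mathscr{F}_{\mathbb{R}^{+}})=f(\mathscr{F}_{\mathbb{R}^{+}})\,\mathscr{P}_{_{{\scriptstyle\mathscr{F}}_{\mathbb{R}^{+}}}}(\Delta(\varepsilon))\,.
\]

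Finally I would pass to the limit. By Lemma~\ref{StCon} the spectral projectors \(\mathscr{P}_{_{{\scriptstyle\mathscr{F}}_{\mathbb{R}^{+}}}}(\Delta(\varepsilon))\) converge strongly to \(\mathscr{I}\) as \(\varepsilon\to+0\) (this is precisely \eqref{AprId}); and \(f(\mathscr{F}_{\mathbb{R}^{+}})\) is a bounded operator (Theorem~\ref{Bicont}), so by continuity of \(f(\mathscr{F}_{\mathbb{R}^{+}})\) one gets \(f(\mathscr{F}_{\mathbb{R}^{+}})\,\mathscr{P}_{_{{\scriptstyle\mathscr{F}}_{\mathbb{R}^{+}}}}(\Delta(\varepsilon))x\to f(\mathscr{F}_{\mathbb{R}^{+}})x\) for every \(x\). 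Together with the two displays above this is exactly \eqref{ValPrS} and \eqref{InteF}. (Alternatively, avoiding Lemma~\ref{StCon}: fix any sequence \(\varepsilon_{n}\downarrow0\) and set \(h_{n}=f_{\varepsilon_{n}}\); symmetry of \(\Delta(\varepsilon_{n})\) makes the cut-off multiply both summands in \eqref{NFAF} by \(\mathds{1}_{\Delta(\varepsilon_{n})}\le1\), so \(\|h_{n}\|_{_{\scriptstyle\mathscr{F}_{\mathbb{R}^{+}}}}\le\|f\|_{_{\scriptstyle\mathscr{F}_{\mathbb{R}^{+}}}}\), while \(h_{n}(\zeta)\to f(\zeta)\) for every \(\zeta\neq0\); Theorem~\ref{strCont} then yields \(h_{n}(\mathscr{F}_{\mathbb{R}^{+}})\to f(\mathscr{F}_{\mathbb{R}^{+}})\) strongly, and sequence-independence of the limit gives the continuous limit \eqref{ValPr}.)

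I do not expect a genuinely hard step here: the argument is an assembly of the functional-calculus facts already established. The one point that must be observed — and it is the reason the singular point \(\zeta=0\) has to be excised by a \emph{symmetric} window \(V_{\varepsilon}\) rather than by an arbitrary one — is that restricting an admissible function to a symmetric set keeps it admissible without increasing its \(\mathfrak{B}_{_{{\scriptstyle\mathscr{F}}_{_{{\mathbb{R}}^{+}}}}}\)-norm; for an asymmetric window the truncations \(f_{\varepsilon}\) could have norms tending to infinity, in the spirit of \eqref{NOPE}--\eqref{SpSing}, and the limiting argument would break down.
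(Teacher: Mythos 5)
Your proof is correct and follows essentially the same route as the paper: identify the truncated integral with \((\mathds{1}_{\Delta(\varepsilon)}f)(\mathscr{F}_{\!_{\scriptstyle\mathbb{R}^{+}}})\), factor it by multiplicativity of the calculus into \(\mathscr{P}_{_{{\scriptstyle\mathscr{F}}_{\mathbb{R}^{+}}}}(\Delta(\varepsilon))\) times \(f(\mathscr{F}_{\!_{\scriptstyle\mathbb{R}^{+}}})\), and pass to the limit via Lemma \ref{StCon}. The only cosmetic difference is the order of the two commuting factors (and your optional detour through Theorem \ref{strCont}), which changes nothing.
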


\begin{proof}
 To justify the limiting relation
\eqref{ValPrS} and to establish the equality \eqref{InteF}, we
observe that
\begin{equation*}
\int\limits_{{\text{\small\(\sigma\)}}(\mathscr{F}_{\mathbb{R}^{+}})%
\setminus{}V_{\varepsilon}}f(\zeta)\,\,%
\mathscr{P}_{_{{\scriptstyle\mathscr{F}}_{\mathbb{R}^{+}}}}(d\zeta)=\int\limits_{\Delta(\varepsilon)}%
\mathds{1}_{\Delta(\varepsilon)}(\zeta)f(\zeta)\mathscr{P}_{_{{\scriptstyle\mathscr{F}}_{\mathbb{R}^{+}}}}(d\zeta)=
\big(\mathds{1}_{\Delta(\varepsilon)}f\big)(\mathscr{F}_{_{\!{\scriptstyle\mathbb{R}}^{+}}}).
\end{equation*}
(For functions vanishing outside the set \(\Delta(\varepsilon)\),
which is separated from zero, the equality \eqref{InteF} is
already established. In the present case, we apply the equality
\eqref{InteF} to the function
\(\mathds{1}_{\Delta(\varepsilon)}(\zeta)f(\zeta)\).)
 Since
\begin{equation*}%
\big(\mathds{1}_{\Delta(\varepsilon)}f\big)(\mathscr{F}_{_{\!{\scriptstyle\mathbb{R}}^{+}}})
=\mathds{1}_{\Delta(\varepsilon)}(\mathscr{F}_{_{\!{\scriptstyle\mathbb{R}}^{+}}})
f(\mathscr{F}_{_{\!{\scriptstyle\mathbb{R}}^{+}}})
=\mathscr{P}_{_{{\scriptstyle\mathscr{F}}_{\mathbb{R}^{+}}}}%
(\Delta(\varepsilon))f(\mathscr{F}_{_{\!{\scriptstyle\mathbb{R}}^{+}}})\,,
\end{equation*}
we have
\begin{equation*}%
\int\limits_{{\text{\normalsize\(\sigma\)}}(\mathscr{F}_{\mathbb{R}^{+}})%
\setminus{}V_{\varepsilon}}f(\zeta)\,\,%
\mathscr{P}_{_{{\scriptstyle\mathscr{F}}_{\mathbb{R}^{+}}}}(d\zeta)
=\mathscr{P}_{_{{\scriptstyle\mathscr{F}}_{\mathbb{R}^{+}}}}(\Delta(\varepsilon))\,%
f(\mathscr{F}_{_{\!{\scriptstyle\mathbb{R}}^{+}}})\,.
\end{equation*}%
According to Lemma \ref{StCon},
\begin{equation*}
\lim_{\varepsilon\to+0}\mathscr{P}_{\mathscr{F}_E}(\Delta(\varepsilon))\,%
f(\mathscr{F}_{_{\!{\scriptstyle\mathbb{R}}^{+}}})=
f(\mathscr{F}_{_{\!{\scriptstyle\mathbb{R}}^{+}}})\,,
\end{equation*}
 where convergence is the strong convergence
of operators. Thus under the assumptions of Lemma, there exists
the strong limit in \eqref{ValPr} and the equality \eqref{InteF}
holds.
\end{proof}%
\begin{remark}
For every fixed \(\varepsilon>0\), the spectral measure
\(\Delta\to\mathscr{P}_{\mathscr{F}_{\mathbb{R}^{+}}}\),
restricted on
\(\Delta:\,\Delta\subseteq{\text{\normalsize\(\sigma\)}}(\mathscr{F}_{\mathbb{R}^{+}})\setminus{}V_{\varepsilon}\),
is sigma-additive. Because of this, it is possible to
integrate an arbitrary bounded measurable function \(h(\zeta)\)
 over
\({\text{\normalsize\(\sigma\)}}(\mathscr{F}_{\mathbb{R}^{+}})\setminus{}V_{\varepsilon}\).
However,  the spectral measure \(\) is \emph{not}
sigma-additive and even unbounded on the family of \emph{all}
\(\mathscr{F}_{\mathbb{R}^{+}}\)\,-\,%
admissible sets \(\Delta\). \textup{(}See
\eqref{SpSing}.\textup{)} Therefore it is impossible to
integrate an \emph{arbitrary} bounded function \(h(\zeta)\) over the
whole
\({\text{\normalsize\(\sigma\)}}(\mathscr{F}_{\mathbb{R}^{+}})\).
We have to restrict ourself by bounded functions \(h\) which
furthermore have a certain symmetry near the point \(\zeta=0\).
Moreover we have to interprete the integral over
\({\text{\normalsize\(\sigma\)}}(\mathscr{F}_{\mathbb{R}^{+}})\)
as an improper integral. \textup{(}See \eqref{ValPr}.\textup{)}

 This reflect the fact that the point
\(\zeta=0\) is a spectral singularity for
\(\mathscr{F}_{\mathbb{R}^{+}}\). \textup{(\textit{See} Remark
\ref{spSing}.)}
\end{remark}
\section{ The selfadjoint differential operator
\mathversion{bold}%
\(\mathcal{L}\)\\
\hspace*{3.0ex} which commutes
with the operator
\(\bm{\mathscr{F}}_{\!_{\scriptstyle\mathbb{R}^{+}}}\).}
\mathversion{normal}
\noindent \textbf{1.}
It is well known that the eigenfunctions of the Fourier operator \(\mathscr{F}\)
 are the Hermite functions \(h_n(t)\):
\begin{equation*}
\psi_n(t)=e^{\frac{t^2}{2}}\bigg(\frac{d\,}{dt}\bigg)^ke^{-t^2}\,, \ \ (n=0,\,1,\,2,\,\ldots).
\end{equation*}
The equality
\begin{equation}
\label{hfef}
\mathscr{F}\psi_n=i^n\psi_n,\ \ n\in\mathbb{N}\,,
\end{equation}
can be checked by direct (but a little bit involved) calculation.
 The following facts helps both to guess the system \(\{\psi_{n}\}_{n\in\mathbb{N}}\) and to check the equalities \eqref{hfef}
in an organized way.
\begin{enumerate}
\item %
The selfadjoint differential operator \(\mathcal{L}_{_{\mathscr{F}}}\) generated by the formal differential operator \(L_{\mathscr{_F}}=-\frac{d^2\,}{dt^2}+t^2\)
commutes with the operator \(\mathscr{F}\). If \(x(t)\) is a smooth function and \(x(t),\,x^{\prime}(t)\),\,
\(x^{\prime\prime}(t)\) decay  fast enough as \(t\to\pm\infty\), then
\begin{equation}
\label{crho}
\mathscr{F}\mathcal{L}_{_\mathscr{F}}x=
\mathcal{L}_{_\mathscr{F}}\mathscr{F}x ;
\end{equation}
For arbitrary \(x\) from the domain of definition \(\mathcal{D}_{\mathcal{L}_{_\mathscr{F}}}\)
of the operator \(\mathcal{L}_{_\mathscr{F}}\), the equality \eqref{crho} can be justified
by passing to the limit.

\item
The functions \(\psi_n\) are eigenfunctions of the  operator \(\mathcal{L}_{_\mathscr{F}}\):
\begin{equation}
\label{efos}
\mathcal{L}_{_\mathscr{F}}\psi_n(t)=\lambda_n\psi_n(t),\ \ \lambda_n=2n+1,\ n\in\mathbb{N}\,.
\end{equation}
 This differential operator has no other eigenfunctions: if
\begin{equation}
\label{efeq}
\mathcal{L}_{_\mathscr{F}}x=\lambda\,x,\ \ x\in{}L^2(\mathbb{R}),\ x\not=0,
\end{equation}
then   \(\lambda=\lambda_n\)  and \(x\) is proportional to \(\psi_n\) for some \(n\in\mathbb{N}\).
\end{enumerate}

The spectral analysis of the operator \(-\frac{d^2\,}{dt^2}+t^2\), which is the energy operator
 of the quantum
harmonic oscillator, was done in \cite[Chapter VI, sec.\,34]{Dir}.
The operators \(\mathfrak{a}^{\phantom{\dag}}=\frac{d\,\,}{dt}+t\), \(\mathfrak{a}^{\dag}=-\frac{d\,\,}{dt}+t\) are involved essentially in this spectral analysis,
which is purely algebraic. The fact that the eigenvalues \(\lambda_n\)
of the operator \(\mathcal{L}_{_\mathscr{F}}\) are \emph{simple} (i.e. the appropriate
eigenspaces are one-dimensional) is of crucial importance.
Applying \eqref{crho} to the function \(x=\psi_n\), we see that the function
\(\mathscr{F}\psi_n\), as well as the function \(\psi_n\), is an eigenfunction of
\(\mathscr{F}\) corresponding to the eigenvalue \(\lambda_n\).
However, the eigenspace of \(\mathcal{L}_{_\mathscr{F}}\) corresponding to the eigenvalue \(\lambda_n\) is one-dimensional
and is generated by \(\psi_n\). Therefore \(\mathscr{F}\psi_n=\zeta_n\psi_n\) for
some \(\zeta_n\in\mathbb{C}\). Since \(\mathscr{F}^4=\mathscr{I}\),
\(\zeta_n^4=1\). So, \(\zeta_n\) can take only one of four values
\(1,-1,i,-i\). More detail analysis shows that \eqref{hfef} holds.

Since the spectrum of \(\mathcal{L}_{_\mathscr{F}}\) is of multiplicity one,
the operator \(\mathscr{F}\), as well as any
operator commuting with \(\mathscr{L}_{_\mathscr{F}}\), can can be interpreted as a function of
the operator \(\mathcal{L}_{_\mathscr{F}}\):
\begin{equation}
\mathscr{F}=h(\mathscr{L}_{_\mathscr{F}}),
\end{equation}
where
\begin{equation}%
h(\lambda_n)=i^n\,,\ n\in\mathbb{N}\,.
\end{equation}
In particular, we may take
\begin{equation*}
h(\mu)=e^{-i\pi/4}\cdot\,e^{i\mu\pi/4}, \ \ \mu\in\mathbb{R}\,.
\end{equation*}
 (Actually  the operator \(h(\mathcal{L}_{_\mathscr{F}})\) depends only on values of the function  \(h\) at the points \(\lambda_n,\,n\in\mathbb{N}\).)

\noindent \textbf{2.}
 To extend this way of reasoning to the operator \(\mathscr{F}_{\!_{\scriptstyle\mathbb{R}^{+}}}\), we have firstly to find the operator
 \(\mathscr{L}\) which commutes with  \(\mathscr{F}_{\!_{\scriptstyle\mathbb{R}^{+}}}\).
 It turns out that this is the differential operator \(\mathcal{L}\) generated by  the formal differential operator \(L\):
 \begin{equation}
 \label{DOCTF}
 (Lx)(t)=-\frac{d\,}{dt}\bigg(t^2\frac{dx(t)}{dt}\bigg)
 \end{equation}
This \emph{formal} operator \(L\) generates the \emph{minimal} operator
\(\mathcal{L}_{\textup{min}}\) and \emph{maximal} operator \(\mathcal{L}_{\textup{max}}\).
Namely, \(L\) describes how act the operators
\(\mathcal{L}_{\textup{min}}\) and \(\mathcal{L}_{\textup{max}}\)
on functions from the appropriate domain of definition.

\begin{definition}
\label{ACHI}%
 The set \(\mathcal{A}\) is the set of complex valued functions
 \(x(t)\) defined on the open half-axis \(\mathbb{R}^{+}\) and
 satisfying the following conditions:\\
 \hspace*{1.5ex}\textup{1.}\,
\begin{minipage}[t]{0.92\linewidth} The derivative
\(\dfrac{dx(t)}{dt}\) of the function \(x(t)\) exists at every
point \(t\) of the interval \(\mathbb{R}^{+}\);
\end{minipage}\\
\hspace*{1.5ex}\textup{2.}\,
\begin{minipage}[t]{0.92\linewidth}
The function \(\dfrac{dx(t)}{dt}\) is absolutely continuous on
every compact subinterval of the interval \(\mathbb{R}^{+}\);
\end{minipage}\\
\end{definition}
\begin{definition}
\label{ACHIz}%
The set \(\mathring{\mathcal{A}}\) is the set of complex-valued
functions \(x(t)\) defined on \(\mathbb{R}^{+}\) and
satisfied the
following conditions:\\
\hspace*{1.5ex}\textup{1.}\,
\begin{minipage}[t]{0.92\linewidth}
The function \(x(t)\) belongs to the set \(\mathcal{A}\) defined
above;
\end{minipage}\\[1.0ex]
\hspace*{1.5ex}\textup{2.}\,
\begin{minipage}[t]{0.92\linewidth}
The support \(\textup{supp}\,x\) of the function \(x(t)\) is a
compact subset of the open half-axis \(\mathbb{R}^{+}\):
\((\textup{supp}\,x)\!\Subset{}\mathbb{R}^{+}\).
\end{minipage}
\end{definition}
\begin{definition}
\label{MaxDOhi}
The differential operator \(\mathcal{L}_{\textup{max}}\) is defined as follows:\\[1.0ex]
\hspace*{1.5ex}\textup{1.}\,
\begin{subequations}
\label{maxdohi}
\begin{minipage}[t]{0.92\linewidth}
The domain of definition \(\mathcal{D}_{\mathcal{L}_{\textup{max}}}\)%
of the operator \(\mathcal{L}_{\textup{max}}\) is:
\begin{equation}%
\label{maxdo1}
\mathcal{D}_{\mathcal{L}_{\textup{max}}}=\lbrace{}x:\,x(t)\in%
L^2(\mathbb{R}^{+})\cap{\mathcal{A}}\ \  \textup{and} \ \
 (Lx)(t)\in{}L^2(\mathbb{R}^{+})\rbrace,
 \end{equation}%
 where \((Lx)(t)\) is defined\,\footnotemark%
by \eqref{DOCTF}.
\end{minipage}\\[1.0ex]
\hspace*{1.5ex}\textup{2.}\,
\begin{minipage}[t]{0.92\linewidth}
The action of the operator  \(\mathcal{L}_{\textup{max}}\) is:
\begin{equation}
\label{maxdo2}
\textup{For}\ x\in\mathcal{D}_{\mathcal{L}_{\textup{max}}}\,,\,\ %
\mathcal{L}_{{}_\textup{max}}x=Lx\,.
\end{equation}
\end{minipage}{\ }\\[2.0ex]
\end{subequations}
\footnotetext{Since \(x\in\mathcal{A}\), the expression \((Lx)(t)\) is well defined.}%
The operator \(\mathcal{L}_{\textup{max}}\) is said to be the
\emph{maximal differential operator generated by the formal
differential expression} \(L\).
\end{definition}
The minimal differential operator
\(\mathcal{L}_{{}_\textup{min}}\) is the restriction of the
maximal differential operator \(\mathcal{L}_{{}_\textup{max}}\) on
the set of functions which is some sense vanish at the endpoint of
the interval \(\mathbb{R}^{+}\). The precise definition is presented
below.
\begin{definition}
\label{MinDOha}
\begin{subequations}
\label{mindo} The operator \(\mathcal{L}_{{}_\textup{min}}\)
is the closure\,\footnote{%
Since the operator \(\mathring{\mathcal{L}}\) is symmetric, it  is closable.}%
 of the operator \(\mathring{\mathcal{L}}\):
\begin{equation}
\label{mindo1ha} \mathcal{L}_{{}_\textup{min}}=
\textup{clos}\big(\mathring{\mathcal{L}}\,\big)\,,
\end{equation}
where the operator \(\mathring{\mathcal{L}}\) is the restriction
of the operator \(\mathcal{L}_{{}_\textup{max}}\):
\begin{equation}%
\label{mindo2ha}
\mathring{\mathcal{L}}\subset{}\mathcal{L}_{\textup{max}},\quad %
\mathring{\mathcal{L}}=%
{\mathcal{L}_{\textup{max}}}_{|_{\scriptstyle\mathcal{D}_{\mathring{\mathcal{L}}}}},
\quad
\mathcal{D}_{\mathring{\mathcal{L}}}=%
\mathcal{D}_{\!{}_{\scriptstyle\mathcal{L}_{\textup{max}}}}\!\cap{}\mathring{\mathcal{A}}\,.
\end{equation}%
\end{subequations}
\end{definition}
By \(\langle\,,\,\rangle\) we denote the standard scalar product
in \(L^2(\mathbb{R}^{+})\):%
 \begin{equation*}\textup{For
}u,\,v\in{}L^2(\mathbb{R}^{+}),\ \ \ \langle{}u,v\rangle=
\int\limits_{0}^{\infty}u(t)\overline{v(t)}\,dt\,.
\end{equation*}
\vspace{1.0ex} \noindent \textsf{The properties of the operators}
\(\mathcal{L}_{{}_\textup{min}}\)
\textsf{and} \(\mathcal{L}_{{}_\textup{max}}\):\\[1.0ex]
\hspace*{2.5ex}1.\
\begin{minipage}[t]{0.90\linewidth}
\textit{The operator \(\mathcal{L}_{{}_\textup{min}}\) is
symmetric:
\begin{equation}%
\label{MinSym}%
\langle{}\mathcal{L}_{{}_\textup{min}}x,y\rangle=
\langle{}x,\mathcal{L}_{{}_\textup{min}}y{}\rangle\,,
\quad \forall {}x,\,y\in{}%
\mathcal{D}_{\!{}_{\scriptstyle\mathcal{L}_{\textup{min}}}};
\end{equation}%
In other words, the operator \(\mathcal{L}_{{}_\textup{min}}\) is
contained in its adjoint}:
\begin{equation*}%
\mathcal{L}_{{}_\textup{min}}\subseteq(\mathcal{L}_{{}_\textup{min}})^\ast\,;
\end{equation*}%
\end{minipage}{\ }\\[1.0ex]
\hspace*{2.5ex}2.\
\begin{minipage}[t]{0.90\linewidth}
\textit{The operators \(\mathcal{L}_{{}_\textup{min}}\) and
\(\mathcal{L}_{{}_\textup{max}}\) are mutually adjoint}:
\begin{equation}%
\label{ATDOho}%
(\mathring{\mathcal{L}})^\ast=%
(\mathcal{L}_{{}_\textup{min}})^\ast=\mathcal{L}_{{}_\textup{max}},\quad
(\mathcal{L}_{{}_\textup{max}})^\ast=\mathcal{L}_{{}_\textup{min}}\,;
\end{equation}%
\end{minipage}{\ }\\[1.0ex]

The fact that \((\mathring{\mathcal{L}})^\ast=\mathcal{L}_{{}_\textup{max}}\) is
a very general fact related to ordinary differential operators, regular or singular,
of finite or infinite interval.

Let as calculate the deficiency indices of the symmetric operator
\(\mathcal{L}_{{}_\textup{min}}\). In view of \eqref{ATDOho}, we
have to investigate the dimension of the space of solutions of the
equation \(\mathcal{L}_{{}_\textup{max}}x=\lambda{}x\) for
\(\lambda\) from the upper half plane and for \(\lambda\) from the
lower half plane. The equation
\(\mathcal{L}_{{}_\textup{max}}x=\lambda{}x\) is the differential
equation of the form
\begin{equation}
\label{EDDIha}
 -\frac{d\,}{dt}\bigg(t^2\frac{dx(t)}{dt}\bigg)=\lambda{}x(t)\,,
 \quad 0<t<\infty\,.
\end{equation}
We are interested in solutions of this equation which belong to
\(L^2(\mathbb{R}^{+})\).

The equation \eqref{EDDIha} can be solved explicitly.
Seeking its solution on the form \(x(t)=t^a\), we come to the equation
\begin{equation*}
a(a+1)+\lambda=0\,.
\end{equation*}
The roots of this equation are
\begin{equation}
\label{RAE}
a_1=-\frac{1}{2}+\sqrt{\frac{1}{4}-\lambda}, \quad
a_2=-\frac{1}{2}-\sqrt{\frac{1}{4}-\lambda}\,.
\end{equation}
These roots are different if \(\lambda\not=\frac{1}{4}\).
Thus if \(\lambda\not=\frac{1}{4}\)\,, the general solution of the differential equation \eqref{EDDIha} is
 of the form
\begin{equation}
\label{GeS}
x(t)=c_1t^{a_1}+c_2t^{a_2}\,,
\end{equation}
where \(c_1,\,c_2\) are arbitrary constants. If \(\lambda=\frac{1}{4}\),
the general solution of \eqref{EDDIha} is
 of the form
\begin{equation}
\label{GeSe}
x(t)=c_1t^{-1/2}+c_2t^{-1/2}\ln{}t\,.
\end{equation}
However the function \(x(t)\) of the form \eqref{GeS} (or \eqref{GeSe})
belongs to \(L^2((0,\infty))\) only if \(x(t)\equiv{}0\).
Thus, the following result is proved
\begin{lemma}
Whatever \(\lambda\in\mathbb{C}\) is, the differential equation
\eqref{EDDIha} has no solutions \(x(t)\not\equiv{}0\) belonging to
\(L^2(\mathbb{R}^{+})\).
\end{lemma}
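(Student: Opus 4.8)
The plan is to read everything off the explicit fundamental system already produced in \eqref{GeS} and \eqref{GeSe}, the only genuinely delicate point being a possible destructive interference between the two solutions when their exponents have equal real part. I would start from the elementary observation that a single power $t^{a}$ ($a\in\mathbb{C}$) is never square-integrable on $\mathbb{R}^{+}$: one has $\int_{0}^{1}t^{2\operatorname{Re}a}\,dt<\infty$ exactly when $\operatorname{Re}a>-\tfrac12$, while $\int_{1}^{\infty}t^{2\operatorname{Re}a}\,dt<\infty$ exactly when $\operatorname{Re}a<-\tfrac12$, and these two requirements are incompatible. Next I would record the structural identity that the roots \eqref{RAE} satisfy $a_{1}+a_{2}=-1$, whence $\operatorname{Re}a_{1}+\operatorname{Re}a_{2}=-1$; consequently either (i) $\operatorname{Re}a_{1}\neq\operatorname{Re}a_{2}$, in which case, after relabelling, $\operatorname{Re}a_{2}<-\tfrac12<\operatorname{Re}a_{1}$, or (ii) $\operatorname{Re}a_{1}=\operatorname{Re}a_{2}=-\tfrac12$.

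In case (i), take a solution $x(t)=c_{1}t^{a_{1}}+c_{2}t^{a_{2}}$ as in \eqref{GeS}. If $c_{2}\neq0$ then, since $t^{\operatorname{Re}a_{2}}$ dominates $t^{\operatorname{Re}a_{1}}$ as $t\to+0$, we get $|x(t)|\geq\tfrac12|c_{2}|\,t^{\operatorname{Re}a_{2}}$ for all sufficiently small $t$, and $\int_{0}^{\varepsilon}t^{2\operatorname{Re}a_{2}}\,dt=\infty$ because $2\operatorname{Re}a_{2}<-1$; if instead $c_{2}=0$, then $x=c_{1}t^{a_{1}}$ is a single power, which lies in $L^{2}$ near $\infty$ only if $c_{1}=0$. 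Hence membership $x\in L^{2}(\mathbb{R}^{+})$ forces $c_{1}=c_{2}=0$, i.e.\ $x\equiv0$.

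Case (ii) — which also absorbs the exceptional value $\lambda=\tfrac14$ together with its solution \eqref{GeSe} — is the one requiring the substitution $s=\ln t$. Writing $a_{1,2}=-\tfrac12\pm i\beta$ with $\beta\in\mathbb{R}$, a solution has the form $x(t)=t^{-1/2}\bigl(c_{1}t^{i\beta}+c_{2}t^{-i\beta}\bigr)$ (respectively $x(t)=t^{-1/2}(c_{1}+c_{2}\ln t)$ when $\beta=0$), and then
\[
\int_{0}^{\infty}|x(t)|^{2}\,dt=\int_{-\infty}^{\infty}\bigl|c_{1}e^{i\beta s}+c_{2}e^{-i\beta s}\bigr|^{2}\,ds=\int_{-\infty}^{\infty}\Bigl(|c_{1}|^{2}+|c_{2}|^{2}+2\operatorname{Re}\bigl(c_{1}\overline{c_{2}}\,e^{2i\beta s}\bigr)\Bigr)\,ds
\]
(respectively $\int_{-\infty}^{\infty}|c_{1}+c_{2}s|^{2}\,ds$ when $\beta=0$). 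In either case the integrand is a non-negative almost-periodic (respectively polynomial) function whose mean equals $|c_{1}|^{2}+|c_{2}|^{2}$, so the integral over $\mathbb{R}$ is $+\infty$ unless $c_{1}=c_{2}=0$. Collecting the two cases, the only $L^{2}(\mathbb{R}^{+})$ solution of \eqref{EDDIha} is $x\equiv0$. The main obstacle is precisely this last case: the crude bound $|x(t)|^{2}=t^{-1}\bigl|c_{1}t^{i\beta}+c_{2}t^{-i\beta}\bigr|^{2}$ cannot be disposed of by $\int t^{-1}\,dt=\infty$ alone because of the oscillating factor, and it is the logarithmic change of variable that rules out cancellation.
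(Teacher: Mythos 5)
Your argument is correct and follows essentially the same route as the paper: solve \eqref{EDDIha} explicitly via \(x=t^{a}\) and observe that no nontrivial combination of \(t^{a_1},t^{a_2}\) (or \(t^{-1/2},\,t^{-1/2}\ln t\)) belongs to \(L^{2}(\mathbb{R}^{+})\). The paper asserts this last step without detail, while you supply the verification — in particular the delicate case \(\operatorname{Re}a_{1}=\operatorname{Re}a_{2}=-\tfrac12\), handled by the substitution \(s=\ln t\), which is the same change of variables the paper later employs in \eqref{ChVar}.
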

In particular, taking \(\lambda=i\) and \(\lambda=-i\), we see
that the deficiency indices \(n_{+}\) and \(n_{-}\) of the symmetric operator
\(\mathcal{L}_{{}_\textup{min}}\)
are equal to zero. Applying the von Neumann criterion of the selfadjointness,
we obtain
\begin{lemma}
\label{MOSA}
The differential operator \(\mathcal{L}_{{}_\textup{min}}\) is self-adjoint.
\end{lemma}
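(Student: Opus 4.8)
The plan is to apply the von Neumann self-adjointness criterion to the closed symmetric operator $\mathcal{L}_{\textup{min}}$. First I would collect the structural facts already established: $\mathcal{L}_{\textup{min}}=\textup{clos}\big(\mathring{\mathcal{L}}\big)$ is closed by construction, it is symmetric by \eqref{MinSym}, and by \eqref{ATDOho} its adjoint is $(\mathcal{L}_{\textup{min}})^{\ast}=\mathcal{L}_{\textup{max}}$. Thus $\mathcal{L}_{\textup{min}}$ is a closed symmetric operator whose deficiency subspaces are, by definition,
\[
\mathfrak{N}_{\pm}=\ker\big((\mathcal{L}_{\textup{min}})^{\ast}\mp{}i\mathscr{I}\big)
=\ker\big(\mathcal{L}_{\textup{max}}\mp{}i\mathscr{I}\big),
\]
i.e. $\mathfrak{N}_{\pm}$ is the set of $x\in{}L^2(\mathbb{R}^{+})\cap\mathcal{A}$ satisfying $\mathcal{L}_{\textup{max}}x=\pm{}ix$, which is precisely equation \eqref{EDDIha} with $\lambda=\pm{}i$.

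Next I would invoke the preceding Lemma, applied with $\lambda=i$ and with $\lambda=-i$: the differential equation \eqref{EDDIha} has no nonzero $L^2(\mathbb{R}^{+})$ solution, so $\mathfrak{N}_{+}=\mathfrak{N}_{-}=\{0\}$ and hence the deficiency indices vanish, $n_{+}=\dim\mathfrak{N}_{+}=0$, $n_{-}=\dim\mathfrak{N}_{-}=0$. Finally, the von Neumann criterion states that a closed symmetric operator with $n_{+}=n_{-}=0$ is self-adjoint; equivalently, von Neumann's decomposition $\mathcal{D}_{\mathcal{L}_{\textup{max}}}=\mathcal{D}_{\mathcal{L}_{\textup{min}}}\dotplus\mathfrak{N}_{+}\dotplus\mathfrak{N}_{-}$ collapses to $\mathcal{D}_{\mathcal{L}_{\textup{max}}}=\mathcal{D}_{\mathcal{L}_{\textup{min}}}$, so $\mathcal{L}_{\textup{min}}=\mathcal{L}_{\textup{max}}=(\mathcal{L}_{\textup{min}})^{\ast}$, which is the assertion.

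There is no genuine obstacle here: the analytic content — explicitly integrating the Euler-type equation \eqref{EDDIha}, finding the power solutions $t^{a_{1}},t^{a_{2}}$ (or $t^{-1/2},\,t^{-1/2}\ln t$ in the exceptional case $\lambda=1/4$), and checking none of these lies in $L^2(0,\infty)$ unless identically zero — was already carried out in the Lemma immediately above. The only step that needs a line of care is identifying the abstract deficiency subspace $\ker\big((\mathcal{L}_{\textup{min}})^{\ast}\mp{}i\mathscr{I}\big)$ with the concrete $L^2$-solution space of \eqref{EDDIha}, and this is immediate from the equality $(\mathcal{L}_{\textup{min}})^{\ast}=\mathcal{L}_{\textup{max}}$ recorded in \eqref{ATDOho}.
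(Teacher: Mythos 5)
Your argument is correct and coincides with the paper's own proof: the paper likewise identifies the deficiency subspaces of \(\mathcal{L}_{\textup{min}}\) with the \(L^2(\mathbb{R}^{+})\)-solution spaces of \eqref{EDDIha} at \(\lambda=\pm i\) (via \((\mathcal{L}_{\textup{min}})^{\ast}=\mathcal{L}_{\textup{max}}\) from \eqref{ATDOho}), invokes the preceding lemma to conclude \(n_{+}=n_{-}=0\), and applies the von Neumann criterion. No gaps; your only addition is the explicit remark that the von Neumann decomposition collapses to \(\mathcal{D}_{\mathcal{L}_{\textup{max}}}=\mathcal{D}_{\mathcal{L}_{\textup{min}}}\), which is a harmless restatement of the same conclusion.
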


In other words, we prove that
\(\mathcal{L}_{{}_\textup{min}}=\mathcal{L}_{{}_\textup{max}}\).
\begin{notation}
\label{DefL}
From now till the end this paper we use the notation \(\mathcal{L}\) for the operator
\(\mathcal{L}_{{}_\textup{min}}=\mathcal{L}_{{}_\textup{max}}\):
\begin{equation}
\label{DefLDi}
\mathcal{L}\stackrel{\textup{\tiny def}}{=}\mathcal{L}_{{}_\textup{min}}=
\mathcal{L}_{{}_\textup{max}}\end{equation}
\end{notation}
\noindent
 Since
\(\mathcal{L}=\mathcal{L}_{{}_\textup{min}}\),
\begin{equation}
\label{FNTE}
\mathcal{L}=\textup{clos}\,\mathring{\mathcal{L}}\,.
\end{equation}
Since \(\mathcal{L}=\mathcal{L}_{{}_\textup{max}}\),
\begin{equation}
\label{DDCV}
\mathcal{D}_{\mathcal{L}}=\big\lbrace{}x:\,\,x\in\mathcal{A}\cap{}L^2(\mathbb{R}^{+}),\,\,
Lx\in{}L^2(\mathbb{R}^{+})\big\rbrace\,.
\end{equation}
\noindent \textbf{3.}  The following relationship between the operators \(\mathcal{L}\)
and \(\mathscr{F}_{\!_{{\scriptstyle\mathbb{R}}^{+}}}\) helps
in the spectral analysis of the operator \(\mathscr{F}_{\!_{{\scriptstyle\mathbb{R}}^{+}}}\)
in much the same as the relationship \eqref{crho} helps in the spectral analysis of the operator
\(\mathscr{F}\).
\begin{theorem}
\label{Comhi}
The (selfadjoint) operator \(\mathcal{L}\) commutes with the truncated Fourier
operator \(\mathscr{F}_{\!_{{\scriptstyle\mathbb{R}}^{+}}}\), as well as with the adjoint operator
\(\mathscr{F}_{\!_{{\scriptstyle\mathbb{R}}^{+}}}^{\ast}\):\\[1.0ex]
\hspace*{2.5ex}\textup{1}.\
\begin{minipage}[t]{0.92\linewidth}
\hspace*{8.0ex}
If \(x\in\mathcal{D}_{\mathcal{L}}\), then \(\mathscr{F}_{\!_{{\scriptstyle\mathbb{R}}^{+}}}\,x\in\mathcal{D}_{\mathcal{L}}\),
\(\mathscr{F}_{\!_{{\scriptstyle\mathbb{R}}^{+}}}^{\ast}\,x\in\mathcal{D}_{\mathcal{L}}\).
\end{minipage}\\[1.5ex]
\hspace*{2.5ex}\textup{2}.\
\begin{minipage}[t]{0.92\linewidth}
\vspace*{-3ex}
\begin{equation}
\label{ComReha}
\mathscr{F}_{\!_{{\scriptstyle\mathbb{R}}^{+}}}\mathcal{L}\,x=
\mathcal{L}\mathscr{F}_{\!_{{\scriptstyle\mathbb{R}}^{+}}}\,x,\ \ \
\mathscr{F}_{\!_{{\scriptstyle\mathbb{R}}^{+}}}^{\ast}\mathcal{L}\,x=
\mathcal{L}\mathscr{F}_{\!_{{\scriptstyle\mathbb{R}}^{+}}}^{\ast}\,x,
\quad \forall \,x\in\mathcal{D}_{\mathcal{L}}\,.
\end{equation}
\end{minipage}\\[1.5ex]
\hspace*{2.5ex}\textup{3}.\
\begin{minipage}[t]{0.92\linewidth}
If \(\mathcal{E}(\Delta)\) is the spectral projector of the operator \(\mathcal{L}\)
corresponding to a Borelian subset \(\Delta\) of the real axis, then
\begin{equation}
\label{ComReSP}
\mathscr{F}_{\!_{{\scriptstyle\mathbb{R}}^{+}}}\mathcal{E}(\Delta)=
\mathcal{E}(\Delta)\mathscr{F}_{\!_{{\scriptstyle\mathbb{R}}^{+}}}, \ \
\mathscr{F}_{\!_{{\scriptstyle\mathbb{R}}^{+}}}^{\ast}\mathcal{E}(\Delta)=
\mathcal{E}(\Delta)\mathscr{F}_{\!_{{\scriptstyle\mathbb{R}}^{+}}}^{\ast}
\quad \forall\, \Delta\,.
\end{equation}
\end{minipage}
\end{theorem}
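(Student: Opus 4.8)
The plan is to prove the commutation at the level of the unbounded operator $\mathcal{L}$ by a direct computation on a convenient dense core, and then propagate it first to the resolvent and finally to the spectral projectors. The natural core is $\mathring{\mathcal{A}}$ (compactly supported functions, smooth enough), since $\mathcal{L}=\operatorname{clos}\mathring{\mathcal{L}}$ by \eqref{FNTE}. First I would take $x\in\mathring{\mathcal{A}}$ and compute $L(\mathscr{F}_{\mathbb{R}^{+}}x)$ directly. Writing $y(t)=(\mathscr{F}_{\mathbb{R}^{+}}x)(t)=\frac{1}{\sqrt{2\pi}}\int_{0}^{\infty}x(\xi)e^{it\xi}\,d\xi$, one differentiates under the integral sign (legitimate because $x$ is compactly supported in $\mathbb{R}^{+}$, so all $\xi$-integrals are over a compact set and converge absolutely together with all their $t$-derivatives): $y'(t)=\frac{1}{\sqrt{2\pi}}\int_0^\infty i\xi x(\xi)e^{it\xi}\,d\xi$, hence $t^2y'(t)$ and then $-\frac{d}{dt}(t^2y'(t))$ are expressed as $\xi$-integrals. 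The key algebraic identity to exploit is the self-adjointness of the formal expression $L=-\frac{d}{d\xi}(\xi^2\frac{d}{d\xi})$: one integrates by parts twice in $\xi$, throwing the $\xi$-derivatives off the exponential kernel $e^{it\xi}$ and onto $x(\xi)$, using that the boundary terms at $\xi=0$ and $\xi=+\infty$ vanish because $\operatorname{supp}x\Subset\mathbb{R}^{+}$. The cross term produced by differentiating $e^{it\xi}$ in $t$ versus in $\xi$ matches precisely because $\partial_t e^{it\xi}=i\xi e^{it\xi}$ and $\partial_\xi e^{it\xi}=it e^{it\xi}$ play symmetric roles; this is exactly the point where the particular form $t^2$ of the coefficient (rather than any other power) is needed. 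The upshot is $(L y)(t)=\frac{1}{\sqrt{2\pi}}\int_0^\infty (Lx)(\xi)e^{it\xi}\,d\xi=(\mathscr{F}_{\mathbb{R}^{+}}(Lx))(t)$, which is item 2 on the core, and simultaneously shows $Ly\in L^2(\mathbb{R}^{+})$, giving item 1 on the core.

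Next I would extend from the core $\mathring{\mathcal{A}}$ to the full domain $\mathcal{D}_{\mathcal{L}}$ by a closure/limiting argument. Since $\mathscr{F}_{\mathbb{R}^{+}}$ is bounded and $\mathcal{L}$ is closed, if $x_n\in\mathring{\mathcal{A}}$ with $x_n\to x$ and $\mathcal{L}x_n\to\mathcal{L}x$ in $L^2(\mathbb{R}^{+})$ (possible because $\mathring{\mathcal{A}}$ is a core), then $\mathscr{F}_{\mathbb{R}^{+}}x_n\to\mathscr{F}_{\mathbb{R}^{+}}x$ and $\mathcal{L}\mathscr{F}_{\mathbb{R}^{+}}x_n=\mathscr{F}_{\mathbb{R}^{+}}\mathcal{L}x_n\to\mathscr{F}_{\mathbb{R}^{+}}\mathcal{L}x$; closedness of $\mathcal{L}$ then gives $\mathscr{F}_{\mathbb{R}^{+}}x\in\mathcal{D}_{\mathcal{L}}$ and $\mathcal{L}\mathscr{F}_{\mathbb{R}^{+}}x=\mathscr{F}_{\mathbb{R}^{+}}\mathcal{L}x$. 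The same argument applies verbatim to $\mathscr{F}^{\ast}_{\mathbb{R}^{+}}$, whose kernel $e^{-it\xi}$ satisfies the identical integration-by-parts identity; alternatively one deduces the adjoint version by taking Hilbert-space adjoints of $\mathcal{L}\mathscr{F}_{\mathbb{R}^{+}}\subseteq\mathscr{F}_{\mathbb{R}^{+}}\mathcal{L}$ and using $\mathcal{L}^{\ast}=\mathcal{L}$ (Lemma \ref{MOSA}).

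For item 3, I would pass from the operator commutation to the spectral-projector commutation via the resolvent. From $\mathcal{L}\mathscr{F}_{\mathbb{R}^{+}}x=\mathscr{F}_{\mathbb{R}^{+}}\mathcal{L}x$ on $\mathcal{D}_{\mathcal{L}}$ one gets, for $\mu\in\mathbb{C}\setminus\mathbb{R}$, that $\mathscr{F}_{\mathbb{R}^{+}}$ maps $\mathcal{D}_{\mathcal{L}}$ into itself and commutes with $(\mu\mathscr{I}-\mathcal{L})$ there; applying $(\mu\mathscr{I}-\mathcal{L})^{-1}$ on both sides yields $\mathscr{F}_{\mathbb{R}^{+}}(\mu\mathscr{I}-\mathcal{L})^{-1}=(\mu\mathscr{I}-\mathcal{L})^{-1}\mathscr{F}_{\mathbb{R}^{+}}$ as a bounded-operator identity on all of $L^2(\mathbb{R}^{+})$. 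Since $\mathcal{L}$ is self-adjoint, its spectral projectors $\mathcal{E}(\Delta)$ lie in the strongly closed algebra generated by its resolvents (Stone's formula / the functional calculus), so commutation with every resolvent forces commutation with every $\mathcal{E}(\Delta)$; the same holds for $\mathscr{F}^{\ast}_{\mathbb{R}^{+}}$. I expect the main obstacle to be the careful bookkeeping of the boundary terms in the double integration by parts in $\xi$ — making sure the $\xi\to 0^{+}$ and $\xi\to+\infty$ contributions genuinely vanish (which is why restricting to $\mathring{\mathcal{A}}$ at the first stage, rather than working on all of $\mathcal{D}_{\mathcal{L}}$ directly, is the right move) and verifying that the surviving terms assemble exactly into $\mathscr{F}_{\mathbb{R}^{+}}(Lx)$ with the correct signs; everything after that is soft functional analysis.
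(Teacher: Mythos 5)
Your proposal is correct and follows essentially the same route as the paper: the double integration by parts on the compactly supported core $\mathcal{D}_{\mathring{\mathcal{L}}}$ (where compact support in $\mathbb{R}^{+}$ kills the boundary terms), extension to all of $\mathcal{D}_{\mathcal{L}}$ via $\mathcal{L}=\textup{clos}\,\mathring{\mathcal{L}}$ together with boundedness of $\mathscr{F}_{\!_{\scriptstyle\mathbb{R}^{+}}}$ and closedness of $\mathcal{L}$, and then commutation with the resolvent $(\mathcal{L}-z\mathscr{I})^{-1}$ for non-real $z$ to obtain \eqref{ComReSP}. The only cosmetic difference is that you move the $\xi$-derivatives from the kernel onto $x$ rather than from $Lx$ onto the kernel, which is the same computation read backwards.
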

\begin{proof} {\ }\\ %
\(1^{\circ}.\)
Let \(x\in\mathcal{D}_{\mathring{\mathcal{L}}}\).
Then the function \(\mathscr{F}_{_{\!\scriptstyle\mathbb{R}^+}}\,x\)
is the Fourier transform of a summable  finite function, hence \(\mathscr{F}_{_{\!\scriptstyle\mathbb{R}^+}}\,x\in\mathcal{A}\).
Since \(\mathscr{F}_{_{\!\scriptstyle\mathbb{R}^+}}L^2(\mathbb{R}^{+})\subseteq{}L^2(\mathbb{R}^{+})\),
 and \(x\in{}L^2(\mathbb{R}^{+})\),
 then \(\mathscr{F}_{_{\scriptstyle\mathbb{R}^{+}}}x\in{}L^2(\mathbb{R}^{+})\).
Thus
\begin{equation}
\label{FIDd}
\mathscr{F}_{_{\!\scriptstyle\mathbb{R}^{+}}}\mathcal{D}_{\mathring{\mathcal{L}}}
\in\mathcal{A}\cap{}L^{2}(\mathbb{R}^{+})\,.
\end{equation}
\(2^{\circ}.\)   Let as before \(x\in\mathcal{D}_{\mathring{\mathcal{L}}}\).
Integrating by parts twice, we obtain
\begin{equation}
\label{siip}
\int\limits_{0}^{\infty}\left(-\frac{d\,\,}{d\xi}\bigg(\xi^2
\frac{dx(\xi)}{d\xi}\bigg)\right)e^{it\xi}d\xi=
it\int\limits_{0}^{\infty}x(\xi)\Bigg(\frac{d\,}{d\xi}\,\bigg(\xi^2e^{it\xi}\bigg)\Bigg)\,d\xi\,.
\end{equation}
Since the support of the function
\(x(t)\) is a compact subset of the open interval \(\mathbb{R}^+\),
the terms outside the integral vanish.
Transforming the integral in the right hand side of \eqref{siip}
and denoting
\begin{math}
 y(t)=\int\limits_{0}^{\infty}x(\xi)e^{it\xi}\,d(\xi),
\end{math}
 we obtain
 \begin{multline}
 \label{tra}
 -it\int\limits_{0}^{\infty}x(\xi)\Bigg(\frac{d\,}{d\xi}\,\bigg(\xi^2e^{it\xi}\bigg)\Bigg)\,d\xi=
 -it\int\limits_{0}^{\infty}x(\xi)(it\xi^2+2\xi)e^{it\xi}\,d\xi=
 \\
 =t^2\int\limits_{0}^{\infty}x(\xi)\xi^2e^{it\xi}d\xi-
 2it\int\limits_{0}^{\infty}x(\xi)\xi{}e^{it\xi}d\xi=\\
 =-t^2\frac{d^2y(t)}{dt^2}-2t\frac{dy(t)}{dt}= -\frac{d\,\,}{dt}\bigg(t^2\frac{dy(t)}{dt}\bigg)\,.
\end{multline}
The equality \eqref{tra} means that
\begin{equation}
\label{CRgha}
\mathscr{F}_{_{\!\scriptstyle \mathbb{R}^{+}}}L\,x=
L\mathscr{F}_{\!_{{\scriptstyle\mathbb{R}}^{+}}}\,x, \quad \forall x\in\mathcal{D}_{\mathring{\mathcal{L}}}\,.
\end{equation}
Since \(x\in\mathcal{D}_{\mathcal{L}}\), \(Lx\in{}L^2(\mathbb{R})\). Therefore
\(\mathscr{F}_{_{\!\scriptstyle \mathbb{R}^{+}}}L\,x\in{}L^2(\mathbb{R}^{+})\). In view of
\eqref{FIDd} and \eqref{CRgha},
\(L(\mathscr{F}_{_{\!\scriptstyle \mathbb{R}^{+}}}x)\in{}L^{2}(\mathbb{R}^{+})\). Thus
\begin{equation}
\label{ImDDha}
\mathscr{F}_{\!E}\mathcal{D}_{\mathring{\mathcal{L}}}\subseteq{}
\mathcal{D}_{\mathcal{L}}\,.
\end{equation}
\(3^{\circ}.\)
Let \(x\in\mathcal{D}_{\mathcal{L}}\) now. In view  of \eqref{FNTE},
there exists a sequence
\(x_n\in\mathcal{D}_{\mathring{\mathcal{L}}}\) such that \(x_n\to{}x,\,
\mathcal{L}x_n\to{}\mathcal{L}x\) as \(n\to\infty\).
(The convergence is the strong convergence,
that  is the convergence in \(L^2(\mathbb{R}^{+})\).) According to
\eqref{CRgha}, for every \(n\) the equality
\begin{equation}
\label{CRghan}
\mathscr{F}_{\!_{{\scriptstyle\mathbb{R}}^{+}}}\mathcal{L}\,x_n=
\mathcal{L}\mathscr{F}_{\!_{{\scriptstyle\mathbb{R}}^{+}}}\,x_n
\end{equation}
holds. The operator \(\mathscr{F}_{\!E}\) is continuous. Therefore
\(\mathscr{F}_{\!_{{\scriptstyle\mathbb{R}}^{+}}}\,x_n\to\mathscr{F}_{\!_{{\scriptstyle\mathbb{R}}^{+}}}\,x\), and
\(\mathscr{F}_{\!_{{\scriptstyle\mathbb{R}}^{+}}}\,\mathcal{L}x_n\to%
\mathscr{F}_{\!_{{\scriptstyle\mathbb{R}}^{+}}}\,\mathcal{L}x\) as \(n\to\infty\).
Now from \eqref{CRghan} it follows that there exists limit of the sequence
\(\mathcal{L}(\mathscr{F}_{\!_{{\scriptstyle\mathbb{R}}^{+}}}\,x_n)\). Since the operator \(\mathcal{L}\)
is closed, then \(\mathscr{F}_{\!_{{\scriptstyle\mathbb{R}}^{+}}}\,x\in\mathcal{D}_{\mathcal{L}}\) and
\(\mathscr{F}_{\!_{{\scriptstyle\mathbb{R}}^{+}}}\mathcal{L}\,x=
\mathcal{L}\mathscr{F}_{\!_{{\scriptstyle\mathbb{R}}^{+}}}\,x\). The inclusion
 \(\mathscr{F}^{\ast}_{\!_{{\scriptstyle\mathbb{R}}^{+}}}\,x\in\mathcal{D}_{\mathcal{L}}\) and the equality
\(\mathscr{F}^{\ast}_{\!_{{\scriptstyle\mathbb{R}}^{+}}}\mathcal{L}\,x=
\mathcal{L}\mathscr{F}^{\ast}_{\!_{{\scriptstyle\mathbb{R}}^{+}}}\,x\) can be established analogously.\\
\(4^{\circ}.\)
Since the operator \(\mathcal{L}\) is selfadjoint,
its spectrum is real. In particular,
for every non-real number \(z\), the operator \(\mathcal{L}-z\mathscr{I}\)
is invertible, and its inverse operator \( (\mathcal{L}-z\mathscr{I})^{-1}\)
is bounded and defined everywhere.
Taking  \(x=(\mathcal{L}-z\mathscr{I})^{-1}y\) in \eqref{ComReha}, where \(y\) is
an arbitrary vector,
we obtain that
\begin{equation}
\label{CreNre}
(\mathcal{L}-z\mathscr{I})^{-1}\mathscr{F}_{\!_{{\scriptstyle\mathbb{R}}^{+}}}=
\mathscr{F}_{\!_{{\scriptstyle\mathbb{R}}^{+}}}(\mathcal{L}-z\mathscr{I})^{-1}\quad
\forall\,z:\,\,\textup{Re}\,z\not=0\,.
\end{equation}
The equality \eqref{ComReSP} is a consequence of \eqref{CreNre}.
\end{proof}

\noindent
\begin{center}
\fbox{
\begin{minipage}[t]{0.9\linewidth}
Theorem  \ref{Comhi} sagest to interpret the operator
\(\mathscr{F}_{\!_{{\scriptstyle\mathbb{R}}^{+}}}\)
as a~function of the operator \(\mathcal{L}\) and to use the spectral analysis
of the \emph{selfadjoint} operator \(\mathcal{L}\) for study of the \emph{non-normal} operator
\(\mathscr{F}_{\!_{{\scriptstyle\mathbb{R}}^{+}}}\).
However since
the spectrum of the operator \(\mathcal{L}\) is of \emph{multiplicity two}, this
function should be not a \emph{scalar valued} function, but a \emph{matrix-valued} one.
\end{minipage}}
\end{center}

\section{ Spectral analysis
of the operator
\mathversion{bold}%
\(\mathcal{L}\).\\ The functional model of the operator
\mathversion{bold}%
\(\mathcal{L}\).}\mathversion{normal}
\(1^{\circ}.\) The spectral analysis of the operator \(\mathcal{L}\) can be reduced
to the spectral analysis of the operator \(-\frac{d^2\,}{ds^2}\) in \(L^2(\mathbb{R})\).
Changing variables
\begin{equation}%
\label{ChVar}
t=e^s,\,-\infty<s<\infty,\quad  z(s)=e^{s/2}x(e^s)\,,
\end{equation}%
we reduce the equation \eqref{EDDIha} to the form
\begin{equation}%
\label{RedEq}%
-\frac{d^2z(s)}{ds^2}+\frac{1}{4}z(s)=\lambda{}\,z(s),\quad -\infty<s<\infty\,.
\end{equation}%
The correspondence
\begin{equation}
\label{UnEqu}
z=Vx, \quad \text{where}\ \
z(s)=e^{s/2}x(e^s)\,,
\end{equation}
is an unitary operator from \(L^2(\mathbb{R}^{+},\,dt)\) onto
\(L^2(\mathbb{R},\,ds)\):
\begin{equation}
\label{UnEqEq}
\int\limits_{0}^{\infty}|x(t)|^2\,dt=\int\limits_{-\infty}^{\infty}|z(s)|^2ds\,.
\end{equation}
The operator \(\mathcal{L}\) is unitarily equivalent to the operator \(\mathcal{T}+\frac{1}{4}I\):
\begin{equation}%
\label{OUEO}
\mathcal{L}=V^{-1}\Big(\mathcal{T}+\frac{1}{4}I\Big)V,
\end{equation}
where
\begin{equation}%
\label{OSeD}
(\mathcal{T}z)(s)=-\frac{d^2z(s)}{ds^2}
\end{equation}
is the differential operator in \(L^2(\mathbb{R})\)
defined on the "natural" domain. The spectral structure of the operator
\(\mathcal{T}\) is well known. Its spectrum \(\sigma(\mathcal{T})\)
is absolutely continuous of \emph{multiplicity two} and
fills the positive half-axis:
\begin{math}%
\sigma(\mathcal{T})=[0,\infty)\,.
\end{math} %
The (generalized) eigenfunctions of the operator \(\mathcal{T}\) corresponding
to the point \(\rho\in(0,\infty)\) are
\begin{equation}%
\label{EFSLea}%
 z_{+}(s,\mu)=e^{i\mu{}s}\,,\ \
z_{-}(s,\mu)=e^{-i\mu{}s},\quad s\in\mathbb{R}\,,
\end{equation}
where \(\mu=\rho^{1/2}>0\,.\)
Changing variable in the expressions
\eqref{EFSLea} for eigenfunctions of the operator \(\mathcal{T}\)
according to \eqref{ChVar}, we come to the functions
\begin{equation}%
\label{EFSLni}%
 \psi^{+}(t)=t^{-\frac{1}{2}+i\mu},\quad
\psi^{-}(t)=t^{-\frac{1}{2}-i\mu}, \quad t\in\mathbb{R}^{+},\
\mu\in\mathbb{R}^{+}\,.
\end{equation}
Both of the functions \(\psi^{+}(t,\mu),\psi^{-}(t,\mu)\) are
eigenfunctions of the operator \(\mathcal{L}\) corresponding to
\emph{the same} eigenvalue \(\lambda(\mu)\),
\begin{equation}%
\label{laotmu}
\lambda(\mu)=\mu^2+1/4,\ \ \ \mu\in\mathbb{R}^{+}\,.
\end{equation}
\begin{equation}%
\label{EFEq}%
\mathcal{L}\psi^{+}(t,\mu)=\lambda(\mu)\psi^{+}(t,\mu),\quad
\mathcal{L}\psi^{-}(t,\mu)=\lambda(\mu)\psi^{-}(t,\mu)\,.
\end{equation}
 In view of \eqref{OUEO}, the spectral properties
of the operator \(\mathcal{T}\) can be reformulated as the spectral properties of
the operator \(\mathcal{L}\). Reindexing the spectral parameter \(\mu\) in such a manner
that the value of the parameter to be coincide with the eigenvalue, we come to the functions
\begin{equation}%
\label{ReIEF}%
\varphi^{+}(t,\lambda)=\psi^{+}(t,\mu(\lambda)),\quad
\varphi^{-}(t,\lambda)=\psi^{-}(t,\mu(\lambda)).
\end{equation}
where
\begin{equation}
\label{ChSpP}
\mu=\mu(\lambda)=\sqrt{\lambda-\frac{1}{4}},\ \ \mu>0\,,\ \ 1/4<\lambda<\infty\,.
\end{equation}
\begin{equation}
\label{RIEvEq}
\mathcal{L}\varphi^{+}(t,\lambda)=\lambda\varphi^{+}(t,\lambda),\quad
\mathcal{L}\varphi^{-}=\lambda\varphi^{-}(t,\lambda)\,,\ \ \
1/4<\lambda<\infty\,.
\end{equation}
In what follow we work mainly with the system
\[\big\lbrace\psi^{+}(t,\mu),\psi^{-}(t,\mu)\big\rbrace_{\mu\in(0,\infty)}\] of
"non-reindexed" eigenfunctions, but not with the system
\[\big\lbrace\varphi^{+}(t,\lambda),%
\varphi^{-}(t,\lambda)\big\rbrace_{\lambda\in(1/4,\infty)}\] of
"reindexed" eigenfunctions. The reindexing procedure is useful if
we would like to feet the eigenfunctions to the operator
\(\mathcal{L}\) in a most natural way. However, the operator
\(\mathcal{L}\) plays the heuristic role only. What we actually
need these are eigenfunctions of \(\mathcal{L}\) but not
\(\mathcal{L}\) itself.

The spectrum \(\text{\large\(\sigma\)}(\!\mathcal{L})\) of the
operator \(\mathcal{L}\) is absolutely continuous of multiplicity
two and fills of the semi-infinite interval:
\begin{equation}%
\label{specL}
\sigma(\mathcal{L})=\big[1/4,\infty\big).
\end{equation}
 To the point
\(\lambda\in(\frac{1}{4},\infty)\) of the spectrum of the operator
\(\mathcal{L}\) corresponds the \emph{two-dimensional} "generalized
eigenspace" generated by the 'generalized" eigenfunctions
\(\psi^{+}(t,\mu(\lambda)),\,\psi^{-}(t,\mu(\lambda))\),
\(\mu(\lambda)\) is defined in \eqref{ChSpP}.

Given \(\mu\in(0,\infty)\), the "eigenfunctions"
\(\psi^{+}(t,\,\mu),\,\psi^{-}(t,\,\mu)\) do not belong to the
space \(L^2(\mathbb{R}^{+},\,dt)\), but almost belong. Their averages
with respect to the spectral parameter
\[\frac{1}{2\varepsilon}%
\int\limits_{\mu-\varepsilon}^{\mu+\varepsilon}\psi^{\pm}(t,\zeta)\,d\zeta=
t^{-\frac{1}{2}\pm{}i\mu}\frac{\sin(\varepsilon\ln{t})}{\varepsilon\ln{t}}\]
over an arbitrary small interval
\((\mu-\varepsilon,\mu+\varepsilon)\subset(0,\,\infty)\) already
belongs to \(L^2((0,\infty),\,dt)\). These eigenfunctions satisfy
the generalized "orthogonality relations":
\begin{gather}
\int\limits_{0}^{\infty}\overline{\psi^{-}(t,\mu_2)}\,\psi^{+}(t,\mu_1)\,dt=0
\ \ ,\notag\\
\int\limits_{0}^{\infty}\overline{\psi^{+}(t,\mu_2)}\,\psi^{+}(t,\mu_1)\,dt=
2\pi\,\delta(\mu_1-\mu_2),\notag\\
\int\limits_{0}^{\infty}\overline{\psi^{-}(t,\mu_2)}\,\psi^{-}(t,\mu_1)\,dt=
2\pi\,\delta(\mu_1-\mu_2),\notag\\
\forall\, \mu_1,\mu_2>0,
\ \ \textup{where}\ \ \delta\ \ \ \textup{is the Dirac \(\delta\)-function.}
\label{GeOrRe}
\end{gather}
The integrals in \eqref{GeOrRe} diverge, so the relations \eqref{GeOrRe} are nonsense
if they are understood literally. Nevertheless the equalities \eqref{GeOrRe} can be provide
with  a meaning in the sense of distributions.

However we prefer to stay on the `classical' point of view, and to
to formulate the `orthogonality properties' of the
`eigenfunctions' \(\psi_{\pm}(t,\lambda)\) in the language of the
\(L^2\)-theory of the Fourier integrals.\\[1.0ex]

\noindent \textbf{Notation.} In what follows we use the matrix
notation, matrix language, and matrix operations. We organize the
pair \(\psi_{+}(t,\mu),\,\psi_{-}(t,\mu)\), \eqref{EFSLni}, into
the matrix-row
\begin{subequations}
\label{matrpsi}
\begin{equation}
\label{matrpsi1}%
\psi(t,\mu)=
\begin{bmatrix}
\,\psi_{+}(t,\mu)\!&\!
\psi_{-}(t,\mu)\,
\end{bmatrix}\,.
\end{equation}
According to the matrix algebra notation, the matrix adjoint to
the matrix-column \(\psi(t,\mu)\) is the matrix-column
\begin{equation}
\label{matrpsi2}
 \psi^{\ast}(t,\mu)=
\begin{bmatrix}
\vspace*{-2.3ex}\\\overline{\psi_{+}(t,\mu)}\\[1.3ex]\overline{\psi_{-}(t,\mu)}
\end{bmatrix}\,.
\end{equation}
\end{subequations}
In this notation, the orthogonality relation \eqref{GeOrRe} can be
presented  as
\begin{equation}
\label{GeOrt}
\int\limits_{t\in\mathbb{R}^{+}}%
\psi^{\ast}(t,\mu_{2})\psi(t,\mu_{1})\,dt=
2\pi\,\delta(\mu_1-\mu_2)I,
\end{equation}
where \(I\) is the \(2\times2\) identity matrix.

\noindent%
\(2^{\circ}.\) One of fundamental results of the theory of selfadjoint operators
can be roughly formulated as follows.

\emph{Every selfadjoint operator is unitary equivalent
to a multiplication operator in a space of square integrable vector functions on a subset of the real axis.}

In the case of the operator \(\mathcal{L}\), this is a space of two-component
vector functions defined on the positive half-axis \(\mathbb{R}^{+}\) which are square integrable with respect to the Lebesgue measure on \(\mathbb{R}^{+}\).
(The number \emph{two} is the spectral multiplicity of the operator \(\mathcal{L}\).)
The precise definition is as follows.

\begin{definition}
\label{DefModSp}
The space \(\mathscr{K}\) is the space of all two component vector functions
\(y(\mu)=\begin{bmatrix}
y^{+}(\mu)\\[0.8ex]y^{-}(\mu)
\end{bmatrix}\)
which are defined on the positive half-axis \(\mu\in\mathbb{R}^{+}\), are Borel measurable
and satisfy the condition
\begin{equation}
\label{sqIntMoSp}
\int\limits_{\mu\in\mathbb{R}^{+}}y^{\ast}(\mu)\,y(\mu)\,\frac{d\mu}{2\pi}<\infty\,.
\end{equation}
 The set \(\mathscr{K}\) provided by pointwise algebraic operations and the scalar product
\begin{equation}
\label{ScPrMoSp_}
\langle\,y_1\,,\,y_2\,\rangle_{_{\scriptstyle\mathscr{K}}}=
\int\limits_{\mu\in\mathbb{R}^{+}}y_2^{\ast}(\mu)\,y_2(\mu)\,\frac{d\mu}{2\pi}\,\cdot
\end{equation}
is a Hilbert space.
(Strictly speaking elements of \(\mathscr{K}\) are equivalency classes of vector functions.)

We use the terminology \emph{the model space}  for the Hilbert space \(\mathscr{K}\).
\end{definition}

\noindent%
\(3^{\circ}.\) With the system \(\{\psi(t,\,\mu)\}_{\mu\in\mathbb{R}^{+}}\) of
eigenfunctions of the operator \(\mathcal{L}\) we relate the `Fourier transform'
\(x(t)\to\hat{x}(\mu)\) in this eigenfunctions.
For a function \(x\in{}L^2(\mathbb{R}^{+})\)
 compactly supported on \(\mathbb{R}^{+}\): \(\textup{supp\,}x\Subset{}\mathbb{R}^{+}\),
we  set
 \begin{equation}
 \label{FTEiFu}
\hat{x}(\mu)=\int\limits_{t\in{}\mathbb{R}^{+}}\psi^{\ast}(t,\,\mu)\,x(t)\,dt\,,\ \ \mu\in\mathbb{R}^{+}\,.
 \end{equation}
 For a vector-function \(y(\mu)\in\mathscr{K}\) compactly supported on \(\mathbb{R}^{+}\): \(\textup{supp\,}y\Subset{}\mathbb{R}^{+}\), we  set
 \begin{equation}
 \label{FiTEiFu}
\check{y}(t)=\int\limits_{\mu\in{}\mathbb{R}^{+}}\psi(t,\,\mu)\,y(\mu)\,\frac{d\mu}{2\pi}\,,\ \ %
t\in\mathbb{R}^{+}\,.
 \end{equation}
 Since \(\psi(t,\mu)\psi^{\ast}(t,\mu)=2t^{-1}\) for every %
 \(t\in\mathbb{R}^{+}\!,\,\mu\in\mathbb{R}^{+}\), the integrals in \eqref{FTEiFu}, \eqref{FTEiFu}
 converge absolutely for every \(\mu\in\mathbb{R}^{+}\), \(t\in\mathbb{R}^{+}\) respectively.
  Thus for compactly supported
 \(x\in{}L^2(\mathbb{R}^{+})\), \(y\in\mathscr{K}\), the functions \(\hat{x}(\mu)\), \(\check{y}(t)\)
 are well defined function on the half-axis \(\mu\in\mathbb{R}^{+}\), \(t\in\mathbb{R}^{+}\)
 respectively.
\begin{remark}
\label{Mell}
The transformations \eqref{FTEiFu} and \eqref{FiTEiFu} are closely related to the Mellin transforms
(direct and inverse). Let \(x(t)\) be a function defined on \(\mathbb{R}^{+}\). The Mellin transform
\(\big(\mathscr{M}x\big)(\zeta)\) of the function \(x\) is defined as
\begin{equation}
\label{melt}
\hat{x}(\zeta)=\big(\mathscr{M}x\big)(\zeta)=\int\limits_{0}^{\infty}t^{\,\zeta}\,x(t)\frac{dt}{t}
\end{equation}
The inverse Mellin transform is
\begin{equation}
\label{imelt}
x(t)=\big(\mathscr{M}^{-1}x\big)=\frac{1}{2\pi{}i}
\int\limits_{c-i\infty}^{c+i\infty}t^{\,-\zeta}\,\hat{x}(\zeta)\,d\zeta\,.
\end{equation}
which is defined for those complex \(\zeta\) for which the integral exists.
The restriction of the
 Mellin transform \eqref{melt} on the vertical line \mbox{\(\zeta=1/2+i\mu\),}
\(\mu\in\mathbb{R}\), coincides essentially with the transform \(x(t)\to\hat{x}(\mu)\)
defined by \eqref{FTEiFu}. The inverse Mellin transform \eqref{imelt}, with the value \(c=1/2\),
 coincides essentially with the transform \(y(\mu)\to\check{y}(t)\) defined by
\eqref{FiTEiFu}.
\end{remark}
\begin{lemma} {\ }  %
 \label{PaIdeL1}%
 \begin{enumerate}
 \item[\textup{1}.] Assume that \(x\in{}L^2(\mathbb{R}^{+})\) and \(\textup{supp}\,x\Subset{}\mathbb{R}^{+}\). Let us define the function
  \(\hat{x}(\mu)\) by \eqref{FTEiFu}. Then the Parseval equality
  \begin{equation}
  \label{PaIde1}
  \int\limits_{\mu\in\mathbb{R}^{+}}\hat{x}^{\ast}(\mu)\hat{x}(\mu)\frac{d\mu}{2\pi}=
  \int\limits_{t\in\mathbb{R}^{+}}\overline{x}(t)x(t)\,dt\,.
  \end{equation}
  holds.
  \item[\textup{2}.] Assume that \(y\in\mathscr{K}\) and \(\textup{supp\,}y\Subset{}\mathbb{R}^{+}\). Let us define the function
  \(\check{y}(t)\) by \eqref{FiTEiFu}. Then the Parseval equality
  \begin{equation}
  \label{PaIde2}
  \int\limits_{t\in\mathbb{R}^{+}}\overline{\check{y}}(t)\check{y}(t)\,dt=
  \int\limits_{\mu\in\mathbb{R}^{+}}y^{\ast}(\mu)y(\mu)\frac{d\mu}{2\pi}
  \,.
  \end{equation}
  holds.
  \end{enumerate}
 \end{lemma}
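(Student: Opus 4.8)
The plan is to reduce both Parseval identities to the classical Plancherel theorem for the Fourier transform on \(\mathbb{R}\), using the unitary change of variable \(V\) of \eqref{UnEqu}--\eqref{UnEqEq}, under which the ``eigenfunctions'' \(\psi_{\pm}(t,\mu)=t^{-1/2\pm i\mu}\) of \eqref{EFSLni} turn into the ordinary Fourier exponentials \(e^{\pm i\mu s}\).

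For item~1, I would put \(z=Vx\), that is \(z(s)=e^{s/2}x(e^s)\). Since \(x\in L^2(\mathbb{R}^{+})\) with \(\textup{supp}\,x\Subset\mathbb{R}^{+}\), the function \(z\) is compactly supported on \(\mathbb{R}\) and hence lies in \(L^1(\mathbb{R})\cap L^2(\mathbb{R})\). Substituting \(t=e^s\) in \eqref{FTEiFu} and using \(\overline{\psi_{\pm}(t,\mu)}=t^{-1/2\mp i\mu}\) for \(t>0\), \(\mu\in\mathbb{R}\), a direct computation gives \(\hat{x}^{+}(\mu)=\int_{\mathbb{R}}z(s)e^{-i\mu s}\,ds\) and \(\hat{x}^{-}(\mu)=\int_{\mathbb{R}}z(s)e^{i\mu s}\,ds\); writing \(\tilde z(\mu)=\int_{\mathbb{R}}z(s)e^{-i\mu s}\,ds\) for the (unnormalized) Fourier transform of \(z\), this reads \(\hat{x}^{+}(\mu)=\tilde z(\mu)\), \(\hat{x}^{-}(\mu)=\tilde z(-\mu)\). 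Then \(\hat{x}^{\ast}(\mu)\hat{x}(\mu)=|\tilde z(\mu)|^2+|\tilde z(-\mu)|^2\), so integrating over \(\mathbb{R}^{+}\) folds the two half-lines together: \(\int_{\mathbb{R}^{+}}\hat{x}^{\ast}\hat{x}\,\frac{d\mu}{2\pi}=\int_{\mathbb{R}}|\tilde z(\mu)|^2\,\frac{d\mu}{2\pi}\). The Plancherel theorem in the normalization \(\tilde z(\mu)=\int z(s)e^{-i\mu s}\,ds\) gives \(\int_{\mathbb{R}}|\tilde z|^2\,\frac{d\mu}{2\pi}=\int_{\mathbb{R}}|z(s)|^2\,ds\), and \eqref{UnEqEq} gives \(\int_{\mathbb{R}}|z(s)|^2\,ds=\int_{\mathbb{R}^{+}}|x(t)|^2\,dt\). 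Chaining the three equalities yields \eqref{PaIde1}.

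Item~2 is dual. Given \(y\in\mathscr{K}\) with \(\textup{supp}\,y\Subset\mathbb{R}^{+}\), I would define \(w\) on all of \(\mathbb{R}\) by \(w(\mu)=y^{+}(\mu)\) for \(\mu>0\) and \(w(\mu)=y^{-}(-\mu)\) for \(\mu<0\); then \(w\in L^1(\mathbb{R})\cap L^2(\mathbb{R})\) and \(\int_{\mathbb{R}}|w(\mu)|^2\,\frac{d\mu}{2\pi}=\int_{\mathbb{R}^{+}}y^{\ast}(\mu)y(\mu)\,\frac{d\mu}{2\pi}\). Substituting \(t=e^s\) in \eqref{FiTEiFu} shows that \((V\check y)(s)=e^{s/2}\check y(e^s)=\int_{\mathbb{R}}w(\mu)e^{i\mu s}\,\frac{d\mu}{2\pi}\) is the inverse Fourier transform of \(w\); Plancherel then gives \(\int_{\mathbb{R}}|(V\check y)(s)|^2\,ds=\int_{\mathbb{R}}|w(\mu)|^2\,\frac{d\mu}{2\pi}\), while \eqref{UnEqEq} gives \(\int_{\mathbb{R}}|(V\check y)(s)|^2\,ds=\int_{\mathbb{R}^{+}}|\check y(t)|^2\,dt\). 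Combining these proves \eqref{PaIde2}.

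I do not expect a genuine obstacle here: the only points requiring care are (i) keeping track of the constant \(2\pi\) consistently --- which is precisely why the measure \(\tfrac{d\mu}{2\pi}\) was built into the space \(\mathscr{K}\) and into the transforms \eqref{FTEiFu}, \eqref{FiTEiFu} --- and (ii) verifying that the compact-support hypotheses legitimize every step: compact support in \(\mathbb{R}^{+}\) transfers under \(t=e^s\) to compact support in \(\mathbb{R}\), which together with square-integrability places the transformed functions in \(L^1(\mathbb{R})\cap L^2(\mathbb{R})\), so that all Fourier integrals converge absolutely and the Plancherel theorem applies in its elementary form. One could instead argue formally from the ``orthogonality relations'' \eqref{GeOrt}, but making those \(\delta\)-function manipulations rigorous comes down to exactly the same Plancherel computation.
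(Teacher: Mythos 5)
Your proposal is correct and follows essentially the same route as the paper: the paper's own proof also reduces both identities to the classical \(L^2\)-theory of the Fourier integral by splitting the Fourier transform into the two half-line pieces \(\tilde z_{\pm}(\mu)\) and applying the exponential change of variable \eqref{ChVar}, exactly as you do with \(z=Vx\) and the identification \(\hat x^{+}(\mu)=\tilde z(\mu)\), \(\hat x^{-}(\mu)=\tilde z(-\mu)\). Your added care about the compact-support hypotheses placing the transformed functions in \(L^1(\mathbb{R})\cap L^2(\mathbb{R})\) is a harmless refinement of what the paper leaves implicit.
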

 This lemma says that the
 mappings \(x(t)\to\hat{x}(\mu)\), \(y(\mu)\to\check{y}(t)\), defined so far only  for
 compactly supported \(x\in{}L^{2}(\mathbb{R}^{+})\), \(y\in\mathscr{K}\) , are isometric mappings from \(L^{2}(\mathbb{R}^{+})\) into \(\mathscr{K}\) and from \(\mathscr{K}\) into \(L^{2}(\mathbb{R}^{+})\) respectively. In particular, each of these two mappings is \emph{uniformly continuous} on its domain of definition. The sets of compactly supported \(x\), \(y\)
 are \emph{dense subsets} of the spaces \(L^2(\mathbb{R}^{+})\) and \(\mathscr{K}\) respectively. Therefore the mapping \(x(t)\to\hat{x}(\mu)\) can be extended to a continuous mapping from \(L^2(\mathbb{R}^{+})\) to \(\mathscr{K}\)
 defined on the the whole space \(L^2(\mathbb{R}^{+})\). Analogously the mapping \(y(\mu)\to\check{y}(t)\) can be extended to a continuous mapping from \(L^2(\mathbb{R}^{+})\) to \(\mathscr{K}\) defined on the the whole space \(\mathscr{K}\).

 \begin{definition} {\ }%
 \label{ParExtL}%
 \begin{enumerate}%
 \item[\textup{1.}]
 The mapping \(U\), \(U:L^2(\mathbb{R}^{+})\to\mathscr{K}\), is the continuous mapping
 defined on the whole space \(L^2(\mathbb{R}^{+})\) which acts as
 \begin{equation}
 \label{unit}
 (Ux)(\mu)=\int\limits_{t\in{}\mathbb{R}^{+}}\psi^{\ast}(t,\,\mu)\,x(t)\,dt
 \end{equation}
  for \(x\in{}L^2(\mathbb{R})\) which are compactly supported on \(\mathbb{R}^{+}\).
 \item[\textup{2.}]
 The mapping \(U^{-1}\), \(U^{-1}\!:\mathscr{K}\to{}L^2(\mathbb{R}^{+})\), is the continuous mapping
 defined on the whole space \(\mathscr{K}\) which acts as
 \begin{equation}
 \label{uniti}
 (U^{-1}y)(t)=\int\limits_{\mu\in{}\mathbb{R}^{+}}\psi(t,\,\mu)\,y(\mu)\,\frac{d\mu}{2\pi}
 \end{equation}
 for \(y\in{}\mathscr{K}\) which are compactly supported on \(\mathbb{R}^{+}\).\\
   \textup{(}Here \(U^{-1}\) is a notation only.
 We do not claim for the present that the operator \(U^{-1}\) is the operator inverse to \(U\).\textup{)}
 \end{enumerate}
 \end{definition}
 \begin{remark}
 \label{GeDe}
 If \(x\in{}L^2(\mathbb{R}^{+})\), but \(x\) is not compactly supported on \(\mathbb{R}^{+}\),
 than the function \(\psi(t,\mu)x(t)\) may be not integrable, i.e. the
  integral in \eqref{FTEiFu} may not exist in the proper sense. In this case, the
 function \(Ux\) is defined by means of the above mentioned extension procedure rather
 by the integral \eqref{FTEiFu}.
 Nevertheless we will use the expression \eqref{FTEiFu} for the function \(\hat{x}=Ux\) as a \emph{notation} no matter whether the integral in \eqref{FTEiFu} exists or not as
 a Lebesgue integral. The same is also related to the function \(\check{y}=U^{-1}y\).
 \end{remark}
 \begin{theorem} {\ } %
 \label{UbMaToM}
 \begin{enumerate}
 \item[\textup{1.}]
 The mapping \(U\) introduced in \textup{Definition \ref{ParExtL}} is an unitary mapping from the space
 \(L^2(\mathbb{R}^{+})\) \textsf{on}to the model space \(\mathscr{K}\).
 \item[\textup{2.}] The mapping \(U^{-1}\) introduced in \textup{Definition \ref{ParExtL}} is an unitary mapping from the space \(\mathscr{K}\)
  \textsf{on}to the model space \(L^2(\mathbb{R}^{+})\).
 \item[\textup{3.}] The mappings \(U\) and \(U^{-1}\) are mutually inverse:
 \begin{equation}
 \label{MuIn}
 U^{-1}U=\mathscr{I}_{_{\!\scriptstyle{L^{2}(\mathbb{R}^{+})}}},\ \
 UU^{-1}=\mathscr{I}_{\!_{\scriptstyle\mathscr{K}}}\,.
 \end{equation}
 \end{enumerate}
 \end{theorem}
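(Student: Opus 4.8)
The plan is to transfer the whole picture to the classical Fourier operator $\mathscr{F}$ on $L^2(\mathbb{R})$ via the logarithmic substitution $V$ of \eqref{ChVar}--\eqref{UnEqu}, which by \eqref{UnEqEq} is unitary from $L^2(\mathbb{R}^{+},dt)$ onto $L^2(\mathbb{R},ds)$. Writing $z=Vx$, $z(s)=e^{s/2}x(e^{s})$, I would put $t=e^{\sigma}$ in \eqref{unit} and use $\psi^{\ast}(t,\mu)=\begin{bmatrix}t^{-1/2-i\mu}\\ t^{-1/2+i\mu}\end{bmatrix}$; the factor $e^{\sigma/2}$ is absorbed, so that for every $\mu\in\mathbb{R}^{+}$
\begin{equation*}
(Ux)(\mu)=\int\limits_{\mathbb{R}}\begin{bmatrix}e^{-i\mu\sigma}\\ e^{i\mu\sigma}\end{bmatrix}z(\sigma)\,d\sigma
=\sqrt{2\pi}\,\begin{bmatrix}(\mathscr{F}z)(-\mu)\\ (\mathscr{F}z)(\mu)\end{bmatrix},
\end{equation*}
with $\mathscr{F}$ the operator \eqref{FWRA}. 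Then I would introduce the operator $\Phi\colon L^{2}(\mathbb{R})\to\mathscr{K}$ sending $g$ to the vector function $\mu\mapsto\sqrt{2\pi}\begin{bmatrix}g(-\mu)\\ g(\mu)\end{bmatrix}$ on $\mathbb{R}^{+}$: since the half-lines $\mathbb{R}^{+}$ and $-\mathbb{R}^{+}$ reassemble $L^{2}(\mathbb{R})$ and the factor $\sqrt{2\pi}$ absorbs the measure $d\mu/2\pi$ of \eqref{ScPrMoSp_}, $\Phi$ is a unitary bijection, with inverse $(\Phi^{-1}y)(\mu)=\tfrac{1}{\sqrt{2\pi}}y^{-}(\mu)$ for $\mu>0$ and $\tfrac{1}{\sqrt{2\pi}}y^{+}(-\mu)$ for $\mu<0$. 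The displayed identity reads $U=\Phi\,\mathscr{F}\,V$.

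Running the same substitution in \eqref{uniti}, with $\psi(t,\mu)=\begin{bmatrix}t^{-1/2+i\mu}&t^{-1/2-i\mu}\end{bmatrix}$, splitting the $\mu$-integral over $\mathbb{R}^{+}$ into its $y^{+}$- and $y^{-}$-parts and replacing $\mu$ by $-\mu$ in the second, one recognises the inverse Fourier integral $\tfrac{1}{\sqrt{2\pi}}\int_{\mathbb{R}}e^{-is\mu}(\cdot)\,d\mu$, and so gets $U^{-1}=V^{-1}\,\mathscr{F}^{\ast}\,\Phi^{-1}$. Both identifications would first be checked on the dense subsets of compactly supported $x\in L^{2}(\mathbb{R}^{+})$ and $y\in\mathscr{K}$ — for such $x$ the function $z=Vx$ is bounded with compact support in $\mathbb{R}$, hence lies in $L^{1}(\mathbb{R})\cap L^{2}(\mathbb{R})$, so its Fourier integral converges absolutely and agrees with $\mathscr{F}z$ — and then extended to all of $L^{2}(\mathbb{R}^{+})$, resp.\ $\mathscr{K}$, by continuity; the boundedness of $U$ and $U^{-1}$ required for the extension is exactly Lemma~\ref{PaIdeL1}.

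With these two factorisations in hand, the theorem drops out of the unitarity \eqref{Unit} of $\mathscr{F}$ (so $\mathscr{F}^{\ast}=\mathscr{F}^{-1}$), the unitarity of $V$, and the unitarity of $\Phi$:
\begin{equation*}
U^{-1}U=V^{-1}\mathscr{F}^{\ast}\Phi^{-1}\cdot\Phi\,\mathscr{F}\,V=V^{-1}\mathscr{F}^{\ast}\mathscr{F}\,V=\mathscr{I}_{L^{2}(\mathbb{R}^{+})},\qquad
UU^{-1}=\Phi\,\mathscr{F}\,V\cdot V^{-1}\mathscr{F}^{\ast}\Phi^{-1}=\Phi\Phi^{-1}=\mathscr{I}_{\mathscr{K}},
\end{equation*}
which is item~3; and, being finite products of unitary operators, $U$ and $U^{-1}$ are themselves unitary, which gives items~1 and~2.

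The one step that I expect to need genuine care is the identification $U=\Phi\,\mathscr{F}\,V$ (and its $U^{-1}$ analogue). A direct attempt to prove $U^{-1}Ux=x$ by interchanging the order of integration in \eqref{unit}--\eqref{uniti} fails, because the resulting kernel $\tfrac{1}{\pi}\cos\!\big(\mu\ln(t/s)\big)$ is not integrable in $\mu$ — this is precisely the divergence visible in \eqref{GeOrRe}. The remedy is to stay in the $L^{2}$-sense throughout and invoke the classical Plancherel and Fourier-inversion theorems for $\mathscr{F}$ rather than pointwise integral identities; equivalently, one could insert the regularising factor $\tfrac{\sin(\varepsilon\ln(t/s))}{\varepsilon\ln(t/s)}$ and let $\varepsilon\to+0$, exactly as in the discussion of the averaged eigenfunctions preceding \eqref{GeOrRe}.
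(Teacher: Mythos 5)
Your proposal is correct and follows essentially the same route as the paper: the paper likewise reduces everything, via the substitution \(t=e^{s}\), \(z(s)=e^{s/2}x(e^{s})\), to the classical \(L^{2}\)-theory of the Fourier integral, splitting the transform \(\tilde z(\mu)\) into its restrictions \(\tilde z_{\pm}\) to \(\mathbb{R}^{+}\) (your operator \(\Phi\)) and invoking the Plancherel identity and the inversion formula. Your explicit factorizations \(U=\Phi\,\mathscr{F}\,V\), \(U^{-1}=V^{-1}\mathscr{F}^{\ast}\Phi^{-1}\) are just a slightly more formal packaging of the same argument, and your remark that one must stay in the \(L^{2}\)-sense rather than interchange the divergent pointwise integrals is exactly the point the paper makes around \eqref{GeOrRe}.
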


Statement 3 of Theorem \ref{UbMaToM} claims that for arbitrary \(x\in{}L^2(\mathbb{R}^{+})\)
the pair of formulas
\begin{subequations}
\label{PInF}
\begin{align}
\label{PInF1}
\hat{x}(\mu)&=\int\limits_{t\in{}\mathbb{R}^{+}}\psi^{\ast}(t,\,\mu)\,x(t)\,dt\,,\\
\label{PInF2}
x(t)&=\int\limits_{\mu\in{}\mathbb{R}^{+}}\psi(t,\,\mu)\,\hat{x}(\mu)\,\frac{d\mu}{2\pi}
\end{align}
\end{subequations}
holds. This pair of formulas can be considered as the \emph{expansion of arbitrary
\(x\in{}L^2(\mathbb{R}^{+})\) in eigenfunction of the operator} \(\mathcal{L}\).
 \begin{proof}[Proof of Lemma \ref{PaIdeL1} and Theorem \ref{UbMaToM}.] {\ }\\ %
Lemma \ref{PaIdeL1} and Theorem \ref{UbMaToM} are paraphrases of the classical facts from the  \(L^{2}\)-theory of the Fourier integral.
Given the function \(z(s)\in{}L^2(\mathbb{R},ds)\), its Fourier transform
\(\tilde{z}(\mu)\) is \[\tilde{z}(\mu)=\int\limits_{s\in\mathbb{R}}z(s)e^{-i\mu{}s}ds\,,
\ \ \mu\in\mathbb{R}\,.\]
We split the function \(\tilde{z}(\mu)\) into the pair \(\tilde{z}_{+}(\mu)\) and
\(\tilde{z}_{-}(\mu)\), both functions \(\tilde{z}_{+}(\mu)\) and \(\tilde{z}_{-}(\mu)\)
are defined for \(\mu\in\mathbb{R}^{+}\):
\begin{equation}
\label{FtrSpl}
\tilde{z}_{+}(\mu)=\int\limits_{s\in\mathbb{R}}z(s)e^{-i\mu{}s}\,ds,\quad
\tilde{z}_{-}(\mu)=\int\limits_{s\in\mathbb{R}}z(s)e^{i\mu{}s}\,ds,\quad\mu\in\mathbb{R}^{+}\,.
\end{equation}
The Parseval identity and the inversion formula can by presented in the form
\begin{equation}
\label{ParIdSpl}%
\int\limits_{s\in\mathbb{R}}|z(s)|^2ds=
\int\limits_{\mu\in\mathbb{R}^{+}}|\tilde{z}_{+}(\mu)|^2\frac{d\mu}{2\pi}+
\int\limits_{\mu\in\mathbb{R}^{+}}|\tilde{z}_{-}(\mu)|^2\frac{d\mu}{2\pi}\,,
\end{equation}
and
\begin{equation}
\label{InFSpl}%
z(s)=\int\limits_{\mathbb{R}^{+}}\tilde{z}_{+}(\mu)\,e^{i\mu{}s}\frac{d\mu}{2\pi}+
\int\limits_{\mu\in\mathbb{R}^{+}}\tilde{z}_{-}(\mu)\,e^{-i\mu{}s}\frac{d\mu}{2\pi}\,.
\end{equation}
Changing variable
\[x(t)=t^{-1/2}z(\ln{}t),\,\ \mu(\lambda)=\sqrt{\lambda-1/4}\,,\]
(see \eqref{ChVar}), we present the formulas \eqref{FtrSpl} and \eqref{ParIdSpl}
in the form \eqref{FTEiFu} and \eqref{PaIde1} respectively. The inversion formula
\eqref{InFSpl} corresponds to the formula \eqref{FiTEiFu}, where \(\hat{x}(\mu)\) from \eqref{FTEiFu}
is taken for \(y(\mu)\) and \(\check{y}(t)=x(t)\). This means that the operators \(U,\,U^{-1}\)
are mutually inverse.
 \end{proof}

 \noindent%
\(4^{\circ}.\)
 The operator \(U\), which is constructed from eigenfunctions of the operator \(\mathcal{L}\),
 diagonalizes the operator \(\mathcal{L}\). More precisely, this means that the operator
 \(U\mathcal{L}U^{-1}\) is a `diagonal' operator in the space \(\mathscr{K}\).

 To explain this, let us introduce the multiplication operator \(\mathcal{M}\) acting in the space \(\mathscr{K}\).
 \begin{definition} {\ }%
 \label{DeMOLdD}
 \begin{enumerate}
 \item[\textup{1.}] The domain of definition \(\mathcal{D}_{\mathcal{M}}\) of the operator
 \(\mathcal{M}\) is
 \begin{equation}%
 \label{DeMOLd}
 \mathcal{D}_{\mathcal{M}}=\big\{y\in\mathscr{K}:\,\lambda(\mu)y(\mu)\in\mathscr{K}\big\}\,,
\end{equation}%
where \(\lambda(\mu)\) is defined in \eqref{laotmu}\,.
 \item[\textup{2.}] For \(y\in\mathcal{D}_{\mathcal{M}}\),
 \begin{equation}
 \label{AcOpM}
 (\mathcal{M}y)(\mu)=\lambda(\mu)y(\mu),\ \ \mu\in\mathbb{R}^{+}\,.
 \end{equation}
 \end{enumerate}
 \end{definition}
 \begin{theorem}
 \label{DiMaOpT}
 The equality
 \begin{equation}
 \label{DiMaOp}
 \mathcal{L}=U^{-1}\mathcal{M}U
 \end{equation}
 holds. In particular,
 \[U\mathcal{D}_{\mathcal{L}}=\mathcal{D}_{\mathcal{M}}.\]
 \end{theorem}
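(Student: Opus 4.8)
The plan is to realise the unitary $U$ as the composition of the change of variables $V$ from \eqref{ChVar} with the Fourier transform of $L^{2}(\mathbb{R})$ written in the $\mathbb{R}^{+}$-split form \eqref{FtrSpl}, and then to deduce \eqref{DiMaOp} from the already-recorded unitary equivalence \eqref{OUEO} together with the classical diagonalisation of $-d^{2}/ds^{2}$ by the Fourier transform. Nothing new needs to be computed; the work is in lining up the pieces, and above all the domains.

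First I would establish the factorisation $U=\mathcal{G}V$, where $\mathcal{G}\colon L^{2}(\mathbb{R})\to\mathscr{K}$ sends $z$ to the pair $\bigl(\tilde z_{+},\tilde z_{-}\bigr)$ of \eqref{FtrSpl}. For $x$ compactly supported on $\mathbb{R}^{+}$ put $z=Vx$, so $z(s)=e^{s/2}x(e^{s})$; substituting $s=\ln t$ in \eqref{FtrSpl} turns $\tilde z_{\pm}(\mu)$ into $\int_{0}^{\infty}t^{-1/2\mp i\mu}x(t)\,dt=\int_{0}^{\infty}\overline{\psi_{\pm}(t,\mu)}\,x(t)\,dt$, which is exactly the $\pm$ component of $\hat{x}(\mu)$ in \eqref{FTEiFu}. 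Hence $U$ and $\mathcal{G}V$ agree on the dense set of compactly supported functions; since both are bounded (both are in fact unitary --- $U$ by Theorem \ref{UbMaToM}, $\mathcal{G}$ by the Plancherel identity \eqref{ParIdSpl} and the inversion formula \eqref{InFSpl}), they agree on all of $L^{2}(\mathbb{R}^{+})$, and likewise $U^{-1}=V^{-1}\mathcal{G}^{-1}$.

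Next I would invoke the spectral theorem for the self-adjoint operator $\mathcal{T}$, $(\mathcal{T}z)(s)=-z''(s)$: the split Fourier transform $\mathcal{G}$ diagonalises it, $\mathcal{G}\,\mathcal{T}\,\mathcal{G}^{-1}=\mathcal{N}$, where $\mathcal{N}$ is multiplication by $\mu^{2}$ on $\mathscr{K}$ with domain $\{y\in\mathscr{K}\colon \mu^{2}y(\mu)\in\mathscr{K}\}$ and $\mathcal{G}\,\mathcal{D}_{\mathcal{T}}=\mathcal{D}_{\mathcal{N}}$. Adding the constant $\tfrac14$ leaves the domain unchanged, so by \eqref{laotmu}, \eqref{DeMOLd}, \eqref{AcOpM} one gets $\mathcal{G}\bigl(\mathcal{T}+\tfrac14 I\bigr)\mathcal{G}^{-1}=\mathcal{M}$ with $\mathcal{D}_{\mathcal{M}}=\mathcal{D}_{\mathcal{N}}$. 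Combining this with \eqref{OUEO} and $U=\mathcal{G}V$,
\[
\mathcal{L}=V^{-1}\bigl(\mathcal{T}+\tfrac14 I\bigr)V
 =V^{-1}\mathcal{G}^{-1}\,\mathcal{M}\,\mathcal{G}\,V
 =U^{-1}\mathcal{M}\,U,
\]
which is \eqref{DiMaOp}; and $U\mathcal{D}_{\mathcal{L}}=\mathcal{G}V\bigl(V^{-1}\mathcal{G}^{-1}\mathcal{D}_{\mathcal{M}}\bigr)=\mathcal{D}_{\mathcal{M}}$, the last assertion of the theorem.

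The one place that needs care --- and the main obstacle --- is the domain bookkeeping: \eqref{OUEO} must be read as an equality of operators, i.e. $V\mathcal{D}_{\mathcal{L}}=\mathcal{D}_{\mathcal{T}}$, and $\mathcal{D}_{\mathcal{T}}$ must be the full Sobolev domain $\mathcal{G}^{-1}\mathcal{D}_{\mathcal{N}}$. Both hold because $\mathcal{L}$ is self-adjoint (Lemma \ref{MOSA}) and a self-adjoint operator admits no proper self-adjoint extension, so it must coincide with the self-adjoint operator $V^{-1}\bigl(\mathcal{T}+\tfrac14 I\bigr)V$, which shares with it the core of smooth compactly supported functions. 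If one prefers to stay within the framework of the preceding sections, one verifies \eqref{DiMaOp} first on that core $\mathcal{D}_{\mathring{\mathcal{L}}}$: for such $x$, integrating $Lx=-\tfrac{d}{dt}\bigl(t^{2}x'\bigr)$ against $\overline{\psi_{\pm}(t,\mu)}=t^{-1/2\mp i\mu}$ twice by parts kills the (compactly supported) boundary terms and gives $\widehat{\mathcal{L}x}(\mu)=\bigl(\tfrac14+\mu^{2}\bigr)\hat{x}(\mu)=\lambda(\mu)\hat{x}(\mu)$; one then passes to the closure using $\mathcal{L}=\textup{clos}\,\mathring{\mathcal{L}}$ (see \eqref{FNTE}), the boundedness of $U$, and the closedness of $\mathcal{M}$, and finally upgrades the resulting inclusion $\mathcal{L}\subseteq U^{-1}\mathcal{M}U$ to an equality by the same no-proper-self-adjoint-extension argument.
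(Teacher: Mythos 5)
Your argument is correct, and it in fact supplies a proof where the paper gives none: right after stating Theorem \ref{DiMaOpT} the author declares it ``a paraphrase of a standard result from the theory of Fourier integral,'' plays down its role as heuristic, and explicitly omits the details. Your route is the one the paper gestures at — factor $U=\mathcal{G}V$ (which is exactly how the paper itself proves Lemma \ref{PaIdeL1} and Theorem \ref{UbMaToM}), use the classical Fourier diagonalization of $-d^{2}/ds^{2}$, and combine with \eqref{OUEO} — but you correctly identify that the only nontrivial content is the domain bookkeeping, which the paper never addresses since \eqref{OUEO} is stated with $\mathcal{T}$ on an unspecified ``natural'' domain. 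Your resolution is sound on both counts: conjugation of the self-adjoint multiplication operator $\mathcal{M}$ by the unitary $U$ is self-adjoint, so the inclusion $\mathcal{L}\subseteq U^{-1}\mathcal{M}U$ between self-adjoint operators forces equality; and the inclusion itself is obtained either from the abstract maximality argument or, more self-containedly, by your second route — the double integration by parts against $\overline{\psi_{\pm}(t,\mu)}$ on the core $\mathcal{D}_{\mathring{\mathcal{L}}}$ (boundary terms vanish by compact support, and $(-\tfrac12\mp i\mu)\bigl(-(\tfrac12\mp i\mu)\bigr)=\tfrac14+\mu^{2}=\lambda(\mu)$), followed by passage to the closure using \eqref{FNTE}, the boundedness of $U$, and the closedness of $\mathcal{M}$. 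The latter argument stays entirely within the framework already built in the paper and is arguably the cleaner of your two variants; either one is a complete and correct proof of \eqref{DiMaOp} and of $U\mathcal{D}_{\mathcal{L}}=\mathcal{D}_{\mathcal{M}}$.
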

 Theorem \ref{DiMaOpT} is a paraphrase of a standard result from the theory of Fourier integral.
 This result tell how to express the Fourier transform of the derivative of some function in terms of
 the Fourier transform of the function itself. We do not present the detail explanation.
 This theorem plays an heuristic role only. The only what we need is the expression
 \eqref{EFSLni} for generalized eigenfunction of the operator \(\mathcal{L}\)
 corresponding to the point \(\mu=\mu(\lambda)\) of the spectrum \(\mathcal{L}\).

\noindent
\begin{center}
\fbox{
\begin{minipage}[t]{0.9\linewidth}
 Theorem \ref{DiMaOpT} says that the differential operator \(\mathcal{L}\) is unitary equivalent
 to the multiplication operator \(\mathcal{M}\). The operator \(\mathcal{M}\) may be considered
 as a \emph{model} of the operator \(\mathcal{L}\). Spectral properties of the operator \(\mathcal{L}\)
 can be reformulated in  terms of spectral properties of the model operator \(\mathcal{M}\).
 From the other hand, since the model operator is diagonal, to study spectral properties of the model operator
 is easier than to study  spectral properties of the original operator \(\mathcal{L}\).
\end{minipage}}
\end{center}

\section{\!Matrix-functional calculus
for the operator~\mathversion{bold}%
\(\mathcal{L}\)\,.
}\mathversion{normal}

In Theorem \ref{DiMaOpT} it was stated that the differential operator \(\mathcal{L}\) is unitary equivalent to the multiplication operator  \(\mathcal{M}\) in the space \(\mathscr{K}\) of vector-functions. This allows us to built
functional calculus for the operator~\(\mathcal{L}\).

If
\(g(\mu)\), \(f:\,\mathbb{R}^{+}\to\mathbb{C}\), is a bounded
 complex-values measurable function, we define the operator \(\mathcal{M}_{g}\), which acts in the model space \(\mathscr{K}\), \(\mathcal{M}_{g}:\,\mathscr{K}\to\mathscr{K}\), as
\begin{equation}
\label{scfop}
(\mathcal{M}_{_g}y)(\mu)\stackrel{\textup{\tiny def}}{=}g(\mu)y(\mu),\ \ y\in\mathscr{K}\,.
\end{equation}
In this notation, the equality \eqref{DiMaOp} can be presented in the form
\begin{equation}
\label{AnF}
\mathcal{L}=U^{-1}\mathcal{M}_{\lambda}U,
\end{equation}
where \(U,\,U^{-1}\) are the unitary operators defined in \eqref{unit}, \eqref{uniti},
and the function \(\lambda(\mu)\)  is defined in \eqref{laotmu}.

This motivates the following definition.

\emph{If \(h(\lambda)\) is a  bounded measurable function defined on the spectrum
\(\text{\large\(\sigma\)}(\!\mathcal{L})\) of
 \(\mathcal{L}\), \eqref{specL}, then the operator \(h(\mathcal{L})\),
\(h(\mathcal{L}):\,L^2(\mathbb{R}^{+})\to~\,L^2(\mathbb{R}^{+})\),
is defined as}
\begin{equation}
\label{hotL}
h(\mathcal{L})\stackrel{\textup{\tiny def}}{=}U^{-1}\mathcal{M}_{_g}U.
\end{equation}
where
\begin{equation}
g(\mu)=h(\lambda(\mu)),\ \
\end{equation}
\(\lambda(\mu)\) is defined in \eqref{laotmu}.

The operator \(\mathcal{M}_{_g}\) acts in the space \(\mathscr{K}\)
of two-component \emph{vector} functions and multiplies a vector-function \(y(\mu)\)
on the \emph{scalar} function \(g(\mu)\). However, it is natural to consider
an operator which multiplies a vector-function \(y(\mu)\) on  a \(2\times2\) \emph{matrix}-%
valued function \(G(\mu)\).
\begin{definition} {\ }\\[-2.0ex]
\label{Linf}
\begin{enumerate}
\item[\textup{1.}]
\(L^{\infty}_{_{\mathfrak{M}_2}}(\mathbb{R}^{+})\) is the set of all
\(2\times2\) matrix functions which
entries are complex valued functions defined almost everywhere on \(\mathbb{R}^{+}\)
and essentially bounded there.
\item[\textup{2.}] The set \(L^{\infty}_{_{\mathfrak{M}_2}}(\mathbb{R}^{+})\) is provided by
pointwise algebraic operation: the addition, the multiplication and the multiplication with complex scalars.
\item[\textup{3.}] For
\(G(\mu)\in{}L^{\infty}_{_{\mathfrak{M}_2}}(\mathbb{R}^{+})\), the
norm \(\|G\|_{_{L^{\infty}_{_{\mathfrak{M}_2}}}}\) is defined as
\begin{equation}
\label{dMaS}
\|G\|_{_{L^{\infty}_{_{\mathfrak{M}_2}}}}\!\!\stackrel{\textup{\tiny def}}{=}\,
\underset{\mu\in\mathbb{R}^{+}}{\textup{ess\,sup}}\,\|G(\mu)\|\,,
\end{equation}
where for each \(\mu\), the expression \(\|G(\mu)\|\) means the norm of the
matrix \(F(\mu)\) considered as an operator in the two-dimensional complex Euclidean space,
and \(\textup{ess\,sup}\) is the essential supremum with respect to the Lebesgue measure
on \(\mathbb{R}^{+}\).
\item[\textup{4.}] For
\(G(\mu)\in{}L^{\infty}_{_{\mathfrak{M}_2}}(\mathbb{R}^{+})\), the conjugate
matrix function \(G^{\ast}(\mu)\) is defined as
\begin{equation}
G^{\ast}(\mu)\stackrel{\textup{\tiny def}}{=}(G(\mu))^{\ast}\,,\quad \mu\in\mathbb{R}^{+},
\end{equation}
where where for each \(\mu\), the expression \(G(\mu))^{\ast}\) means the matrix
Hermitian conjugated to the matrix \(G(\mu)\).
\end{enumerate}
\end{definition}

The set \(L^{\infty}_{_{\mathfrak{M}_2}}(\mathbb{R}^{+})\)
provided by pointwise algebraic operations and the norm \eqref{dMaS} is a Banach algebra.
\begin{definition}\ \
\label{FuCa}
Let \(G(\mu)\) be a matrix function from \(L^{\infty}_{_{\mathfrak{M}_2}}(\mathbb{R}^{+})\).

The operator \(\mathcal{M}_{_{{\scriptstyle G}}}\), which acts in the model space \(\mathscr{K}\), \,
 \(\mathcal{M}_{_{\scriptstyle G}}:\,\mathscr{K}\to~\mathscr{K}\), is defined as
\begin{equation}
\label{scfopM}
(\mathcal{M}_{_{\scriptstyle G}}y)(\mu)\stackrel{\textup{\tiny def}}{=}G(\mu)y(\mu),\ \ y\in\mathscr{K}\,.
\end{equation}
\end{definition}
\begin{definition}
Let \(H(\lambda)\),
\begin{equation}
\label{MaS}
H(\lambda)=
\begin{bmatrix}
h_{++}(\lambda)&h_{+-}(\lambda)\\[1.0ex]
h_{-+}(\lambda)&h_{--}(\lambda)
\end{bmatrix},\quad \ \lambda\in\text{\large\(\sigma\)}(\!\mathcal{L})\,,
\end{equation}
be a measurable \(2\times2\) matrix function with complex-valued
entries \(h_{\pm}(\lambda)\)
defined almost everywhere on the spectrum \(\text{\large\(\sigma\)}(\!\mathcal{L})\)
of the operator~\(\mathcal{L}\). Assume that the matrix function \(H\) is essentially bounded on
\(\text{\large\(\sigma\)}(\!\mathcal{L})\):
\begin{equation}
\label{EBa}
\underset{\lambda\in{\textstyle\sigma}(\!\mathcal{L})}{\textup{ess\,sup}}\,\|H(\lambda)\|<\infty\,.
\end{equation}
The operator \(H(\mathcal{L})\),
\(H(\mathcal{L}):\,L^2(\mathbb{R}^{+})\to~\,L^2(\mathbb{R}^{+})\),
is defined as
\begin{equation}
\label{HotL}
H(\mathcal{L})\stackrel{\textup{\tiny def}}{=}U^{-1}\mathcal{M}_{_F}U,
\end{equation}
where \(U,\,U^{-1}\) are the unitary operators defined in \eqref{unit}, \eqref{uniti}, and
\begin{equation}
F(\mu)=H(\lambda(\mu)),\ \ \lambda(\mu) \textup{ is defined in \eqref{laotmu}.}
\end{equation}
\end{definition}
\begin{lemma}
\label{Homom}
 The mapping \(G\to\mathcal{M}_{_{\scriptstyle G}}\) ,
is a norm preserving homomorphism of the algebra \(L^{\infty}_{_{\mathfrak{M}_2}}(\mathbb{R}^{+})\)
of matrix functions into the algebra of all bounded operators in the model space
\(\mathscr{K}\):
\begin{enumerate}
\item[\textup{1}.] If \(G(\mu)\equiv{}I\), then \(\mathcal{M}_{_{\scriptstyle G}}=
\mathscr{I}_{_\mathscr{K}}\), where \(I\) it the identity \(2\times2\) matrix, and
\(\mathscr{I}_{_\mathscr{K}}\) is the identity operator in \(\mathscr{K}\).
\item[\textup{2}.] If \(G(\mu)=\alpha_1G_1(\mu)+\alpha_2G_2(\mu)\), where
\(G_1,\,G_2\in{}L^{\infty}_{_{\mathfrak{M}_2}}(\mathbb{R}^{+})\) %
 and
\(\alpha_1,\,\alpha_2\in\mathbb{C}\), then %
\(\mathcal{M}_{_{\scriptstyle G}}=
\alpha_1\mathcal{M}_{_{{\scriptstyle G}_1}}+\alpha_2\mathcal{M}_{_{{\scriptstyle G}_2}}\).
\item[\textup{3}.] If \(G(\mu)=G_1(\mu)\cdot{}G_2(\mu)\), where \(G_1,\,G_2%
\in{}L^{\infty}_{_{\mathfrak{M}_2}}(\mathbb{R}^{+})\),
 then \\ %
\(\mathcal{M}_{_{\scriptstyle G}}=\mathcal{M}_{_{{\scriptstyle G}_1}}\cdot\mathcal{M}_{_{{\scriptstyle G}_2}}\).
\item[\textup{4}.] If \(G%
\in{}L^{\infty}_{_{\mathfrak{M}_2}}(\mathbb{R}^{+})\), then
\begin{equation}
\label{CoMO}
(\mathcal{M}_{_{\scriptstyle G}})^{\ast}=\mathcal{M}_{_{{\scriptstyle G}^{\ast}}},
\end{equation}
where \(G^{\ast}\) is the matrix function conjugated to the matrix-function \(G\)
and \(\mathcal{M}_{_{{\scriptstyle G}^{\ast}}}\) is the operator conjugated to the operator
\(\mathcal{M}_{_{\scriptstyle G}}\) with respect to the scalar product in \(\mathscr{K}\).
\item[\textup{5}.] If \(G%
\in{}L^{\infty}_{_{\mathfrak{M}_2}}(\mathbb{R}^{+})\),
then
\begin{equation}
\label{NoP}
\|\mathcal{M}_{_{\scriptstyle G}}\|_{_{\scriptstyle{\mathscr{K}\to\mathscr{K}}}}=
\|G\|_{_{{\scriptstyle{}L}^{\infty}_{_{\mathfrak{M}_2}}}}.
\end{equation}
\end{enumerate}
\end{lemma}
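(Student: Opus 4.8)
The plan is to check the five listed properties in an order that makes the boundedness of \(\mathcal{M}_{_{\scriptstyle G}}\) available before it is used, establishing the norm identity \eqref{NoP} last; the only substantive point is the lower bound for the operator norm.

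First I would record the elementary estimate. For \(y\in\mathscr{K}\) and almost every \(\mu\in\mathbb{R}^{+}\) one has \(\|G(\mu)y(\mu)\|\le\|G(\mu)\|\,\|y(\mu)\|\le\|G\|_{_{{\scriptstyle{}L}^{\infty}_{_{\mathfrak{M}_2}}}}\|y(\mu)\|\); squaring and integrating against \(d\mu/2\pi\) gives \(\|\mathcal{M}_{_{\scriptstyle G}}y\|_{\mathscr{K}}\le\|G\|_{_{{\scriptstyle{}L}^{\infty}_{_{\mathfrak{M}_2}}}}\|y\|_{\mathscr{K}}\). In particular \(\mathcal{M}_{_{\scriptstyle G}}\) is a well-defined bounded operator on \(\mathscr{K}\), and one half of \eqref{NoP} is already proved. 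Properties 1--3 then follow at once from the pointwise identities \(I\,y(\mu)=y(\mu)\), \((\alpha_1G_1+\alpha_2G_2)(\mu)\,y(\mu)=\alpha_1G_1(\mu)y(\mu)+\alpha_2G_2(\mu)y(\mu)\), and \(\big(G_1(\mu)G_2(\mu)\big)y(\mu)=G_1(\mu)\big(G_2(\mu)y(\mu)\big)\), each valid for a.e.\ \(\mu\). For Property 4 I would unwind the scalar product of \(\mathscr{K}\): for \(y_1,y_2\in\mathscr{K}\),
\[
\langle\mathcal{M}_{_{\scriptstyle G}}y_1,y_2\rangle_{\mathscr{K}}
=\int_{\mathbb{R}^{+}}y_2^{\ast}(\mu)\,G(\mu)\,y_1(\mu)\,\tfrac{d\mu}{2\pi}
=\int_{\mathbb{R}^{+}}\big(G^{\ast}(\mu)y_2(\mu)\big)^{\ast}y_1(\mu)\,\tfrac{d\mu}{2\pi}
=\langle y_1,\mathcal{M}_{_{{\scriptstyle G}^{\ast}}}y_2\rangle_{\mathscr{K}}\,,
\]
which identifies \((\mathcal{M}_{_{\scriptstyle G}})^{\ast}=\mathcal{M}_{_{{\scriptstyle G}^{\ast}}}\).

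The heart of the matter is the reverse inequality \(\|\mathcal{M}_{_{\scriptstyle G}}\|\ge\|G\|_{_{{\scriptstyle{}L}^{\infty}_{_{\mathfrak{M}_2}}}}\). Writing \(c\) for \(\|G\|_{_{{\scriptstyle{}L}^{\infty}_{_{\mathfrak{M}_2}}}}\) and fixing \(\varepsilon>0\), I would choose a countable set \(\{v_n\}_{n\ge1}\) of unit vectors dense in the unit sphere of \(\mathbb{C}^2\); since \(v\mapsto\|G(\mu)v\|\) is continuous and the sphere compact, \(\|G(\mu)\|=\sup_n\|G(\mu)v_n\|\) for every \(\mu\). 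Each scalar function \(\mu\mapsto\|G(\mu)v_n\|\) is measurable, so the sets \(E_n=\{\mu\in\mathbb{R}^{+}:\|G(\mu)v_n\|>c-\varepsilon\}\) are measurable, and because \(c\) is the essential supremum of \(\|G(\mu)\|\) their union has positive measure; hence \(m(E_n)>0\) for some \(n\). Picking \(E\subseteq E_n\) with \(0<m(E)<\infty\) and setting \(y(\mu)=\mathds{1}_{E}(\mu)\,v_n\in\mathscr{K}\), one gets \(\|y\|_{\mathscr{K}}^2=m(E)/2\pi\) and
\[
\|\mathcal{M}_{_{\scriptstyle G}}y\|_{\mathscr{K}}^2=\int_{E}\|G(\mu)v_n\|^2\,\tfrac{d\mu}{2\pi}\ge(c-\varepsilon)^2\,\frac{m(E)}{2\pi}=(c-\varepsilon)^2\,\|y\|_{\mathscr{K}}^2\,,
\]
so \(\|\mathcal{M}_{_{\scriptstyle G}}\|\ge c-\varepsilon\); letting \(\varepsilon\to0\) and combining with the upper bound gives \eqref{NoP}.

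The step I expect to be the main obstacle is precisely this lower bound: one must exhibit an admissible test vector-function that captures the operator norm of \(G(\mu)\) on a set of positive measure, whereas a pointwise choice of an optimal direction for each \(\mu\) need not depend measurably on \(\mu\). Fixing once and for all a countable dense family \(\{v_n\}\) of directions circumvents any appeal to a measurable-selection theorem, reducing everything to measurability of the scalar functions \(\mu\mapsto\|G(\mu)v_n\|\). The remaining content is routine bookkeeping with pointwise matrix algebra and with the definition of the inner product on \(\mathscr{K}\).
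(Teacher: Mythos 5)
Your proof is correct and complete. Note that the paper itself states Lemma \ref{Homom} without proof, treating it as a standard fact about matrix multiplication operators, so there is no argument in the text to compare yours against; what you supply is exactly the routine verification plus the one genuinely non-trivial point, the lower bound $\|\mathcal{M}_{_{\scriptstyle G}}\|\geq\|G\|_{_{L^{\infty}_{_{\mathfrak{M}_2}}}}$. Your device of a fixed countable dense family $\{v_n\}$ of unit directions is a clean way to get measurable sets $E_n=\{\mu:\|G(\mu)v_n\|>c-\varepsilon\}$ whose union is $\{\mu:\|G(\mu)\|>c-\varepsilon\}$ (a set of positive measure by the definition of the essential supremum), thereby sidestepping any measurable-selection argument; the test vector $y=\mathds{1}_{E}\,v_n$ with $0<m(E)<\infty$ then does the job. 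The adjoint computation in Property 4 is also right (with the understanding that the scalar product on $\mathscr{K}$ is $\int y_2^{\ast}(\mu)y_1(\mu)\,\frac{d\mu}{2\pi}$ — the formula \eqref{ScPrMoSp_} in the paper contains an obvious misprint), and Properties 1--3 indeed reduce to pointwise matrix algebra once boundedness is in hand.
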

\begin{lemma}
\label{StrConL}
Let \(\{G_n\}_{1\leq{}n<\infty}\) be a sequence of
\(2\times2\) matrix functions,
\(G_n\in{}L^{\infty}_{_{\mathfrak{M}_2}}(\mathbb{R}^{+})\) for every \(n\).

We assume that
\begin{enumerate}
\item[\textup{1}.] The sequence \(\{G_n\}_{1\leq{}n<\infty}\) is uniformly bounded, that is
\begin{equation}
\label{UniBou}
\sup_n\|G_n\|_{_{{\scriptstyle{}L}^{\infty}_{_{\mathfrak{M}_2}}}}<\infty\,.
\end{equation}
\item[\textup{2}.] For \textsf{almost every} \(\mu\in\mathbb{R}^{+}\) there exists the limit of matrices \(G_n(\mu)\)\textup{:}
\begin{equation}
\label{PWLim}
\lim_{n\to\infty}G_n(\mu)=G(\mu)\,.
\end{equation}
\end{enumerate}
Then the sequence of operators \(\{\mathcal{M}_{_{\,{\scriptstyle G}_n}}\}_{1\leq n<\infty}\)
converges \textsf{strongly} to the operator \(\mathcal{M}_{_{\scriptstyle G}}\)\textup{:}
\begin{equation}
\label{StrConM}
 \lim_{n\to\infty}\|\mathcal{M}_{_{\,{\scriptstyle G}_n}}y-\mathcal{M}_{_{\scriptstyle G}}y\|_{_{\scriptstyle \mathscr{K}}}=0\,
\ \ \  \textup{for every} \ \ y\in\mathscr{K},
\end{equation}
\end{lemma}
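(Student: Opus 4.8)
The plan is to prove Lemma~\ref{StrConL} by a direct appeal to Lebesgue's dominated convergence theorem, applied to the scalar integrands $\mu\mapsto\big|\big(G_n(\mu)-G(\mu)\big)y(\mu)\big|^2$, where for $v\in\mathbb{C}^2$ we write $|v|^2=v^{\ast}v$ for the squared Euclidean norm.

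First I would check that the limit matrix function $G$ itself lies in $L^\infty_{\mathfrak{M}_2}(\mathbb{R}^+)$, so that the operator $\mathcal{M}_{G}$ appearing in the conclusion is well defined. This is immediate: since $\mathfrak{M}_{2,2}$ is finite-dimensional, the map $A\mapsto\|A\|$ is continuous on it, hence for almost every $\mu$ one has $\|G(\mu)\|=\lim_n\|G_n(\mu)\|\leq\sup_n\|G_n\|_{L^\infty_{\mathfrak{M}_2}}=:C<\infty$ by hypothesis~1; measurability of $G$ follows from hypothesis~2, being a pointwise a.e.\ limit of measurable matrix functions. By Lemma~\ref{Homom} the operators $\mathcal{M}_{G_n}$ and $\mathcal{M}_{G}$ are all bounded on $\mathscr{K}$, with norms $\leq C$.

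Next, fixing $y\in\mathscr{K}$, I would write
\[
\|\mathcal{M}_{G_n}y-\mathcal{M}_{G}y\|_{\mathscr{K}}^{2}
=\int\limits_{\mathbb{R}^{+}}\big|\big(G_n(\mu)-G(\mu)\big)y(\mu)\big|^{2}\,\frac{d\mu}{2\pi}\,.
\]
For almost every $\mu\in\mathbb{R}^{+}$ the integrand tends to $0$ as $n\to\infty$, because $G_n(\mu)\to G(\mu)$ by hypothesis~2 and matrix--vector multiplication is continuous. For the majorant I would use the crude estimate $\big|\big(G_n(\mu)-G(\mu)\big)y(\mu)\big|\leq\big(\|G_n(\mu)\|+\|G(\mu)\|\big)|y(\mu)|\leq 2C\,|y(\mu)|$, so that the integrand is dominated by the fixed integrable function $\mu\mapsto 4C^{2}|y(\mu)|^{2}$, integrable precisely because $y\in\mathscr{K}$. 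Dominated convergence then forces the integral to tend to $0$, which is exactly the assertion \eqref{StrConM}.

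There is essentially no hard step here: the proof is a routine dominated-convergence argument. The only points worth a word of care are that hypothesis~1 (uniform boundedness) is exactly what supplies the integrable majorant — and that without it the conclusion genuinely fails — and that one should invoke the finite-dimensionality of $\mathfrak{M}_{2,2}$ to pass from entrywise convergence $G_n(\mu)\to G(\mu)$ to both $G_n(\mu)y(\mu)\to G(\mu)y(\mu)$ and $\|G_n(\mu)\|\to\|G(\mu)\|$. It may also be worth remarking that the convergence in \eqref{StrConM} is only strong and is in general not uniform, so no stronger statement is being claimed.
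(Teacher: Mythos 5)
Your proof is correct and follows essentially the same route as the paper: the same pointwise bound $\|G_n(\mu)y(\mu)-G(\mu)y(\mu)\|^2_{\mathbb{C}^2}\leq 4C^2\|y(\mu)\|^2_{\mathbb{C}^2}$, pointwise a.e.\ convergence from hypothesis~2, and the Lebesgue dominated convergence theorem with the integrable majorant supplied by $y\in\mathscr{K}$. Your preliminary verification that $G$ is measurable and essentially bounded (hence $\mathcal{M}_{G}$ is well defined) is a welcome small addition that the paper leaves implicit.
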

\begin{proof}
According to \eqref{UniBou}, there exists a constant \(C<\infty\) such that the norms of the matrices
\(G_n(\mu),\,G(\mu)\)
admit the estimates
\(\|G_n(\mu)\|\leq{}C\),
\(\|G(\mu)\|\leq{}C\) for almost every \(\mu\in\mathbb{R}^{+}.\)
 Therefore for every \(y\in\mathscr{K}\),
the inequalities
\begin{equation}
\label{DomCon}
\|G_n(\mu)y(\mu)-G(\mu)y(\mu)\|^{2}_{_{{\scriptstyle\mathbb{C}}^2}}\leq4C^2\|y(\mu)\|^{2}_{_{{\scriptstyle\mathbb{C}}^2}}
\end{equation}
hold for almost every \(\mu\in\mathbb{R}^{+}\) and for every \(n=1,\,2,\,3,...\,\,.\)
From the condition \eqref{PWLim} it follows that
\begin{equation}
\label{CToZ}
\lim_{n\to\infty}\|G_n(\mu)y(\mu)-G(\mu)y(\mu)\|^{2}_{_{{\scriptstyle\mathbb{C}}^2}}=0  \ \
\textup{for almost every} \ \ \mu\in\mathbb{R}^{+}\,.
\end{equation}
Since \(y\in\mathscr{K}\),
\begin{equation}
\label{SumMaj}
\int\limits_{\mu\in\mathbb{R}^{+}}\|y(\mu)\|^{2}_{_{{\scriptstyle\mathbb{C}}^2}}\frac{d\mu}{2\pi}=
\|y\|^{2}_{_{_{\scriptstyle \mathscr{K}}}}<\infty\,.
\end{equation}
From \eqref{DomCon}, \eqref{CToZ}, \eqref{SumMaj} and the Lebesgue dominated convergence theorem
it follows that
\begin{equation*}
\label{TeToZe}
\lim_{n\to\infty}\int\limits_{\mathbb{R}^{+}}
\|G_n(\mu)y(\mu)-G(\mu)y(\mu)\|^{2}_{_{{\scriptstyle\mathbb{C}}^2}}\frac{d\mu}{2\pi}=0\,.
\end{equation*}
The last equality is the equality \eqref{StrCon}.
\end{proof}
\begin{definition}
\label{InvMa}
Let \(G(\mu),\,\mu\in\mathbb{R}^{+},\) be a \(2\times2\) matrix function defined almost everywhere
on \(\mathbb{R}^{+}\). Then \emph{the matrix-function \(G^{-1}(\mu)\) is defined for those
\(\mu\in\mathbb{R}^{+}\) for which the matrix \(G(\mu)\) is defined and invertible:}
\[G^{-1}(\mu)\stackrel{\textup{\tiny def}}{=}(G(\mu))^{-1}\,,\]
 where \((G(\mu))^{-1}\) is the matrix inverse to the matrix \(G(\mu)\).

In particular, if the matrix \(G(\mu)\) is invertible for almost every \mbox{\(\mu\in\mathbb{R}^{+}\)},
then the matrix function \(G^{-1}(\mu)\) is defined almost everywhere.
\end{definition}

\begin{theorem}
\label{InvCon}
Let \(G(\mu),\,\mu\in\mathbb{R}^+\),
be a matrix function from the set \(L^{\infty}_{_{\mathfrak{M}_2}}(\mathbb{R}^{+})\),
\textup{(Definition \ref{Linf})},
and \(\mathcal{M}_{_{\scriptstyle G}}: \mathscr{K}\to{}\mathscr{K}\) be the operator generated by
the matrix function \(G\).

\begin{enumerate}
\item[\textup{1}.]
In order to the operator \(\mathcal{M}_{_{\scriptstyle G}}\) be invertible it is necessary and sufficient that
both of the following two conditions are satisfied:
\begin{enumerate}
\item[\textup{a)}.]
For almost every \(\mu\in\mathbb{R}^{+}\), the matrix \(G(\mu)\) is invertible,
i.e. the condition
\begin{equation}
\label{DIC}
\det G(\mu)\not=0\quad \textup{for almost every }\ \mu\in\mathbb{R}^{+}
\end{equation}
holds.
\item[\textup{b)}.] The matrix function \(G^{-1}(\mu)\), which under the condition
\eqref{DIC}
is defined for almost every \(\mu\in\mathbb{R}^{+}\), belongs to the set
\(L^{\infty}_{_{\mathfrak{M}_2}}(\mathbb{R}^{+})\)\textup{:}
\begin{equation}
\label{BoIM}
\underset{\mu\in\mathbb{R}^{+}}{\textup{ess\,sup}}\,\|G^{-1}(\mu)\|<\infty\,.
\end{equation}
\end{enumerate}
\item[\textup{2}.]
If the conditions \eqref{DIC} and \eqref{BoIM} are satisfied, then the inverse operator
\((\mathcal{M}_{G})^{-1}\) is expressible as
\begin{equation}
\label{ExInOp}
(\mathcal{M}_{G})^{-1}=\mathcal{M}_{G^{-1}}\,.
\end{equation}
\item[\textup{3}.] If the condition \eqref{DIC} is violated, then the point \(\zeta=0\) belongs
to both the point spectrum \(\textup{\large\(\sigma\)}_p((\mathcal{M}_{F})\)
and the residual spectrum \(\textup{\large\(\sigma\)}_r((\mathcal{M}_{G})\) of the operator \(\mathcal{M}_{G}\).
\item[\textup{4}.] If the condition \eqref{DIC} is satisfied, but  the condition \eqref{BoIM} is
violated, then the  point \(\zeta=0\) belongs to the continuous spectrum
\(\textup{\large\(\sigma\)}_c((\mathcal{M}_{G})\)
 of the operator \(\mathcal{M}_{F}\),
but \(0\not\in\textup{\large\(\sigma\)}_p((\mathcal{M}_{G})\),
\(0\not\in\textup{\large\(\sigma\)}_r((\mathcal{M}_{G})\).
\end{enumerate}
\end{theorem}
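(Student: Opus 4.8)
The plan is to reduce everything to pointwise properties of the matrix symbol $G(\mu)$, exploiting that $G\mapsto\mathcal{M}_{G}$ is a norm‑preserving algebra homomorphism which also respects adjoints (Lemma~\ref{Homom}). I would first settle the sufficiency direction of part~1 together with part~2: if \eqref{DIC} and \eqref{BoIM} hold, then by Definition~\ref{InvMa} the matrix function $G^{-1}(\mu)$ is defined for almost every $\mu$ and belongs to $L^{\infty}_{_{\mathfrak{M}_2}}(\mathbb{R}^{+})$, so $\mathcal{M}_{G^{-1}}$ is a bounded operator in $\mathscr{K}$; since $G(\mu)G^{-1}(\mu)=G^{-1}(\mu)G(\mu)=I$ almost everywhere, parts~1 and~3 of Lemma~\ref{Homom} give $\mathcal{M}_{G}\mathcal{M}_{G^{-1}}=\mathcal{M}_{G^{-1}}\mathcal{M}_{G}=\mathscr{I}_{_\mathscr{K}}$, which is \eqref{ExInOp} and the ``if'' implication of part~1. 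The ``only if'' implication of part~1 will then be read off from parts~3 and~4, which describe exactly the two ways in which invertibility can fail.

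For part~3, assume \eqref{DIC} is violated, so $E=\{\mu:\det G(\mu)=0\}$ has positive Lebesgue measure; fix a Borel set $E'\subseteq E$ with $0<m(E')<\infty$. For a singular $2\times2$ matrix $\big[\begin{smallmatrix}a&b\\ c&d\end{smallmatrix}\big]$ one of the vectors $(b,-a)$, $(d,-c)$ is a nonzero element of its kernel (both vanish only when the matrix is $0$), and the selection and normalization of such a null vector can be carried out Borel‑measurably in the entries. This produces a measurable unit null vector field $v(\mu)$ on $E'$, so that $y=v\,\mathds{1}_{E'}$ is a nonzero element of $\mathscr{K}$ with $\mathcal{M}_{G}y=0$, hence $0\in\textup{\large\(\sigma\)}_{\!p}(\mathcal{M}_{G})$. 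Since $\det G^{\ast}(\mu)=\overline{\det G(\mu)}$, the matrix $G^{\ast}(\mu)$ is singular on the same set $E$; applying the same construction to $G^{\ast}$ gives a nonzero $z\in\mathscr{K}$ with $\mathcal{M}_{G^{\ast}}z=0$, and since $\mathcal{M}_{G^{\ast}}=(\mathcal{M}_{G})^{\ast}$ by Lemma~\ref{Homom}(4), this $z$ is orthogonal to the range of $\mathcal{M}_{G}$; thus the range is not dense and $0\in\textup{\large\(\sigma\)}_{\!r}(\mathcal{M}_{G})$.

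For part~4 and the remainder of part~1, assume \eqref{DIC} holds but \eqref{BoIM} fails. Since $G(\mu)$ is invertible almost everywhere, $\mathcal{M}_{G}y=0$ forces $y=0$, and likewise for $G^{\ast}(\mu)$, so $0\notin\textup{\large\(\sigma\)}_{\!p}(\mathcal{M}_{G})$ and $0\notin\textup{\large\(\sigma\)}_{\!r}(\mathcal{M}_{G})$. Let $s(\mu)$ be the smallest singular value of $G(\mu)$; then $\|G^{-1}(\mu)\|=s(\mu)^{-1}$, so the failure of \eqref{BoIM} means $\underset{\mu\in\mathbb{R}^{+}}{\textup{ess\,inf}}\,s(\mu)=0$. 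Hence for each $n$ the set $E_n=\{\mu:s(\mu)<1/n\}$ has positive measure; choosing $E_n'\subseteq E_n$ with $0<m(E_n')<\infty$ and a measurable unit right‑singular vector field $w_n(\mu)$ of $G(\mu)$ belonging to the smallest singular value on $E_n'$, the functions $y_n=\big(2\pi/m(E_n')\big)^{1/2}w_n\,\mathds{1}_{E_n'}$ satisfy $\|y_n\|_{\mathscr{K}}=1$ and $\|\mathcal{M}_{G}y_n\|_{\mathscr{K}}\le 1/n$. Thus $0$ lies in the approximate point spectrum; since $\mathcal{M}_{G}$ is bounded and everywhere defined (hence closed) and lies in neither $\textup{\large\(\sigma\)}_{\!p}$ nor $\textup{\large\(\sigma\)}_{\!r}$, it lies in $\textup{\large\(\sigma\)}_{\!c}(\mathcal{M}_{G})$, which is part~4. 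The approximate eigenvectors produced in these last two paragraphs also show directly that $\mathcal{M}_{G}$ is not invertible whenever \eqref{DIC} or \eqref{BoIM} fails, completing part~1.

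The only genuinely delicate point is the measurable choice of null vectors and of bottom singular vectors used in parts~3 and~4. For $2\times2$ matrices this is entirely routine — everything is given by explicit algebraic expressions in the entries, with a finite case split according to which expression is nonzero — so I expect it to be bookkeeping rather than a real obstacle; the conceptual content lies wholly in the homomorphism and adjoint properties of $G\mapsto\mathcal{M}_{G}$ recorded in Lemma~\ref{Homom}.
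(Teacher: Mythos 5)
Your proposal is correct and follows essentially the same route as the paper: the sufficiency and formula \((\mathcal{M}_{G})^{-1}=\mathcal{M}_{G^{-1}}\) via the homomorphism properties of \(G\to\mathcal{M}_{G}\), measurable null-vector fields on a positive-measure set for the point and residual spectrum when \eqref{DIC} fails, and normalized approximate eigenvectors supported where \(\|G^{-1}(\mu)\|\) is large (equivalently, where the smallest singular value is small) to place \(0\) in the continuous spectrum when \eqref{BoIM} fails. Your explicit \(2\times2\) formulas for the kernel vectors merely make precise the measurable-selection step the paper asserts without detail, so the arguments coincide in substance.
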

\begin{proof} \ \\
\hspace*{2.0ex}\(\circ\)\,\,Assume that the conditions \eqref{DIC} and \eqref{BoIM} are satisfied,
so the operator \(\mathcal{M}_{G^{-1}}: \,L^{2}(\mathbb{R}^{+})\to{}L^{2}(\mathbb{R}^{+})\) exists
and is a bounded operator. According to the statements \textsf{3} and \textsf{1} of Lemma \ref{Homom},
\begin{equation*}
\mathcal{M}_{G^{-1}}\cdot{}\mathcal{M}_{G}=\mathcal{M}_{G}\cdot{}\mathcal{M}_{G^{-1}}=
\mathcal{M}_{G^{-1}\,\cdot\,G}=\mathcal{M}_{G\,\cdot\,G^{-1}}=\mathcal{M}_{I}=\mathscr{I}_{_\mathscr{K}}\,.
\end{equation*}
So the operator \(\mathcal{M}_{G}\) is invertible, and the inverse operator \((\mathcal{M}_{G})^{-1}\)
is expressed by the equality \eqref{ExInOp}.\\
\hspace*{2.0ex}\(\circ\)\,\,Assume that the condition \eqref{DIC} is violated.
Then there exists a measurable set \(S\), \(S\in\mathbb{R}^{+},\,m(S)>0\), such that for
every \(\mu\in{}S\) there exists a vector \(y(\mu)\in\mathbb{C}^{\,2},\,y(\mu)\not=0\), such that
\(G(\mu)y(\mu)=0\). One can choose the vectors \(y(\mu)\) in such a way that the function
\(y(\mu),\,\mu\in{}S\), is measurable. One can normalize the vectors \(y(\mu)\) in such a way
that \(\int\limits_{S}y^{\ast}(\mu)\,y(\mu)\,\frac{d\mu}{2\pi}=1.\) Until now the function \(y(\mu)\) is defined
only on \(S\). Let us extend the function \(y(\mu)\) on the whole \(\mathbb{R}^{+}\) putting
\(y(\mu)=0\) for \(\mu\in\mathbb{R}^{+}\setminus{}S\). Now the function \(y(\mu)\) is defined
everywhere on \(\mathbb{R}^{+}\) and satisfies the condition
\(\int\limits_{\mathbb{R}^{+}}y^{\ast}(\mu)\,y(\mu)\,\frac{d\mu}{2\pi}=1\), that is
\begin{equation*}
y(\mu)\in\mathscr{K},\quad\,y\not=0\ \ \textup{in}\ \  \mathscr{K}\,.
\end{equation*}
From the other hand,
\begin{equation*}
G(\mu)y(\mu)=0\quad \textup{for every}\ \ \mu\in\mathbb{R}^{+}\,.
\end{equation*}
This means that the vector \(y,\,y\not=0\), belongs to the null space of the operator \(\mathcal{M}_{G}\).
In other words, the point \(\zeta=0\) belongs to the point spectrum of the operator \(\mathcal{M}_{G}\).
 Considering the matrix function
\(G^{\ast}(\mu)\) we obtain that the point \(\zeta=0\) belongs to the point spectrum of the operator \((\mathcal{M}_{G})^{\ast}\). In other words, the point \(\zeta=0\) belongs to the residual spectrum of the operator \(\mathcal{M}_{G}\). In particular the operator \(\mathcal{M}_{G}\) is not invertible.\\
\hspace*{2.0ex}\(\circ\)\,\,Assume that the condition \eqref{DIC} is satisfied. Then
the point \(\zeta=0\) belongs neither to the point spectrum, nor to the residual spectrum of the
operator \(\mathcal{M}_{G}\). Indeed let \(y\in\mathscr{K},\,y\not=0\), but
\(\mathcal{M}_{_{\scriptstyle G}}\,y=0\). Let \(S=\{\mu\in\mathbb{R}^{+}:\,y(\mu)\not=0\}\). Since \(y\not=0\) in
\(\mathscr{K}\), \(m(S)>0\). From the other hand, since
\(\mathcal{M}_{_{\scriptstyle G}}\,y=0\) in
\(\mathscr{K}\),
 \(G(\mu)y(\mu)=0\) for almost every
\(\mu\in\mathbb{R}^{+}\). Therefore \(\det G(\mu)=0\) for almost every \(\mu\in{}S\).
This contradicts the condition \eqref{DIC}. Thus, \(0\not\in\textup{\large\(\sigma\)}_{\!\!p}(\mathcal{M}_{_{\scriptstyle G}})\).
Analogously \(0\not\in\textup{\large\(\sigma\)}_{\!\!p}(\mathcal{M}_{_{\scriptstyle G^{\ast}}})=
\overline{\textup{\large\(\sigma\)}_{\!r}(\mathcal{M}_{_{\scriptstyle G}})}\).\\
\hspace*{2.0ex}\(\circ\)\,\,Let the condition \eqref{DIC} be satisfied.  Then the matrix function \((G(\mu))^{-1}\) is defined
for almost all \(\mu\in\mathbb{R}^{+}\). Given \(\varepsilon>0\), let
\(S_{\varepsilon}=\{\mu\in\mathbb{R}^{+}:\,\|(G(\mu))^{-1}\|\geq\varepsilon^{-1}\}\).
Assume moreover that
the condition \eqref{BoIM} is violated. Then for every \(\varepsilon>0\), the condition
\(m(S_{\varepsilon})>{}0\) holds. If \(\|(G(\mu))^{-1}\|\geq\varepsilon^{-1}\) for some \(\mu\),
then there exists a vector \(v(\mu)\in\mathbb{C}^{2}\), \(v(\mu)\not=0\) such that
\(\|(G(\mu))^{-1}v(\mu)\|\geq\varepsilon^{-1}\|v(\mu)\|\). The vector \(y(\mu)=(G(\mu))^{-1}v(\mu)\)
satisfies the conditions \(y(\mu)\not=0 \),
\begin{math}
\|G(\mu)y(\mu)\|\leq\varepsilon\|y(\mu)\|\,.
\end{math}
Thus if the condition \eqref{DIC} is satisfied, but the condition \eqref{BoIM} is violated,
then for every fixed \(\varepsilon>0\) there exists the vector function \(y(\mu)\) defined
 on the set \(S_{\varepsilon}\) which satisfies the conditions
 \begin{equation*}
 y(\mu)\not=0, \ \|G(\mu)y(\mu)\|\leq\varepsilon\|y(\mu)\|\ \textup{for every}\ \mu\in{S_{\varepsilon}}\,,
 \ \textup{where}\  m(S_{\varepsilon})>0\,.
 \end{equation*}
 The function \(y(\mu)\), defined up to now only on \(S_{\varepsilon}\), can be chosen to be measurable.
 Normalizing the vectors \(y(\mu)\) appropriately, we can satisfy the condition
 \begin{math}
 \int\limits_{S_{\varepsilon}}y^{\ast}(\mu)y(\mu)\,\frac{d\mu}{2\pi}=1.
 \end{math}
 Let us extend the function \(y(\mu)\) from the set \(S_{\varepsilon}\) to the whole \(\mathbb{R}^{+}\)
 putting \(y(\mu)=0\) for \(\mu\in\mathbb{R}^{+}\setminus{}S_{\varepsilon}\). The extended function \(y(\mu)\)
 satisfies the condition
 \begin{math}
 \int\limits_{\mathbb{R}^{+}}y^{\ast}(\mu)y(\mu)\,\frac{d\mu}{2\pi}=1.
 \end{math}
 In other words,
 \begin{equation}
 \label{NoY}
 y\in\mathscr{K},\quad \|y\|_{_{\scriptstyle\mathscr{K}}}=1\,.
 \end{equation}
 On the other hand, the inequality \( \|G(\mu)y(\mu)\|\leq\varepsilon\|y(\mu)\|\), which holds for the extended function
 \(y(\mu)\) at every \(\mu\in\mathbb{R}^{+}\), implies the inequality
 \begin{equation}
 \label{NoFY}
 \|\mathcal{M}_{_{\scriptstyle{\!G}}}\,y\|_{_{\scriptstyle\mathscr{K}}}\leq\varepsilon\,.
 \end{equation}
 Thus if the condition  \eqref{DIC} is satisfied, but the condition \eqref{BoIM} is violated,
 then for every \(\varepsilon>0\) there exists \(y\in\mathscr{K}\) satisfying the conditions
 \eqref{NoY} and \eqref{NoFY}. Therefore the point \(\zeta=0\) belongs to the continuous spectrum
 \(\textup{\large\(\sigma\)}_{\!\!c}(\mathcal{M}_{_{\scriptstyle G}})\) of the operator
 \(\mathcal{M}_{G}\). (We already know that if the condition \eqref{DIC} is satisfied, then
 \(0\not\in\textup{\large\(\sigma\)}_{\!\!p}(\mathcal{M}_{_{\scriptstyle G}}),\,
 0\not\in\textup{\large\(\sigma\)}_{\!\!r}(\mathcal{M}_{_{\scriptstyle G}})\)).
 In particular, the operator  \(\mathcal{M}_{G}\) is not invertible.
 \end{proof}

\section{The functional model
of the operator
\mathversion{bold}%
\(\mathscr{F}_{\!_{\scriptstyle\boldsymbol{\mathbb{R}^{+}}}}\).
}\mathversion{normal}
In this section we construct the functional model of the
 truncated Fourier operator \(\mathscr{F}_{\!_{\scriptstyle{\mathbb{R}}^{+}}}\).
First we do formal calculations. Then we justify them.

The operator \(\mathcal{L}\) commutes with the operators
\(\mathscr{F}_{\!_{\scriptstyle{\mathbb{R}}^{+}}},\,
\mathscr{F}_{\!_{\scriptstyle{\mathbb{R}}^{+}}}^{\ast}\). (Theorem \ref{Comhi}). Let \(\mu\in\mathbb{R}^{+}\).
The "eigenspace" of the operator \(\mathcal{L}\) corresponding to the eigenvalue \(\lambda(\mu)\)
is two-dimensional and is generated by the "eigenfunctions" \eqref{EFSLni}.
Would be
the "eigenfunctions" \eqref{EFSLni} of the operator
\(\mathcal{L}\) "true" \(L^2(\mathbb{R}^{+})\)-functions, then the
two-dimensional subspace generated by them will be invariant with
respect to each of the operators \(\mathscr{F}_{\!_{\scriptstyle{\mathbb{R}}^{+}}}\) and \(\mathscr{F}_{\!_{\scriptstyle{\mathbb{R}}^{+}}}^{\ast}\). This means that for
some  matrix
\begin{equation}
\label{ModTrFo}
F(\mu)=\begin{bmatrix}f_{++}(\mu)&f_{+-}(\mu)\\
f_{-+}(\mu)&f_{--}(\mu)\end{bmatrix},
\end{equation}
which is constants with
respect to \(t\),
the equality holds
\begin{gather*}
 \big(\mathscr{F}_{\!_{\scriptstyle{\mathbb{R}}^{+}}}\psi_{+}(\,.\,,\mu)\big)(t)=
\psi_{+}(t,\mu)f_{++}(\mu)+\psi_{-}(t,\mu)f_{-+}(\mu),\\
 \big(\mathscr{F}_{\!_{\scriptstyle{\mathbb{R}}^{+}}}\psi_{-}(\,.\,,\mu)\big)(t)
=\psi_{+}(t,\mu)f_{+-}(\mu)+\psi_{-}(t,\mu)f_{--}(\mu)\,.
\end{gather*}
The matrix form of these equalities is:
\begin{subequations}
\label{FTBF}
\begin{equation}
\label{FTBF1}
\big(\mathscr{F}_{\!_{\scriptstyle{\mathbb{R}}^{+}}}\psi(\,.\,,\mu)\big)(t)=\psi(t\,,\mu)F(\mu)\,.
\end{equation}
We show that
\begin{equation}
\label{FTBF2}
\big(\mathscr{F}_{\!_{\scriptstyle{\mathbb{R}}^{+}}}%
^{\ast}\psi(\,.\,,\mu)\big)(t)=\psi(t\,,\mu)F^{\ast}(\mu)\,,
\end{equation}
where \(F^{\ast}(\mu)\) is the matrix Hermitian conjugated to the matrix \(F(\mu)\).
\end{subequations}

 However the functions \(\psi_{\pm}(t,\mu)\) does not belong to
\(L^2(\mathbb{R}^{+})\). So the operators \(\mathscr{F}_{\!_{\scriptstyle{\mathbb{R}}^{+}}},\,
\mathscr{F}_{\!_{\scriptstyle{\mathbb{R}}^{+}}}^{\ast}\),
considered as an operator acting in \(L^2(\mathbb{R}^{+})\), are not applicable
to the functions \(\psi_{+}(t,\mu),\psi_{-}(t,\mu)\). Nevertheless
we can consider the Fourier integrals
\(\mathscr{F}_{\!_{\scriptstyle{\mathbb{R}}^{+}}}\psi_{\pm}(\,.\,,\mu)\), \(\mathscr{F}^{\ast}_{\!_{\scriptstyle{\mathbb{R}}^{+}}}\psi_{\pm}(\,.\,,\mu)\)
 in some \emph{Pickwick sense}.
Namely we  interpret the expressions
\((\mathscr{F}_{\!_{\scriptstyle{\mathbb{R}}^{+}}}\psi_{\pm}(\,.\,,\mu))(t)\) and
\((\mathscr{F}_{\!_{\scriptstyle{\mathbb{R}}^{+}}}^{\ast}\psi_{\pm}(\,.\,,\mu))(t)\)
as
\begin{subequations}
\label{PickFT}
 \begin{align}
 \label{PickFT1}
 \big(\mathscr{F}_{\!_{\scriptstyle{\mathbb{R}}^{+}}}%
 \psi_{\pm}(\,.\,,\mu)\big)(t)&=\lim_{\substack{\varepsilon\to+0\\
 R\to+\infty}}\frac{1}{\sqrt{2\pi}}%
\int\limits_{\varepsilon}^{R}\xi^{-1/2\pm{}i\mu}e^{i\xi{}t}\,d\xi\,,
\end{align}
\begin{align}
 \label{PickFT2}
 \big(\mathscr{F}^{\ast}_{\!_{\scriptstyle{\mathbb{R}}^{+}}}%
 \psi_{\pm}(\,.\,,\mu)\big)(t)&=\lim_{\substack{\varepsilon\to+0\\
 R\to+\infty}}\frac{1}{\sqrt{2\pi}}%
\int\limits_{\varepsilon}^{R}\xi^{-1/2\pm{}i\mu}e^{-i\xi{}t}\,d\xi\,,
\end{align}
\end{subequations}
 In \eqref{PickFT},
\(t\in\mathbb{R}^{+},\,\mu\in\mathbb{R}^{+}\). It turns out that the
limits in \eqref{PickFT} exist and are uniform if \(t\) belongs to
any fixed interval separated from zero and infinity. (We shall see
this when calculating the integrals.) Changing variable in
\eqref{PickFT1}: \(\xi\to\xi/t\), and using the homogeneity
properties of the functions \(\psi_{\pm}(t,\mu)\) with respect to
\(t\), we obtain that
\begin{subequations}
\label{FTEiS}
\begin{align}
 \label{FTEiS1}
 \big(\mathscr{F}_{\!_{\scriptstyle{\mathbb{R}}^{+}}}%
 \psi_{+}(\,.\,,\mu)\big)(t)&=\psi_{-}(t,\mu)f_{-+}(\mu)\,,\\
 \label{FTEiS2}
  \big(\mathscr{F}_{\!_{\scriptstyle{\mathbb{R}}^{+}}}%
  \psi_{-}(\,.\,,\mu)\big)(t)&=\psi_{+ }(t,\mu)f_{+-}(\mu),
 \end{align}
 \end{subequations}
 where \(t\in\mathbb{R}^{+},\ \mu\in\mathbb{R}^{+}\), and
\begin{subequations}
\label{FTEco}
\begin{align}
\label{FTEco1}
f_{-+}(\mu)=\lim_{\substack{\varepsilon\to+0\\
R\to+\infty}}\frac{1}{\sqrt{2\pi}}\int\limits_{\varepsilon}^{R}\xi^{-1/2+i\mu}e^{i\xi{}}\,d\xi\,,
\\
\label{FTEco2}
f_{+-}(\mu)=\lim_{\substack{\varepsilon\to+0\\
R\to+\infty}}\frac{1}{\sqrt{2\pi}}\int\limits_{\varepsilon}^{R}\xi^{-1/2-i\mu}e^{i\xi{}}\,d\xi\,.
 \end{align}
 \end{subequations}
 Changing variable in
\eqref{PickFT2}: \(\xi\to\xi/t\), and using the homogeneity
properties of the functions \(\psi_{\pm}(t,\mu)\) with respect to
\(t\), we obtain that
\begin{subequations}
\label{FTEiSA}
\begin{align}
 \label{FTEiS1A}
 \big(\mathscr{F}_{\!_{\scriptstyle{\mathbb{R}}^{+}}}^{\ast}%
 \psi_{+}(\,.\,,\mu)\big)(t)&=\psi_{-}(t,\mu)\overline{f_{+-}(\mu)}\,,\\
 \label{FTEiS2A}
  \big(\mathscr{F}_{\!_{\scriptstyle{\mathbb{R}}^{+}}}^{\ast}%
  \psi_{-}(\,.\,,\mu)\big)(t)&=\psi_{+ }(t,\mu)\overline{f_{-+}(\mu)},
 \end{align}
 \end{subequations}
 where \(t\in\mathbb{R}^{+},\ \mu\in\mathbb{R}^{+}\).
 Let as calculate the integrals in \eqref{FTEco}. These integrals
 can be presented as
 \begin{equation}
 \label{CoInRe}
\lim_{\substack{\varepsilon\to+0\\
R\to+\infty}}\int\limits_{\varepsilon}^{R}\xi^{-1/2\pm{}i\mu}e^{i\xi{}}\,d\xi
=e^{i\frac{\pi}{4}\mp{}\frac{\mu\pi}{2}}\
\lim_{\substack{\varepsilon\to+0\\
 R\to+\infty}}\ \
 \int\limits_{[-i\varepsilon,iR]}f(\zeta)d\zeta,\,
 \end{equation}
 where
 \begin{equation}
 \label{ComVa}
 f(\zeta)=\zeta^{-1/2\pm{}i\mu}e^{-\zeta}\,, \ \ \arg\zeta>0 \ \ \textup{for}
 \ \ \zeta\in(0,\infty)\,.
 \end{equation}
 Then we  `rotate' the ray of integration
 from the ray \((0,-i\infty)\) to the ray
 \((0,\infty)\).
 The function \(f(\zeta)\) is holomorphic in the domain \(\mathbb{C}\setminus(-\infty,0].\)
 According to Cauchy integral theorem,
 \begin{equation*}
\int\limits_{[-i\varepsilon,iR]}f(\zeta)\,d\zeta=\int\limits_{[\varepsilon,R]}f(\zeta)\,d\zeta+
\int\limits_{\gamma_{\varepsilon}}f(\zeta)d\zeta+\int\limits_{\gamma_{_R}}f(\zeta)\,d\zeta,
 \end{equation*}
 where \(\gamma_{\varepsilon}\) and \(\gamma_{_R}\) are the arcs \(-\pi/2\leq\arg{z}\leq{}0\),
 \(|z|=\varepsilon\) and \(|z|\) respectively. The functions \(f(\zeta)\) grows as \(|\varepsilon|^{-1/2}\) as \(\zeta\in\gamma_\varepsilon\), \(\varepsilon\to{}0\),
 and the length of the arc \(\gamma_{\varepsilon}\) decays as \(\varepsilon\), as
 \(\varepsilon\to{}0\). Therefore, \(\int\limits_{\gamma_{\varepsilon}}f(\zeta)d\zeta\to{}0\)
as \(\varepsilon\to{}0\). Applying Jordan lemma to the function \(f(\zeta)\) in the
quadrant \(-\pi/2\leq\arg{\zeta}\leq{}0\), we conclude that
 \(\int\limits_{\gamma_{_R}}f(\zeta)d\zeta\to{}0\)
as \(R\to{}\infty\).
Therefore
\begin{equation*}
\lim_{\substack{\varepsilon\to+0\\
 R\to+\infty}}\int\limits_{\varepsilon}^{R}\xi^{-1/2\pm{}i\mu}e^{i\xi{}}\,d\xi=
 e^{i\frac{\pi}{4}\mp{}\frac{\mu\pi}{2}}%
 \int\limits_{0}^{+\infty}\xi^{-1/2\pm{}i\mu}e^{-\xi{}}\,d\xi\,.
\end{equation*}
The integral in the right hand side of the last formula is the Euler integral
representing the \(\Gamma\)-function. Thus
\begin{equation}
\label{CalGa}
\lim_{\substack{\varepsilon\to+0\\
 R\to+\infty}}\int\limits_{\varepsilon}^{R}\xi^{-1/2\pm{}i\mu}e^{i\xi{}}\,d\xi=
 e^{i\frac{\pi}{4}\mp{}\frac{\mu\pi}{2}}%
 \Gamma(1/2\pm{}i\mu)\,,\quad -\infty<\mu<\infty\,,
\end{equation}
and
\begin{subequations}
\label{MaEnt}
\begin{align}
\label{MaEnt1}
f_{+-}(\mu)=\frac{1}{\sqrt{2\pi}} \,e^{i\frac{\pi}{4}+{}\frac{\mu\pi}{2}}%
 \Gamma(1/2-{}i\mu)\,,\\
 \label{MaEnt2}
f_{-+}(\mu)=\frac{1}{\sqrt{2\pi}} \,e^{i\frac{\pi}{4}-{}\frac{\mu\pi}{2}}%
 \Gamma(1/2+{}i\mu)\,.
\end{align}
\end{subequations}
So, the matrix \(F(\mu)=
\Big[\begin{smallmatrix}f_{++}(\mu)&f_{+-}(\mu)\\  %
f_{-+}(\mu)&f_{--}(\mu)\end{smallmatrix}\Big]\)
in \eqref{FTBF} is of the form
\begin{equation}
\label{Matr}
F(\mu)=%
\begin{bmatrix}
0& \frac{1}{\sqrt{2\pi}}\,e^{i\frac{\pi}{4}+{}\frac{\mu\pi}{2}}\Gamma(1/2-{}i\mu) \\[1.0ex]
\frac{1}{\sqrt{2\pi}}\, e^{i\frac{\pi}{4}-{}\frac{\mu\pi}{2}}\Gamma(1/2+{}i\mu) &0
\end{bmatrix}
\end{equation}
Thus the equalities \eqref{FTBF} hold with the matrix \(F(\mu)\) of the form \eqref{Matr}.
Given \(x(t)\in{}L^2(\mathbb{R}^{+})\), we apply the operators
\(\mathscr{F}_{\!_{\scriptstyle{\mathbb{R}}^{+}}},\,
\mathscr{F}_{\!_{\scriptstyle{\mathbb{R}}^{+}}}^{\ast}\) to the spectral expansion \eqref{PInF1},
\eqref{PInF2}. Applying the operators \(\mathscr{F}_{\!_{\scriptstyle{\mathbb{R}}^{+}}},\,
\mathscr{F}_{\!_{\scriptstyle{\mathbb{R}}^{+}}}^{\ast}\) to the
linear combination \(\psi(t,\mu)\hat{x}(\mu)\), we should take
into account that these operators act on functions
of variable \(t\) and the coefficients \(\hat{x}(\mu)\) of this
linear combination do not depend on \(t\). Therefore
\begin{subequations}
\label{CaTh}
\begin{align}%
\label{CaTh1}
\mathscr{F}_{\!_{\scriptstyle{\mathbb{R}}^{+}}}%
\big(\psi(\,.\,,\mu)\hat{x}(\mu)\big)(t)&=
\big(\mathscr{F}_{\!_{\scriptstyle{\mathbb{R}}^{+}}}\psi(\,.\,,\mu)\big)(t)\hat{x}(\mu)\,,\\
\label{CaTh2}
\mathscr{F}_{\!_{\scriptstyle{\mathbb{R}}^{+}}}^{\ast}%
\big(\hat{x}(\mu)\psi(\,.\,,\mu)\big)(t)&=
\big(\mathscr{F}_{\!_{\scriptstyle{\mathbb{R}}^{+}}}^{\ast}\psi(\,.\,,\mu)\big)(t)\hat{x}(\mu)\,.
\end{align}
\end{subequations}
 Carry the operator \(\mathscr{F}_{\!_{\scriptstyle{\mathbb{R}}^{+}}}\)
 trough the integral in \eqref{PInF2} and using
\eqref{CaTh}, we obtain
\begin{subequations}
\label{SpReF}
\begin{gather}
\label{SpReF1}%
 (\mathscr{F}_{\!_{\scriptstyle{\mathbb{R}}^{+}}}x)(t)=\!\!\!\!\!\!
\int\limits_{\mu\in\mathbb{R}^{+}}\!\!\!\!\!
\psi(t,\mu)\,u_{{}_{\mathscr{F}_{{{\mathbb{R}}^{+}}}}}(\mu)%
\,\frac{d\mu}{2\pi},\,\,\,\,\,\,
(\mathscr{F}_{\!_{\scriptstyle{\mathbb{R}}^{+}}}^{\ast}x)(t)=\!\!\!\!\!\!
\int\limits_{\mu\in\mathbb{R}^{+}}\!\!\!\!\!
\psi(t,\mu)\,u_{{}_{\mathscr{F}_{{{\mathbb{R}}^{+}}}^{\ast}}}(\mu)\,\frac{d\mu}{2\pi},\,
 \intertext{where}
\label{SpReF2}%
 u_{{}_{\mathscr{F}_{{{\mathbb{R}}^{+}}}}}(\mu)=
F(\mu)\,\hat{x}(\mu)\,,\quad
u_{{}_{\mathscr{F}_{{{\mathbb{R}}^{+}}}^{\ast}}}(\mu)=
F^{\ast}(\mu)\,\hat{x}(\mu)\,.
\end{gather}
\end{subequations}


Let us go to prove rigorously the formulas \eqref{SpReF} expressing the
spectral resolution of the vectors
\(\mathscr{F}_{\!_{\scriptstyle{\mathbb{R}}^{+}}}\,x\) in terms
of  the spectral resolution \eqref{PInF1} of the vector \(x\). In
this proof we use the following expressions for the absolute
values of the entries of the matrix \(F(\mu)\):
\begin{equation}
\label{AVMe}%
 |f_{+-}(\mu)|=(1+e^{-2\pi\mu})^{-1/2},\ \ \
|f_{-+}(\mu)|=(1+e^{2\pi\mu})^{-1/2},\quad \mu\in\mathbb{R}^{+}\,.
\end{equation}
The expressions \eqref{AVMe} are derived  from \eqref{MaEnt}.
 Since
\begin{equation}
\label{Refl}
\Gamma(1/2+{}i\mu)\Gamma(1/2-{}i\mu)=\frac{\pi}{\cosh{\pi\mu}}\,
\end{equation}
and the numbers \(\Gamma(1/2\pm{}i\mu)\) are complex conjugated,
then
\begin{equation}
\label{AbGa}
|\Gamma(1/2\pm{}i\mu)|^2=\frac{2\pi}{e^{\pi\mu}+e^{-\pi\mu}},\quad
\mu\in\mathbb{R}^{+}\,.
\end{equation}
The equalities \eqref{AVMe} follows from the last formula and from
\eqref{MaEnt}. We remark that in particular
\begin{equation}
\label{MaFC}
1/\sqrt{2} <|f_{+-}(\mu)|<1,\ \ \ |f_{-+}(\mu)|<1/\sqrt{2},\ \, \mu\in\mathbb{R}^{+}\,.
\end{equation}
If \(\mu\) runs over the interval \([0,\infty)\), then
\(|f_{+-}(\mu)|\) increases from \(2^{-1/2}\) to \(1\) and
\(|f_{-+}(\mu)|\) decreases from \(2^{-1/2}\) to \(0\)\,. In
particular,
\begin{equation}
\label{MaFCSu}
\sup_{\mu\in\mathbb{R}^{+}}|f_{+-}(\mu)|=\underset{\mu\in\mathbb{R}^{+}}%
{\textup{ess\,sup}}|f_{+-}(\mu)|=1\,.
\end{equation}
From \eqref{MaEnt} and \eqref{AbGa} it follows that
\begin{equation}
\label{SMc2}
 |f_{+-}(\mu)|^2+|f_{-+}(\mu)|^2=1\,,
\end{equation}
\begin{equation}
\label{SMc}
|f_{+-}(\mu)|+|f_{-+}(\mu)|=\sqrt{1+\frac{1}{\cosh\,\pi\mu}},
\end{equation}
thus
\begin{equation}
\label{SMe}
1\leq |f_{+-}(\mu)|+|f_{-+}(\mu)|\leq\sqrt{2},\quad 0\leq\mu<\infty\,.
\end{equation}
In view of the diagonal structure \eqref{Matr} of the matrix
\(F(\mu)\) and the estimates \eqref{MaFC}, \eqref{MaFCSu} for its
entries,  the equalities
\begin{subequations}
\label{EsNoF}%
\begin{equation}%
\label{EsNoF1}%
\|F(\mu)\|<1\ \ \ \forall\,\mu\in(0,\infty)
\end{equation}
and
\begin{equation}%
\label{EsNoF2}%
\underset{\mu\in\mathbb{R}^{+}}%
{\textup{ess\,sup}}\|F(\mu)\|=1\,.
\end{equation}
\end{subequations}
hold.
\begin{theorem}
\label{SRTFTha}%
 Let \(x(t)\in{}L^2(\mathbb{R}^{+})\), and
\(\hat{x}(\mu)\) be the Fourier transform of \(x\), \eqref{PInF1}:
\begin{equation*}%
\hat{x}(\mu)=\int\limits_{\xi\in\mathbb{R}^{+}}\psi^{\ast }(\xi,\mu)\,x(\xi)\,d\xi\,,
\quad\mu\in\mathbb{R}^{+}\,.
\end{equation*}%

Then the Fourier transforms \(u_{{}_{\mathscr{F}_E}}(\mu)\), \(u_{{}_{\mathscr{F}_E^{\ast}}}(\mu)\)
of the functions
\((\mathscr{F}_{\!_{\scriptstyle{\mathbb{R}}^{+}}}x)(t)\),
\((\mathscr{F}_{\!_{\scriptstyle{\mathbb{R}}^{+}}}^{\ast}x)(t)\) \textup{:}
\begin{equation}%
\label{FCFT1}%
 u_{{}_{\mathscr{F}_E}}(\mu)= \hspace*{-1.8ex}\int\limits_{\xi\in\mathbb{R}^{+}}\hspace*{-1.4ex}
\psi^{\ast}(\xi,\mu)\,(\mathscr{F}_{\!_{\scriptstyle{\mathbb{R}}^{+}}}\,x)(\xi)\,d\xi,\quad
 u_{{}_{\mathscr{F}_E^{\ast}}}(\mu)= \hspace*{-1.8ex}\int\limits_{\xi\in\mathbb{R}^{+}}\hspace*{-1.4ex}
\psi^{\ast}(\xi,\mu)(\mathscr{F}_{\!_{\scriptstyle{\mathbb{R}}^{+}}}^{\ast}\,x)(\xi)\,d\xi
\end{equation}
are expressed in terms of \(\hat{x}(\mu)\) by the formula
\eqref{SpReF2}.

The functions \((\mathscr{F}_{\!_{\scriptstyle{\mathbb{R}}^{+}}}\,x)(t)\),  \((\mathscr{F}_{\!_{\scriptstyle{\mathbb{R}}^{+}}}^{\ast}\,x)(t)\) are
expressed by the formula \eqref{SpReF1}:
\begin{gather*}
(\mathscr{F}_{\!_{\scriptstyle{\mathbb{R}}^{+}}}\,x)(t)=\int\limits_{\mu\in\mathbb{R}^{+}}\psi(t,\mu)F(\mu)\hat{x}(\mu)\,\frac{d\mu}{2\pi},\\
(\mathscr{F}_{\!_{\scriptstyle{\mathbb{R}}^{+}}}^{\ast}\,x)(t)=
\int\limits_{\mu\in\mathbb{R}^{+}}\psi(t,\mu)F^{\ast}(\mu)\hat{x}(\mu)\,\frac{d\mu}{2\pi}\,.
\end{gather*}
\end{theorem}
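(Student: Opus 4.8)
The plan is to establish the single operator identity $U\,\mathscr{F}_{\!_{\scriptstyle\mathbb{R}^{+}}}=\mathcal{M}_{_F}\,U$ on $L^2(\mathbb{R}^{+})$, which is exactly the assertion \eqref{SpReF1}, \eqref{SpReF2} for $\mathscr{F}_{\!_{\scriptstyle\mathbb{R}^{+}}}$. The corresponding statement for $\mathscr{F}_{\!_{\scriptstyle\mathbb{R}^{+}}}^{\ast}$ then comes for free by taking Hilbert space adjoints, since $U^{-1}=U^{\ast}$ (Theorem~\ref{UbMaToM}) and $(\mathcal{M}_{_F})^{\ast}=\mathcal{M}_{_{F^{\ast}}}$ (Lemma~\ref{Homom}, item~4). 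Both maps $x\mapsto U\mathscr{F}_{\!_{\scriptstyle\mathbb{R}^{+}}}x$ and $x\mapsto\mathcal{M}_{_F}Ux$ are bounded from $L^2(\mathbb{R}^{+})$ to $\mathscr{K}$: $U$ is unitary, $\mathscr{F}_{\!_{\scriptstyle\mathbb{R}^{+}}}$ is a contraction, and $\mathcal{M}_{_F}$ is bounded because $\|F(\mu)\|\le1$ for a.e. $\mu$ by \eqref{AVMe}, \eqref{EsNoF2} (Lemma~\ref{Homom}). Hence it suffices to prove the identity on a dense subset of $L^2(\mathbb{R}^{+})$, and I would take $\mathcal{D}=\{\,U^{-1}y:\ y\in\mathscr{K},\ \textup{supp}\,y\Subset(0,\infty),\ y\ \textup{bounded}\,\}$, which is dense because $U$ is unitary and such $y$ are dense in $\mathscr{K}$.

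Fix such a $y$, supported in $[a,b]\subset(0,\infty)$, and put $x=U^{-1}y$; then $x(t)=\tfrac{1}{2\pi}\int_a^b\psi(t,\mu)y(\mu)\,d\mu$ pointwise and $x\in L^2(\mathbb{R}^{+})$ by Theorem~\ref{UbMaToM}, Lemma~\ref{PaIdeL1}. Since $\|\psi(t,\mu)\|=\sqrt{2}\,t^{-1/2}$, one has $|x(t)|=O(t^{-1/2})$, so the truncations $x_N=x\cdot\mathds{1}_{[1/N,N]}$ lie in $L^1(\mathbb{R}^{+})\cap L^2(\mathbb{R}^{+})$ and $x_N\to x$ in $L^2(\mathbb{R}^{+})$, whence $\mathscr{F}_{\!_{\scriptstyle\mathbb{R}^{+}}}x_N\to\mathscr{F}_{\!_{\scriptstyle\mathbb{R}^{+}}}x$ in $L^2(\mathbb{R}^{+})$. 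For $x_N\in L^1$ the operator $\mathscr{F}_{\!_{\scriptstyle\mathbb{R}^{+}}}$ acts as an honest integral, and Fubini over the compact rectangle $[1/N,N]\times[a,b]$ gives
\begin{equation*}
\big(\mathscr{F}_{\!_{\scriptstyle\mathbb{R}^{+}}}x_N\big)(t)=
\frac{1}{2\pi}\int_a^b\Bigg(\frac{1}{\sqrt{2\pi}}\int_{1/N}^{N}\psi(\xi,\mu)\,e^{it\xi}\,d\xi\Bigg)y(\mu)\,d\mu\,.
\end{equation*}
By the contour computation already carried out in this section (rotation of the ray of integration onto the Euler $\Gamma$-integral, the estimate on the small arc $\gamma_\varepsilon$, and Jordan's lemma on the large arc $\gamma_{_R}$), the inner integral tends to $\psi(t,\mu)F(\mu)$, cf.\ \eqref{FTEiS1}, \eqref{FTEiS2}; the point to record is that this convergence is uniform for $\mu\in[a,b]$ and locally uniform in $t\in(0,\infty)$. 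Passing to the limit under $\int_a^b d\mu$, and using $\|F(\mu)\|\le1$ so that $Fy\in\mathscr{K}$ is again supported in $[a,b]$, we get, for each fixed $t$,
\begin{equation*}
\big(\mathscr{F}_{\!_{\scriptstyle\mathbb{R}^{+}}}x_N\big)(t)\ \longrightarrow\ \frac{1}{2\pi}\int_a^b\psi(t,\mu)F(\mu)y(\mu)\,d\mu=\big(U^{-1}(Fy)\big)(t)\,.
\end{equation*}
Extracting a subsequence along which $\mathscr{F}_{\!_{\scriptstyle\mathbb{R}^{+}}}x_N\to\mathscr{F}_{\!_{\scriptstyle\mathbb{R}^{+}}}x$ almost everywhere identifies the two limits: $\mathscr{F}_{\!_{\scriptstyle\mathbb{R}^{+}}}x=U^{-1}(Fy)$, i.e. $U\mathscr{F}_{\!_{\scriptstyle\mathbb{R}^{+}}}x=F\,Ux$. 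This is the claimed identity on $\mathcal{D}$, hence on all of $L^2(\mathbb{R}^{+})$; reading it back through \eqref{unit}, \eqref{uniti} yields precisely \eqref{SpReF1}, \eqref{SpReF2}. (As an independent check one may compute the kernel $\int_0^\infty\psi(t,\mu)F(\mu)\psi^{\ast}(\xi,\mu)\tfrac{d\mu}{2\pi}$ directly from the $\Gamma$-function expression \eqref{Matr}; with the reflection formula \eqref{Refl} and the Mellin--Barnes representation of $e^{-z}$ it collapses to $\tfrac{1}{\sqrt{2\pi}}e^{it\xi}$, which is the statement again.)

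The one genuinely delicate point is exactly this passage to the limit in the integral representing $\mathscr{F}_{\!_{\scriptstyle\mathbb{R}^{+}}}$ applied to an ``eigenfunction'': the integrals $\int_0^{\infty}\xi^{-1/2\pm i\mu}e^{it\xi}\,d\xi$ converge only conditionally, so the crux is to upgrade the pointwise contour argument (already sketched in the text) to convergence that is \emph{uniform in the spectral parameter} $\mu$ on compact subsets of $(0,\infty)$ and suitably controlled as $t\to0^{+}$ and $t\to+\infty$ — this is what licenses interchanging the $N\to\infty$ limit both with $\int_a^b d\mu$ and with the $L^2$-limit defining $\mathscr{F}_{\!_{\scriptstyle\mathbb{R}^{+}}}x$. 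The remaining ingredients — the reduction to a dense set, the $L^1/L^2$ truncation, extraction of an a.e.\ convergent subsequence, and the adjoint argument for $\mathscr{F}_{\!_{\scriptstyle\mathbb{R}^{+}}}^{\ast}$ — are routine.
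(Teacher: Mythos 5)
Your proposal is correct, and it reaches the identity \(U\mathscr{F}_{\!_{\scriptstyle\mathbb{R}^{+}}}=\mathcal{M}_{_{\scriptstyle F}}U\) by a route whose regularization differs from the paper's. The paper also verifies the formulas on a dense set (all \(x\) with \(\hat x\in L^1\cap L^2\)) by substituting the expansion \eqref{PInF2} and interchanging the \(\xi\)- and \(\mu\)-integrations, but it tames the non-absolutely-convergent \(\xi\)-integral with the multiplicative regularization \((\mathcal{R}_{\varepsilon}x)(t)=e^{-\varepsilon t}x(t)\): for fixed \(\varepsilon>0\) Fubini applies, the regularized transforms of the eigenfunctions are computed in closed form as \(\psi_{\mp}(t\pm i\varepsilon,\mu)\) times entries of \(F(\mu)\), and the \(\varepsilon\)-independent majorant \(t^{-1/2}\) of \eqref{RFTEFf} lets Lebesgue dominated convergence push \(\varepsilon\to+0\) through the \(\mu\)-integral over all of \((0,\infty)\); the adjoint is treated in parallel by the same computation. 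You instead truncate in the \(\xi\)-variable (\(x_N=x\,\mathds{1}_{[1/N,N]}\)), work on the smaller dense set \(x=U^{-1}y\) with \(y\) bounded and compactly supported, and replace dominated convergence by uniform convergence in \(\mu\in[a,b]\) (for fixed \(t\)) of the conditionally convergent truncated integrals, finishing with an a.e.-convergent subsequence; the adjoint you obtain by duality from \(U^{-1}=U^{\ast}\) and \((\mathcal{M}_{_{\scriptstyle F}})^{\ast}=\mathcal{M}_{_{\scriptstyle F^{\ast}}}\), a genuine economy. The uniformity you single out as the delicate point does hold: in the contour computation the small-arc and large-arc errors are \(O(\varepsilon^{1/2}e^{\mu\pi/2})\) and \(O(R^{-1/2}e^{\mu\pi/2})\), uniform for \(\mu\) in a compact subset of \((0,\infty)\), so the interchange with \(\int_a^b d\mu\) is legitimate. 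What the paper's scheme buys is that compact support of \(\hat x\) is never needed and one has a pointwise \(\varepsilon\)-uniform bound instead of a uniformity claim about conditionally convergent integrals; what yours buys is that it avoids the shifted eigenfunctions \(\psi_{\pm}(t\pm i\varepsilon,\mu)\) and the operator \(\mathcal{R}_{\varepsilon}\) altogether and disposes of \(\mathscr{F}^{\,\ast}_{\!_{\scriptstyle\mathbb{R}^{+}}}\) with no extra computation.
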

\begin{proof}
We substitute the expression
\begin{equation*}%
x(\xi)=\int\limits_{\mu\in\mathbb{R}^{+}}
\hat{x}(\mu)\,\psi(\xi,\mu)\,\frac{d\mu}{2\pi}
\end{equation*}%
for the function \(x\), \eqref{PInF2},  into the formulas
\eqref{DTFTr} and \eqref{DTFTrA} which defines the truncated Fourier operator
\(\mathscr{F}_{\!_{\scriptstyle{\mathbb{R}}^{+}}}\) and the adjoint operator
\(\mathscr{F}_{\!_{\scriptstyle{\mathbb{R}}^{+}}}^{\ast}\).
 To curry the operators \(\mathscr{F}_{\!_{\scriptstyle{\mathbb{R}}^{+}}}\),
\(\mathscr{F}_{\!_{\scriptstyle{\mathbb{R}}^{+}}}^{\ast}\) through
the integral in \eqref{PInF2}, we have to change the order of
integration in the iterated integrals
\begin{subequations}
\label{ItInt}
\begin{equation}%
\label{ItInt1}
(\mathscr{F}_{\!_{\scriptstyle{\mathbb{R}}^{+}}}x)(t)=\int\limits_{\xi\in{\mathbb{R}^{+}}}%
\bigg(\int\limits_{\mu\in{\mathbb{R}^{+}}}
\hat{x}(\mu)\,\psi(\xi,\mu)\,\frac{d\mu}{2\pi}\bigg)\,e^{it\xi}\,d\xi\,,
\end{equation}%
\begin{equation}%
\label{ItInt2}
(\mathscr{F}_{\!_{\scriptstyle{\mathbb{R}}^{+}}}^{\ast}x)(t)=\int\limits_{\xi\in{\mathbb{R}^{+}}}%
\bigg(\int\limits_{\mu\in{\mathbb{R}^{+}}}
\hat{x}(\mu)\,\psi(\xi,\mu)\,\frac{d\mu}{2\pi}\bigg)\,e^{-it\xi}\,d\xi\,.
\end{equation}%
\end{subequations}
Usual tool to justify the change of the order of integration is
the Fubini  theorem. However the Fubini theorem is not applicable
to the iterated integrals \eqref{ItInt}. The function under the
integral is not summable with respect to \(\xi\).

\textsf{To curry the operators \(\mathscr{F}_{\!_{\scriptstyle{\mathbb{R}}^{+}}}\), \(\mathscr{F}_{\!_{\scriptstyle{\mathbb{R}}^{+}}}^{\ast}\) through the integral in
\eqref{PInF2}, we use a regularization procedure.}  Given
\(\varepsilon>0\), we define \emph{the regularization operator
\(\mathcal{R}_{\varepsilon}:\,L^2(\mathbb{R}^{+})\to{}L^2(\mathbb{R}^{+})\)},
  \begin{equation}%
  \label{RegOp}
  \mathcal{R}_{\varepsilon}x(t)=e^{-\varepsilon{}t}x(t)\,,\quad\forall x\in{}L^2(\mathbb{R}^{+})\,.
  \end{equation}
  It is clear that for every \(x\in{}L^2(\mathbb{R}^{+})\),
\[\|\mathcal{R}_{\varepsilon}x-x\|_{L^2(\mathbb{R}^{+})}\to0\ \ \textup{as}\ \ \varepsilon\to+0\,.\]
The kernel of the operator
\(\mathscr{F}_{\!_{\scriptstyle{\mathbb{R}}^{+}}}\mathcal{R}_{\varepsilon}\) can be calculated
without difficulties. Let
\(x\in{}L^2(\mathbb{R}^{+})\cap{}L^1(\mathbb{R}^{+})\). Then
\begin{gather*}
(\mathscr{F}_{\!_{\scriptstyle{\mathbb{R}}^{+}}}\mathcal{R}_{\varepsilon}x)(t)=\frac{1}{\sqrt{2\pi}}
\int\limits_{\xi\in\mathbb{R}^{+}}e^{i\xi{}{}s}e^{-\varepsilon{}\xi}x(\xi)\,d\xi\,,\\
(\mathscr{F}_{\!_{\scriptstyle{\mathbb{R}}^{+}}}^{\ast}%
\mathcal{R}_{\varepsilon}x)(t)=\frac{1}{\sqrt{2\pi}}
\int\limits_{\xi\in\mathbb{R}^{+}}e^{-i\xi{}{}s}e^{-\varepsilon{}\xi}x(\xi)\,d\xi\,,
\end{gather*}
Substituting the expression \eqref{PInF2} for the function
\(x(\xi)\) into the last formula, we present the functions
\((\mathscr{F}_{\!_{\scriptstyle{\mathbb{R}}^{+}}}\mathcal{R}_{\varepsilon}x)(t),%
\,(\mathscr{F}_{\!_{\scriptstyle{\mathbb{R}}^{+}}}^{\ast}\mathcal{R}_{\varepsilon}x)(t)\)
as the iterated integrals
\begin{subequations}
\label{IteInt}
\begin{align}
\label{IteInt1}
(\mathscr{F}_{\!_{\scriptstyle{\mathbb{R}}^{+}}}%
\mathcal{R}_{\varepsilon}x)(t)&=\frac{1}{\sqrt{2\pi}}
\int\limits_{\xi\in\mathbb{R}^{+}}e^{i\xi{}{}(t+i\varepsilon)}
\bigg(\int\limits_{\mu\in\mathbb{R}^{+}}
\,\psi(\xi,\mu)\,\hat{x}(\mu)\,\frac{d\mu}{2\pi} \bigg)\,d\xi\,,\\
\label{IteInt2}%
\hspace*{-1.0ex}
(\mathscr{F}_{\!_{\scriptstyle{\mathbb{R}}^{+}}}^{\ast}\mathcal{R}_{\varepsilon}x)(t)%
&=\frac{1}{\sqrt{2\pi}}
\int\limits_{\xi\in\mathbb{R}^{+}}\!\!\!\!e^{-i\xi{}{}(t-i\varepsilon)}
\bigg(\int\limits_{\mu\in\mathbb{R}^{+}}
\,\psi(\xi,\mu)\,\hat{x}(\mu)\,\frac{d\mu}{2\pi}\bigg)\,d\xi\,,
\end{align}
\end{subequations}
 We assume firstly
that the function \(\hat{x}(\mu)\) belongs to
\(L^2(\mathbb{R}^{+})\cap{}L^1(\mathbb{R}^{+})\). The Fubini theorem is applicable to each of
the iterated integral \eqref{IteInt}. So for every fixed
\(\varepsilon>0\) we can change the order of integration there.
Changing the order, we obtain
\begin{subequations}
\label{ReEqua}%
\begin{gather}
\label{ReEqua1}%
 (\mathscr{F}_{\!_{\scriptstyle{\mathbb{R}}^{+}}}\mathcal{R}_{\varepsilon}x)(t)=
\int\limits_{\mu\in\mathbb{R}^{+}}
\big(\mathscr{F}_{\!_{\scriptstyle{\mathbb{R}}^{+}}}%
\mathcal{R}_{\varepsilon}\psi(\,.\,,\mu)\big)(t)\hat{x}(\mu)\,%
\frac{d\mu}{2\pi}\,,\\
\label{ReEqua2}%
(\mathscr{F}_{\!_{\scriptstyle{\mathbb{R}}^{+}}}^{\ast}%
\mathcal{R}_{\varepsilon}x)(t)=
\int\limits_{\mu\in\mathbb{R}^{+}}
\big(\mathscr{F}_{\!_{\scriptstyle{\mathbb{R}}^{+}}}^{\ast}%
\mathcal{R}_{\varepsilon}\psi(\,.\,,\mu)\big)(t)\hat{x}(\mu)\,%
\frac{d\mu}{2\pi}\,,
\end{gather}
\end{subequations}
where
\begin{subequations}
\label{RFTBE}
\begin{gather}
\label{RFTBE1}
\big(\mathscr{F}_{\!_{\scriptstyle{\mathbb{R}}^{+}}}%
\mathcal{R}_{\varepsilon}\psi_{\pm}(\,.\,,\mu)\big)(t)=
\frac{1}{\sqrt{2\pi}}\int\limits_{\xi\in\mathbb{R}^{+}}
\xi^{-1/2\pm{}i\mu}e^{i(t+i\varepsilon)\xi}\,d\xi\,,\\
\label{RFTBE2}
\big(\mathscr{F}_{\!_{\scriptstyle{\mathbb{R}}^{+}}}^{\ast}%
\mathcal{R}_{\varepsilon}\psi_{\pm}(\,.\,,\mu)\big)(t)=
\frac{1}{\sqrt{2\pi}}\int\limits_{\xi\in\mathbb{R}^{+}}
\xi^{-1/2\pm{}i\mu}e^{-i(t-i\varepsilon)\xi}\,d\xi\,.
\end{gather}
\end{subequations}
The integrals in \eqref{RFTBE} can be calculated explicitly:
\begin{subequations}
\label{RFTEF}
\begin{gather}
\label{RFTEF1}
\big(\mathscr{F}_{\!_{\scriptstyle{\mathbb{R}}^{+}}}%
\mathcal{R}_{\varepsilon}\psi_{+}(\,.\,,\mu)\big)(t)=
\psi_{-}(t+i\varepsilon,\,\mu)f_{-+}(\mu),\\
\label{RFTEF2}
\big(\mathscr{F}_{\!_{\scriptstyle{\mathbb{R}}^{+}}}%
\mathcal{R}_{\varepsilon}\psi_{-}(\,.\,,\mu)\big)(t)=
\psi_{+}(t+i\varepsilon,\,\mu)f_{+-}(\mu)\,,\\
\label{RFTEF3}
\big(\mathscr{F}_{\!_{\scriptstyle{\mathbb{R}}^{+}}}^{\ast}%
\mathcal{R}_{\varepsilon}\psi_{+}(\,.\,,\mu)\big)(t)=
\psi_{-}(t-i\varepsilon,\,\mu)\overline{f_{+-}(\mu)},\\
\label{RFTEF4}
\big(\mathscr{F}_{\!_{\scriptstyle{\mathbb{R}}^{+}}}^{\ast}%
\mathcal{R}_{\varepsilon}\psi_{-}(\,.\,,\mu)\big)(t)=
\psi_{+}(t-i\varepsilon,\,\mu)\overline{f_{-+}(\mu)}\,,
\end{gather}
\end{subequations}
where \(f_{+-}(\mu)\) and \(f_{-+}(\mu)\) are the same that in
\eqref{MaEnt}, and
\begin{subequations}
\label{RegEF}
\begin{gather}
\label{RegEF1}
\psi_{+}(t\pm{}i\varepsilon,\mu)=(t\pm{}i\varepsilon)^{-1/2+i\mu}=
e^{(-1/2+i\mu)(\ln|t+i\varepsilon|\pm{}i\arg(t+i\varepsilon))}\,,\\
\label{RegEF2}
\psi_{-}(t\pm{}i\varepsilon,\mu)=(t\pm{}i\varepsilon)^{-1/2-i\mu}=
e^{(-1/2-i\mu)(\ln|t+i\varepsilon|\pm{}i\arg(t+i\varepsilon))}\,.
\end{gather}
\end{subequations}
Here
\begin{equation}
\label{ArdL}
0<\arg(t+i\varepsilon)<\pi/2\ \ \ \textup{for}\ \ t>0,\,\varepsilon>0\,.
\end{equation}
Formulas \eqref{RegEF} are derived similarly
to formulas \eqref{FTEiS}. We change variable: \(\xi\to\xi/|t+i\varepsilon|\), and
then rotate the ray of integration. From \eqref{RegEF} and
\eqref{ArdL} it follows that for every \(t\in(0,\infty),\,\mu\in(0,\infty)\)
\begin{subequations}
\label{EstEF}
\begin{gather}
\label{EstEF1}
|\psi_{+}(t+i\varepsilon,\mu)|\leq{}t^{-1/2},\ \ %
|\psi_{-}(t+i\varepsilon,\mu)|\leq{}t^{-1/2}e^{\mu\pi/2},\quad\\
\label{EstEF2}
|\psi_{+}(t-i\varepsilon,\mu)|\leq{}t^{-1/2}e^{\mu\pi/2},\  \ %
|\psi_{-}(t-i\varepsilon,\mu)|\leq{}t^{-1/2}\,.
\end{gather}
\end{subequations}
Taking into account the estimates \eqref{AVMe}, we obtain from
\eqref{RFTEF} and \eqref{EstEF} the estimates
\begin{subequations}
\label{RFTEFf}
\begin{gather}
\label{RFTEFf1}
\big|\big(\mathscr{F}_{\!_{\scriptstyle{\mathbb{R}}^{+}}}%
\mathcal{R}_{\varepsilon}\psi_{\pm}(\,.\,,\mu)\big)(t)\big|\leq
t^{-1/2},\\
\label{RFTEFf2}
\big|\big(\mathscr{F}_{\!_{\scriptstyle{\mathbb{R}}^{+}}}^{\ast}%
\mathcal{R}_{\varepsilon}\psi_{\pm}(\,.\,,\mu)\big)(t)\big|\leq
t^{-1/2}\,,
\end{gather}
\end{subequations}
which hold for every \(\mu>0,\,t>0\) and \(\varepsilon>0\). In particular, the expressions in
the right hand sides of \eqref{RFTEFf} do not depend on \(\varepsilon\). Moreover,
from \eqref{RFTEF} it follows that for every fixed \(\mu>0\) and \(t>0\), there exist the limits
\begin{subequations}
\label{LiEF}
\begin{align}
\label{LiEF3}%
 \lim_{\varepsilon\to{}+0}
\big(\mathscr{F}_E\mathcal{R}_{\varepsilon}\psi(\,.\,,\mu)\big)(t)&=
\psi(t,\mu)F(\mu)\,,\\
\label{LiEF4}%
 \lim_{\varepsilon\to{}+0}
\big(\mathscr{F}_E^{\ast}\mathcal{R}_{\varepsilon}\psi(\,.\,,\mu)\big)(t)&=
\psi(t,\mu)F^{\ast}(\mu)\,.
\end{align}
\end{subequations}
Using the Lebesgue dominating convergence theorem, we conclude that
for every fixed \(t>0\)
\begin{gather*}
\lim_{\varepsilon\to{}+0}%
\int\limits_{\mu\in\mathbb{R}^{+}}\!\!
\big(\mathscr{F}_E\mathcal{R}_{\varepsilon}%
\psi(\,.\,,\mu)\big)(t)\,\hat{x}(\mu)\,d\mu
=\int\limits_{\mu\in\mathbb{R}^{+}}\big(\psi(t,\mu)F(\mu)\big)\hat{x}(\mu)
\,d\mu\,,\\
\lim_{\varepsilon\to{}+0}%
\int\limits_{\mu\in\mathbb{R}^{+}}\!\!
\big(\mathscr{F}_E^{\ast}\mathcal{R}_{\varepsilon}%
\psi(\,.\,,\mu)\big)(t)\,\hat{x}(\mu)\,d\mu
=\int\limits_{\mu\in\mathbb{R}^{+}}\big(\psi(t,\mu)F^{\ast}(\mu)\big)\,\hat{x}(\mu)\,d\mu\,.
\end{gather*}
 Involving \eqref{ReEqua1}, we see that
\begin{gather*}
\lim_{\varepsilon\to{}+0}(\mathscr{F}_E\mathcal{R}_{\varepsilon}x)(t)=
\int\limits_{\mu\in\mathbb{R}^{+}}\big(\psi(t,\mu)F(\mu)\big)\hat{x}(\mu)
\,\frac{d\mu}{2\pi}\,,\\
\lim_{\varepsilon\to{}+0}(\mathscr{F}_E^{\ast}\mathcal{R}_{\varepsilon}x)(t)=
\int\limits_{\mu\in\mathbb{R}^{+}}\big(\psi(t,\mu)F^{\ast}(\mu)\big)\hat{x}(\mu)
\,\frac{d\mu}{2\pi}\,.
\end{gather*}
 for every fixed \(t>0\). From the other hand,
\begin{gather*}
\lim_{\varepsilon\to{}+0}%
\|(\mathscr{F}_{\!_{\scriptstyle{\mathbb{R}}^{+}}}\mathcal{R}_{\varepsilon}x)(t)-%
(\mathscr{F}_{\!_{\scriptstyle{\mathbb{R}}^{+}}}x)(t)\|_{L^2(\mathbb{R}^{+})}=0\,,\\
\lim_{\varepsilon\to{}+0}%
\|(\mathscr{F}_{\!_{\scriptstyle{\mathbb{R}}^{+}}}^{\ast}\mathcal{R}_{\varepsilon}x)(t)-%
(\mathscr{F}_{\!_{\scriptstyle{\mathbb{R}}^{+}}}^{\ast}x)(t)\|_{L^2(\mathbb{R}^{+})}=0\,,
\end{gather*}
Comparing the last formulas, we obtain that
\begin{subequations}
\label{FEFTFT}
\begin{gather}
\label{FEFTFT1}
(\mathscr{F}_{\!_{\scriptstyle{\mathbb{R}}^{+}}}x)(t)%
=\int\limits_{\mu\in\mathbb{R}^{+}}\psi(t,\mu)F(\mu)\hat{x}(\mu)
\,\frac{d\mu}{2\pi}\,,\\
\label{FEFTFT2}
(\mathscr{F}_{\!_{\scriptstyle{\mathbb{R}}^{+}}}^{\ast}x)(t)
=\int\limits_{\mu\in\mathbb{R}^{+}}\psi(t,\mu)F^{\ast}(\mu)\hat{x}(\mu)
\,\frac{d\mu}{2\pi}\,,
\end{gather}
\end{subequations}
for every \(x(t)\in{}L^2(\mathbb{R}^{+})\) for which %
\(\hat{x}(\mu)\in{}L^2(\mathbb{R}^{+})\cap{}L^1(\mathbb{R}^{+})\). Since the set
\(L^2(\mathbb{R}^{+})\cap{}L^1(\mathbb{R}^{+})\) is dense in \(L^2(\mathbb{R}^{+})\), the last
equality can be extended to all \(x(t)\in{}L^2(\mathbb{R}^{+})\). To justify
such extension, one should involve the Parseval equality taking
into account that the matrix \(F(\mu)\) is bounded,
\eqref{AVMe}\,.
\end{proof}{\ \ }
\section{Spectrum and resolvent of the operator
~\mathversion{bold}%
\(\mathcal{M}_{_{\scriptstyle F}}\).
\label{SpARes}}
\vspace{-4.0ex}
Theorem \ref{SRTFTha} claims that the operator \(\mathscr{F}_{\!_{\scriptstyle{\mathbb{R}^{+}}}}:
L^2(\mathbb{R}^{+})\to{}L^2(\mathbb{R}^{+})\) is unitary equivalent to the matrix
multiplication operator \(\mathcal{M}_{_{\scriptstyle{}F}}\) in the model space \(\mathscr{K}\),
where \(F(\mu)\)  is the matrix function of the form \eqref{Matr}:
\begin{equation}
\label{UnEq}
\mathscr{F}_{\!_{\scriptstyle{\mathbb{R}^{+}}}}=U^{-1}\mathcal{M}_{_{\scriptstyle{}F}}U\,.
\end{equation}
The unitary operator \(U\) and its inverse \(U^{-1}\) are expressed%
\,\footnote{In the spectral language, the operators \(U\) and \(U^{-1}\) are
direct and inverse expansions in eigenfunctions of the self-adjoint differential
operator \(\mathcal{L}\), which commutes with \(\mathscr{F}_{\!_{\scriptstyle{\mathbb{R}^{+}}}}\).
In the integral transforms language, \(U\) and \(U^{-1}\) are direct and inverse Mellin transforms
restricted on the vertical line \(\textup{Im}\,\zeta=\frac{1}{2}\). (See Remark \ref{Mell}.)}
by the equalities \eqref{unit} and \eqref{uniti}.

The unitary equivalence \eqref{UnEq} allows to reduce a study of the spectrum and the resolvent of
the operator \(\mathscr{F}_{\!_{\scriptstyle{\mathbb{R}^{+}}}}:\,L^2(\mathbb{R}^{+})\to{}L^2(\mathbb{R}^{+})\)
to the spectral analysis of the operator \(\mathcal{M}_{_{\scriptstyle F}}:\,\mathscr{K}\to\mathscr{K}\).

To perform a spectral analysis of the \emph{operator} \(\mathcal{M}_{_{\scriptstyle F}}\),
acting in the \emph{infinite dimensional} space \(\mathscr{K}\),
 we have to perform the spectral analysis of the \emph{\(2\times2\)} matrix \(F(\mu)\), acting in the
\emph{2 dimensional} space \(\mathbb{C}^2\). The spectral analysis of the matrix \(F(\mu)\) can be done
for \emph{each} \(\mu\in\mathbb{R}^{+}\) separately. Then we can \emph{glue} the spectrum
\(\textup{\large\(\sigma\)}(\mathcal{M}_{F})\)  of the operator  \(\mathcal{M}_{_{\scriptstyle F}}\) from the spectra \(\textup{\large\(\sigma\)}(F(\mu))\) of the matrices \(F(\mu)\),
 as well as  the resolvent of the
operator \(\mathcal{M}_{_{\scriptstyle F}}\) from the resolvents of the matrices  \(F(\mu)\):
\begin{equation}
\label{GlSp}
\textup{\large\(\sigma\)}(\mathcal{M}_{_{\scriptstyle F}})=
\Big(\bigcup_{\mu\in[0,\,\infty)}\textup{\large\(\sigma\)}(F(\mu))\Big)\bigcup\{0\}.
\end{equation}
\begin{equation}
\label{GlRes}
(z\,\mathscr{I}-\mathcal{M}_{_{\scriptstyle F}})^{-1}=
\big(\mathcal{M}_{_{{\scriptstyle (zI-F)}}}\big)^{-1}=
\mathcal{M}_{_{{\scriptstyle (zI-F)}^{-1}}}\,.
\end{equation}
(The point \(\{0\}\), which appears in the right hand side of \eqref{GlSp},
corresponds to the value \(\mu=\infty\).)

Therefore we have first to perform a spectral analysis of the matrix \(F(\mu)\) for each \(\mu\) separately.\\
\noindent
\textsf{The spectrum of the matrix \(F(\mu)\).}
According to \eqref{Matr}, the matrix \(zI-F(\mu)\) is of the form
\begin{equation}
\label{ZMF}
zI-F(\mu)=
\begin{bmatrix}
z & -f_{+-}(\mu)\\
-f_{-+}(\mu)&z
\end{bmatrix}\,
\end{equation}
where \(f_{-+},\,f_{+-}\) are of the form \eqref{MaEnt}. According to the identity
\eqref{Refl},
\begin{equation}
\label{prdi}
f_{+-}(\mu)\cdot{}f_{-+}(\mu)=\frac{i}{2\,\cosh\pi\mu}\,.
\end{equation}
Let \(D(z,\mu)\) be the determinant of the matrix \(zI-F(\mu)\):
\begin{equation}
\label{detm}
D(z,\mu)=\det(zI-F(\mu))\,.
\end{equation}
From \eqref{ZMF} and \eqref{prdi} it follows that
\begin{equation}
\label{Detm}
D(z,\mu)=z^2-\frac{i}{2\,\cosh\pi\mu}\,.
\end{equation}
For \(\mu\in[0,\infty)\), let
\begin{equation}
\label{EiVa}
\zeta(\mu)=e^{i\pi/4}\frac{1}{\sqrt{2\,\cosh\pi\mu}},
\end{equation}
so \eqref{prdi} takes form
\begin{equation}
\label{UsEq}
\zeta^2(\mu)=f_{-+}(\mu)f_{+-}(\mu)\,.
\end{equation}
Let us denote the roots of the characteristic polynomial \(D(z,\mu)\) of the matrix \(F(\mu)\):
\begin{equation}
\label{FaDe}
D(z,\mu)=(z-\zeta_{+}(\mu))\cdot(z-\zeta_{-}(\mu))\,,
\end{equation}
where
\begin{equation}
\label{EiV}
\zeta_{+}(\mu)=\zeta(\mu),\ \ \zeta_{-}(\mu)=-\zeta(\mu)\,.
\end{equation}
It is clear that \(\zeta(\mu)\not=0\), so \(\zeta_{+}(\mu)\not=\zeta_{-}(\mu)\) for every \(\mu\in[0,\infty)\).
\begin{lemma}{ \ }
\label{Spmm}
\begin{enumerate}
\item[\textup{1}.]
For \(\mu\in[0,\infty)\), the spectrum
 \(\textup{\large\(\sigma\)}(F(\mu))\) of the matrix \(F(\mu)\), \eqref{Matr}, is simple,
and consists of two different points \(\zeta_{+}(\mu)\) and \(\zeta_{-}(\mu)\): \eqref{EiV},\,\eqref{EiVa}:
\begin{equation}
\label{SpMM}
\textup{\large\(\sigma\)}(F(\mu))=\{\zeta_{+}(\mu)\,,\zeta_{-}(\mu)\}\,.
\end{equation}
\item[\textup{2}.] If \(\mu_1,\mu_2\in[0,\infty),\,\,\mu_1\not=\mu_2\), then
\(\textup{\large\(\sigma\)}(F(\mu_1))\cap\textup{\large\(\sigma\)}(F(\mu_2))=\emptyset\).
\end{enumerate}
\end{lemma}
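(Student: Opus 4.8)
The plan is to read off both statements directly from the characteristic polynomial $D(z,\mu)$, which has already been computed in \eqref{Detm} as $D(z,\mu)=z^2-\tfrac{i}{2\cosh\pi\mu}$, together with the elementary monotonicity of $\cosh$ on $[0,\infty)$.

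First I would establish Statement 1. Since $\cosh\pi\mu\geq 1>0$ for every $\mu\in[0,\infty)$, the number $\tfrac{1}{2\cosh\pi\mu}$ is strictly positive and real, so $\zeta(\mu)$ as defined in \eqref{EiVa} is a nonzero complex number; consequently $\zeta_{+}(\mu)=\zeta(\mu)$ and $\zeta_{-}(\mu)=-\zeta(\mu)$ are two \emph{distinct} numbers, which is exactly the factorization \eqref{FaDe}--\eqref{EiV} of $D(z,\mu)$. A $2\times2$ matrix whose characteristic polynomial has two distinct roots is diagonalizable with one-dimensional eigenspaces; hence its spectrum consists of precisely those two points and is simple. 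This yields \eqref{SpMM}.

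Next, Statement 2. By \eqref{UsEq} (equivalently \eqref{prdi}) one has $\zeta(\mu)^2=\tfrac{i}{2\cosh\pi\mu}$, and by Statement 1 the set $\sigma(F(\mu))=\{\zeta(\mu),-\zeta(\mu)\}$ is exactly the set of the two square roots of $\zeta(\mu)^2$. Therefore the relation $\sigma(F(\mu_1))\cap\sigma(F(\mu_2))\neq\emptyset$ would force $\zeta(\mu_1)=\pm\zeta(\mu_2)$, hence $\zeta(\mu_1)^2=\zeta(\mu_2)^2$, hence $\cosh\pi\mu_1=\cosh\pi\mu_2$. Since $t\mapsto\cosh\pi t$ is strictly increasing on $[0,\infty)$, this is impossible when $\mu_1\neq\mu_2$, so the two pairs are disjoint.

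The whole argument is routine, and I do not expect any genuine obstacle: the only point requiring a moment's care is the reduction of the disjointness of the two unordered pairs $\{\zeta(\mu_1),-\zeta(\mu_1)\}$ and $\{\zeta(\mu_2),-\zeta(\mu_2)\}$ to the injectivity of the single-valued map $\mu\mapsto\zeta(\mu)^2=\tfrac{i}{2\cosh\pi\mu}$, which in turn rests on strict monotonicity of $\cosh$ on the half-line.
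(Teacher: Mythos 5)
Your proof is correct and follows essentially the same route as the paper, which treats the lemma as an immediate consequence of the computation \(D(z,\mu)=z^2-\tfrac{i}{2\cosh\pi\mu}\) in \eqref{Detm}, the factorization \eqref{FaDe}--\eqref{EiV} with \(\zeta(\mu)\not=0\), and the strict monotonicity of \(\mu\mapsto\zeta(\mu)\) noted just before the study of \(\textup{\large\(\sigma\)}(\mathcal{M}_F)\). Your reduction of Statement 2 to the injectivity of \(\mu\mapsto\zeta(\mu)^2=\tfrac{i}{2\cosh\pi\mu}\) is exactly the intended argument.
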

\noindent
\textsf{The resolvent of the matrix \(F(\mu)\).}\\
From \eqref{ZMF} and the rule of inversion of a \(2\times2\) matrix it follows:
\begin{lemma}{\ \ }\\[-2.0ex]
\label{InM}
\begin{enumerate}
\item[\textup{1.}]
Given \(\mu\in[0,\infty)\), the matrix \((zI-F(\mu))^{-1}\) is\textup{:}
\begin{equation}
\label{Rmm}
(zI-F(\mu))^{-1}=\frac{1}{D(z,\mu)}
\begin{bmatrix}
z & f_{-+}(\mu)\\
f_{+-}(\mu)&z
\end{bmatrix}
\end{equation}
\item[\textup{2.}]
The matrix function \((zI-F(\mu))^{-1}\), the resolvent of the matrix
\(F(\mu)\), is a rational matrix
function of \(z\). The only singularities of this matrix function
in the extended complex plain \(\mathbb{C}_{\textup{ext}}=\mathbb{C}\cup\infty\)
are simple poles at the points \(z=\zeta_{+}(\mu)\) and \(z=\zeta_{-}(\mu)\)\textup{:}
\begin{equation}
\label{ElFrDe}
(zI-F(\mu))^{-1}=\frac{E_{+}(\mu)}{z-\zeta_{+}(\mu)}+\frac{E_{-}(\mu)}{z-\zeta_{-}(\mu)}\,,
\end{equation}
where the residue matrices \(E_{+}(\mu)\),\,\(E_{-}(\mu)\) are projector matrices of rank one.
\end{enumerate}
\end{lemma}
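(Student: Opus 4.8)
The plan is to prove both statements by elementary $2\times2$ linear algebra, feeding in only the explicit form \eqref{ZMF} of $zI-F(\mu)$ and the factorization of its determinant recorded in \eqref{Detm}--\eqref{FaDe}. In particular, nothing about the infinite-dimensional operator $\mathcal{M}_{_{\scriptstyle F}}$ is needed here; the whole lemma lives at the level of a single $2\times2$ matrix, for each fixed $\mu\in[0,\infty)$.

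For Statement 1, I would apply Cramer's rule: for an invertible $\bigl[\begin{smallmatrix} a&b\\ c&d\end{smallmatrix}\bigr]$ one has $\bigl[\begin{smallmatrix} a&b\\ c&d\end{smallmatrix}\bigr]^{-1}=(ad-bc)^{-1}\bigl[\begin{smallmatrix} d&-b\\ -c&a\end{smallmatrix}\bigr]$. Inserting the entries of \eqref{ZMF}, the adjugate matrix is $\bigl[\begin{smallmatrix} z&f_{+-}(\mu)\\ f_{-+}(\mu)&z\end{smallmatrix}\bigr]$ and the normalizing scalar $ad-bc$ equals $z^2-f_{+-}(\mu)f_{-+}(\mu)$, which by \eqref{detm}, \eqref{prdi} and \eqref{Detm} is exactly $D(z,\mu)$. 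Since Lemma \ref{Spmm} tells us that $D(z,\mu)\ne0$ precisely when $z\notin\{\zeta_+(\mu),\zeta_-(\mu)\}$, the formula \eqref{Rmm} is valid on the whole resolvent set of $F(\mu)$.

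For Statement 2, observe that \eqref{Rmm} exhibits $(zI-F(\mu))^{-1}$ as a matrix each of whose entries is a polynomial in $z$ of degree at most one divided by $D(z,\mu)=(z-\zeta_+(\mu))(z-\zeta_-(\mu))$; since $\zeta_+(\mu)\ne\zeta_-(\mu)$ by \eqref{EiV}--\eqref{EiVa}, it is a rational matrix function whose only finite singularities are simple poles at $\zeta_\pm(\mu)$, and it tends to $0$ as $z\to\infty$, so $\infty$ is a regular point. Hence the partial-fraction decomposition \eqref{ElFrDe} holds with $E_\pm(\mu)$ the residue of $(zI-F(\mu))^{-1}$ at $z=\zeta_\pm(\mu)$; since $D(z,\mu)=z^2-\zeta^2(\mu)$ has $z$-derivative $2z$, these residues come out as
\[
E_{+}(\mu)=\frac{1}{2\zeta(\mu)}\begin{bmatrix} \zeta(\mu)&f_{+-}(\mu)\\ f_{-+}(\mu)&\zeta(\mu)\end{bmatrix},\qquad
E_{-}(\mu)=\frac{1}{2\zeta(\mu)}\begin{bmatrix} \zeta(\mu)&-f_{+-}(\mu)\\ -f_{-+}(\mu)&\zeta(\mu)\end{bmatrix}.
\]
It then remains to check that $E_\pm(\mu)$ are rank-one projectors. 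I would compute directly that $\textup{tr}\,E_\pm(\mu)=1$ and that $\det E_\pm(\mu)=\tfrac{1}{4\zeta(\mu)^2}\bigl(\zeta(\mu)^2-f_{+-}(\mu)f_{-+}(\mu)\bigr)=0$, the last equality being exactly the identity \eqref{UsEq}. A $2\times2$ matrix with trace $1$ and determinant $0$ has eigenvalues $0$ and $1$, hence is diagonalizable and is therefore an idempotent of rank $\textup{tr}=1$. Alternatively, comparing with $z(zI-F(\mu))^{-1}\to I$ as $z\to\infty$ gives $E_+(\mu)+E_-(\mu)=I$, while a one-line multiplication (again using \eqref{UsEq}) gives $E_+(\mu)E_-(\mu)=E_-(\mu)E_+(\mu)=0$, so that $E_\pm(\mu)$ are complementary projections onto the one-dimensional eigenspaces of $F(\mu)$ for the eigenvalues $\zeta_\pm(\mu)$.

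There is no genuine obstacle here: the lemma is a routine computation with $2\times2$ matrices. The one point at which the earlier analysis is used essentially is the relation $\zeta^2(\mu)=f_{-+}(\mu)f_{+-}(\mu)$ of \eqref{UsEq}, which is what collapses $\det E_\pm(\mu)$ to zero and thus forces the residue matrices to have rank one rather than rank two; equivalently, it encodes the fact (Lemma \ref{Spmm}) that $\zeta_\pm(\mu)$ are genuine, simple eigenvalues of $F(\mu)$, so that $F(\mu)$ is diagonalizable.
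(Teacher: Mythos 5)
Your proof is correct and follows essentially the same route as the paper: Statement 1 is the \(2\times2\) inversion rule applied to \eqref{ZMF}, and Statement 2 comes from the partial-fraction decomposition of a rational matrix function with simple poles, the residue matrices being written out explicitly and shown to be rank-one projectors via the identity \eqref{UsEq} — which is exactly the analysis the paper defers to the formulae \eqref{SpPr}–\eqref{eigPr} (your trace-one/determinant-zero argument is a compact way of packaging the relations \eqref{PrPr}–\eqref{eigPr}). One small point: your adjugate and residue matrices have \(f_{+-}(\mu)\) in the \((1,2)\) slot and \(f_{-+}(\mu)\) in the \((2,1)\) slot, the opposite of what is printed in \eqref{Rmm} and \eqref{SpPr}; your version is the correct one for \(F(\mu)\) as given in \eqref{Matr}, \eqref{ZMF}, and it is the one consistent with the later formulas \eqref{WoEx} and \eqref{DeReRes}, so the discrepancy is a transposition typo in the paper rather than a flaw in your argument.
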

  The fact that the residues matrices  \(E_{+}(\mu)\),\,\(E_{-}(\mu)\) are  projector matrices of rank one
  can be obtained analyzing  the explicit expression of these matrices. The identity \eqref{UsEq}
  has be used by such analysis. We do this analysis later.
  (See the  formulae \eqref{SpPr}-\eqref{SpPrAd}).
   This can also be derived from general facts about
  resolvents of operators.
\begin{lemma}
\label{EMME}
The norm of an arbitrary \(2\times2\) matrix  \(M\),
\begin{equation*}
M=
\begin{bmatrix}
m_{11}&m_{12}\\[1.5ex]
m_{21}&m_{22}
\end{bmatrix}\,,
\end{equation*}
considered as
an operator from \(\mathbb{C}^2\) to \(\mathbb{C}^2\), admits the estimates
from above and from below
\begin{equation}
\label{EMMED}
\tfrac{1}{2}\,\textup{trace}\,(M^{\ast}M)
\leq\|M\|^2
\leq\,\textup{trace}\,(M^{\ast}M)\,.
\end{equation}
Assuming that \(\det\,M\not=0\), the norm of the inverse matrix \(M^{-1}\) can
be estimated as follows:
\begin{multline}
\label{EMMEI}
(\det\,M)|^{-2}\,\textup{trace}\,(M^{\ast}M)
-\frac{2}{\textup{trace}\,(M^{\ast}M)}\leq\,\\
\leq\|M^{-1}\|^{\,2}\,\leq\,
|(\det\,M)|^{-2}\,\textup{trace}\,(M^{\ast}M)\,
\end{multline}
where
\begin{equation}
\label{Trac}%
\textup{trace}\,M^{\ast}M=|m_{11}|^2+|m_{12}|^2+|m_{21}|^2+|m_{22}|^2\,.
\end{equation}
\end{lemma}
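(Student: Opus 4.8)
The plan is to reduce every quantity appearing in the statement to the two singular values of $M$ and then read off all four inequalities from the single elementary fact that the larger singular value dominates the smaller one. Concretely, I would first observe that $M^{\ast}M$ is a positive semidefinite $2\times2$ matrix; let $s_1^2\geq s_2^2\geq0$ denote its eigenvalues, so that $s_1\geq s_2\geq0$ are the singular values of $M$. By the spectral theorem for the self-adjoint matrix $M^{\ast}M$ (equivalently, the $C^{\ast}$-identity $\|M\|^2=\|M^{\ast}M\|$ together with positivity) one has $\|M\|^2=s_1^2$, while $\operatorname{trace}(M^{\ast}M)=s_1^2+s_2^2$ and $\det(M^{\ast}M)=|\det M|^2=s_1^2s_2^2$. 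The entrywise identity \eqref{Trac} is obtained by the direct computation $(M^{\ast}M)_{jj}=\sum_{k}\overline{m_{kj}}\,m_{kj}=\sum_k|m_{kj}|^2$, whence $\operatorname{trace}(M^{\ast}M)=\sum_{j,k}|m_{kj}|^2$.

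With these identities in hand, the two-sided estimate \eqref{EMMED} is immediate: the upper bound $\|M\|^2=s_1^2\leq s_1^2+s_2^2=\operatorname{trace}(M^{\ast}M)$ holds because $s_2^2\geq0$, and the lower bound $\tfrac12\operatorname{trace}(M^{\ast}M)=\tfrac12(s_1^2+s_2^2)\leq s_1^2=\|M\|^2$ is equivalent to $s_2^2\leq s_1^2$, which is true by the ordering of the singular values.

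For \eqref{EMMEI}, I would assume $\det M\neq0$, so that $s_2>0$ and $\operatorname{trace}(M^{\ast}M)=s_1^2+s_2^2>0$ (this is the only place where the nondegeneracy hypothesis is needed and it also legitimizes the divisor $\operatorname{trace}(M^{\ast}M)$ in the statement). The singular values of $M^{-1}$ are $s_2^{-1}\geq s_1^{-1}>0$, hence $\|M^{-1}\|^2=s_2^{-2}$, while $|\det M|^{-2}\operatorname{trace}(M^{\ast}M)=\frac{s_1^2+s_2^2}{s_1^2s_2^2}=s_1^{-2}+s_2^{-2}$. The upper bound $\|M^{-1}\|^2=s_2^{-2}\leq s_1^{-2}+s_2^{-2}$ is again trivial. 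For the lower bound one checks $s_1^{-2}+s_2^{-2}-\frac{2}{s_1^2+s_2^2}\leq s_2^{-2}$, i.e. $s_1^{-2}\leq\frac{2}{s_1^2+s_2^2}$, i.e. $s_1^2+s_2^2\leq2s_1^2$, which once more is just $s_2^2\leq s_1^2$. This completes the argument.

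There is essentially no serious obstacle: the proof is pure bookkeeping, the only substantive input being the interpretation of $\|M\|$, $\|M^{-1}\|$, $\operatorname{trace}(M^{\ast}M)$ and $|\det M|$ in terms of the singular values. An alternative, entirely computational route would be to write $s_1^2,s_2^2=\tfrac12\bigl(\operatorname{trace}(M^{\ast}M)\pm\sqrt{\operatorname{trace}(M^{\ast}M)^{2}-4|\det M|^{2}}\bigr)$ as the roots of the characteristic polynomial of $M^{\ast}M$ and estimate the square root directly, but the singular-value argument above is shorter and transparent, and it makes clear that all four inequalities are sharp, with equality precisely when $s_1=s_2$ (i.e. when $M$ is a scalar multiple of a unitary matrix).
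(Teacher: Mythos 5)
Your proof is correct and follows essentially the same route as the paper: the paper's own argument likewise writes \(\|M\|\), \(\|M^{-1}\|\), \(\textup{trace}\,(M^{\ast}M)\) and \(|\det M|\) in terms of the two singular values and reduces all four inequalities to the trivial ordering of the singular values. Your write-up merely spells out the final elementary verifications (each amounting to \(s_2^2\le s_1^2\)) which the paper leaves implicit, so there is nothing to add.
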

\begin{proof}
Let \(s_0\) and \(s_1\) be singular values of the matrix \(M\), that is
\begin{equation}
\label{SinV}
0<s_1\leq{}s_0\,,
\end{equation}
 and the numbers \(s_0^2,\,s_1^2\) are eigenvalues of the matrix
\(M^{\ast}M\). Then
\begin{gather*}
\|M\|=s_0,\quad \|M^{-1}\|=s_1^{\,-1},\\
 \textup{trace}(M^{\ast}M)=s_0^2+s_1^2\,,
\quad{}|\det(M)|^2=\det{}(M^{\ast}M)=s_0^2\cdot{}s_1^2\,.
\end{gather*}%
Therefore the inequality \eqref{EMMED} takes the form
\[\frac{1}{2}(s_{0}^{2}+s_{1}^{2})\leq{}s_{0}^{2}\leq(s_0^{2}+s_1^{2})\,,\]
and the inequality \eqref{EMMEI} takes the form
\[(s_0s_1)^{\,-2}(s_0^{\,2}+s_{1}^{\,2})\,-\,\frac{2}{s_0^{\,2}+s_1^{\,2}}\,\leq\,s_1^{\,-2}\,\leq\,
(s_0s_1)^{\,-2}(s_0^{\,2}+s_{1}^{\,2})\,.\]
The last inequalities hold for arbitrary numbers \(s_0,\,s_1\) which satisfy the
inequalities \eqref{SinV}.
\end{proof}
\begin{lemma}
\label{eremf}
For every \(\mu\in[0,\infty)\) and every \(z\in\mathbb{C}\setminus\!\textup{\large\(\sigma\)}(F(\mu))\)
the matrix \((zI-F(\mu))^{-1}\) admits the estimates
\begin{multline}
\label{EsSRe}
 |D(z,\mu)|^{-2}\big(2|z|^2+1\big)-\frac{2}{2|z|^2+1}\leq\\
\leq\|(zI-F(\mu))^{-1}\|^{2}\leq{}|D(z,\mu)|^{-2}\big(2|z|^2+1\big)\,,
\end{multline}%
where \(D(z,\mu)=\det(zI-F(\mu))\)\textup{:}\,\eqref{detm},\eqref{FaDe}, and
\(\textup{\large\(\sigma\)}(F(\mu))\) is the spectrum of the matrix \(F(\mu)\)\textup{:}\,
\eqref{SpMM},\,\eqref{EiV},\,\eqref{EiVa}.
\end{lemma}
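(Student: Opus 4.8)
The plan is to deduce the stated two-sided bound directly from Lemma~\ref{EMME} applied to the $2\times2$ matrix $M=zI-F(\mu)$. Since $z\notin\textup{\large\(\sigma\)}(F(\mu))$, the matrix $M$ is invertible, so the non-degeneracy hypothesis $\det M\neq0$ required by the second part of Lemma~\ref{EMME} is automatically satisfied, and $(zI-F(\mu))^{-1}=M^{-1}$. Thus \eqref{EsSRe} will be obtained as the special case of \eqref{EMMEI} corresponding to this particular $M$, once the scalars $\det M$ and $\textup{trace}(M^{\ast}M)$ have been identified.

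First I would read off the entries of $M$ from \eqref{ZMF}: $m_{11}=m_{22}=z$, $m_{12}=-f_{+-}(\mu)$, $m_{21}=-f_{-+}(\mu)$. By formula \eqref{Trac} this gives $\textup{trace}(M^{\ast}M)=2|z|^2+|f_{+-}(\mu)|^2+|f_{-+}(\mu)|^2$. Invoking the identity \eqref{SMc2}, namely $|f_{+-}(\mu)|^2+|f_{-+}(\mu)|^2=1$, collapses this to $\textup{trace}(M^{\ast}M)=2|z|^2+1$. Simultaneously, $\det M=D(z,\mu)$ by the very definition \eqref{detm} of $D(z,\mu)$ together with \eqref{ZMF}.

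Substituting $\textup{trace}(M^{\ast}M)=2|z|^2+1$ and $|\det M|^{-2}=|D(z,\mu)|^{-2}$ into the inequality \eqref{EMMEI} of Lemma~\ref{EMME}, and using $\|(zI-F(\mu))^{-1}\|^{2}=\|M^{-1}\|^{2}$, then reproduces \eqref{EsSRe} verbatim. There is essentially no obstacle here: the only ingredients are the explicit form \eqref{ZMF} of $zI-F(\mu)$, the normalization \eqref{SMc2} of the entries of $F(\mu)$ (itself a consequence of the reflection formula \eqref{Refl} for the $\Gamma$-function), and Lemma~\ref{EMME}. The one point worth a moment's care is checking that $\det M\neq0$, which is exactly the assumption $z\notin\textup{\large\(\sigma\)}(F(\mu))$.
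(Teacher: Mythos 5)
Your proposal is correct and follows essentially the same route as the paper: the author likewise applies the second inequality of Lemma~\ref{EMME} to $M=zI-F(\mu)$ and uses \eqref{ZMF}, \eqref{Trac} and \eqref{SMc2} to identify $\textup{trace}(M^{\ast}M)=2|z|^2+1$ and $\det M=D(z,\mu)$. Nothing is missing; your explicit remark that $z\notin\textup{\large\(\sigma\)}(F(\mu))$ guarantees $\det M\neq0$ is a minor point the paper leaves implicit.
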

\begin{proof} We apply the estimate \eqref{EMMEI} to the matrix \(M=zI-F(\mu)\). According to \eqref{ZMF},
 \eqref{Trac} and \eqref{SMc2},
 \begin{equation*}%
 \textup{trace}\,\big((zI-F(\mu))^{-1}\big)^{\ast}(zI-F(\mu))^{-1}=2|z|^2+1.
 \end{equation*}
\end{proof}
\noindent
\textsf{The spectrum of the operator \(\mathcal{M}_{F}\)}.\\
Let us describe the spectrum \(\textup{\large\(\sigma\)}(\mathcal{M}_{F})\) of the multiplication
operator \(\mathcal{M}_{F}\), where \(F(\mu)\) is the matrix function appeared in \eqref{Matr}.

When \(\mu\) runs over the interval \([0,\,\infty)\), the points \(\zeta_{+}(\mu)\) fill
the interval
\(\Big(0,\,\frac{1}{\sqrt{2}}\,e^{i\pi/4}\Big]\) and the points \(\zeta_{-}(\mu)\) fill
the interval \(\Big[-e^{i\pi/4}\frac{1}{\sqrt{2}},\,0\Big)\).
When \(\mu\) increases, the points \(\zeta_{+}(\mu)\), \(\zeta_{-}(\mu)\) move monotonically,
so the mappings \(\mu\to\zeta_{+}(\mu)\) is a homeomorphism of \([0,\,\infty)\) onto
\(\Big(0,\,\frac{1}{\sqrt{2}}\,e^{i\pi/4}\Big]\) and the mapping \(\mu\to\zeta_{-}(\mu)\)
is a homeomorphism of \([0,\,\infty)\) onto \(\Big[-e^{i\pi/4}\frac{1}{\sqrt{2}},\,0\Big)\).
\begin{lemma}
\label{psMO}
The multiplication operator \(\mathcal{M}_{F}\), where \(F(\mu)\) is the matrix function from
\eqref{Matr}, has neither the point nor the residual spectrum.
\end{lemma}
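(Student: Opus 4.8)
The plan is to read the statement off of the complete description of the point and residual spectra of a matrix multiplication operator furnished by Theorem \ref{InvCon}, combined with the explicit formula \eqref{Detm} for the characteristic determinant $D(z,\mu)=\det(zI-F(\mu))$. The starting remark is that for every $z\in\mathbb{C}$ one has $z\mathscr{I}-\mathcal{M}_{F}=\mathcal{M}_{zI-F}$, and that $zI-F(\mu)$ belongs to $L^{\infty}_{_{\mathfrak{M}_2}}(\mathbb{R}^{+})$ because $F$ does, by \eqref{EsNoF2}; hence Theorem \ref{InvCon} is applicable with $G=zI-F$.

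First I would reduce the point spectrum to a determinant condition. A point $z$ lies in the point spectrum of $\mathcal{M}_{F}$ precisely when $\mathcal{M}_{zI-F}$ fails to be injective, i.e. when $0\in\textup{\large\(\sigma\)}_{\!p}(\mathcal{M}_{zI-F})$. The proof of Theorem \ref{InvCon} actually establishes the equivalence, not merely one implication: $0\in\textup{\large\(\sigma\)}_{\!p}(\mathcal{M}_{G})$ if and only if the condition \eqref{DIC} is violated for $G$. Applying this to $G=zI-F$, we see that $z\in\textup{\large\(\sigma\)}_{\!p}(\mathcal{M}_{F})$ if and only if $\det(zI-F(\mu))=0$ for $\mu$ in a set of positive Lebesgue measure. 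For the residual spectrum, $z\in\textup{\large\(\sigma\)}_{\!r}(\mathcal{M}_{F})$ means $0\in\textup{\large\(\sigma\)}_{\!p}\big((\mathcal{M}_{zI-F})^{\ast}\big)=\textup{\large\(\sigma\)}_{\!p}\big(\mathcal{M}_{(zI-F)^{\ast}}\big)$ by \eqref{CoMO} in Lemma \ref{Homom}; since $(zI-F(\mu))^{\ast}=\bar z I-F^{\ast}(\mu)$ and $\det\big(\bar z I-F^{\ast}(\mu)\big)=\overline{\det(zI-F(\mu))}=\overline{D(z,\mu)}$, this again reduces to the vanishing of $D(z,\mu)$ on a set of positive measure.

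It therefore suffices to show that for each fixed $z\in\mathbb{C}$ the set $\{\mu\in[0,\infty):D(z,\mu)=0\}$ is a Lebesgue null set. By \eqref{Detm},
\begin{equation*}
D(z,\mu)=z^{2}-\frac{i}{2\cosh\pi\mu},
\end{equation*}
so $D(z,\mu)=0$ forces $z\neq0$ and $\cosh\pi\mu=\dfrac{i}{2z^{2}}$. The function $\mu\mapsto\cosh\pi\mu$ is strictly increasing, hence injective, on $[0,\infty)$, so this equation has at most one solution $\mu\in[0,\infty)$. Thus $\{\mu:D(z,\mu)=0\}$ contains at most one point and is a null set; equivalently, the condition \eqref{DIC} for $G=zI-F$ holds for every $z\in\mathbb{C}$. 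By Theorem \ref{InvCon} neither the point nor the residual spectrum of $\mathcal{M}_{F}$ contains any point, which is the assertion of Lemma \ref{psMO}.

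There is no serious obstacle here: all the analytic content has been placed in Theorem \ref{InvCon} and in the elementary identity \eqref{Detm}. The only step requiring a little care is the passage to the residual spectrum through the adjoint, where one must use that $F^{\ast}(\mu)$ inherits the anti-diagonal structure of $F(\mu)$, so that $\det(\bar z I-F^{\ast}(\mu))$ is the complex conjugate of $D(z,\mu)$ and therefore has exactly the same zero set in $\mu$.
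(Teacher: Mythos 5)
Your proposal is correct and follows essentially the same route as the paper: reduce the question to whether $\det\bigl(zI-F(\mu)\bigr)$ vanishes on a set of positive measure, observe via \eqref{Detm} that for each fixed $z$ the zero set contains at most one point $\mu$, and then invoke Theorem \ref{InvCon} applied to $G=zI-F$. The only difference is your explicit detour through the adjoint for the residual spectrum, which is harmless but unnecessary, since Theorem \ref{InvCon} already asserts that when condition \eqref{DIC} holds the point $0$ lies neither in the point nor in the residual spectrum of $\mathcal{M}_{G}$.
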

\begin{proof}
Let \(z\) be a complex number. Then
\(\mathcal{M}_{_{\scriptstyle F}}-z\,\mathscr{I}=\mathcal{M}_{_{\scriptstyle F-zI}}\).
According to \eqref{detm},\,\eqref{FaDe}, the determinant \(\det(F(\mu)-zI)\)
does not vanish for \(\mu\in[0,\infty)\) if
\(z\not\in\Big[-e^{i\pi/4}\frac{1}{\sqrt{2}},\,0\Big)\cup\Big(0,\,\frac{1}{\sqrt{2}}\,e^{i\pi/4}\Big]\)
and vanishes precisely in one point \(\mu(z)\) if
\(z\in\Big[-e^{i\pi/4}\frac{1}{\sqrt{2}},\,0\Big)\cup\Big(0,\,\frac{1}{\sqrt{2}}\,e^{i\pi/4}\Big]\).
In each of these two cases, \(m(\{\mu:\,\det(F(\mu)-zI)=0\})=0\).
According to Theorem~\ref{InvCon}, \(0\not\in\textup{\large\(\sigma\)}_p(\mathcal{M}_{F}-z\,\mathscr{I})\),
\(0\not\in\textup{\large\(\sigma\)}_r(\mathcal{M}_{F}-z\,\mathscr{I})\), that is
\(z\not\in\textup{\large\(\sigma\)}_p(\mathcal{M}_{F})\),\,\(z\not\in\textup{\large\(\sigma\)}_r(\mathcal{M}_{F})\).
\end{proof}
\begin{lemma}
\label{csMO}
The continuous spectrum \(\textup{\large\(\sigma\)}_c(\mathcal{M}_{F})\)
of the multiplication operator \(\mathcal{M}_{F}\), where \(F(\mu)\) is the matrix function from \eqref{Matr}, is the interval
\(\Big[-\frac{1}{\sqrt{2}}\,e^{i\pi/4},\,\frac{1}{\sqrt{2}}\,e^{i\pi/4}\Big]\).
\end{lemma}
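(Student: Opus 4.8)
The plan is to compute the continuous spectrum of $\mathcal{M}_{F}$ by gluing together the point spectra of the $2\times 2$ matrices $F(\mu)$, using the decomposition \eqref{GlSp} together with Lemma \ref{psMO}, which already tells us that $\mathcal{M}_F$ has no point and no residual spectrum. Hence every point of $\textup{\large\(\sigma\)}(\mathcal{M}_F)$ lies in the continuous spectrum, and it suffices to identify the set $\textup{\large\(\sigma\)}(\mathcal{M}_F)$ itself. By \eqref{GlSp} this set is $\bigl(\bigcup_{\mu\in[0,\infty)}\textup{\large\(\sigma\)}(F(\mu))\bigr)\cup\{0\}$, so I would first describe the union of the matrix spectra and then argue that adjoining the limit point $\{0\}$ (corresponding to $\mu=\infty$) yields exactly the closed interval.

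First I would invoke Lemma \ref{Spmm}: for each $\mu\in[0,\infty)$, $\textup{\large\(\sigma\)}(F(\mu))=\{\zeta_{+}(\mu),\zeta_{-}(\mu)\}$ with $\zeta_{\pm}(\mu)=\pm e^{i\pi/4}(2\cosh\pi\mu)^{-1/2}$, see \eqref{EiVa}, \eqref{EiV}. As noted in the paragraph preceding Lemma \ref{psMO}, the map $\mu\mapsto\zeta_{+}(\mu)$ is a homeomorphism of $[0,\infty)$ onto $\bigl(0,\tfrac{1}{\sqrt2}e^{i\pi/4}\bigr]$ and $\mu\mapsto\zeta_{-}(\mu)$ is a homeomorphism of $[0,\infty)$ onto $\bigl[-\tfrac{1}{\sqrt2}e^{i\pi/4},0\bigr)$; this is immediate since $\cosh\pi\mu$ increases continuously from $1$ to $+\infty$ as $\mu$ runs over $[0,\infty)$, so $(2\cosh\pi\mu)^{-1/2}$ decreases continuously from $\tfrac{1}{\sqrt2}$ to $0$. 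Consequently
\begin{equation*}
\bigcup_{\mu\in[0,\infty)}\textup{\large\(\sigma\)}(F(\mu))=
\Bigl[-\tfrac{1}{\sqrt2}e^{i\pi/4},0\Bigr)\cup\Bigl(0,\tfrac{1}{\sqrt2}e^{i\pi/4}\Bigr],
\end{equation*}
and adjoining the point $\{0\}$ from \eqref{GlSp} gives the full closed interval $\bigl[-\tfrac{1}{\sqrt2}e^{i\pi/4},\tfrac{1}{\sqrt2}e^{i\pi/4}\bigr]$. Since by Lemma \ref{psMO} none of these points is in $\textup{\large\(\sigma\)}_p$ or $\textup{\large\(\sigma\)}_r$, they all lie in $\textup{\large\(\sigma\)}_c(\mathcal{M}_F)$.

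To make the use of \eqref{GlSp} fully rigorous I would, if \eqref{GlSp} has not itself been proved in detail, verify the two inclusions directly. For $z$ in the interval: either $z=0$, in which case $\det(F(\mu)-zI)=-\tfrac{i}{2\cosh\pi\mu}\to 0$ as $\mu\to\infty$ but never vanishes, so by the argument in the proof of Theorem \ref{InvCon} part 4 the inverse matrix functions $(zI-F(\mu))^{-1}$ are unbounded in $\mu$ and $z=0\in\textup{\large\(\sigma\)}_c(\mathcal{M}_F)$; or $z=\zeta_{\pm}(\mu_0)$ for a unique $\mu_0$, in which case $\det(zI-F(\mu))$ vanishes at $\mu=\mu_0$ only and one builds an approximate null vector supported near $\mu_0$ exactly as in Theorem \ref{InvCon}. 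For $z$ outside the closed interval, $\inf_{\mu\in[0,\infty)}|\det(zI-F(\mu))|>0$ because $D(z,\mu)=z^2-\tfrac{i}{2\cosh\pi\mu}$ traces out, as $\mu$ varies, the segment from $z^2-\tfrac{i}{2}$ to $z^2$, which avoids $0$ precisely when $z^2\notin[0,\tfrac{i}{2}]$, i.e. when $z\notin\bigl[-\tfrac{1}{\sqrt2}e^{i\pi/4},\tfrac{1}{\sqrt2}e^{i\pi/4}\bigr]$; combined with the trace identity $\operatorname{trace}\bigl((zI-F(\mu))^{\ast}(zI-F(\mu))\bigr)=2|z|^2+1$ from Lemma \ref{eremf}, the upper estimate in \eqref{EsSRe} shows $\sup_{\mu}\|(zI-F(\mu))^{-1}\|<\infty$, so $z\in\textup{\large\(\rho\)}(\mathcal{M}_F)$ by Theorem \ref{InvCon} part 1.

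The only genuinely delicate point is the behavior at $z=0$ and, more generally, the claim that membership of $z$ in the interval forces the inverses to be \emph{unbounded} in $\mu$ rather than merely undefined at a single $\mu$ — this is where one must be careful to produce a properly normalized approximate eigenvector in $\mathscr{K}$, mimicking the construction in the last bullet of the proof of Theorem \ref{InvCon}. Everything else is a routine continuity-and-homeomorphism computation on the explicit eigenvalues $\zeta_{\pm}(\mu)$.
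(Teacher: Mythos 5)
Your proposal is correct and, in its rigorous core, follows the same route as the paper: the identification $\inf_{\mu}|D(z,\mu)|=\textup{dist}(z^2,[0,i/2])$, the two-sided bound of Lemma \ref{eremf} to control $\sup_{\mu}\|(zI-F(\mu))^{-1}\|$, and Theorem \ref{InvCon} applied to $G(\mu)=zI-F(\mu)$ (with Lemma \ref{psMO} excluding point and residual spectrum). The preliminary "gluing" framing via \eqref{GlSp} and the homeomorphisms $\mu\mapsto\zeta_{\pm}(\mu)$ is only packaging, and you correctly recognize that it must be backed by the direct verification, which is exactly the paper's argument.
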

\begin{proof}
When \(\mu\) runs  over the interval \([0,\infty)\), the complex
number \(\dfrac{i}{2\cosh{\pi\mu}}\), which appears in the right
hand side of the equality \eqref{Detm}, fill the interval
\((0,i/2]\). Therefore
\begin{equation}
\label{DistCo}%
\inf_{\mu\in(0,\infty)}|D(z,\mu)|=\textup{dist}(z^2\,,\,[0,i/2]\,)
\end{equation}%
In particular,
\begin{equation*}
\Big(\inf_{\mu\in[0,\infty)}|D(z,\mu)|>0\Big)\Leftrightarrow
\big(\,z^2\not\in[0,i/2]\,\big)\,,
\end{equation*}%
or, what is the same,
\begin{equation}
\label{SepCo}
\Big(\inf_{\mu\in[0,\infty)}|D(z,\mu)|>0\Big)\Leftrightarrow
\Big(\,z\not\in\Big[-\frac{1}{\sqrt{2}}\,e^{i\pi/4},\,
\frac{1}{\sqrt{2}}\,e^{i\pi/4}\Big]\,\Big)\,,
\end{equation}%
From the inequalities \eqref{EsSRe} it follows that
\begin{multline}
\label{CoFRe}
\frac{2|z|^2+1}{\big(\inf_{\mu\in[0,\infty)}|D(z,\mu)|\big)^{2}}-\frac{2}{2|z|^2+1}\leq\\
\leq\sup\limits_{\mu\in[0,\infty)}\|(zI-F(\mu))^{-1}\|^{2}\leq
\frac{2|z|^2+1}{\big(\inf_{\mu\in[0,\infty)}|D(z,\mu)|\big)^{2}}
\end{multline}
From \eqref{SepCo} and \eqref{CoFRe} it follows that
\begin{equation}
\label{CoFRe1}
\left(\,\sup\limits_{\mu\in[0,\infty)}\|(zI-F(\mu))^{-1}\|<\infty\right)\Leftrightarrow
\left(\,z\not\in\Big[-\frac{1}{\sqrt{2}}\,e^{i\pi/4},\,
\frac{1}{\sqrt{2}}\,e^{i\pi/4}\Big]\,\right)
\end{equation}
Lemma \ref{csMO} is a consequence of \eqref{CoFRe1} and Theorem \ref{InvCon}. (Theorem \ref{InvCon}
should be applied to the matrix function \(G(z)=zI-F(\mu)\).)
\end{proof}
\noindent
From Lemmas \eqref{psMO}, \ref{csMO} and Statement 2 of Theorem \ref{InvCon} we derive:
\begin{theorem}{\ \ }\\[-3.0ex]
\label{SpMuOpe}
\begin{enumerate}
\item[\textup{1.}]
The spectrum \(\textup{\large\(\sigma\)}(\mathcal{M}_{F})\) of the multiplication
operator \(\mathcal{M}_{F}\), where \(F(\mu)\) is the matrix function appeared in \eqref{Matr}, is:
\begin{equation}
\label{SpemuOper}
\textup{\large\(\sigma\)}(\mathcal{M}_{F})=\left[-\frac{1}{\sqrt{2}}\,e^{i\pi/4},\,
\frac{1}{\sqrt{2}}\,e^{i\pi/4}\right]
\end{equation}
\item[\textup{2.}]
If \(z\) does not belong to the spectrum \(\textup{\large\(\sigma\)}(\mathcal{M}_{F})\),
then  the resolvent \((z\mathscr{I}-\mathcal{M}_{F})^{-1}\) can be expressed as
\begin{equation}
\label{RAsMO}
(z\mathscr{I}-\mathcal{M}_{F})^{-1}=\mathcal{M}_{_{\scriptstyle (zI-F)^{-1}}}\,.
\end{equation}
\end{enumerate}
\end{theorem}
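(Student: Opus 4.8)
The plan is to assemble the statement directly from the three results just established, since $\mathcal{M}_{F}$ is a bounded everywhere-defined operator. For part 1, I would first invoke the general decomposition $\textup{\large\(\sigma\)}(\mathcal{M}_{F})=\textup{\large\(\sigma\)}_{\!p}(\mathcal{M}_{F})\cup\textup{\large\(\sigma\)}_{\!r}(\mathcal{M}_{F})\cup\textup{\large\(\sigma\)}_{\!c}(\mathcal{M}_{F})$ valid for bounded operators. By Lemma \ref{psMO} the first two sets are empty, and by Lemma \ref{csMO} the third equals $\Big[-\frac{1}{\sqrt{2}}\,e^{i\pi/4},\,\frac{1}{\sqrt{2}}\,e^{i\pi/4}\Big]$; putting these together gives \eqref{SpemuOper}. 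I would also remark in passing that the point $\zeta=0$ is genuinely in this interval: it is the only point of the spectrum not of the form $\zeta_{\pm}(\mu)$ with $\mu<\infty$ (it corresponds to $\mu=\infty$), but it already lies in $\textup{\large\(\sigma\)}_{\!c}(\mathcal{M}_{F})$ by Lemma \ref{csMO}, so no separate argument for it is needed.

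For part 2, fix $z\notin\textup{\large\(\sigma\)}(\mathcal{M}_{F})$. First I would record the algebraic identity $z\mathscr{I}-\mathcal{M}_{F}=\mathcal{M}_{zI-F}$, which follows from Statements 1 and 2 of Lemma \ref{Homom} (and is the same observation used in the proof of Lemma \ref{psMO}). So it suffices to invert the multiplication operator generated by the matrix function $G(\mu):=zI-F(\mu)$, and for that I would apply Theorem \ref{InvCon}, checking its hypotheses \eqref{DIC} and \eqref{BoIM}. Condition \eqref{DIC} holds because $\det G(\mu)=D(z,\mu)$, and by \eqref{Detm}, since $z$ lies outside the interval, $D(z,\mu)$ vanishes for at most one $\mu$, hence on a set of Lebesgue measure zero — exactly the reasoning in the proof of Lemma \ref{csMO}. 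Condition \eqref{BoIM} holds because the equivalence \eqref{CoFRe1}, derived there from \eqref{EsSRe} and \eqref{SepCo}, gives $\sup_{\mu}\|(zI-F(\mu))^{-1}\|<\infty$ precisely when $z$ is outside the interval. Statement 2 of Theorem \ref{InvCon} then yields $(\mathcal{M}_{zI-F})^{-1}=\mathcal{M}_{(zI-F)^{-1}}$, which is \eqref{RAsMO}.

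There is no real obstacle here: every step is a direct citation of an already-proved lemma. The only thing requiring a little care is the bookkeeping — making explicit that the two hypotheses of Theorem \ref{InvCon} for the matrix function $G(\mu)=zI-F(\mu)$ are literally the statements ``$D(z,\cdot)$ vanishes $m$-almost nowhere'' and ``$(zI-F(\cdot))^{-1}$ is essentially bounded on $\mathbb{R}^{+}$'' that were isolated inside the proof of Lemma \ref{csMO}, and that the reduction to this matrix function is legitimate via $z\mathscr{I}-\mathcal{M}_{F}=\mathcal{M}_{zI-F}$.
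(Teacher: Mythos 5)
Your proposal is correct and follows the paper's own route: the paper derives Theorem \ref{SpMuOpe} precisely by combining Lemma \ref{psMO}, Lemma \ref{csMO} (whose proof contains the equivalence \eqref{CoFRe1} you cite for condition \eqref{BoIM}), and Statement 2 of Theorem \ref{InvCon} applied to $G(\mu)=zI-F(\mu)$. Your write-up merely makes explicit the bookkeeping that the paper leaves to a one-line citation, so there is nothing to correct.
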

\noindent
\textsf{Estimate for the resolvent of the operator \(\mathcal{M}_{F}\)}.\\
Let \(z\) does not belong to the spectrum \(\textup{\large\(\sigma\)}(\mathcal{M}_{F})\) of the operator \(\mathcal{M}_{F}\).
According to Statement 2 of Theorem \ref{InvCon}, the resolvent \((z\mathscr{I}-\mathcal{M}_{F})^{-1}\)
of this operator can be expressed by \eqref{RAsMO}.
According to Statement 5 of Theorem \ref{InvCon}, applied to the matrix function \(G(\mu)=(zI-F(\mu))^{-1}\),
the estimate
\begin{equation}
\label{EFRe}
\|\mathcal{M}_{_{\scriptstyle (zI-F)^{-1}}}\|_{_{\mathscr{K}\to\mathscr{K}}}=\,\sup\limits_{\mu\in[0,\infty)}\|(zI-F(\mu))^{-1}\|
\end{equation}
holds. Comparing \eqref{RAsMO},\,\eqref{EFRe},\,\eqref{CoFRe} and \eqref{DistCo}, we obtain two sided estimates
for the norm of the resolvent \((z\mathscr{I}-\mathscr{F}_E)^{-1}\) in term of the value \(\textup{dist}(z^2\,,\,[0,i/2])\):
\begin{subequations}
\label{ToSiEs}%
\begin{gather}
\label{UpEs}%
\big\|(z\mathscr{I}-\mathscr{F}_E)^{-1}\big\|
\leq{}\frac{\big(2|z|^2+1\big)^{1/2}}{\textup{dist}(z^2\,,\,[0,i/2])}\,, \\
\notag%
\frac{\big(2|z|^2+1\big)^{1/2}}{\textup{dist}(z^2\,,\,[0,i/2])}
\sqrt{1-\tfrac{2\textup{dist}^2(z^2\,,\,[0,i/2])}{\big(2|z|^2+1\big)^2}}
\leq\big\|(z\mathscr{I}-\mathscr{F}_E)^{-1}\big\|\,.
\end{gather}
The value under the square root in \eqref{LoEs} is positive
since
\begin{equation*}
\label{GOH}%
\frac{2\textup{dist}^2(z^2\,,\,[0,i/2])}{\big(2|z|^2+1\big)^2}\leq
\frac{2|z|^2}{\big(2|z|^2+1\big)^2}\leq\frac{1}{2}\,.
\end{equation*}
Since \((1-\alpha)\leq\sqrt{1-\alpha}\) for \(0\leq\alpha\leq1\), then
\[1-\frac{2\textup{dist}^2(z^2\,,\,[0,i/2])}{\big(2|z|^2+1\big)^2}
\leq\sqrt{1-\frac{2\textup{dist}^2(z^2\,,\,[0,i/2])}{\big(2|z|^2+1\big)^2}}
\,.\]
Thus, the lower estimate for the norm of resolvent is
\begin{equation}
\label{LoEs}
\frac{\big(2|z|^2+1\big)^{1/2}}{\textup{dist}(z^2\,,\,[0,i/2])}-
\frac{2\textup{dist}(z^2\,,\,[0,i/2])}{\big(2|z|^2+1\big)^{3/2}}
\leq\big\|(z\mathscr{I}-\mathscr{F}_E)^{-1}\big\|\,.
\end{equation}
\end{subequations}
The smaller is the value \(\textup{dist}(z^2\,,\,[0,i/2])\), the closer are the lower estimate \eqref{LoEs}
and the upper estimate \eqref{UpEs}.
 However, we would like to estimate of the resolvent in term of the value
 \(\textup{dist}(z,\,\textup{\large\(\sigma\)}(\mathcal{M}_{F}))\).
\begin{lemma}
\label{DfDi}
Let \(\zeta\) be a point of the spectrum \(\textup{\large\(\sigma\)}(\mathcal{M}_{F})\)\textup{:}
\begin{equation}
\label{zePsp}
\zeta\in\bigg[-\frac{1}{\sqrt{2}}\,e^{i\pi/4},\,
\frac{1}{\sqrt{2}}\,e^{i\pi/4}\bigg]\,,
\end{equation}
and the point \(z\)  lies on the normal to the interval
\(\Big[-\frac{1}{\sqrt{2}}\,e^{i\pi/4},\,
\frac{1}{\sqrt{2}}\,e^{i\pi/4}\Big]\) at the point \(\zeta\):
\begin{equation}
\label{znorspm}
z=\zeta\pm{}|z-\zeta|e^{i3\pi/4}\,.
\end{equation}
Then
\begin{equation}
\label{distot}
\textup{dist}\big(z^2\,,\big[0\,,i/2\big]\,\big)\,=\,
\begin{cases}
2|\zeta|\,|z-\zeta|\,,&\textup{if \ }|z-\zeta|\leq|\zeta|\,,\\
|\zeta|^2+|z-\zeta|^2|\,=\,|z|^2,&\textup{if \ }|z-\zeta|\geq|\zeta|\,.
\end{cases}
\end{equation}
\end{lemma}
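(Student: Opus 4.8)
The plan is to introduce the real coordinate along the spectral interval and thereby reduce the whole statement to an elementary ``distance to a segment'' computation in the plane. First I would write $\zeta = s\,e^{i\pi/4}$ with $s\in[-1/\sqrt2,\,1/\sqrt2]$, so that $|\zeta| = |s|$, and note that the normal direction to the interval $[-\tfrac{1}{\sqrt2}e^{i\pi/4},\,\tfrac{1}{\sqrt2}e^{i\pi/4}]$ at $\zeta$ is $e^{i3\pi/4} = i\,e^{i\pi/4}$. Hence the hypothesis \eqref{znorspm} says exactly $z = (s\pm it)\,e^{i\pi/4}$ with $t = |z-\zeta|\ge 0$; in particular $|z|^2 = s^2+t^2$.

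Next I would square: $z^2 = (s\pm it)^2 e^{i\pi/2} = i\,(s\pm it)^2 = \mp 2st + i\,(s^2-t^2)$, so $z^2$ has real part of modulus $2|s|t = 2|\zeta|\,|z-\zeta|$ and imaginary part $s^2-t^2$. The segment occurring in \eqref{distot} is $[0,i/2]=\{iy:\,0\le y\le 1/2\}$ (recall from \eqref{Detm} that $\tfrac{1}{2\cosh\pi\mu}$ ranges over $(0,1/2]$ as $\mu$ runs over $[0,\infty)$). For a point $w=a+ib$, minimising $|w-iy|^2$ over $y\in[0,1/2]$ gives: distance $=|a|$ if $0\le b\le 1/2$; distance $=|w|$ if $b\le 0$; distance $=\sqrt{a^2+(b-1/2)^2}$ if $b\ge 1/2$. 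This elementary sub-lemma is the only computation needed.

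Now I would split according to the sign of $s^2-t^2$, i.e. according to $|z-\zeta|$ versus $|\zeta|$. If $|z-\zeta|\le|\zeta|$, then $0\le s^2-t^2\le s^2\le 1/2$, so $\mathrm{Im}\,z^2\in[0,1/2]$ and the distance is the modulus of the real part, namely $2|\zeta|\,|z-\zeta|$. If $|z-\zeta|\ge|\zeta|$, then $s^2-t^2\le 0$, the nearest point of $[0,i/2]$ is $0$, and the distance is $|z^2|=|z|^2=s^2+t^2=|\zeta|^2+|z-\zeta|^2$. These are precisely the two cases in \eqref{distot}.

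There is essentially no obstacle; the only point requiring a little care is keeping track of which point of $[0,i/2]$ realises the distance, and checking — using $|\zeta|\le 1/\sqrt2$, whence $s^2\le 1/2$ — that $\mathrm{Im}\,z^2$ never exceeds $1/2$ under the stated hypotheses, so that the ``$\sqrt{a^2+(b-1/2)^2}$'' branch of the distance formula never intervenes.
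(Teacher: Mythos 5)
Your proof is correct and follows essentially the same route as the paper: writing $\zeta$ and $z$ along the direction $e^{i\pi/4}$, computing $z^{2}=\mp 2|\zeta|\,|z-\zeta|+i\bigl(|\zeta|^{2}-|z-\zeta|^{2}\bigr)$, and splitting according to whether the imaginary part lands in $[0,1/2]$ (using $|\zeta|^{2}\le 1/2$) or is negative, in which case the nearest point of $[0,i/2]$ is the origin and the distance equals $|z|^{2}$. Your explicit statement of the distance-to-segment sub-lemma is just a slightly more formalized version of what the paper does implicitly.
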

\begin{proof}
The condition \eqref{zePsp} means that
\(\zeta=\pm|\zeta|e^{i\pi/4}\). Substituting this expression for \(\zeta\) into
\eqref{znorsp}, we obtain
\begin{equation*}
z^2=\pm2|\zeta|\,|z-\zeta|+i(|\zeta|^2-|z-\zeta|^2)\,.
\end{equation*}
If \(|z-\zeta|\leq|\zeta|\), then the point \(i(|\zeta|^2-|z-\zeta|^2)\)
lies on the interval \([0,i/2]\). In this case,
\(\textup{dist}(z^2,\,[0,i/2])=2|\zeta|\,|z-\zeta|\). If \(|z-\zeta|\geq|\zeta|\),
then the point \(i(|\zeta|^2-|z-\zeta|^2)\) lies on the half-axis \([0,-i\infty)\).
In this case,
\begin{multline*}
\textup{dist}\,(z^2\,,\,[0,\,\,i/2]))=\sqrt{{\big(|\zeta|^2-|z-\zeta|^2\big)^2+
4|\zeta|^2|z-\zeta|^2}}=\\=|\zeta|^2+|z-\zeta|^2=|z|^2\,.
\end{multline*}
Since \(|\zeta|^2+|z-\zeta|^2\geq2|\zeta||z-\zeta|\), in any case, either \(|z-\zeta|\leq|\zeta|\), or \(|z-\zeta|\geq|\zeta|\), the inequality
\begin{equation}
\label{ERFB}
\textup{dist}\,(z^2\,,\,[0,\,i/2]))\geq2|\zeta|\,|z-\zeta|\,.
\end{equation}
holds.
\end{proof}
\begin{theorem}%
\label{TEstRes}%
Let \(\zeta\) be a point of the spectrum \(\textup{\large\(\sigma\)}(\mathcal{M}_{F})\)
of the operator \(\mathcal{M}_{F}\), and let the point \(z\) lie
on the normal to the interval \(\textup{\large\(\sigma\)}(\mathcal{M}_{F})\)
at the point \(\zeta\).\\
Then
\begin{enumerate}
\item[\textup{1.}]
The resolvent \((z\mathscr{I}-\mathcal{M}_{F})^{-1}\) admits the estimate from above:
\begin{equation}
\label{UpEsResSM}
\big\|(z\mathscr{I}-\mathcal{M}_{F})^{-1}\big\|\leq
A(z)\frac{1}{|\zeta|}\cdot\frac{1}{|z-\zeta|}\,,
\end{equation}
where \(A(z)=\frac{(2|z|^2+1)^{1/2}}{2}\).
\item[\textup{2.}]
If moreover the condition \(|z-\zeta|\leq|\zeta|\) is satisfied,
then the resolvent \((z\mathscr{I}-\mathcal{M}_{F})^{-1}\) also admits the estimate from below:
\begin{equation}
\label{LoEsResSM}
A(z)\frac{1}{|\zeta|}\cdot\frac{1}{|z-\zeta|}
-B(z)|\zeta||z-\zeta|
\leq\big\|(z\mathscr{I}-\mathcal{M}_{F})^{-1}\big\|
\,,
\end{equation}
 where \(A(z)\) is the same that in \eqref{UpEsResS} and
\(B(z)=\frac{4}{(2|z|^2+1)^{3/2}}\)\,.
\item[\textup{3.}] For \(\zeta=0\), then the resolvent \((z\mathscr{I}-\mathcal{M}_{F})^{-1}\)
admits the estimates
\begin{equation}
\label{EZEZM}
2A(z)\frac{1}{|z|^2}-B(z)
\leq\big\|(z\mathscr{I}-\mathcal{M}_{F})^{-1}\big\|\leq{}2A(z)\frac{1}{|z|^2},
\end{equation}
where \(A(z)\) and \(B(z)\) are the same that in \eqref{UpEsResS},
 \eqref{LoEsResS}, and \(z\) is an arbitrary point of the normal.
\end{enumerate}
In particular, if \(\zeta\not=0\), and \(z\) tends to \(\zeta\)
along the normal to the interval \(\textup{\large\(\sigma\)}(\mathcal{M}_{F})\),
then
\begin{equation}
\label{AsNzpM}
\big\|(z\mathscr{I}-\mathcal{M}_{F})^{-1}\big\|=\frac{A(\zeta)}{|\zeta|}\,\frac{1}{|z-\zeta|}
+O(1)\,.
\end{equation}
If \(\zeta=0\) and \(z\) tends to \(\zeta\)
along the normal to the interval \(\textup{\large\(\sigma\)}(\mathcal{M}_{F})\),
then
\begin{equation}
\label{AszpM}
\big\|(z\mathscr{I}-\mathcal{M}_{F})^{-1}\big\|=|z|^{-2}+O(1)\,,
\end{equation}
where \(O(1)\) is a value which remains bounded as \(z\) tends to \(\zeta\).
\end{theorem}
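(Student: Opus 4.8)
The plan is to derive every estimate in the theorem from material already in hand, so that the proof is essentially a substitution rather than a new argument. The three ingredients are: Theorem \ref{SpMuOpe}, which identifies the resolvent as the multiplication operator $(z\mathscr{I}-\mathcal{M}_F)^{-1}=\mathcal{M}_{(zI-F)^{-1}}$ with norm $\sup_{\mu\in[0,\infty)}\|(zI-F(\mu))^{-1}\|$ by \eqref{EFRe}; the two-sided bound \eqref{ToSiEs} (the upper estimate \eqref{UpEs} together with the lower estimate \eqref{LoEs}), which controls this norm in terms of $\textup{dist}\big(z^2,[0,i/2]\big)$; and Lemma \ref{DfDi}, which on the normal to $\sigma(\mathcal{M}_F)$ at $\zeta$ evaluates $\textup{dist}\big(z^2,[0,i/2]\big)$ exactly (equation \eqref{distot}) and in particular gives $\textup{dist}\big(z^2,[0,i/2]\big)\ge 2|\zeta|\,|z-\zeta|$ (equation \eqref{ERFB}). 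Thus nothing further about operators is needed; the work is the insertion of \eqref{distot} and \eqref{ERFB} into \eqref{ToSiEs} and the bookkeeping of the remainder terms.

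First I would prove item 1. On the normal one has $\textup{dist}\big(z^2,[0,i/2]\big)\ge 2|\zeta|\,|z-\zeta|$ by \eqref{ERFB}, so the upper bound \eqref{UpEs} gives $\|(z\mathscr{I}-\mathcal{M}_F)^{-1}\|\le \frac{(2|z|^2+1)^{1/2}}{2|\zeta|\,|z-\zeta|}=A(z)\,\frac1{|\zeta|}\cdot\frac1{|z-\zeta|}$, which is \eqref{UpEsResSM}. For item 2, the extra hypothesis $|z-\zeta|\le|\zeta|$ puts us in the first alternative of \eqref{distot}, so $\textup{dist}\big(z^2,[0,i/2]\big)=2|\zeta|\,|z-\zeta|$ exactly; substituting into the lower bound \eqref{LoEs} and noting that $\frac{2\cdot 2|\zeta|\,|z-\zeta|}{(2|z|^2+1)^{3/2}}=B(z)\,|\zeta|\,|z-\zeta|$ gives $\|(z\mathscr{I}-\mathcal{M}_F)^{-1}\|\ge A(z)\,\frac1{|\zeta|}\cdot\frac1{|z-\zeta|}-B(z)\,|\zeta|\,|z-\zeta|$, which is \eqref{LoEsResSM}. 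For item 3, with $\zeta=0$ every point $z$ of the normal at $0$ has $|z-\zeta|=|z|\ge 0=|\zeta|$, so the second alternative of \eqref{distot} applies and $\textup{dist}\big(z^2,[0,i/2]\big)=|z|^2$; feeding this into \eqref{UpEs} and \eqref{LoEs} yields the upper bound $2A(z)/|z|^2$ and, after bounding the subtracted term $2|z|^2/(2|z|^2+1)^{3/2}$ by $B(z)$, the lower bound $2A(z)/|z|^2-B(z)$, i.e. \eqref{EZEZM}.

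The two ``in particular'' asymptotics then follow by letting $z\to\zeta$ along the normal. If $\zeta\neq0$, then eventually $|z-\zeta|\le|\zeta|$, so items 1 and 2 sandwich $\|(z\mathscr{I}-\mathcal{M}_F)^{-1}\|$ between $\frac{A(z)}{|\zeta|\,|z-\zeta|}-B(z)\,|\zeta|\,|z-\zeta|$ and $\frac{A(z)}{|\zeta|\,|z-\zeta|}$; since $A$ and $B$ are continuous and $A(z)=A(\zeta)+O(|z-\zeta|)$, the singular part is $\frac{A(\zeta)}{|\zeta|}\cdot\frac1{|z-\zeta|}$, while $\frac{A(z)-A(\zeta)}{|\zeta|\,|z-\zeta|}$ and $B(z)\,|\zeta|\,|z-\zeta|$ stay bounded, giving \eqref{AsNzpM}. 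If $\zeta=0$, item 3 gives $\|(z\mathscr{I}-\mathcal{M}_F)^{-1}\|=\frac{2A(z)}{|z|^2}+O(1)$, and $2A(z)=(2|z|^2+1)^{1/2}=1+O(|z|^2)$ as $z\to0$, hence $\frac{2A(z)}{|z|^2}=|z|^{-2}+O(1)$, which is \eqref{AszpM}.

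Because the substantive content (the resolvent bound \eqref{ToSiEs} and the distance computation of Lemma \ref{DfDi}) is already available, I do not expect a real obstacle. The only point requiring some care is the treatment of the subtracted remainder terms: one must check that $B(z)\,|\zeta|\,|z-\zeta|$ and $B(z)$ remain $O(1)$ as $z\to\zeta$, and, in item 3, that $2\,\textup{dist}\big(z^2,[0,i/2]\big)/(2|z|^2+1)^{3/2}$ is dominated by $B(z)=4/(2|z|^2+1)^{3/2}$ for the relevant $z$ (which holds when $|z|\le\sqrt2$, in particular for $z$ near the spectrum). If one wants the estimates valid for \emph{all} $z$ on the normal rather than only for $z$ near $\zeta$, one simply retains the sharper quantity $2\,\textup{dist}\big(z^2,[0,i/2]\big)/(2|z|^2+1)^{3/2}$ in place of $B(z)$; this refinement is irrelevant for the asymptotic statements.
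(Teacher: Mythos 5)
Your proposal is correct and follows essentially the same route as the paper's own proof: it combines the two-sided estimate \eqref{ToSiEs} with Lemma \ref{DfDi} (the exact evaluation \eqref{distot} and the inequality \eqref{ERFB}), substitutes on the normal, and obtains the asymptotics from \(|A(z)-A(\zeta)|/|z-\zeta|=O(1)\). Your extra remark about dominating the subtracted term by \(B(z)\) (valid for \(|z|\le\sqrt{2}\), hence near the spectrum) is in fact a more careful bookkeeping than the paper's, which passes over this point silently.
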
%
\begin{proof}
The proof is based on the estimates \eqref{ToSiEs} for the resolvent
and on Lemma \ref{DfDi}. Combining the inequality \eqref{ERFB}
with the estimate \eqref{UpEs}, we obtain the estimate \eqref{UpEsResS},
which holds for \emph{all} \(z\) lying on the normal to
the interval \(\textup{\large\(\sigma\)}(\mathcal{M}_{F})\) at the point \(\zeta\).
\emph{If moreover  \(z\) is close enough to \(z\)}, namely the condition \(|z-\zeta|\leq|\zeta|\)
is satisfied, then the equality holds in \eqref{ERFB}. Combining \emph{the equality}
\eqref{ERFB} with the estimate \eqref{LoEs}, we obtain the estimate \eqref{LoEsResS}.

The asymptotic relation \eqref{AsNzp} is a consequence of the inequalities
\eqref{UpEsResS} and \eqref{LoEsResS} since \(\frac{|A(z)-A(\zeta)|}{|z-\zeta|}=O(1)\)
as \(z\) tends to \(\zeta\).

The asymptotic relation \eqref{Aszp} is a consequence of the inequalities
\eqref{ToSiEs} and the equality \(\textup{dist}\,(z^2\,,\,[0\,,\,i/2])=|z|^2\)
which holds for all \(z\) lying on the normal to
the interval \(\textup{\large\(\sigma\)}(\mathcal{M}_{F})\) at the point \(\zeta=0\).
(See \eqref{distot} for \(\zeta=0\).)
\end{proof}
\begin{remark}
\label{FormTr}
The estimates \eqref{UpEsResS} and \eqref{LoEsResS} are formally true also for \(\zeta=0\),
but in this case they are not rich in content.
\end{remark}
\begin{corollary}
\label{NoSNO}
From the asymptotic relations \eqref{AsNzp} and \eqref{Aszp} it~follows
that the operator \(\mathcal{M}_{F}\) is not similar to any normal operator. \ \
Were the operator \(\mathcal{M}_{F}\)
similar to a normal operator \(\mathscr{N}\), the resolvent  \((z\mathscr{I}-\mathcal{M}_{F})^{-1}\) would admit the estimate
\[\big\|(z\mathscr{I}-\mathcal{M}_{F})^{-1}\big\|%
\leq\,C(\mathscr{N})\,\textup{dist}\,(z,\,\textup{\large\(\sigma\)}(\mathcal{M}_{F}))\,,\]
where \(C(\mathscr{N})<\infty\) is a constant which  does not depend on \(z\). However,
this estimate  is not compatible with the asymptotic relation \eqref{Aszp}.
\end{corollary}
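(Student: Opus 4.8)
The plan is to derive a contradiction from the assumption that $\mathcal{M}_{F}$ is similar to a normal operator, by playing off the two distinct resolvent growth rates furnished by Theorem \ref{TEstRes}. First I would record the standard similarity bookkeeping. Suppose $\mathcal{M}_{F}=S\mathscr{N}S^{-1}$ with $S$ bounded and boundedly invertible and $\mathscr{N}$ normal. Then $\textup{\large\(\sigma\)}(\mathcal{M}_{F})=\textup{\large\(\sigma\)}(\mathscr{N})$ and, for $z\notin\textup{\large\(\sigma\)}(\mathcal{M}_{F})$,
\[
(z\mathscr{I}-\mathcal{M}_{F})^{-1}=S\,(z\mathscr{I}-\mathscr{N})^{-1}\,S^{-1}.
\]
By the spectral theorem for normal operators, $\|(z\mathscr{I}-\mathscr{N})^{-1}\|=\big(\textup{dist}(z,\textup{\large\(\sigma\)}(\mathscr{N}))\big)^{-1}$, so with $C(\mathscr{N}):=\|S\|\,\|S^{-1}\|<\infty$ one gets
\[
\big\|(z\mathscr{I}-\mathcal{M}_{F})^{-1}\big\|\le \frac{C(\mathscr{N})}{\textup{dist}\big(z,\textup{\large\(\sigma\)}(\mathcal{M}_{F})\big)},\qquad z\notin\textup{\large\(\sigma\)}(\mathcal{M}_{F}).
\]

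Next I would specialise to the distinguished spectral point $\zeta=0$. By Theorem \ref{SpMuOpe} the spectrum $\textup{\large\(\sigma\)}(\mathcal{M}_{F})$ is the straight segment $\big[-\tfrac{1}{\sqrt2}e^{i\pi/4},\tfrac{1}{\sqrt2}e^{i\pi/4}\big]$, whose midpoint is $0$; the normal to this segment at $0$ runs in the direction $e^{i3\pi/4}$ (as in \eqref{znorsp}), and for $z$ on that normal the foot of the perpendicular to the line through the segment is $0$ itself, hence $\textup{dist}(z,\textup{\large\(\sigma\)}(\mathcal{M}_{F}))=|z|$. Inserting this into the similarity bound above would give $\big\|(z\mathscr{I}-\mathcal{M}_{F})^{-1}\big\|\le C(\mathscr{N})\,|z|^{-1}$ as $z\to0$ along the normal. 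But the asymptotic relation \eqref{Aszp} of Theorem \ref{TEstRes} asserts $\big\|(z\mathscr{I}-\mathcal{M}_{F})^{-1}\big\|=|z|^{-2}+O(1)$ along the same approach, and $|z|^{-2}$ dominates $C(\mathscr{N})\,|z|^{-1}$ as $z\to 0$. This incompatibility shows no such $S$ and $\mathscr{N}$ exist, which is the assertion of the corollary.

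I would also note an alternative, essentially equivalent route through the companion relation \eqref{AsNzp}: there the exact leading coefficient at a point $\zeta\in\textup{\large\(\sigma\)}(\mathcal{M}_{F})$, $\zeta\neq0$, is $\sqrt{1+2|\zeta|^{2}}/(2|\zeta|)$, which tends to $\infty$ as $\zeta\to0$, whereas similarity to a normal operator forces this leading coefficient to be bounded uniformly in $\zeta$ by $C(\mathscr{N})$. (Only \eqref{Aszp} is strictly needed; \eqref{AsNzp} provides the second viewpoint.) I expect no serious obstacle here: the argument is a routine consequence of the already established estimates, and the only points deserving a line of care are the standard identity $\|(z\mathscr{I}-\mathscr{N})^{-1}\|=1/\textup{dist}(z,\textup{\large\(\sigma\)}(\mathscr{N}))$ for normal $\mathscr{N}$ and the elementary geometric observation that along the normal at the midpoint $0$ the distance to the spectrum equals $|z|$ rather than the distance to an endpoint of the segment.
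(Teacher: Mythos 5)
Your argument is correct and follows essentially the same route the paper sketches inside Corollary \ref{NoSNO}: similarity $\mathcal{M}_{F}=S\mathscr{N}S^{-1}$ yields $\big\|(z\mathscr{I}-\mathcal{M}_{F})^{-1}\big\|\le \|S\|\,\|S^{-1}\|\,/\,\textup{dist}\big(z,\textup{\large\(\sigma\)}(\mathcal{M}_{F})\big)$, which along the normal at $\zeta=0$ (where $\textup{dist}(z,\textup{\large\(\sigma\)}(\mathcal{M}_{F}))=|z|$) contradicts the growth $|z|^{-2}+O(1)$ of \eqref{Aszp}. You merely make explicit the steps the paper leaves implicit (the normal-resolvent identity, the conjugation bound, and the geometric distance computation), and you correctly read the displayed estimate with the distance in the denominator.
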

\section{Functional calculus for the operator
~\mathversion{bold}%
\(\mathcal{M}_{_{\scriptstyle F}}\).
\label{FuCalM}}
A direct consequence of the unitary equivalency \eqref{UnEq} is:
\begin{theorem}{\ \ }\\[-3.5ex]
\label{CoinSpe}
\begin{enumerate}
\item[\textup{1.}]
The  spectra
\(\textup{\large\(\sigma\)}(\mathscr{F}_{\!_{\scriptstyle{\mathbb{R}^{+}}}})\)
 and
 \(\textup{\large\(\sigma\)}(\mathcal{M}_{_{\scriptstyle F}})\)
coincide:
\begin{equation}
\label{CoiSpa}
\textup{\large\(\sigma\)}(\mathscr{F}_{\!_{\scriptstyle{\mathbb{R}^{+}}}})=
\textup{\large\(\sigma\)}(\mathcal{M}_{_{\scriptstyle F}})
\,.
\end{equation}
\item[\textup{2.}]
For \(z\) out of these spectra, the resolvents  and are related by the equality
\begin{equation}
\label{RelRes}
(z\,\mathscr{I}-\mathscr{F}_{\!_{\scriptstyle{\mathbb{R}^{+}}}})^{-1}=U^{-1}
(z\,\mathscr{I}-\mathcal{M}_{_{\scriptstyle F}})^{-1}U
\,.
\end{equation}
\end{enumerate}
\end{theorem}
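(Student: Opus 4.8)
The plan is to obtain Theorem \ref{CoinSpe} directly from the unitary equivalence \eqref{UnEq}, which was established in Theorem \ref{SRTFTha} together with Theorem \ref{UbMaToM}: one has $\mathscr{F}_{\!_{\scriptstyle{\mathbb{R}^{+}}}}=U^{-1}\mathcal{M}_{_{\scriptstyle F}}U$, where $U\colon L^2(\mathbb{R}^{+})\to\mathscr{K}$ is the unitary operator of Definition \ref{ParExtL}, and $U$, $U^{-1}$ are genuine mutual inverses by \eqref{MuIn}. The whole statement is then the standard fact that similar (here even unitarily equivalent) operators have the same spectrum and conjugate resolvents; all of the substantive content has already been carried out in the preceding sections, so what remains is essentially bookkeeping.

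First I would fix an arbitrary $z\in\mathbb{C}$ and record the two factorizations
\[
z\mathscr{I}-\mathscr{F}_{\!_{\scriptstyle{\mathbb{R}^{+}}}}
= U^{-1}\big(z\mathscr{I}-\mathcal{M}_{_{\scriptstyle F}}\big)U ,
\qquad
z\mathscr{I}-\mathcal{M}_{_{\scriptstyle F}}
= U\big(z\mathscr{I}-\mathscr{F}_{\!_{\scriptstyle{\mathbb{R}^{+}}}}\big)U^{-1},
\]
both of which follow from \eqref{UnEq} by linearity and by $U^{-1}U=\mathscr{I}$, $UU^{-1}=\mathscr{I}$. Since $U$ and $U^{-1}$ are everywhere-defined bounded bijections, a composition of the form $U^{-1}(\,\cdot\,)U$ is a bounded bijection of $L^2(\mathbb{R}^{+})$ onto itself with bounded inverse precisely when its middle factor is a bounded bijection of $\mathscr{K}$ onto itself with bounded inverse; the second factorization gives the converse implication. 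Hence $z\in\textup{\large\(\rho\)}(\mathscr{F}_{\!_{\scriptstyle{\mathbb{R}^{+}}}})$ if and only if $z\in\textup{\large\(\rho\)}(\mathcal{M}_{_{\scriptstyle F}})$; the two resolvent sets coincide, and passing to complements yields \eqref{CoiSpa}.

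For the second assertion I would take $z$ outside the common spectrum $\textup{\large\(\sigma\)}(\mathscr{F}_{\!_{\scriptstyle{\mathbb{R}^{+}}}})=\textup{\large\(\sigma\)}(\mathcal{M}_{_{\scriptstyle F}})$ and invert the first displayed factorization, using $(ABC)^{-1}=C^{-1}B^{-1}A^{-1}$ together with $(U^{-1})^{-1}=U$. This gives immediately
\[
\big(z\mathscr{I}-\mathscr{F}_{\!_{\scriptstyle{\mathbb{R}^{+}}}}\big)^{-1}
= U^{-1}\big(z\mathscr{I}-\mathcal{M}_{_{\scriptstyle F}}\big)^{-1}U ,
\]
which is \eqref{RelRes}; the right-hand side is meaningful because $z\mathscr{I}-\mathcal{M}_{_{\scriptstyle F}}$ is invertible by the first part of the argument.

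I do not expect a genuine obstacle here. The only point meriting a moment's care is to invoke the right earlier result, namely that $U$ is an honest unitary bijection (Theorem \ref{UbMaToM}, statements 1--3) and not merely an isometry, so that both factorizations above are available and can be inverted on both sides. I would also remark in passing that, although it is not part of this statement, the finer decomposition of the spectrum into point, residual and continuous parts is preserved as well, since $U$ intertwines $\mathscr{F}_{\!_{\scriptstyle{\mathbb{R}^{+}}}}$ with $\mathcal{M}_{_{\scriptstyle F}}$ and $\mathscr{F}_{\!_{\scriptstyle{\mathbb{R}^{+}}}}^{\ast}$ with $\mathcal{M}_{_{\scriptstyle F}}^{\ast}$.
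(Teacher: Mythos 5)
Your argument is correct and is exactly the route the paper takes: Theorem \ref{CoinSpe} is stated there as a direct consequence of the unitary equivalence \eqref{UnEq} established in Theorem \ref{SRTFTha} (with $U$, $U^{-1}$ mutually inverse unitaries by Theorem \ref{UbMaToM}), and your write-up merely spells out the routine similarity bookkeeping that the paper leaves implicit.
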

\vspace{3.0ex}
\noindent
The unitary equivalency \eqref{UnEq} suggests the equality
\begin{equation}
\label{EqFOp}
h(\mathscr{F}_{\!_{\scriptstyle{\mathbb{R}^{+}}}})=U^{-1}h(\mathcal{M}_{_{\scriptstyle F}})U\,
\end{equation}
for functions \(h\) of the operator. The equality \eqref{EqFOp} can be interpreted differently.
From \eqref{UnEq} follows directly that the equality \eqref{EqFOp} holds for any polynomial \(h\).
Invoking Theorem \ref{CoinSpe}, we conclude that the equality \eqref{EqFOp} holds for every
rational function\footnote{%
For a rational function \(h\), the function \(h(A)\) of an operator \(A\) can be defined directly.
For a rational \(h\), all reasonable functional calculuses leads to the same result for \(h(A)\).
} %

If  \(h\) is holomorphic on the spectra \(\textup{\large\(\sigma\)}(\mathscr{F}_{\!_{\scriptstyle{\mathbb{R}^{+}}}})=
\textup{\large\(\sigma\)}(\mathcal{M}_{_{\scriptstyle F}})\),
then the equality \eqref{EqFOp} still holds:
\begin{math}
h_{\textup{hol}}(\mathscr{F}_{\!_{\scriptstyle{\mathbb{R}^{+}}}})
=U^{-1}h_{\textup{hol}}(\mathcal{M}_{_{\scriptstyle F}})U\,,
\end{math}
where the operators \(h_{\textup{hol}}(\mathscr{F}_{\!_{\scriptstyle{\mathbb{R}^{+}}}})\),
\(h_{\textup{hol}}(\mathcal{M}_{_{\scriptstyle F}})\) are defined in the sense of
the holomorphic functional calculus. To see this, we should to multiply the equality
\eqref{RelRes} with \(h(z)\) and then integrate the product along a contour which encloses the spectra
and is contained in the domain of holomorphy of the function \(h\).

If  \(h\) is a function which is not holomorphic on the
spectra \(\textup{\large\(\sigma\)}(\mathscr{F}_{\!_{\scriptstyle{\mathbb{R}^{+}}}})=
\textup{\large\(\sigma\)}(\mathcal{M}_{_{\scriptstyle F}})\), then the question ``What are
\(h(\mathscr{F}_{\!_{\scriptstyle{\mathbb{R}^{+}}}})\) and \(h(\mathcal{M}_{_{\scriptstyle F}})\)\,?\,''\,arises.
For a class of functions \(h\) which were called \(\mathscr{F}_{\!_{\scriptstyle{\mathbb{R}^{+}}}}\)\!-\,admissible,
Definition \eqref{DFAdF}, the definition of the operator \(h(\mathscr{F}_{\!_{\scriptstyle{\mathbb{R}^{+}}}})\)
was announced, \eqref{ExFCal}. However the existence of the limit in \eqref{ExFCal} is not yet proved, and the properties
of the mapping \(h\to{}h(\mathscr{F}_{\!_{\scriptstyle{\mathbb{R}^{+}}}})\) are not yet established.

In fact, we use the equality \eqref{EqFOp} as a \emph{definition} of the operator \(h(\mathscr{F}_{\!_{\scriptstyle{\mathbb{R}^{+}}}})\) in terms of the operator
\(h(\mathcal{M}_{_{\scriptstyle F}})\).

Namely, we first work not with
the the operator \(\mathscr{F}_{\!_{\scriptstyle{\mathbb{R}^{+}}}}\), but with its model -- the
operator \(\mathcal{M}_{_{\scriptstyle F}}\). To work with the operator
 \(\mathcal{M}_{_{\scriptstyle F}}\) is much easier: we can write
 explicit formulae. So, \textsf{given an \(\mathscr{F}_{\!_{\scriptstyle{\mathbb{R}^{+}}}}\)-admissible function
 \(h\), we first define the operator \(h(\mathcal{M}_{_{\scriptstyle F}})\) by an explicit for\-mu\-la.}
 In particular we justify the passage to the limit analogous to \eqref{ExFCal},
 where the resolvent \((z\,\mathscr{I}-\mathscr{F}_{\!_{\scriptstyle{\mathbb{R}^{+}}}})^{-1}\)
 is replaced by the resolvent \((z\,\mathscr{I}-\mathcal{M}_{_{\scriptstyle F}})^{-1}\)
  \textsf{Then we \emph{transplant} the constructions and the results, related to the model
 operator \(\mathcal{M}_{_{\scriptstyle F}}\), to the operator \(\mathscr{F}_{\!_{\scriptstyle{\mathbb{R}^{+}}}}\) itself.}

\begin{definition}
\label{DEMadm}
 Let \(h\) be a function defined almost everywhere on the spectrum   \(\textup{\large\(\sigma\)}(\mathcal{M}_{F})\)
 of the operator \(\mathcal{M}_{F}\). Then for almost every \(\mu\in[0,\infty)\)
 the function \(h\) is defined
 on the spectrum \(\textup{\large\(\sigma\)}(F(\mu))\) of the matrix \(F(\mu)\). Since the matrix \(F(\mu)\) is a
 matrix with simple spectrum, the matrix \(h(F(\mu))\) is defined in the sense of elementary functional calculus of matrices.
 \textup{(}See \textup{Definition \ref{DeFuMa}} below\textup{)}.

 The function \(h\) is said to be \(\mathcal{M}_{F}\)-admissible,
 if the condition
\begin{equation}
\label{ImCoOnh}
\underset{\mu\in[0,\infty)}{\textup{ess\,sup}}\,\|h(F(\mu))\|<\infty\,,
\end{equation}
is fulfilled.
\end{definition}

\begin{definition}
\label{DehM}
Let \(h\) be an \(\mathcal{M}_{F}\)-admissible function.
Then the operator \(\mathcal{M}_{h(F)}\) is defined according \textup{Definition \ref{FuCa}}\,%
\footnote{Definition \ref{FuCa} should be applied to the function \(G(\mu)=h(F(\mu))\)}%
, and we set
\begin{equation}
\label{DHoF}
h(\mathcal{M}_{_{\scriptstyle F}})\stackrel{\textup{\tiny def}}{=}\mathcal{M}_{h(F)}\,.
\end{equation}
\end{definition}

If the function \(h\) is a polynomial or a rational function holomorphic on the spectrum
 \(\textup{\large\(\sigma\)}(\mathcal{M}_{F})\), then the operator \(h(\mathcal{M}_{F})\)
 can be defined directly, without the formula \eqref{DHoF}. In this case both definition of the operator \(h(\mathcal{M}_{F})\),
 the definition \eqref{DHoF} and the direct one, coincides.

\emph{To describe the class  of \(\mathcal{M}_{F}\)-admissible functions more explicitly,
we have to obtain
the explicit expression for the matrix \(h(F(\mu))\).}\\[1.0ex]
\textsf{The spectral projectors of the matrix \(F(\mu)\).}\\
Calculated the residues \(E_{\pm}(\mu)\) at the poles \(\zeta_{\pm}(\mu)\) of the the resolvent \((zI-F(\mu))^{-1}\) explicitly,
we obtain from \eqref{Rmm} that
\begin{equation}
\label{SpPr}
E_{+}(\mu)=
\begin{bmatrix}
\dfrac{1}{2}&\dfrac{f_{-+}(\mu)}{2\zeta(\mu)}\\[2.5ex]
\dfrac{f_{+-}(\mu)}{2\zeta(\mu)}&\dfrac{1}{2}
\end{bmatrix},
\quad
E_{-}(\mu)=
\begin{bmatrix}
\dfrac{1}{2}&-\dfrac{f_{-+}(\mu)}{2\zeta(\mu)}\\[2.5ex]
-\dfrac{f_{+-}(\mu)}{2\zeta(\mu)}&\dfrac{1}{2}
\end{bmatrix}.
\end{equation}
The matrices \(E_{+}(\mu),\,E_{-}(\mu)\) satisfy the equalities
\begin{equation}
\label{PrPr}
E_{+}(\mu)^2=E_{+}(\mu),\ \ E_{-}(\mu)^2=E_{-}(\mu), \ \
\end{equation}
\begin{equation}
\label{SPr}
E_{+}(\mu)E_{-}(\mu)=E_{-}(\mu)E_{+}(\mu)=0\,,\ \ E_{+}(\mu)+E_{-}(\mu)=I\,,
\end{equation}
\begin{equation}
\label{eigPr}
F(\mu)E_{+}(\mu)=\zeta_{+}(\mu)E_{+}(\mu),\quad F(\mu)E_{-}(\mu)=\zeta_{-}(\mu)E_{-}(\mu)\,.
\end{equation}
The equalities \eqref{PrPr} means that the matrices \(E_{+}(\mu),\,E_{-}(\mu)\) are projectors, the equalities \eqref{SPr} means that these projectors are `complementary', the equality \eqref{eigPr}
means that the image subspaces of these projectors are eigensubpaces of the matrix \(F(\mu)\).
The equalities \eqref{PrPr}, \eqref{SPr}, \eqref{eigPr} are special cases of general facts related to
root subspaces of a matrix. However these equalities can be obtained directly, using the equality
\eqref{UsEq}.

The conjugate numbers \(\overline{\zeta_{+}(\mu)}\), \(\overline{\zeta_{-}(\mu)}\) are the eigenvalues
of the hermitian conjugate matrix \(F^{\ast}(\mu)\). The hermitian conjugate matrices
\(E_{+}^{\ast}(\mu)\), \(E_{-}^{\ast}(\mu)\),
\begin{equation}
\label{SpPrAd}
E_{+}^{\ast}(\mu)=
\begin{bmatrix}
\dfrac{1}{2}&\dfrac{\overline{f_{+-}(\mu)}}{2\overline{\zeta(\mu)}}\\[2.5ex]
\dfrac{\overline{f_{-+}(\mu)}}{2\overline{\zeta(\mu)}}&\dfrac{1}{2}
\end{bmatrix},
\quad
E_{-}^{\ast}(\mu)=
\begin{bmatrix}
\dfrac{1}{2}&-\dfrac{\overline{f_{+-}(\mu)}}{2\overline{\zeta(\mu)}}\\[2.5ex]
-\dfrac{\overline{f_{-+}(\mu)}}{2\overline{\zeta(\mu)}}&\dfrac{1}{2}
\end{bmatrix},
\end{equation}
are the spectral
 projectors onto the eigenspaces of the matrix \(F^{\ast}(\mu)\) corresponding to these eigenvalues:
 \begin{equation}
\label{PrPrAd}
E_{+}^{\ast}(\mu)^2=E_{+}^{\ast}(\mu),\ \ E_{-}^{\ast}(\mu)^2=E_{-}^{\ast}(\mu), \ \
\end{equation}
\begin{equation}
\label{SPrAd}
E_{+}^{\ast}(\mu)E_{-}^{\ast}(\mu)=E_{-}^{\ast}(\mu)E_{+}^{\ast}(\mu)=0\,,
\ \ E_{+}^{\ast}(\mu)+E_{-}^{\ast}(\mu)=I\,,
\end{equation}
\begin{equation}
\label{eigPrAd}
F^{\ast}(\mu)E_{+}^{\ast}(\mu)=\overline{\zeta_{+}}(\mu)E_{+}^{\ast}(\mu),\quad F^{\ast}(\mu)E_{-}^{\ast}(\mu)=\overline{\zeta_{-}}(\mu)E_{-}^{\ast}(\mu)\,.
\end{equation}

\vspace{2.0ex}
\noindent
\textsf{Functions of the matrix \(F(\mu)\).}
\vspace{-1.0ex}
\begin{definition}
\label{DeFuMa}
Given \(\mu\in[0,\infty)\), let \(h\) be a function defined on the spectrum \(\textup{\large\(\sigma\)}(F(\mu))\). Such a function is specified by a pair
 of numbers \(h(\zeta_{+}(\mu))\) and \(h(\zeta_{-}(\mu))\).
 Then the matrix \(h(F(\mu))\), which is called \textsf{the function \(h\)
 of the matrix \(F(\mu)\)}, is defined as
 \begin{equation}
 \label{hotF}
 h(F(\mu))\stackrel{\textup{\tiny def}}{=}h(\zeta_{+}(\mu))E_{+}(\mu)+h(\zeta_{-}(\mu))E_{-}(\mu)\,.
 \end{equation}
 \end{definition}
 Substituting the expressions \eqref{SpPr} for \(E_{+}(\mu),\,E_{-}(\mu)\) into
 \eqref{hotF},\,we obtain
 \begin{multline}
 \label{WoEx}
 h(F(\mu))=\\[2.0ex]
 =\begin{bmatrix}
 \dfrac{h(\zeta_{+}(\mu))+h(\zeta_{-}(\mu))}{2}\phantom{\,f_{+-}(\mu)}&
 \dfrac{h(\zeta_{+}(\mu))-h(\zeta_{-}(\mu))}{2\zeta(\mu)}\,f_{+-}(\mu)
 \\
 \dfrac{h(\zeta_{+}(\mu))-h(\zeta_{-}(\mu))}{2\zeta(\mu)}\,f_{-+}(\mu)&
 \dfrac{h(\zeta_{+}(\mu))+h(\zeta_{-}(\mu))}{2}\phantom{\,f_{+-}(\mu)}
 \end{bmatrix}
 \end{multline}
 The following lemma summarizes  the well known facts about functions of matrices.
\begin{lemma}
\label{MaFuCal}
 It is well known that the correspondence \(h(\zeta)\to{}h(F(\mu))\) is
 the algebraic homomorphism of the set of functions defined on the two-point set
 \(\textup{\large\(\sigma\)}(F(\mu))\) into the set
 \(\mathfrak{M}_{2,2}\)
  of \(2\times2\) matrices:
\begin{enumerate}
\item[\textup{1.}]
If \(h(\zeta)=1\), that is \(h(\zeta_{+}(\mu))=1,\,h(\zeta_{-}(\mu))=1\), then \(h(F(\mu))=~I.\)
\item[\textup{2.}]
If \(h(\zeta)=\zeta\), that is \(h(\zeta_{+}(\mu))=\zeta_{+}(\mu),\,h(\zeta_{-}(\mu))=\zeta_{-}(\mu)\),
 then \(h(F(\mu))=F(\mu).\)
 \item[\textup{3.}] If \(h(\zeta)=\alpha_{1}h_{1}(\zeta)+\alpha_{2}h_{2}(\zeta)\), where \(\alpha_1,\,\alpha_2\)
 are complex numbers and \(h_{1},\,h_{2}\) are functions defined on  the spectrum
 \(\textup{\large\(\sigma\)}(F(\mu))\) : \(h(\zeta_{\pm}(\mu))=\alpha_{1}h_{1}(\zeta_{\pm}(\mu))
 +\alpha_{2}h_{2}(\zeta_{\pm}(\mu))\), then
 \begin{equation*}
 h(F(\mu))=\alpha_1[h_{1}(F(\mu))+\alpha_2h_{2}(F(\mu))\,.
 \end{equation*}
  \item[\textup{4.}] If \(h(\zeta)=h_{1}(\zeta)\,h_{2}(\zeta)\), where
   \(h_{1},\,h_{2}\) are functions defined on  the spectrum
 \(\textup{\large\(\sigma\)}(F(\mu))\) : \(h(\zeta_{\pm}(\mu))=h_{1}(\zeta_{\pm}(\mu))\,
 h_{2}(\zeta_{\pm}(\mu))\), then
 \begin{equation*}
 h(F(\mu))=h_{1}(F(\mu))\,h_{2}(F(\mu))\,.
 \end{equation*}
  \item[\textup{5.}] If \(z\not\in\textup{\large\(\sigma\)}(F(\mu))\), that is \(z\not=\zeta_{\pm}(\mu)\), and
  \(h(\zeta)=(z-\zeta)\) for \(\zeta=\zeta_{+}(\mu)\) and \(\zeta=\zeta_{-}(\mu)\), then
  \(h(F(\mu))=(zI-F(\mu))^{-1}\).
  \item[\textup{6.}] The definition \eqref{hotF} of the function \(h\) of the matrix \(F(\mu)\) is compatible with
  the holomorphic operator calculus. If \(h\) is a function holomorphic on \(\textup{\large\(\sigma\)}(F(\mu))\),
  then \(h(F(\mu))=h_{\textup{hol}}(F(\mu))\)
\end{enumerate}
\end{lemma}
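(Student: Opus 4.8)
The plan is to derive all six statements directly from the defining formula \eqref{hotF}, namely $h(F(\mu))=h(\zeta_{+}(\mu))E_{+}(\mu)+h(\zeta_{-}(\mu))E_{-}(\mu)$, combined with the three groups of identities \eqref{PrPr}, \eqref{SPr}, \eqref{eigPr} already recorded for the residue matrices $E_{\pm}(\mu)$: they are complementary idempotents, $E_{\pm}(\mu)^{2}=E_{\pm}(\mu)$, $E_{+}(\mu)E_{-}(\mu)=E_{-}(\mu)E_{+}(\mu)=0$, $E_{+}(\mu)+E_{-}(\mu)=I$, and $F(\mu)E_{\pm}(\mu)=\zeta_{\pm}(\mu)E_{\pm}(\mu)$. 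Throughout, $\mu$ is fixed and I suppress it from the notation.

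For statement 1, the constant function $h\equiv1$ has $h(\zeta_{\pm})=1$, so \eqref{hotF} gives $h(F)=E_{+}+E_{-}=I$ by \eqref{SPr}. For statement 2, $h(\zeta)=\zeta$ gives $h(F)=\zeta_{+}E_{+}+\zeta_{-}E_{-}=F E_{+}+F E_{-}=F(E_{+}+E_{-})=F$, using \eqref{eigPr} and then \eqref{SPr}. Statement 3 is immediate, since the right-hand side of \eqref{hotF} depends linearly on the pair $\big(h(\zeta_{+}),h(\zeta_{-})\big)$ and one has $h(\zeta_{\pm})=\alpha_{1}h_{1}(\zeta_{\pm})+\alpha_{2}h_{2}(\zeta_{\pm})$.

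For statement 4, I would expand the product of the two decompositions and collapse the cross terms: $h_{1}(F)h_{2}(F)=\big(h_{1}(\zeta_{+})E_{+}+h_{1}(\zeta_{-})E_{-}\big)\big(h_{2}(\zeta_{+})E_{+}+h_{2}(\zeta_{-})E_{-}\big)$, which by $E_{+}E_{-}=E_{-}E_{+}=0$ and $E_{\pm}^{2}=E_{\pm}$ equals $h_{1}(\zeta_{+})h_{2}(\zeta_{+})E_{+}+h_{1}(\zeta_{-})h_{2}(\zeta_{-})E_{-}=h(F)$ with $h=h_{1}h_{2}$. For statement 5, the function $h(\zeta)=(z-\zeta)^{-1}$ is well defined on $\textup{\large\(\sigma\)}(F(\mu))=\{\zeta_{+},\zeta_{-}\}$ precisely because $z\neq\zeta_{\pm}$; from \eqref{hotF} and $(zI-F)E_{\pm}=zE_{\pm}-FE_{\pm}=(z-\zeta_{\pm})E_{\pm}$ one gets $(zI-F)h(F)=E_{+}+E_{-}=I$ and likewise $h(F)(zI-F)=I$, so $h(F)=(zI-F)^{-1}$; this of course also reproduces the partial-fraction formula \eqref{ElFrDe}. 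Finally, for statement 6, I would substitute the partial-fraction form \eqref{ElFrDe} of the resolvent into the Cauchy integral $h_{\textup{hol}}(F)=\frac{1}{2\pi i}\int_{\Gamma}h(z)(zI-F)^{-1}\,dz$, taken over a contour $\Gamma$ that encloses both poles $\zeta_{+},\zeta_{-}$ and lies in the domain of holomorphy of $h$; the residue theorem then gives $h_{\textup{hol}}(F)=h(\zeta_{+})E_{+}+h(\zeta_{-})E_{-}=h(F)$.

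I do not expect a genuine obstacle here: the lemma is a bookkeeping statement about a $2\times2$ matrix with simple (hence diagonalizable) spectrum. The only step needing more than a one-line check is statement 6, and even that reduces to a routine residue computation once the resolvent is written in the elementary-fractions form \eqref{ElFrDe} with rank-one residue projectors, which has already been established.
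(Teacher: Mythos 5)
Your proposal is correct and follows exactly the route the paper intends: the paper itself only remarks that statements 1--4 are evident from the representation \eqref{hotF} together with the projector identities \eqref{PrPr}, \eqref{SPr}, \eqref{eigPr}, and your verification of 5 (inverting $zI-F(\mu)$ via $(zI-F(\mu))E_{\pm}(\mu)=(z-\zeta_{\pm}(\mu))E_{\pm}(\mu)$, noting the evident typo $h(\zeta)=(z-\zeta)^{-1}$) and of 6 (residues of the Cauchy integral using \eqref{ElFrDe}) is the standard completion of that argument.
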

The properties 1\,-\,4 of the correspondence \(h\to{}h(F(\mu))\) are not evident from
the expression \eqref{WoEx} for \(h(F(\mu))\), but are evident from the expression \eqref{hotF}
and the properties \eqref{PrPr}, \eqref{SPr}, \eqref{eigPr} of the matrices \(E_{+}(\mu),\,E_{-}(\mu)\).

\vspace{2.0ex}
\noindent
\textsf{The norm in the set of functions defined on \(\textup{\large\(\sigma\)}(F(\mu))\).}\\
Let \(\mu\in[0,\infty)\).
For a function \(h(\zeta)=\{h(\zeta_{+}(\mu)),\,h(\zeta_{-}(\mu))\}\), defined on
\(\textup{\large\(\sigma\)}(F(\mu))\),  we set
\begin{equation}
\label{NFMa}
\|h\|_{\mu}\stackrel{\textup{\tiny def}}{=}
\left|\frac{h(\zeta_{+}(\mu))+h(\zeta_{-}(\mu))}{2}\right|+
\left|\frac{h(\zeta_{+}(\mu))-h(\zeta_{-}(\mu))}{2\zeta(\mu)}\right|\,.
\end{equation}
It is clear that the mapping \ \(h\to{}\|h\|_{\mu}\) \ is a norm on the set all  functions defined on
\(\textup{\large\(\sigma\)}(F(\mu))\).
\begin{lemma}
\label{RiPr}
If \(h=h_{1}\,h_{2}\), where \(h_1,\,h_2\) be functions defined on \(\textup{\large\(\sigma\)}(F(\mu))\), then
\begin{equation}
\label{RiPrE}
\|h\|_{\mu}\leq\|h_{1}\|_{\mu}\,\|h_{2}\|_{\mu}\,.
\end{equation}
\end{lemma}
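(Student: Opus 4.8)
The plan is to reduce the claimed submultiplicativity to an elementary fact about the single complex number $\zeta(\mu)$, using that the spectrum $\sigma(F(\mu))=\{\zeta_{+}(\mu),\zeta_{-}(\mu)\}=\{\zeta(\mu),-\zeta(\mu)\}$ is symmetric about the origin, so that every function on it splits canonically into an ``even'' and an ``odd'' part.

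First I would parametrize. Given a function $g$ defined on $\sigma(F(\mu))$, write $g(\pm\zeta(\mu))=p_{g}\pm\zeta(\mu)\,q_{g}$, where $p_{g}=\tfrac12\big(g(\zeta(\mu))+g(-\zeta(\mu))\big)$ and $q_{g}=\tfrac{1}{2\zeta(\mu)}\big(g(\zeta(\mu))-g(-\zeta(\mu))\big)$. In this notation the defining formula \eqref{NFMa} reads simply $\|g\|_{\mu}=|p_{g}|+|q_{g}|$.

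Next I would compute the even and odd parts of the product $h=h_{1}h_{2}$. Writing $\zeta=\zeta(\mu)$, $p_{i}=p_{h_{i}}$, $q_{i}=q_{h_{i}}$, a direct multiplication gives $h(\pm\zeta)=(p_{1}\pm\zeta q_{1})(p_{2}\pm\zeta q_{2})$, hence $p_{h}=p_{1}p_{2}+\zeta^{2}q_{1}q_{2}$ and $q_{h}=p_{1}q_{2}+q_{1}p_{2}$. Therefore, by the triangle inequality,
\begin{equation*}
\|h\|_{\mu}=\big|p_{1}p_{2}+\zeta^{2}q_{1}q_{2}\big|+\big|p_{1}q_{2}+q_{1}p_{2}\big|
\leq |p_{1}||p_{2}|+|\zeta|^{2}|q_{1}||q_{2}|+|p_{1}||q_{2}|+|q_{1}||p_{2}|\,.
\end{equation*}

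The key point is then that, by \eqref{EiVa}, $\zeta(\mu)=e^{i\pi/4}(2\cosh\pi\mu)^{-1/2}$, so $|\zeta(\mu)|^{2}=(2\cosh\pi\mu)^{-1}\leq\tfrac12<1$ for every $\mu\in[0,\infty)$. Replacing $|\zeta|^{2}|q_{1}||q_{2}|$ by the larger quantity $|q_{1}||q_{2}|$ yields
\begin{equation*}
\|h\|_{\mu}\leq |p_{1}||p_{2}|+|q_{1}||q_{2}|+|p_{1}||q_{2}|+|q_{1}||p_{2}|
=\big(|p_{1}|+|q_{1}|\big)\big(|p_{2}|+|q_{2}|\big)=\|h_{1}\|_{\mu}\,\|h_{2}\|_{\mu}\,,
\end{equation*}
which is \eqref{RiPrE}. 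I do not anticipate any genuine obstacle: the only thing to get right is the bookkeeping in the even/odd decomposition, and the one substantive ingredient is the inequality $|\zeta(\mu)|<1$, which is immediate from \eqref{EiVa}. (This is also precisely the place where the specific value of $\zeta(\mu)$ enters, and it explains why the factor $1/2$ appears symmetrically on both sides in the related estimates such as \eqref{TSE}.)
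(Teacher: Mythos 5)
Your proof is correct and follows essentially the same route as the paper: the paper also splits each $h_j$ into its even and odd parts on the symmetric two-point set $\{\zeta(\mu),-\zeta(\mu)\}$ (writing $a_j\pm b_j$ instead of your $p_j\pm\zeta(\mu)q_j$), expands the product, applies the triangle inequality, and closes the gap using $|\zeta(\mu)|\leq 1$. The only difference is your normalization of the odd coefficient by $\zeta(\mu)$, which is purely cosmetic.
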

\begin{proof}
For \(j=1,\,2\), let \(\frac{h_{j}(\zeta_{+}(\mu)+h_{j}(\zeta_{-}(\mu))}{2}=a_{j}\),
 \(\frac{h_{j}(\zeta_{+}(\mu)-h_{j}(\zeta_{-}(\mu))}{2}=~b_{j}\). Then
 \(h_{j}(\zeta_{+}(\mu)=a_{j}+b_{j},\,h_{j}(\zeta_{-}(\mu)=a_{j}-b_{j}\), so
 \[h(\zeta_{+}(\mu))=(a_1+b_1)(a_2+b_{2})\,,h(\zeta_{-}(\mu))=(a_1-b_1)(a_2-b_{2})\,.\]
 The inequality
 \eqref{RiPrE} takes the form
 \begin{multline*}
\left| \tfrac{(a_1+b_1)(a_2+b_{2})+(a_1-b_1)(a_2-b_{2})}{2}\right|+
\left| \tfrac{(a_1+b_1)(a_2+b_{2})-(a_1-b_1)(a_2-b_{2})}{2\zeta(\mu)}\right|\leq\\
\leq\left(|a_{1}|+|b_{1}|/|\zeta(\mu)|\right)\,\left(|a_{2}|+|b_{2}|/|\zeta(\mu)|\right)\,,
 \end{multline*}
 which can be reduced to the form
 \begin{multline*}
 \left|{a_1\,a_2+b_1\,b_{2}}\right|+
\left| \frac{a_1\,b_2+b_1\,a_{2}}{\zeta(\mu)}\right|\leq\\
\leq\left(|a_{1}|+\frac{|b_{1}|}{|\zeta(\mu)|}\right)\,\left(|a_{2}|+\frac{|b_{2}|}{|\zeta(\mu)|}\right)\,.
 \end{multline*}
 Since \(|\zeta(\mu)\leq{}1\), the last inequality holds for any \(a_{1},\,b_{1},\,a_{2},\,b_{2}\).
\end{proof}
 The definition \eqref{NFMa} of the norm \(\|h\|_{\mu}\)  is motivated by the expression
\eqref{WoEx} for the matrix \(h(F(\mu)\). It is natural to estimate the norm
\(\|h(F(\mu))\|\) of the matrix \(h(F(\mu))\) in terms of the norm \(\|h\|_{\mu}\) of the function \(\|h\|\).\\[2.0ex]
\textsf{One more estimate for the norm of \(2\times2\) matrix.}
\vspace*{-1.5ex}
\begin{lemma}
\label{Estm1}
Let
\begin{equation}
\label{2mat}
M=
\begin{bmatrix}
m_{11}&m_{12}\\[1.5ex]
m_{21}&m_{22}
\end{bmatrix}
\end{equation}
be \(2\times2\) matrix. Then the norm of  the matrix \(M\), considered as
an operator from \(\mathbb{C}^2\) to \(\mathbb{C}^2\), admits the estimates
from above
\begin{equation}
\label{Eab}
\|M\|\leq\max\big(|m_{11}|+|m_{12}|,\,|m_{21}|+|m_{22}|,\,
|m_{11}|+|m_{21}|,\,|m_{12}|+|m_{22}|
\big)\,,
\end{equation}
and from below
\begin{multline}
\label{Ebe}
\frac{1}{\sqrt{2}}\cdot\max\big(|m_{11}|+|m_{12}|,\,|m_{21}|+|m_{22}|,\,
|m_{11}|+|m_{21}|,\,|m_{12}|+|m_{22}|
\big)\leq\\
\leq\,\|M\|\,.
\end{multline}
\end{lemma}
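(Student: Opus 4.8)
\textbf{Proof plan for Lemma \ref{Estm1}.}

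The plan is to reduce both inequalities to elementary facts about the operator norm of a $2\times2$ matrix acting on $(\mathbb{C}^2,\|\cdot\|_2)$, by comparing it with the $\ell^1$-to-$\ell^2$ and $\ell^2$-to-$\ell^1$ type norms that appear on the right-hand sides. First I would observe that the four quantities $|m_{11}|+|m_{12}|$, $|m_{21}|+|m_{22}|$ are the $\ell^1$-norms of the two \emph{rows} of $M$, while $|m_{11}|+|m_{21}|$, $|m_{12}|+|m_{22}|$ are the $\ell^1$-norms of the two \emph{columns}; equivalently, the first pair bounds $\|M\|_{\ell^2\to\ell^\infty}$-type data and the second bounds $\|M\|_{\ell^1\to\ell^2}$-type data. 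Concretely, let $R=\max(|m_{11}|+|m_{12}|,\,|m_{21}|+|m_{22}|)$ and $C=\max(|m_{11}|+|m_{21}|,\,|m_{12}|+|m_{22}|)$, so the right-hand side of both \eqref{Eab} and \eqref{Ebe} is $\max(R,C)$.

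For the upper bound \eqref{Eab}, I would use the standard interpolation-type estimate $\|M\|\le\sqrt{\|M\|_{1}\,\|M\|_{\infty}}$, where $\|M\|_{\infty}$ is the maximum absolute row sum (that is $R$) and $\|M\|_{1}$ is the maximum absolute column sum (that is $C$); since $\sqrt{RC}\le\max(R,C)$, the claim follows. Alternatively, and perhaps more elementarily for a self-contained paper, I would bound directly: for a unit vector $v=(v_1,v_2)$ with $|v_1|^2+|v_2|^2=1$, estimate $|(Mv)_1|\le |m_{11}||v_1|+|m_{12}||v_2|$ and $|(Mv)_2|\le|m_{21}||v_1|+|m_{22}||v_2|$, then bound $\|Mv\|_2^2$ by grouping terms and applying Cauchy--Schwarz together with the AM--GM inequality $2|v_1||v_2|\le 1$; the row sums $R$ drop out after collecting the coefficients. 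The bookkeeping here is routine but must be done carefully so that the constant is exactly $1$ and not something larger.

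For the lower bound \eqref{Ebe}, I would test $M$ on the standard basis vectors and on their normalized sum. Testing on $e_1$ and $e_2$ gives $\|M\|\ge\|Me_j\|_2=$ the $\ell^2$-norm of the $j$-th column; testing on $(M^\ast)$ applied to $e_1,e_2$ (equivalently, using $\|M\|=\|M^\ast\|$ and that the rows of $M$ are the columns of $M^\ast$) gives $\|M\|\ge$ the $\ell^2$-norm of each row. Then I use the elementary comparison $\sqrt{|a|^2+|b|^2}\ge\frac{1}{\sqrt2}(|a|+|b|)$ for the relevant pair $(a,b)$, which converts each of the four $\ell^2$ row/column norms into $\frac{1}{\sqrt2}$ times the corresponding $\ell^1$ row/column sum; taking the maximum over the four choices yields $\|M\|\ge\frac{1}{\sqrt2}\max(R,C)$.

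The main obstacle is making the upper estimate \eqref{Eab} come out with the sharp constant $1$ rather than a worse numerical factor: the naive triangle-inequality bound $\|M\|\le\|M\|_{\text{Frobenius}}$ or a crude splitting would overshoot. The clean way around this is the Schur-test / interpolation inequality $\|M\|^2\le\|M\|_1\|M\|_\infty=CR\le\max(R,C)^2$, which I would either cite as standard or prove in two lines by a Cauchy--Schwarz argument on $|\langle Mu,v\rangle|=|\sum m_{ij}u_j\bar v_i|$, splitting $|m_{ij}|=|m_{ij}|^{1/2}|m_{ij}|^{1/2}$ and distributing the two halves to the $u$ and $v$ sums. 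Everything else is direct substitution and elementary inequalities.
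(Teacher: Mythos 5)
Your proposal is correct and follows essentially the same route as the paper: the upper bound \eqref{Eab} via the Schur estimate $\|M\|^2\le\|M\|_1\|M\|_\infty=CR\le\max(R,C)^2$ (which the paper simply cites as the Schur estimate), and the lower bound \eqref{Ebe} by testing $M$ on the standard basis vectors, using $\|M\|=\|M^\ast\|$ for the row sums, and the elementary inequality $\tfrac12(|a|+|b|)^2\le|a|^2+|b|^2$. The only cosmetic difference is that the paper applies $M$ directly to $e_1,e_2$ and then repeats the argument for $M^{\ast}$, exactly as you sketch, so no gap remains.
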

\begin{proof} The estimate \eqref{Eab} is a special case of the Schur estimate.
(See for example \cite{}).
To obtain the estimate \eqref{Ebe}, we apply the matrix \(M\) to the vector
\(x=\begin{bmatrix}1\\ 0\end{bmatrix}\). Then
\(Mx=\begin{bmatrix}m_{11}\\ m_{21}\end{bmatrix}\). For the considered \(x\), the inequality
\(\|Mx\|\leq\|M\|\cdot\|x\|\) is the inequality
\(|m_{11}|^2+|m_{21}|^2\leq\|M\|^2\). Since
\(\frac{1}{2}\cdot(|m_{11}|+|m_{21}|)^2\leq{}|m_{11}|^2+|m_{21}|^2\)\,, the
inequality
\(\frac{1}{\sqrt{2}}\cdot(|m_{11}|+|m_{21}|)\leq\|M\|\) holds.
Choosing \(x=\begin{bmatrix}0\\ 1\end{bmatrix}\), we come to the inequality
\(\frac{1}{\sqrt{2}}\cdot(|m_{12}|+|m_{22}|)\leq\|M\|\). Applying these reasoning
to the ajoint matrix \(M^{\ast}\) and taking into account that
\(\|M\|=\|M^{\ast}\|\), we get the inequalities
\(\frac{1}{\sqrt{2}}\cdot(|m_{11}|+|m_{12}|)\leq\|M\|\)
and
\(\frac{1}{\sqrt{2}}\cdot(|m_{21}|+|m_{22}|)\leq\|M\|\).The estimate
\eqref{Ebe} summarizes these four inequalities.
\end{proof}
\noindent
\textsf{Estimates of the matrix \(h(F(\mu))\).}
\vspace*{-1.5ex}
\begin{lemma}
\label{emhm}
 Given \(\mu\in[0,\infty)\), let \(F(\mu)\) be the matrix
\eqref{Matr}, \eqref{MaEnt}, and \(h\) be a function defined on the
spectrum \(\textup{\large\(\sigma\)}(F(\mu))=\{\zeta_{+}(\mu)\,,\zeta_{-}(\mu)\}\)
of the matrix \(F(\mu)\). Then the matrix \(h(F(\mu))\), \eqref{WoEx}, admits the
estimates
\begin{equation}
\label{TSEf}
\hfill\tfrac{1}{2}\,\|h\|_{\mu}
\leq\|h(F(\mu))\|\leq\|h\|_{\mu}\,,
\end{equation}
where \(\|h\|_{\mu}\) is defined by \eqref{NFMa}.
\end{lemma}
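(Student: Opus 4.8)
The plan is to reduce everything to the explicit $2\times2$ matrix $h(F(\mu))$ displayed in \eqref{WoEx} and then to apply the Schur-type two-sided estimate of Lemma~\ref{Estm1}. First I would introduce the abbreviations $a=\tfrac12\big(h(\zeta_{+}(\mu))+h(\zeta_{-}(\mu))\big)$ and $b=\tfrac{1}{2\zeta(\mu)}\big(h(\zeta_{+}(\mu))-h(\zeta_{-}(\mu))\big)$; this is legitimate because $\zeta(\mu)\neq0$. In this notation \eqref{WoEx} reads $h(F(\mu))=\left[\begin{smallmatrix}a & b\,f_{+-}(\mu)\\ b\,f_{-+}(\mu) & a\end{smallmatrix}\right]$, and the norm \eqref{NFMa} is simply $\|h\|_{\mu}=|a|+|b|$. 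Hence the assertion $\tfrac12\|h\|_{\mu}\le\|h(F(\mu))\|\le\|h\|_{\mu}$ becomes the purely matrix-theoretic statement $\tfrac12(|a|+|b|)\le\|M\|\le|a|+|b|$ for $M=\left[\begin{smallmatrix}a & b\,f_{+-}(\mu)\\ b\,f_{-+}(\mu) & a\end{smallmatrix}\right]$.

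For the upper bound I would invoke estimate \eqref{Eab} of Lemma~\ref{Estm1}: $\|M\|$ is at most the maximum of the four row- and column-sums, which here (using $m_{11}=m_{22}=a$) are just $|a|+|b|\,|f_{+-}(\mu)|$ and $|a|+|b|\,|f_{-+}(\mu)|$, each occurring twice. Since \eqref{MaFC} gives $|f_{+-}(\mu)|<1$ and $|f_{-+}(\mu)|<1$ for every $\mu\in[0,\infty)$, each of these quantities is $\le|a|+|b|$, so $\|M\|\le|a|+|b|=\|h\|_{\mu}$.

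For the lower bound I would use estimate \eqref{Ebe} of Lemma~\ref{Estm1}, which bounds $\|M\|$ from below by $\tfrac{1}{\sqrt2}$ times that same maximum; in particular $\|M\|\ge\tfrac{1}{\sqrt2}\big(|a|+|b|\,|f_{+-}(\mu)|\big)$. The key input here is the \emph{lower} bound $|f_{+-}(\mu)|>1/\sqrt2$ from \eqref{MaFC}: it yields $\|M\|\ge\tfrac{1}{\sqrt2}\big(|a|+\tfrac{|b|}{\sqrt2}\big)=\tfrac{|a|}{\sqrt2}+\tfrac{|b|}{2}\ge\tfrac{|a|}{2}+\tfrac{|b|}{2}=\tfrac12\|h\|_{\mu}$, where the last inequality uses $1/\sqrt2>1/2$. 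This settles both inequalities.

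The argument is essentially bookkeeping once \eqref{WoEx} and Lemma~\ref{Estm1} are available; there is no real obstacle beyond keeping track of the two elementary facts $|f_{+-}(\mu)|<1$ and $|f_{+-}(\mu)|>1/\sqrt2$ recorded in \eqref{MaFC}. The only conceptual point worth stressing is that the constant $\tfrac12$ in the lower estimate is forced by the requirement that the off-diagonal entry $b\,f_{+-}(\mu)$ of $h(F(\mu))$ not be too small relative to $|b|$. One could in principle obtain the sharp constant by diagonalizing $M^{\ast}M$ directly, but since $f_{+-}(\mu)\neq\overline{f_{-+}(\mu)}$ the cross terms are unpleasant, and the Schur estimates of Lemma~\ref{Estm1} neatly avoid them.
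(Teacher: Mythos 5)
Your proof is correct and follows essentially the same route as the paper: the author likewise applies the two-sided Schur-type estimates \eqref{Eab}, \eqref{Ebe} of Lemma \ref{Estm1} to the matrix \eqref{WoEx} and invokes the bounds \eqref{MaFC} on \(|f_{+-}(\mu)|\), \(|f_{-+}(\mu)|\); your explicit bookkeeping with \(a\), \(b\) just fills in the arithmetic the paper leaves implicit. (Only a cosmetic remark: at \(\mu=0\) one has \(|f_{+-}(0)|=1/\sqrt{2}\) exactly, so use \(|f_{+-}(\mu)|\ge 1/\sqrt{2}\) rather than the strict inequality — the argument is unaffected.)
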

\begin{proof} We apply the estimates \eqref{Eab}, \eqref{Ebe}, where the matrix which appears in the right
hand side of \eqref{WoEx} is taken as the matrix \(M\).
 Applying the estimates  \eqref{Eab}, \eqref{Ebe}, we should take
into account the estimates \eqref{MaFC} for \(|f_{\pm}(\mu)|\).
\end{proof}
\begin{lemma}
\label{CoinsTAd}
Let \(h(\zeta)\) be a function defined almost everywhere on the interval
\(\textup{\large\(\sigma\)}(\mathscr{F}_{\!_{\scriptstyle{\mathbb{R}^{+}}}})=
\textup{\large\(\sigma\)}(\mathcal{M}_{_{\scriptstyle F}})\).

The function \(h\) is \(\mathscr{F}_{\!_{\scriptstyle{\mathbb{R}^{+}}}}\)-admissible,
\textup{(Definition \ref{DFAdF})},
if and only if it is \(\mathcal{M}_{_{\scriptstyle F}}\)-admissible
\textup{(Definition \ref{DEMadm})}.
\end{lemma}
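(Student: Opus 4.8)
The plan is to reduce both admissibility conditions to the finiteness of the single quantity $\|h\|_{\mathscr{F}_{\mathbb{R}^{+}}}$ of \eqref{NFAF}, using the explicit form \eqref{WoEx} of the matrix $h(F(\mu))$ together with the two-sided estimate of Lemma~\ref{emhm}. First I would note that by Theorem~\ref{SpMuOpe} and Theorem~\ref{CoinSpe} the two spectra coincide, $\sigma(\mathcal{M}_{F})=\sigma(\mathscr{F}_{\!_{\scriptstyle\mathbb{R}^{+}}})=\big[-2^{-1/2}e^{i\pi/4},\,2^{-1/2}e^{i\pi/4}\big]$, so the measurability-and-domain clause (item~1 of Definitions~\ref{DFAdF} and~\ref{DEMadm}) imposes literally the same requirement on $h$ in the two cases, once one checks that a function $h$ defined a.e.\ on $\sigma(\mathcal{M}_{F})$ is automatically defined on $\sigma(F(\mu))=\{\zeta_{+}(\mu),\zeta_{-}(\mu)\}$ for a.e.\ $\mu$ (handled together with the measure bookkeeping below). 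Thus the whole content of the lemma becomes the equivalence $\underset{\mu\in[0,\infty)}{\textup{ess\,sup}}\,\|h(F(\mu))\|<\infty\iff\|h\|_{\mathscr{F}_{\mathbb{R}^{+}}}<\infty$.

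Next I would identify $\underset{\mu\in[0,\infty)}{\textup{ess\,sup}}\,\|h\|_{\mu}$ (with $\|h\|_{\mu}$ the norm \eqref{NFMa}) with $\|h\|_{\mathscr{F}_{\mathbb{R}^{+}}}$. By \eqref{EiVa} and \eqref{EiV}, $\zeta_{+}(\mu)=\zeta(\mu)$ and $\zeta_{-}(\mu)=-\zeta(\mu)$ with $\zeta(\mu)=e^{i\pi/4}(2\cosh\pi\mu)^{-1/2}$, so substituting into \eqref{NFMa} gives
\begin{equation*}
\|h\|_{\mu}=\frac{|h(\zeta(\mu))+h(-\zeta(\mu))|}{2}+\frac{|h(\zeta(\mu))-h(-\zeta(\mu))|}{2|\zeta(\mu)|}\,,
\end{equation*}
which is exactly the integrand of \eqref{NFAF} evaluated at $\zeta=\zeta(\mu)$. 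The map $\mu\mapsto\zeta(\mu)$ is a $C^{\infty}$ diffeomorphism of $[0,\infty)$ onto the half-open interval $\sigma^{+}=(0,\,2^{-1/2}e^{i\pi/4}]$ with nowhere vanishing derivative, hence it carries Lebesgue-null sets to Lebesgue-null sets in both directions (likewise $\mu\mapsto-\zeta(\mu)$ onto $\sigma^{-}=[-2^{-1/2}e^{i\pi/4},0)$); this is also where the domain clause is verified, since if $h$ is undefined on a null set $N\subset\sigma(\mathscr{F}_{\!_{\scriptstyle\mathbb{R}^{+}}})$ then $h$ is defined on $\{\zeta(\mu),-\zeta(\mu)\}$ for every $\mu$ outside the null set $\zeta^{-1}(N)\cup(-\zeta)^{-1}(N)$. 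Consequently $\underset{\mu\in[0,\infty)}{\textup{ess\,sup}}\,\|h\|_{\mu}=\underset{\zeta\in\sigma^{+}}{\textup{ess\,sup}}\Big(\tfrac{|h(\zeta)+h(-\zeta)|}{2}+\tfrac{|h(\zeta)-h(-\zeta)|}{2|\zeta|}\Big)$, and since that expression is invariant under $\zeta\mapsto-\zeta$ while $\sigma(\mathscr{F}_{\!_{\scriptstyle\mathbb{R}^{+}}})=\sigma^{+}\cup\sigma^{-}\cup\{0\}$ with $\{0\}$ of measure zero, this ess-sup equals the ess-sup over all of $\sigma(\mathscr{F}_{\!_{\scriptstyle\mathbb{R}^{+}}})$, i.e.\ $\|h\|_{\mathscr{F}_{\mathbb{R}^{+}}}$.

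Then I would invoke Lemma~\ref{emhm}, which gives $\tfrac12\|h\|_{\mu}\le\|h(F(\mu))\|\le\|h\|_{\mu}$ for every $\mu\in[0,\infty)$; taking essential suprema and using the previous step yields
\begin{equation*}
\tfrac12\,\|h\|_{\mathscr{F}_{\mathbb{R}^{+}}}\ \le\ \underset{\mu\in[0,\infty)}{\textup{ess\,sup}}\,\|h(F(\mu))\|\ \le\ \|h\|_{\mathscr{F}_{\mathbb{R}^{+}}}\,.
\end{equation*}
Hence the middle quantity is finite precisely when $\|h\|_{\mathscr{F}_{\mathbb{R}^{+}}}$ is, which is exactly the asserted equivalence between $\mathcal{M}_{F}$-admissibility and $\mathscr{F}_{\!_{\scriptstyle\mathbb{R}^{+}}}$-admissibility.

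The algebra involved is routine; the one place that needs care is the measure-theoretic bookkeeping in the middle step — that the reparametrizations $\mu\leftrightarrow\pm\zeta(\mu)$ preserve Lebesgue-null sets in both directions and that the point $\{0\}=\sigma(\mathscr{F}_{\!_{\scriptstyle\mathbb{R}^{+}}})\setminus(\sigma^{+}\cup\sigma^{-})$ is negligible — because both admissibility conditions are framed in terms of essential suprema. Everything else is immediate from \eqref{WoEx}, \eqref{NFMa}, Lemma~\ref{emhm}, and the homeomorphism properties of $\mu\mapsto\zeta_{\pm}(\mu)$ recorded just before Lemma~\ref{psMO}.
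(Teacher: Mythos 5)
Your proposal is correct and takes essentially the same route as the paper: the paper's proof likewise consists of identifying \(\|h\|_{{}_{\scriptstyle\mathscr{F}_{_{\mathbb{R}^{+}}}}}\) with \(\underset{\mu\in[0,\infty)}{\textup{ess\,sup}}\,\|h\|_{\mu}\) and then taking the essential supremum in the two-sided estimate \eqref{TSEf} of Lemma \ref{emhm}. You merely spell out the null-set bookkeeping for the reparametrization \(\mu\leftrightarrow\pm\zeta(\mu)\) that the paper leaves implicit (one tiny quibble: the derivative of \(\zeta(\mu)\) vanishes at \(\mu=0\), so the map is not a diffeomorphism up to the endpoint, but monotonicity and absolute continuity of the inverse still give null-set preservation, so nothing in your argument is affected).
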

\begin{proof} In view of \eqref{NFAF},
\[\|h\|_{{}_{\scriptstyle\mathscr{F}_{_{\mathbb{R}_{+}}}}}=
\underset{\mu\in[0,\infty)}{\textup{ess\,sup}}\,\|h\|_{\mu}\,\]
where \(\|h\|_{\mu}\) is defined by \eqref{NFMa}.
In view of \eqref{TSEf},
\begin{equation}
\label{TwSiEsh}
\frac{1}{2}\,\|h\|_{{}_{\scriptstyle\mathscr{F}_{_{\mathbb{R}_{+}}}}}\!\!\!\!\leq
\underset{\mu\in[0,\infty)}{\textup{ess\,sup}}\,\|h(F(\mu))\|
\leq\|h\|_{{}_{\scriptstyle\mathscr{F}_{_{\mathbb{R}_{+}}}}}\,.
\end{equation}
\end{proof}

Now we can \emph{define} the operator \(h(\mathscr{F}_{\!_{\scriptstyle{\mathbb{R}^{+}}}})\)
by the formula \eqref{EqFOp}, where the operator \(h(\mathcal{M}_{_{\scriptstyle F}})\)
is defined according to Definition \ref{DehM}, by the equality \eqref{DHoF}:
\begin{definition}
\label{ModFuCalDe}
Let \(h\) be an \(\mathscr{F}_{\!_{\scriptstyle{\mathbb{R}^{+}}}}\)-admissible function.
The operator \(h_{\textup{mod}}(\mathscr{F}_{\!_{\scriptstyle{\mathbb{R}^{+}}}})\) is
defined as
\begin{equation}
\label{ModFuCal}
h_{\textup{mod}}(\mathscr{F}_{\!_{\scriptstyle{\mathbb{R}^{+}}}})%
\stackrel{\textup{\tiny def}}{=}
U^{-1}\mathcal{M}_{h(F)}U.
\end{equation}
The correspondence \(h(\zeta)\to{}h_{\textup{mod}}(\mathscr{F}_{\!_{\scriptstyle{\mathbb{R}^{+}}}})\)
is said to be the model-based function calculus for the operator
\(\mathscr{F}_{\!_{\scriptstyle{\mathbb{R}^{+}}}}\).
\end{definition}
\begin{theorem}{\ }%
\label{PrMofFuCa}
\begin{enumerate}
\item[\textup{1.}]
The mapping \(h\to{}h_{\textup{mod}}(\mathscr{F}_{\!_{\scriptstyle{\mathbb{R}^{+}}}})\)
is an algebraic homomorphism of the Banach algebra \(\mathfrak{B}_{_{{\scriptstyle\mathscr{F}}_{_{{\mathbb{R}}^{+}}}}}\)\!\!\! %
of all \(\mathscr{F}_{\!_{\scriptstyle{\mathbb{R}^{+}}}}\)-admissible functions
into the Banach algebra of all bounded operators in \(L^2(\mathbb{R^{+}})\).
\item[\textup{2.}] The mapping \(h\to{}h_{\textup{mod}}(\mathscr{F}_{\!_{\scriptstyle{\mathbb{R}^{+}}}})\)
    is a homeomorphism of these Banach algebras.  Moreover the two sided estimate
    \begin{equation}
    \label{TwSiEsti}
    \frac{1}{2}\,\|h\|_{{}_{\scriptstyle\mathscr{F}_{_{\mathbb{R}_{+}}}}}\!\!\!\!\leq
\|h_{\textup{mod}}(\mathscr{F}_{\!_{\scriptstyle{\mathbb{R}^{+}}}})\|
\leq\|h\|_{{}_{\scriptstyle\mathscr{F}_{_{\mathbb{R}_{+}}}}}\,
    \end{equation}
    holds.
\item[\textup{3.}]  Let \(\{h_n\}_{1\leq{}n<\infty}\) be a sequence  of \(\mathscr{F}_{\!_{\scriptstyle{\mathbb{R}^{+}}}}\)\!-\,admissible functions.
    We assume that:
     \begin{enumerate}
     \item[\textup{a)}.]
     The \(\mathscr{F}_{\!_{\scriptstyle{\mathbb{R}^{+}}}}\)\!-\,norms \(\|h_n\|_{{}_{\scriptstyle\mathscr{F}_{_{\mathbb{R}_{+}}}}}\) of these functions are uniformly bounded:
     \begin{equation}
     \label{unibou}
     \sup_n\|h_n\|_{{}_{\scriptstyle\mathscr{F}_{_{\mathbb{R}_{+}}}}}<\infty\,,
     \end{equation}
     \item[\textup{b)}.]
     For almost every \(\zeta\) from the interval \(\textup{\large\(\sigma\)}(\mathscr{F}_{\!_{\scriptstyle{\mathbb{R}^{+}}}})=
\textup{\large\(\sigma\)}(\mathcal{M}_{_{\scriptstyle F}})\), there exists the limit
\begin{equation}
\label{PWiLi}
h(\zeta)=\lim_{n\to\infty}h_n(\zeta)\,.
\end{equation}
     \end{enumerate}
     Then the function \(h(\zeta)\) is \(\mathscr{F}_{\!_{\scriptstyle{\mathbb{R}^{+}}}}\)\!-\,admissible, and
     \begin{equation}
\label{StrLi}
h_{\textup{mod}}(\mathscr{F}_{\!_{\scriptstyle{\mathbb{R}^{+}}}})=
\lim_{n\to\infty}(h_n)_{\textup{mod}}(\mathscr{F}_{\!_{\scriptstyle{\mathbb{R}^{+}}}})\,,
\end{equation}
where the limit is a strong limit of a sequence of operators.
\item[\textup{4.}] The model functional calculus is an extension of the holomorphic
functional calculus: if the function \(h\) is holomorphic on the spectrum
\(\textup{\large\(\sigma\)}(\mathscr{F}_{\!_{\scriptstyle{\mathbb{R}^{+}}}})\),
then
\begin{equation}
\label{Comphomo}
h_{\textup{mod}}(\mathscr{F}_{\!_{\scriptstyle{\mathbb{R}^{+}}}})=
h_{\textup{hol}}(\mathscr{F}_{\!_{\scriptstyle{\mathbb{R}^{+}}}})\,.
\end{equation}
\end{enumerate}
\end{theorem}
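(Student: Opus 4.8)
The plan is to observe that the model calculus factors through three maps, each already understood in the excerpt: the \emph{pointwise} matrix calculus \(h\mapsto\big(\mu\mapsto h(F(\mu))\big)\) of Definition~\ref{DeFuMa}, the multiplication-operator correspondence \(G\mapsto\mathcal{M}_{G}\) of Definition~\ref{FuCa}, and conjugation by the fixed unitary \(U\) of \eqref{UnEq}. So all four assertions reduce to concatenating facts about these three maps. Throughout I would use Lemma~\ref{CoinsTAd}, which identifies the \(\mathscr{F}_{\mathbb{R}^{+}}\)-admissible functions with the \(\mathcal{M}_{F}\)-admissible ones and, in the form \eqref{TwSiEsh}, compares \(\|h\|_{\mathscr{F}_{\mathbb{R}^{+}}}\) with \(\textup{ess\,sup}_{\mu}\|h(F(\mu))\|\).

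For statement~1 I would apply Lemma~\ref{MaFuCal} at each \(\mu\) to get the pointwise identities \(1(F(\mu))=I\), \(\zeta(F(\mu))=F(\mu)\), \((\alpha_1h_1+\alpha_2h_2)(F(\mu))=\alpha_1h_1(F(\mu))+\alpha_2h_2(F(\mu))\) and \((h_1h_2)(F(\mu))=h_1(F(\mu))\,h_2(F(\mu))\); that products of admissible functions are admissible is Lemma~\ref{BanAl}. Feeding these identities into the homomorphism \(G\mapsto\mathcal{M}_{G}\) (Lemma~\ref{Homom}, statements~1--3) and then conjugating by \(U\), which preserves sums, products and the identity operator, gives statement~1, while the normalizations together with \eqref{UnEq} give \(1_{\textup{mod}}(\mathscr{F}_{\mathbb{R}^{+}})=\mathscr{I}\) and \(\zeta_{\textup{mod}}(\mathscr{F}_{\mathbb{R}^{+}})=\mathscr{F}_{\mathbb{R}^{+}}\). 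For statement~2, since \(U\) is unitary one has \(\|h_{\textup{mod}}(\mathscr{F}_{\mathbb{R}^{+}})\|=\|\mathcal{M}_{h(F)}\|\), which by statement~5 of Lemma~\ref{Homom} equals \(\textup{ess\,sup}_{\mu}\|h(F(\mu))\|\); the pointwise estimate \eqref{TSEf} of Lemma~\ref{emhm} and the identity \(\|h\|_{\mathscr{F}_{\mathbb{R}^{+}}}=\textup{ess\,sup}_{\mu}\|h\|_{\mu}\) (compare \eqref{NFAF} and \eqref{NFMa}) then yield \eqref{TwSiEsti}, and a two-sided norm bound for a bijective algebra homomorphism is exactly the assertion that it is a topological isomorphism.

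For statement~3 I would first check admissibility of \(h\) and then invoke Lemma~\ref{StrConL}. The reparametrizations \(\mu\mapsto\zeta_{\pm}(\mu)\) are smooth homeomorphisms of \([0,\infty)\) onto the two half-intervals of \(\sigma(\mathscr{F}_{\mathbb{R}^{+}})\) with non-vanishing derivative, so they carry Lebesgue-null sets to Lebesgue-null sets in both directions; hence \eqref{PWiLi} is equivalent to \(h_n(\zeta_{\pm}(\mu))\to h(\zeta_{\pm}(\mu))\) for almost every \(\mu\), and by \eqref{hotF} (or \eqref{WoEx}) and continuity of matrix operations, \(h_n(F(\mu))\to h(F(\mu))\) for almost every \(\mu\). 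The uniform bound \eqref{unibou} together with \eqref{TwSiEsh} gives \(\sup_n\textup{ess\,sup}_{\mu}\|h_n(F(\mu))\|<\infty\), whence by Fatou \(\textup{ess\,sup}_{\mu}\|h(F(\mu))\|<\infty\), so \(h\) is \(\mathcal{M}_{F}\)-admissible and therefore \(\mathscr{F}_{\mathbb{R}^{+}}\)-admissible. Then Lemma~\ref{StrConL} applied to \(G_n=h_n(F)\), \(G=h(F)\) gives \(\mathcal{M}_{h_n(F)}\to\mathcal{M}_{h(F)}\) strongly in \(\mathscr{K}\), and conjugation by the unitary \(U\) transports strong convergence, which is \eqref{StrLi}.

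For statement~4 I would transplant the resolvent. By Theorem~\ref{CoinSpe}, \((z\mathscr{I}-\mathscr{F}_{\mathbb{R}^{+}})^{-1}=U^{-1}(z\mathscr{I}-\mathcal{M}_{F})^{-1}U\), and by Theorem~\ref{SpMuOpe}, \((z\mathscr{I}-\mathcal{M}_{F})^{-1}=\mathcal{M}_{(zI-F)^{-1}}\). For \(h\) holomorphic on \(\sigma(\mathscr{F}_{\mathbb{R}^{+}})\) (such \(h\) is admissible by Lemma~\ref{compat}) and a contour \(\Gamma\) around the spectrum in the domain of holomorphy, multiply by \(h(z)\) and integrate; since the bounded linear map \(G\mapsto\mathcal{M}_{G}\) commutes with the Cauchy integral,
\[
h_{\textup{hol}}(\mathscr{F}_{\mathbb{R}^{+}})
=\frac{1}{2\pi i}\int_{\Gamma}h(z)(z\mathscr{I}-\mathscr{F}_{\mathbb{R}^{+}})^{-1}\,dz
=U^{-1}\mathcal{M}_{G_h}U,\qquad
G_h(\mu)=\frac{1}{2\pi i}\int_{\Gamma}h(z)(zI-F(\mu))^{-1}\,dz.
\]
By statement~6 of Lemma~\ref{MaFuCal}, \(G_h(\mu)=h_{\textup{hol}}(F(\mu))=h(F(\mu))\), so \(h_{\textup{hol}}(\mathscr{F}_{\mathbb{R}^{+}})=U^{-1}\mathcal{M}_{h(F)}U=h_{\textup{mod}}(\mathscr{F}_{\mathbb{R}^{+}})\), which is \eqref{Comphomo}. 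The only points needing real care are the measure-theoretic step in statement~3 (null-set invariance under \(\mu\mapsto\zeta_{\pm}(\mu)\), and that a unitary transports strong limits) and, in statement~4, justifying the interchange of \(G\mapsto\mathcal{M}_{G}\) with the contour integral and matching the two holomorphic calculi through \(U\); the remainder is routine bookkeeping with Lemmas~\ref{Homom}, \ref{StrConL}, \ref{emhm}, \ref{MaFuCal} and \ref{CoinsTAd}.
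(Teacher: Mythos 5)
Your proposal is correct and follows essentially the same route as the paper: it reduces everything to the pointwise matrix calculus of Lemma \ref{MaFuCal}, the multiplication-operator homomorphism of Lemma \ref{Homom} (with \eqref{NoP}/\eqref{TSEf}, i.e.\ \eqref{TwSiEsh}, for the norm estimate), Lemma \ref{StrConL} for the strong limit, and the contour-integral transplantation via \eqref{RelRes}, \eqref{GlRes} for compatibility with the holomorphic calculus. The only differences are cosmetic: you spell out the a.e.-$\zeta$ versus a.e.-$\mu$ translation and the admissibility of the limit function in Statement 3, and in Statement 4 you invoke Statement 6 of Lemma \ref{MaFuCal} where the paper recomputes $G(\mu)$ from the partial-fraction form \eqref{ElFrDe} of the resolvent.
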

\begin{proof}{\ }
The statement \textsf{1} of the theorem is a consequence of the properties
of the mapping \(h(\zeta)\to{}h(F(\mu))\) (Lemma \ref{MaFuCal}, statements 1\,-\,4),
of the properties of the mapping \(G\to{}\mathcal{M}_{G}\)
(Lemma \ref{Homom}, Statements 1\,-\,3.), and of the equality \eqref{UnEq}.

The Statement \textsf{2} of the theorem is a  consequence of the estimate \eqref{TwSiEsh}
and of the equality \eqref{NoP}, applied to \(G(\mu)=h(F(\mu))\).

The statement \textsf{3} of the theorem is a consequence of Lemma \ref{StrConL}
and of the following simple fact.\emph{Given \(\mu\in[0,\infty)\), if the sequences
of numbers \(h_n(\zeta_{+}(\mu))\), \(h_n(\zeta_{-}(\mu))\) converge to a numbers
\(h(\zeta_{+}(\mu)),\,h(\zeta_{-}(\mu))\) respectively, then the
sequence of matrices \(h_n(F(\mu))\) converges to the matrix \(h(F(\mu))\).}

Now we turn to Statement \textsf{4}.
Let \(h\) be a function holomorphic on the spectrum \(\textup{\large\(\sigma\)}(\mathscr{F}_{\!_{\scriptstyle{\mathbb{R}^{+}}}})\).
We choose a contour \(\gamma\) which encloses the interval \(\textup{\large\(\sigma\)}(\mathscr{F}_{\!_{\scriptstyle{\mathbb{R}^{+}}}})\)
and is contained in the domain of holomorphy of the function \(h\). By definition,
\begin{equation*}
h_{\textup{hol}}(\mathscr{F}_{\!_{\scriptstyle{\mathbb{R}^{+}}}})=\frac{1}{2\pi{}i}%
\int\limits_{\gamma}\!\!h(z)(z\,\mathscr{I}-\mathscr{F}_{\!_{\scriptstyle{\mathbb{R}^{+}}}})^{-1}dz\,.
\end{equation*}
Using \eqref{RelRes} and \eqref{GlRes}, we rewrite this formula as
\begin{equation*}
h_{\textup{hol}}(\mathscr{F}_{\!_{\scriptstyle{\mathbb{R}^{+}}}})=U^{-1}\Big(\frac{1}{2\pi{}i}%
\int\limits_{\gamma}\!\!h(z)\mathcal{M}_{_{\scriptstyle(zI- F)^{-1}}}dz\,\Big)\,U
\end{equation*}
The operator \(\mathcal{M}_{_{\scriptstyle(zI- F)^{-1}}}\) is the multiplication operator
on the matrix \((zI- F(\mu))^{-1}\) acting in the space \(\mathscr{K}\) of functions
\emph{of the variable \(\mu\).} The variable \(z\) is a parameter. Therefore we can permute
the multiplication with respect to \(\mu\) and the integration with respect to \(z\):
\begin{equation*}
\frac{1}{2\pi{}i}%
\int\limits_{\gamma}\!\!h(z)\mathcal{M}_{_{\scriptstyle(zI- F)^{-1}}}dz=\mathcal{M}_{_{\scriptstyle G}}\,,
\end{equation*}
where
\begin{equation*}
G(\mu)=\frac{1}{2\pi{}i}%
\int\limits_{\gamma}\!\!h(z)(zI-F(\mu))^{-1}dz\,.
\end{equation*}
Substituting the expression \eqref{ElFrDe} for the resolvent \((zI-F(\mu))^{-1}\) in the last formula, we obtain that
\begin{equation*}
G(\mu)=h(\zeta_{+}(\mu))E_{+}(\mu)+h(\zeta_{-}(\mu))E_{-}(\mu)=h(F(\mu))\,.
\end{equation*}
So
\begin{equation*}
\frac{1}{2\pi{}i}%
\int\limits_{\gamma}\!\!h(z)\mathcal{M}_{_{\scriptstyle(zI- F)^{-1}}}dz=\mathcal{M}_{_{\scriptstyle h(F)}}\,
\end{equation*}
and
\begin{equation*}
h_{\textup{hol}}(\mathscr{F}_{\!_{\scriptstyle{\mathbb{R}^{+}}}})=U^{-1}\mathcal{M}_{_{\scriptstyle h(F)}}U=
h_{\textup{mod}}(\mathscr{F}_{\!_{\scriptstyle{\mathbb{R}^{+}}}})\,.
\end{equation*}
\end{proof}

\section{Spectral projectors of the operator
\mathversion{bold}%
\(\mathcal{M}_{_{\scriptstyle F}}\).
\mathversion{normal}
\label{SpPrMO}}
 Explicit expressions for the spectral projectors \(E_{+}(\mu),\,E_{-}(\mu)\)  of the matrix \(F(\mu)\)
  is done by \eqref{SpPr}.
\begin{lemma}
\label{EsMaPrLe}
\begin{subequations}
\label{EsMaPro}%
 The norms of the spectral projectors
\(E_{+}(\mu),\,E_{-}(\mu)\) of the matrix \(F(\mu)\) are:
\begin{equation}
\label{EsMaPro1} \|E_{+}(\mu)\|=\cosh{}\frac{\pi\mu}{2},\quad
\|E_{-}(\mu)\|=\cosh{}\frac{\pi\mu}{2}\,,\quad 0<\mu<\infty\,.
\end{equation}
In terms of the eigenvalue \(\zeta_{\pm}(\mu)\) of the matrix
\(F(\mu)\),
\begin{align}
\|E_{+}(\mu)\|&=\frac{1}{2|\zeta_{+}(\mu)|}\,\sqrt{1+2|\zeta_{+}(\mu)|^2},\notag\\[-2.0ex]
\\[-2.0ex]
\|E_{-}(\mu)\|&=\frac{1}{2|\zeta_{-}(\mu)|}\sqrt{1+2|\zeta_{-}(\mu)|^2}\,.\notag
\end{align}
\end{subequations}
\end{lemma}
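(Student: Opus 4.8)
The plan is to read off $\|E_{\pm}(\mu)\|$ directly from the explicit formulas \eqref{SpPr}, exploiting two facts already established: that $E_{\pm}(\mu)$ are rank-one idempotents, and the identities \eqref{SMc2} and \eqref{EiVa}. Here $\|\cdot\|$ denotes the operator norm on $\mathbb{C}^{2}$, i.e. the largest singular value. First I would record the reduction to a trace. By Lemma \ref{InM} the residue matrices $E_{\pm}(\mu)$ have rank one, and by \eqref{PrPr} they are idempotent; equivalently $\det E_{\pm}(\mu)=\tfrac14\bigl(1-\zeta(\mu)^{-2}f_{-+}(\mu)f_{+-}(\mu)\bigr)=0$ by \eqref{UsEq}. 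For any $2\times2$ matrix $M$ with singular values $s_{0}\ge s_{1}\ge0$ one has $s_{0}^{2}+s_{1}^{2}=\textup{trace}(M^{\ast}M)$; since $E_{\pm}(\mu)$ has rank one, $s_{1}=0$, whence
\[
\|E_{\pm}(\mu)\|^{2}=\textup{trace}\bigl(E_{\pm}^{\ast}(\mu)E_{\pm}(\mu)\bigr).
\]

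Next I would compute this trace. From \eqref{SpPr}, the squared moduli of the entries of $E_{+}(\mu)$ — and of $E_{-}(\mu)$, since the two matrices differ only by the sign of their off-diagonal entries, which does not affect moduli — are $\tfrac14$, $\tfrac14$, $\tfrac{|f_{-+}(\mu)|^{2}}{4|\zeta(\mu)|^{2}}$, $\tfrac{|f_{+-}(\mu)|^{2}}{4|\zeta(\mu)|^{2}}$, so
\[
\|E_{\pm}(\mu)\|^{2}=\frac12+\frac{|f_{+-}(\mu)|^{2}+|f_{-+}(\mu)|^{2}}{4|\zeta(\mu)|^{2}}.
\]
Substituting $|f_{+-}(\mu)|^{2}+|f_{-+}(\mu)|^{2}=1$ from \eqref{SMc2} and $|\zeta(\mu)|^{2}=\dfrac{1}{2\cosh\pi\mu}$ from \eqref{EiVa} gives
\[
\|E_{\pm}(\mu)\|^{2}=\frac12+\frac{\cosh\pi\mu}{2}=\frac{1+\cosh\pi\mu}{2}=\cosh^{2}\frac{\pi\mu}{2},
\]
the last equality being the half-angle identity $\cosh^{2}(x/2)=\tfrac12(1+\cosh x)$; taking positive square roots yields \eqref{EsMaPro1}. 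For the eigenvalue form, note that $\zeta_{\pm}(\mu)=\pm\zeta(\mu)$ by \eqref{EiV}, hence $|\zeta_{\pm}(\mu)|^{2}=|\zeta(\mu)|^{2}=\dfrac{1}{2\cosh\pi\mu}$, so that $\dfrac{1}{2|\zeta_{\pm}(\mu)|^{2}}=\cosh\pi\mu$ and
\[
\frac{1}{4|\zeta_{\pm}(\mu)|^{2}}\bigl(1+2|\zeta_{\pm}(\mu)|^{2}\bigr)=\frac{\cosh\pi\mu}{2}\Bigl(1+\frac{1}{\cosh\pi\mu}\Bigr)=\frac{1+\cosh\pi\mu}{2}=\|E_{\pm}(\mu)\|^{2},
\]
which is the asserted identity $\|E_{\pm}(\mu)\|=\dfrac{1}{2|\zeta_{\pm}(\mu)|}\sqrt{1+2|\zeta_{\pm}(\mu)|^{2}}$.

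There is no serious obstacle: the argument is a short computation. The only point that deserves a word of justification is the reduction in the first step — that the operator norm of a rank-one $2\times2$ matrix $M$ equals $\sqrt{\textup{trace}(M^{\ast}M)}$ — and this is precisely where the rank-one property (equivalently, $\det E_{\pm}(\mu)=0$, i.e. the identity \eqref{UsEq}) is invoked. Everything downstream is the substitution of \eqref{SMc2} and \eqref{EiVa} followed by the $\cosh$ half-angle identity.
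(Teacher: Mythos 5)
Your proof is correct and follows essentially the same route as the paper: both reduce $\|E_{\pm}(\mu)\|^{2}$ to $\textup{trace}\,(E_{\pm}^{\ast}(\mu)E_{\pm}(\mu))$ via the rank-one property ($\det E_{\pm}(\mu)=0$, which rests on \eqref{UsEq}) and then evaluate the trace using the moduli of $f_{+-},f_{-+},\zeta$ together with the half-angle identity for $\cosh$. The only cosmetic difference is that you substitute \eqref{SMc2} and $|\zeta(\mu)|^{2}=1/(2\cosh\pi\mu)$ in one step, while the paper uses the individual ratios $|f_{+-}(\mu)|^{2}/|\zeta(\mu)|^{2}=e^{\pi\mu}$ and $|f_{-+}(\mu)|^{2}/|\zeta(\mu)|^{2}=e^{-\pi\mu}$; you also spell out the eigenvalue form explicitly, which the paper leaves as an immediate substitution.
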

\begin{proof}
Calculation of the norm of the matrices
\(E_{+}(\mu),\,E_{-}(\mu)\)  can be reduced to calculation of the
norms of the appropriate self-adjoint matrices:
\[\|E_{+}(\mu)\|^2=\|E_{+}^{\ast}(\mu)\,E_{+}(\mu)\|,\quad
\|E_{-}(\mu)\|^2=\|E_{-}^{\ast}(\mu)\,E_{-}(\mu)\|
\]
In its turn, calculation of the norm of a selfadjoin matrix can be
reduced to calculation of its maximal eigenvalues. The
characteristic equation for the matrix
\(\|E_{+}^{\ast}(\mu)\,E_{+}(\mu)\|\) is:
\[p^2-(\textup{trace\,}E_{+}^{\ast}(\mu)\,E_{+}(\mu))p+\det{}E_{+}^{\ast}(\mu)\,E_{+}(\mu)=0\,.\]
By direct computation,
\(\det{}E_{+}^{\ast}(\mu)\,E_{+}(\mu)=|\det{}E_{+}(\mu)|^2=0\),
(recall that \(E_{+}(\mu)\) is a matrix of rank one), and
\[\textup{trace\,}E_{+}^{\ast}(\mu)\,E_{+}(\mu)
=\frac{1}{4}\bigg(2+\frac{|f_{+-}(\mu)|^2}{|\zeta(\mu)|^2}+\frac{|f_{-+}(\mu)|^2}{|\zeta(\mu)|^2}\bigg)\,.\]
According to \eqref{MaEnt} and \eqref{AbGa},
\begin{equation}
\label{AuEq}
\frac{|f_{+-}(\mu)|^2}{|\zeta(\mu)|^2}=e^{\mu\pi},\,\quad
\frac{|f_{-+}(\mu)|^2}{|\zeta(\mu)|^2}=e^{-\mu\pi}\,.
\end{equation}
Thus,
\[\textup{trace\,}E_{+}^{\ast}(\mu)\,E_{+}(\mu)=\frac{1}{4}(2+e^{\mu\pi}+e^{-\mu\pi})=
\cosh^2\frac{\mu\pi}{2}\,.\] So, the roots of the above
characteristic equation are \(p=0\), and
\(p=\cosh^2\frac{\mu\pi}{2}\). Therefore,
\(\|E_{+}^{\ast}(\mu)\,E_{+}(\mu)\|=\cosh^2\frac{\mu\pi}{2}\), and
\(\|E_{+}(\mu)\|=\cosh{}\frac{\mu\pi}{2}\). Analogously,
\(\|E_{-}(\mu)\|=\cosh{}\frac{\mu\pi}{2}\).
\end{proof}
\begin{theorem} {\ }\\
\label{EivF}
\begin{enumerate}
\item[\textup{1.}]
The matrices \(E_{+}(\mu)\) and \(E_{-}(\mu)\), which  are of rank one, admit the factorizations
\begin{equation}
\label{FaMa}
E_{+}(\mu)={u}_{+}(\mu){v}_{+}^{\ast}(\mu),\ \
E_{-}(\mu)={u}_{-}(\mu){v}_{-}^{\ast}(\mu),
\end{equation}
where
\begin{subequations}
\begin{alignat}{2}
{u}_{+}(\mu)&=\frac{1}{\sqrt{2}}
\begin{bmatrix}
1\\[2.5ex]
\dfrac{f_{+-}(\mu)}{\zeta(\mu)}
\end{bmatrix}\,,&\quad
{u}_{-}(\mu)&=\frac{1}{\sqrt{2}}
\begin{bmatrix}
1\\[2.5ex]
-\dfrac{f_{+-}(\mu)}{\zeta(\mu)}
\end{bmatrix}\,,\\[2.0ex]
{v}_{+}(\mu)&=\frac{1}{\sqrt{2}}
\begin{bmatrix}
1\\[2.5ex]
\dfrac{\overline{f_{-+}(\mu)}}{\overline{\zeta(\mu)}}
\end{bmatrix}\,,&\quad
{v}_{-}(\mu)&=\frac{1}{\sqrt{2}}
\begin{bmatrix}
1\\[2.5ex]
-\dfrac{\overline{f_{-+}(\mu)}}{\overline{\zeta(\mu)}}
\end{bmatrix}\,.
\end{alignat}
\end{subequations}
\item[\textup{2.}] The vectors \(u_{+}(\mu),\,{u}_{-}(\mu)\) are eigenvectors of the matrix
\(F(\mu)\)\textup{:}
\begin{subequations}
\begin{equation}
F(\mu)u_{+}(\mu)=\zeta_{+}(\mu)u_{+}(\mu),\quad F(\mu)u_{-}(\mu)=\zeta_{-}(\mu)u_{-}(\mu)\,,
\end{equation}
 The vectors \(v_{+}(\mu),\,{v}_{-}(\mu)\) are eigenvectors of the matrix
\(F^{\ast}(\mu)\)\textup{:}
\begin{equation}
F^{\ast}(\mu)v_{+}(\mu)=\overline{\zeta_{+}(\mu)}v_{+}(\mu),\quad
 F^{\ast}(\mu)v_{-}(\mu)=\overline{\zeta_{-}(\mu)}v_{-}(\mu)\,.
\end{equation}
\end{subequations}
\end{enumerate}
\end{theorem}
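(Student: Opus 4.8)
The plan is to verify both statements by elementary $2\times2$ matrix algebra, the only nonroutine ingredient being the identity $\zeta^{2}(\mu)=f_{-+}(\mu)f_{+-}(\mu)$ from \eqref{UsEq}; for the eigenvector assertions I will also invoke the projector identities \eqref{PrPr}, \eqref{SPr}, \eqref{eigPr} and their adjoint counterparts \eqref{PrPrAd}, \eqref{SPrAd}, \eqref{eigPrAd}, which are already at our disposal.

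I would begin with the factorizations \eqref{FaMa}. Conjugating and transposing the column $v_{+}(\mu)$ turns its lower entry $\overline{f_{-+}(\mu)}/\overline{\zeta(\mu)}$ into $f_{-+}(\mu)/\zeta(\mu)$, so that $v_{+}^{\ast}(\mu)=\tfrac{1}{\sqrt{2}}\begin{bmatrix}1 & f_{-+}(\mu)/\zeta(\mu)\end{bmatrix}$ and likewise $v_{-}^{\ast}(\mu)=\tfrac{1}{\sqrt{2}}\begin{bmatrix}1 & -f_{-+}(\mu)/\zeta(\mu)\end{bmatrix}$. Multiplying out the rank-one product,
\[
u_{+}(\mu)v_{+}^{\ast}(\mu)=\frac{1}{2}
\begin{bmatrix}
1 & f_{-+}(\mu)/\zeta(\mu)\\
f_{+-}(\mu)/\zeta(\mu) & f_{+-}(\mu)f_{-+}(\mu)/\zeta^{2}(\mu)
\end{bmatrix},
\]
whose lower-right entry is $\tfrac12$ by \eqref{UsEq}; comparing with the explicit form \eqref{SpPr} of $E_{+}(\mu)$ gives $u_{+}(\mu)v_{+}^{\ast}(\mu)=E_{+}(\mu)$. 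The computation for $u_{-}(\mu)v_{-}^{\ast}(\mu)=E_{-}(\mu)$ is the same up to the two sign changes. This settles statement~1.

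For statement~2 I would exploit the factorization together with the normalization
\[
v_{\pm}^{\ast}(\mu)u_{\pm}(\mu)=\tfrac12\bigl(1+f_{+-}(\mu)f_{-+}(\mu)/\zeta^{2}(\mu)\bigr)=1,
\]
again by \eqref{UsEq}. Hence $E_{+}(\mu)u_{+}(\mu)=u_{+}(\mu)\bigl(v_{+}^{\ast}(\mu)u_{+}(\mu)\bigr)=u_{+}(\mu)$, and then \eqref{eigPr} yields
\[
F(\mu)u_{+}(\mu)=F(\mu)E_{+}(\mu)u_{+}(\mu)=\zeta_{+}(\mu)E_{+}(\mu)u_{+}(\mu)=\zeta_{+}(\mu)u_{+}(\mu),
\]
and symmetrically $F(\mu)u_{-}(\mu)=\zeta_{-}(\mu)u_{-}(\mu)$. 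Taking Hermitian adjoints in \eqref{FaMa} gives $E_{\pm}^{\ast}(\mu)=v_{\pm}(\mu)u_{\pm}^{\ast}(\mu)$ (which, as a by-product, reproduces \eqref{SpPrAd}); since $u_{\pm}^{\ast}(\mu)v_{\pm}(\mu)=1$ as well, the same argument with \eqref{eigPrAd} in place of \eqref{eigPr} gives $F^{\ast}(\mu)v_{\pm}(\mu)=\overline{\zeta_{\pm}(\mu)}\,v_{\pm}(\mu)$. (Every one of these four eigenrelations could instead be checked in one line by applying $F(\mu)$ or $F^{\ast}(\mu)$ directly to the explicit columns and simplifying the lower entry via \eqref{UsEq}.)

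I do not expect any real obstacle; the single point deserving attention is the bookkeeping of complex conjugates when passing from the columns $v_{\pm}(\mu)$ to the rows $v_{\pm}^{\ast}(\mu)$ and, in the $F^{\ast}(\mu)$ part, when using the conjugated identity $\overline{\zeta(\mu)}^{\,2}=\overline{f_{-+}(\mu)}\,\overline{f_{+-}(\mu)}$. Structurally the proof amounts to the elementary remark that a rank-one projector $P$ with a prescribed nonzero vector $w$ in its range (so $Pw=w$) factors uniquely as $P=w\,\eta^{\ast}$ with $\eta^{\ast}w=1$, applied here to $w=u_{\pm}(\mu)$ and, for $F^{\ast}(\mu)$, to $w=v_{\pm}(\mu)$; all the structural input --- that $E_{\pm}(\mu)$ are complementary rank-one projectors onto the eigenlines of $F(\mu)$ --- is already recorded in \eqref{PrPr}--\eqref{eigPr}.
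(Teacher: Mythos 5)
Your overall strategy coincides with the paper's own proof, which consists of the single remark that the theorem follows ``by direct calculation'' using \eqref{UsEq}; your verification of the factorizations \eqref{FaMa} against \eqref{SpPr}, together with the normalization \(v_{\pm}^{\ast}(\mu)u_{\pm}(\mu)=1\), is exactly that calculation, and that part is carried out correctly.

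There is, however, a step in your treatment of part 2 that fails, and it is precisely the step you dismiss in parentheses as a ``one line'' direct check. With \(F(\mu)\) as fixed in \eqref{ModTrFo}, \eqref{Matr} (entry \(f_{+-}\) in the upper-right corner, \(f_{-+}\) in the lower-left), one computes
\[
F(\mu)\,u_{+}(\mu)=\frac{1}{\sqrt{2}}
\begin{bmatrix} f_{+-}^{2}(\mu)/\zeta(\mu)\\[0.7ex] f_{-+}(\mu)\end{bmatrix},
\qquad
\zeta_{+}(\mu)\,u_{+}(\mu)=\frac{1}{\sqrt{2}}
\begin{bmatrix} \zeta(\mu)\\[0.7ex] f_{+-}(\mu)\end{bmatrix},
\]
and by \eqref{UsEq} these agree only if \(f_{+-}(\mu)=f_{-+}(\mu)\), which \eqref{AVMe} rules out for every \(\mu>0\); the eigenvector of \(F(\mu)\) for \(\zeta_{+}(\mu)\) is in fact \(\tfrac{1}{\sqrt{2}}\,(1,\ f_{-+}(\mu)/\zeta(\mu))^{T}\). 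The root of the trouble is that \eqref{Rmm} transposes the adjugate of \eqref{ZMF} (the entry \(f_{+-}\) must sit in the upper-right corner), and this transposition propagates into \eqref{SpPr}, \eqref{SpPrAd} and into the vectors \(u_{\pm},v_{\pm}\) of the present theorem, whereas \eqref{DeReRes} and \eqref{WoEx} are written in the corrected convention; accordingly \eqref{eigPr} holds for the corrected projectors, not for the matrices printed in \eqref{SpPr}. Your argument for part 2 therefore combines two mutually inconsistent displays: you verify part 1 against \eqref{SpPr} and then invoke \eqref{eigPr}, but \(F(\mu)E_{+}(\mu)=\zeta_{+}(\mu)E_{+}(\mu)\) is false for the very matrix \(E_{+}(\mu)\) you have just factored, so the deduction proves nothing as it stands, and the promised direct check would have exposed the mismatch rather than confirmed it. The repair is purely notational: interchange \(f_{+-}\) and \(f_{-+}\) in \(u_{\pm}\) and \(v_{\pm}\) (equivalently in \eqref{Rmm} and \eqref{SpPr}); after that correction your entire computation --- the rank-one factorization, the normalization \(v_{\pm}^{\ast}u_{\pm}=1\), and the four eigenrelations by either the projector route or the direct one --- goes through verbatim and realizes the paper's intended proof.
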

\begin{proof}
The proof is performed by direct calculation. The equality \eqref{UsEq} is involved in this calculation.
\end{proof}

Let us calculate the "angle" between the eigenvectors
\(u_{+}(\mu)\) and  \(u_(\mu)\) of the matrix \(F(\mu)\).
We recall that if \(x\) and \(y\) are non-zero vectors of some Hilbert space
provided by the scalar product \(\langle\,.\,,\,.\,\rangle\), then the angle \(\theta(x,y)\) between \(x\) and \(y\)
is the unique \(\theta\in[0,\pi/2]\) such that
\begin{equation}
\label{Ang}
\cos^{\,2}\theta=
\frac{|\langle\,x\,,\,y\,
\rangle|^2}{\langle\,x\,,\,x\,\rangle\,\langle\,y\,,\,y\,\rangle}\,\cdot
\end{equation}
\begin{lemma}
\label{Abev}
The angles \(\theta(u_{+}(\mu),u_{-}(\mu))\) and
\(\theta(v_{+}(\mu),v_{-}(\mu))\)
 between eigenvectors of the matrices \(F(\mu)\) and \(F^{\ast}(\mu)\) respectively are:
\begin{equation}
\label{ABev}
\sin\theta(u_{+}(\mu),u_{-}(\mu))=\frac{1}{\cosh\frac{\mu\pi}{2}},\,\quad
\sin\theta(v_{+}(\mu),v_{-}(\mu))=\frac{1}{\cosh\frac{\mu\pi}{2}}\,.
\end{equation}
\end{lemma}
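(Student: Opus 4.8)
The plan is to compute the three scalar products entering the angle formula \eqref{Ang} directly from the explicit expressions for the eigenvectors recorded in Theorem \ref{EivF}, and then to simplify the resulting expression using the identities \eqref{AuEq} together with the elementary factorization $1+e^{\pm\mu\pi}=e^{\pm\mu\pi/2}\cdot2\cosh\frac{\mu\pi}{2}$.

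First I would treat the pair $u_{+}(\mu),u_{-}(\mu)$. From the explicit vectors one reads off
$\langle u_{+}(\mu),u_{+}(\mu)\rangle=\langle u_{-}(\mu),u_{-}(\mu)\rangle=\tfrac12\bigl(1+|f_{+-}(\mu)|^2/|\zeta(\mu)|^2\bigr)$
and
$\langle u_{+}(\mu),u_{-}(\mu)\rangle=\tfrac12\bigl(1-|f_{+-}(\mu)|^2/|\zeta(\mu)|^2\bigr)$.
By the first identity in \eqref{AuEq} the ratio $|f_{+-}(\mu)|^2/|\zeta(\mu)|^2$ equals $e^{\mu\pi}$, so \eqref{Ang} gives
$\cos^{2}\theta(u_{+}(\mu),u_{-}(\mu))=(1-e^{\mu\pi})^{2}/(1+e^{\mu\pi})^{2}$, whence
$\sin^{2}\theta(u_{+}(\mu),u_{-}(\mu))=4e^{\mu\pi}/(1+e^{\mu\pi})^{2}$. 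Writing $1+e^{\mu\pi}=e^{\mu\pi/2}\cdot2\cosh\frac{\mu\pi}{2}$ turns this into $1/\cosh^{2}\frac{\mu\pi}{2}$, and since $\theta\in[0,\pi/2]$ one takes the nonnegative square root to obtain the first identity in \eqref{ABev}.

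Then I would repeat the computation verbatim for $v_{+}(\mu),v_{-}(\mu)$. The only change is that $f_{+-}$ is replaced by $f_{-+}$ (up to a complex conjugation, which is irrelevant since only absolute values occur), so by the second identity in \eqref{AuEq} the relevant ratio is $|f_{-+}(\mu)|^{2}/|\zeta(\mu)|^{2}=e^{-\mu\pi}$. This yields $\sin^{2}\theta(v_{+}(\mu),v_{-}(\mu))=4e^{-\mu\pi}/(1+e^{-\mu\pi})^{2}=1/\cosh^{2}\frac{\mu\pi}{2}$ by the same factorization with $\mu$ replaced by $-\mu$, which is the second identity in \eqref{ABev}.

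There is essentially no serious obstacle: the statement reduces to a short direct computation. The only place asking for a little care is the algebraic simplification $4e^{\pm\mu\pi}/(1+e^{\pm\mu\pi})^{2}=1/\cosh^{2}\frac{\mu\pi}{2}$, which is immediate once one inserts $1+e^{\pm\mu\pi}=e^{\pm\mu\pi/2}\cdot2\cosh\frac{\mu\pi}{2}$; it is also worth noting in passing that the same quantities $1\pm|f_{\pm\mp}(\mu)|^{2}/|\zeta(\mu)|^{2}$ already appeared in the computation of $\|E_{\pm}(\mu)\|$ in Lemma \ref{EsMaPrLe}, so the relation $\sin\theta=1/\cosh\frac{\mu\pi}{2}=1/\|E_{\pm}(\mu)\|$ is a useful consistency check.
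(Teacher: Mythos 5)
Your proposal is correct and coincides with the paper's own argument: the paper proves Lemma \ref{Abev} precisely by ``direct calculation using the equalities \eqref{AuEq}'', which is exactly the computation you carry out (the factorization \(1+e^{\pm\mu\pi}=e^{\pm\mu\pi/2}\cdot2\cosh\tfrac{\mu\pi}{2}\) and the cross-check against \(\|E_{\pm}(\mu)\|=\cosh\tfrac{\mu\pi}{2}\) are fine). Nothing further is needed.
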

\begin{proof}
The proof is performed by direct calculation using the equalities \eqref{AuEq}.
\end{proof}
\begin{remark}
\label{ExAn}
The equalities \eqref{ABev} can be presented in terms of the eigenvalues
\(\zeta_{+}(\mu),\,\zeta_{-}(\mu)\) of the matrices \(F(\mu),\,F^{\ast}(\mu)\).
According to \eqref{EiVa},
\begin{equation}
\label{TrEq}
\cosh\frac{\pi\mu}{2}=\frac{\sqrt{1+2|\zeta(\mu)|^2}}{2|\zeta(\mu)|}\,.
\end{equation}
Thus
\begin{equation}
\label{AlEx}
\sin\theta(u_{+}(\mu),u_{-}(\mu))=\frac{2|\zeta(\mu)|}{\sqrt{1+2|\zeta(\mu)|^2}}\,.
\end{equation}
If \(\mathcal{C}\) is a contractive operator in a Hilbert space and \(e_1,\,e_2\)
are its eigenvectors corresponding to the eigenvalues \(\zeta_1,\,\zeta_2\), then the
angle \(\theta(e_1,\,e_2)\) between the eigenvectors \(e_1,\,e_2\) admits the estimate
from below:
\begin{equation}
\label{EAfb}
\sin\theta(e_1,\,e_2)\geq\left|\frac{\zeta_1-\zeta_2}{1-\zeta_1\overline{\zeta_2}}\right|\,.
\end{equation}
As applied to the contractive matrix \(F(\mu)\),the eigenvalues of which are
\(\zeta_{+}(\mu)=\zeta(\mu),\,\zeta_{-}(\mu)=-\zeta(\mu)\), the estimate \eqref{EAfb}
turns to the inequality
\begin{equation}
\label{AlExAP}
\sin\theta(u_{+}(\mu),u_{-}(\mu))\geq\frac{2|\zeta(\mu)|}{1+|\zeta(\mu)|^2}\,.
\end{equation}
Comparing \eqref{AlEx} and \eqref{AlExAP}, we see that for small \(\zeta(\mu)\)
 the difference between the actual value of \(\sin\theta(u_{+}(\mu),u_{-}(\mu))\)
 and its estimate from below is very small:
 \begin{multline}
 \label{Comp}
 0<\sin\theta(u_{+}(\mu),u_{-}(\mu))-\frac{2|\zeta(\mu)|}{1+|\zeta(\mu)|^2}=\\
 =\frac{2|\zeta(\mu)|}{\sqrt{1+2|\zeta(\mu)|^2}}-\frac{2|\zeta(\mu)|}{1+|\zeta(\mu)|^2}<
 |\zeta(\mu)|^{5}.
 \end{multline}
\end{remark}

\section{Relation between resolvent-based\\ and model-based functional calculi\\ for the operator
\mathversion{bold}%
\(\mathscr{F}_{\!_{\scriptstyle\boldsymbol{\mathbb{R}^{+}}}}\).
\mathversion{normal}
\label{Relb}}

\begin{theorem}
\label{ExFCalM}%
 Let \(h(\zeta)\) be an \(\mathscr{F}_{_{{\scriptstyle\mathbb{R}}^{+}}}\)\,-\,admissible
 function:
 \(h(\zeta)\in\mathfrak{B}_{_{{\scriptstyle\mathscr{F}}_{_{{\mathbb{R}}^{+}}}}}\).\\
Then the operator \(\mathcal{M}_{_{\scriptstyle h(F)}}\) is representable as a strong limit\,%
\footnote{
In particular, the strong limit in the right hand side of \eqref{FAdFOfOp} exists.
} %
of
the family of integrals over the spectrum \(\textup{\large\(\sigma\)}( \mathcal{M}_F)%
\)\textup{:}
 \begin{align}
 \label{FAdFOfOpM}
 \mathcal{M}_{_{\scriptstyle h(F)}}=\\
 =\lim_{\varepsilon\to+0}\,\,&\frac{1}{2\pi{}i}
 \int\limits_{{\textstyle\sigma}(\mathcal{M}_F)}\hspace{-2.0ex}
 h(s)\,\Big(
 R_{_{{\scriptstyle \mathcal{M}}_F}}\!\big(s-\varepsilon{}ie^{i\pi/4}\big)-
 R_{_{{\scriptstyle \mathcal{M}}_F}}\!\big(s+\varepsilon{}ie^{i\pi/4}\big)\Big)\,ds\,,
 \notag
 \end{align}
 where \(R_{_{{\scriptstyle \mathcal{M}}_F}}(z)=
 (z\mathscr{I}- \mathcal{M}_F)^{-1}\) is the resolvent
 of the operator
 \( \mathcal{M}_F\),
 the integral is taken along the interval
 \(\textup{\large\(\sigma\)}( \mathcal{M}_F)=\Big[-\frac{1}{\sqrt{2}}\,e^{i\pi/4},\,
\frac{1}{\sqrt{2}}\,e^{i\pi/4}\Big]\), from the point
\(-\frac{1}{\sqrt{2}}\,e^{i\pi/4}\) to the point
\(\frac{1}{\sqrt{2}}\,e^{i\pi/4}\).
\end{theorem}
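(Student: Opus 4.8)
The plan is, for each fixed $\varepsilon>0$, to evaluate the operator integral in the right-hand side of \eqref{FAdFOfOpM} explicitly as a multiplication operator on $\mathscr{K}$, to recognise it as $\mathcal{M}_{h_\varepsilon(F)}$ for a Poisson-regularised function $h_\varepsilon$, and then to pass to the limit $\varepsilon\to+0$ by means of Lemma \ref{StrConL}. Two preliminary remarks will be used throughout. First, since $h$ is $\mathscr{F}_{\!_{\scriptstyle{\mathbb{R}^{+}}}}$-admissible, the inequalities $|h(\zeta)+h(-\zeta)|\le 2\|h\|_{\mathscr{F}_{\mathbb{R}^{+}}}$ and $|h(\zeta)-h(-\zeta)|\le 2|\zeta|\,\|h\|_{\mathscr{F}_{\mathbb{R}^{+}}}$ give $|h(\zeta)|\le(1+2^{-1/2})\|h\|_{\mathscr{F}_{\mathbb{R}^{+}}}$, so $h$ is bounded and Lebesgue-integrable on $\textup{\large\(\sigma\)}(\mathcal{M}_{F})$. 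Second, for fixed $\varepsilon>0$ the points $s\mp\varepsilon ie^{i\pi/4}$, with $s$ running over the spectral interval, lie at distance $\ge\varepsilon$ from $\big(\bigcup_{\mu}\textup{\large\(\sigma\)}(F(\mu))\big)\cup\{0\}$, so by Theorem \ref{SpMuOpe} and \eqref{CoFRe1} the matrix functions $\mu\mapsto(sI-F(\mu)\mp\varepsilon ie^{i\pi/4}I)^{-1}$ are bounded in $L^{\infty}_{_{\mathfrak{M}_2}}(\mathbb{R}^{+})$ uniformly in $s$; the integral in \eqref{FAdFOfOpM} is therefore a Bochner integral, and, exactly as in the proof of Statement 4 of Theorem \ref{PrMofFuCa}, one may interchange the multiplication in $\mu$ with the integration in $s$ and then use \eqref{RAsMO} together with the partial-fraction form \eqref{ElFrDe} of $(sI-F(\mu))^{-1}$.

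Carrying this out, I parametrise $s=e^{i\pi/4}\sigma$ with $\sigma\in[-2^{-1/2},2^{-1/2}]$ and write $\zeta_{\pm}(\mu)=\pm e^{i\pi/4}a(\mu)$, $a(\mu)=(2\cosh\pi\mu)^{-1/2}$ (recall \eqref{EiVa}). The prefactors $e^{\pm i\pi/4}$ cancel, the identity $\tfrac1{x-i\varepsilon}-\tfrac1{x+i\varepsilon}=\tfrac{2i\varepsilon}{x^2+\varepsilon^2}$ applies, and the integral collapses to
\[
G_\varepsilon(\mu)=\mathcal{P}_\varepsilon[h]\big(a(\mu)\big)\,E_{+}(\mu)+\mathcal{P}_\varepsilon[h]\big(-a(\mu)\big)\,E_{-}(\mu),
\]
where $\mathcal{P}_\varepsilon[h](a)=\frac1\pi\int_{-2^{-1/2}}^{2^{-1/2}}\frac{\varepsilon\,h(e^{i\pi/4}\sigma)}{(\sigma-a)^2+\varepsilon^2}\,d\sigma$ is the Poisson integral of $\sigma\mapsto h(e^{i\pi/4}\sigma)$. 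Putting $h_\varepsilon(e^{i\pi/4}a)=\mathcal{P}_\varepsilon[h](a)$, this is exactly $G_\varepsilon=h_\varepsilon(F)$ in the sense of \eqref{hotF}, so the integral in \eqref{FAdFOfOpM} equals $\mathcal{M}_{h_\varepsilon(F)}$. For almost every $\mu$ the number $a(\mu)$ is an interior Lebesgue point of $\sigma\mapsto h(e^{i\pi/4}\sigma)$ — the map $\mu\mapsto a(\mu)$ is a smooth decreasing bijection of $[0,\infty)$ onto $(0,2^{-1/2}]$, hence carries null sets to null sets — so the classical Poisson-kernel convergence gives $\mathcal{P}_\varepsilon[h](\pm a(\mu))\to h(\zeta_{\pm}(\mu))$, and therefore $G_\varepsilon(\mu)\to h(\zeta_{+}(\mu))E_{+}(\mu)+h(\zeta_{-}(\mu))E_{-}(\mu)=h(F(\mu))$ for a.e. $\mu$.

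To invoke Lemma \ref{StrConL} it remains to bound $\|G_\varepsilon\|_{_{L^{\infty}_{_{\mathfrak{M}_2}}}}=\underset{\mu\in\mathbb{R}^{+}}{\textup{ess\,sup}}\,\|h_\varepsilon(F(\mu))\|$ uniformly in $\varepsilon$; by Lemma \ref{emhm} it suffices to bound $\|h_\varepsilon\|_{\mu}$ uniformly in $\mu$ and $\varepsilon$. Writing $g_{\pm}(\sigma)=h(e^{i\pi/4}\sigma)\pm h(-e^{i\pi/4}\sigma)$ and folding the kernel by $\sigma\mapsto-\sigma$ one gets $h_\varepsilon(\zeta)\pm h_\varepsilon(-\zeta)=\mathcal{P}_\varepsilon[g_{\pm}](a)$ for $\zeta=e^{i\pi/4}a$. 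The symmetric term is harmless because $\mathcal{P}_\varepsilon$ is an $L^{\infty}$-contraction: $|\mathcal{P}_\varepsilon[g_{+}](a)|\le\|g_{+}\|_{\infty}\le 2\|h\|_{\mathscr{F}_{\mathbb{R}^{+}}}$. For the antisymmetric term, $g_{-}$ is odd, so $g_{-}(\sigma)=\sigma k(\sigma)$ with $\|k\|_{\infty}\le 2\|h\|_{\mathscr{F}_{\mathbb{R}^{+}}}$, and a further folding of the kernel gives
\[
\frac{\mathcal{P}_\varepsilon[g_{-}](a)}{a}=\frac1\pi\int_{0}^{2^{-1/2}}\frac{4\sigma^2\varepsilon\,k(\sigma)}{\big((\sigma-a)^2+\varepsilon^2\big)\big((\sigma+a)^2+\varepsilon^2\big)}\,d\sigma .
\]
The crucial step — and the one I expect to be the main obstacle — is that the $L^{1}_{\sigma}$-norm of this kernel is bounded uniformly in $a\in(0,2^{-1/2}]$ and $\varepsilon>0$: the substitution $\sigma=au$, $\varepsilon=at$ reduces it to the single universal integral $J(t)=\frac1\pi\int_0^\infty\frac{4u^2 t}{\big((u-1)^2+t^2\big)\big((u+1)^2+t^2\big)}\,du$, which is continuous on $(0,\infty)$ with $\lim_{t\to0^{+}}J(t)=\lim_{t\to\infty}J(t)=1$, hence $\sup_{t>0}J(t)<\infty$. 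This yields $|\mathcal{P}_\varepsilon[g_{-}](a)/a|\le\|k\|_{\infty}\sup_{t>0}J(t)$ and therefore $\underset{\mu\in\mathbb{R}^{+}}{\textup{ess\,sup}}\,\|h_\varepsilon\|_{\mu}\le\big(1+\sup_{t>0}J(t)\big)\|h\|_{\mathscr{F}_{\mathbb{R}^{+}}}$ uniformly in $\varepsilon$. With this uniform bound and the pointwise convergence in hand, Lemma \ref{StrConL} applied to $G_{\varepsilon_n}=h_{\varepsilon_n}(F)$ along any sequence $\varepsilon_n\downarrow0$ gives $\mathcal{M}_{h_{\varepsilon_n}(F)}\to\mathcal{M}_{h(F)}$ strongly; the sequence being arbitrary, the strong limit in \eqref{FAdFOfOpM} exists and equals $\mathcal{M}_{h(F)}$. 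Apart from the finiteness of $\sup_{t>0}J(t)$, everything is routine bookkeeping with the $2\times2$ resolvent formula \eqref{ElFrDe}, the projector formulas \eqref{SpPr}, and the already-established norm estimates of the matrix functional calculus.
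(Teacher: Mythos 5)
Your proposal is correct and follows essentially the same route as the paper: reduce to the multiplication operator in the model space $\mathscr{K}$, recognize the regularized resolvent difference as Poisson-type singular integrals of $h$ over the slanted spectral interval, establish a uniform bound together with almost-everywhere convergence at Lebesgue points, and conclude by Lemma \ref{StrConL} (this is exactly the content of the paper's Lemma \ref{Jptl}). The only cosmetic differences are that you package the $\varepsilon$-regularized operator as $h_\varepsilon(F)$ via the partial-fraction form \eqref{ElFrDe}, so that the pointwise limit needs only the ordinary Poisson kernel, and that your "crucial" universal integral is precisely the paper's kernel $Q(r,\rho;\varepsilon)=P(|r|,\rho;\varepsilon)\,\tfrac{4\rho^{2}}{(|r|+\rho)^{2}+\varepsilon^{2}}$, whose bound $Q\leq 4P$ gives $\sup_{t>0}J(t)\leq 4$ at once, making that step immediate rather than an obstacle.
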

\begin{proof}
Since \(R_{_{{\scriptstyle \mathcal{M}}_F}}(z)=\mathcal{M}_{(zI-F)^{-1}}\), the equality holds
\begin{equation*}
 R_{_{{\scriptstyle \mathcal{M}}_F}}\!\big(s-\varepsilon{}ie^{i\pi/4}\big)-
 R_{_{{\scriptstyle \mathcal{M}}_F}}\!\big(s+\varepsilon{}ie^{i\pi/4}\big)=
 \mathcal{M}_{_{\scriptstyle G}}\,,
\end{equation*}
where
\begin{multline}
\label{DiRe}
G(\mu;\,s,\varepsilon)=
\begin{bmatrix}
g_{++}(\mu;\,s,\varepsilon)&g_{+-}(\mu;\,s,\varepsilon)\\[1.0ex]
g_{-+}(\mu;\,s,\varepsilon)&g_{--}(\mu;\,s,\varepsilon)
\end{bmatrix}=
\\[1.5ex]
=\Big(\big(s-\varepsilon{}ie^{i\pi/4}\big)I-F(\mu)\Big)^{-1}-
\Big(\big(s+\varepsilon{}ie^{i\pi/4}\big)I-F(\mu)\Big)^{-1}\,.
\end{multline}
(\(\mathcal{M}_{_{\scriptstyle G}}\) is the multiplication operator on the matrix function \(G\)
of the variable \(\mu\). The variables \(s,\,\varepsilon\) are parameters.)
According to the rule \eqref{WoEx}  of calculating of a function of the matrix~\(F(\mu)\),
\begin{gather}
\label{DeReRes}
\big(zI-F(\mu)\big)^{-1}=\hspace{0.70\linewidth}
\\[1.0ex]
=\begin{bmatrix}
\frac{(z-\zeta_{+}(\mu))^{-1}+(z-\zeta_{-}(\mu))^{-1}}{2}\phantom{f_{-+}(\mu)\ }&%
\frac{(z-\zeta_{+}(\mu))^{-1}-(z-\zeta_{-}(\mu))^{-1}}{2\zeta(\mu)}f_{+-}(\mu)\ \\[2.5ex]
\frac{(z-\zeta_{+}(\mu))^{-1}-(z-\zeta_{-}(\mu))^{-1}}{2\zeta(\mu)}f_{-+}(\mu)&
\frac{(z-\zeta_{+}(\mu))^{-1}+(z-\zeta_{-}(\mu))^{-1}}{2}\phantom{f_{-+}(\mu)}
\end{bmatrix}
\notag
\end{gather}
Substituting the expressions \(z=s-\varepsilon{}ie^{i\pi/4}\) and \(z=s+\varepsilon{}ie^{i\pi/4}\)
into \eqref{DeReRes}, we obtain from \eqref{DiRe}
\begin{multline*}
g_{++}(\mu;\,s,\varepsilon)=g_{--}(\mu;\,s,\varepsilon)=\\
= \frac{1}{2}\Big(s-\varepsilon{}ie^{i\pi/4}-\zeta_{+}(\mu))^{-1}+(s-\varepsilon{}ie^{i\pi/4}-\zeta_{-}(\mu))^{-1}-\\
\phantom{\frac{1}{2}}-(s+\varepsilon{}ie^{i\pi/4}-\zeta_{+}(\mu))^{-1}-(s+\varepsilon{}ie^{i\pi/4}-\zeta_{-}(\mu))^{-1}\Big)\,,
\end{multline*}
\begin{multline*}
g_{+-}(\mu;\,s,\varepsilon)=\\
= \frac{1}{2\zeta(\mu)}\Big(s-\varepsilon{}ie^{i\pi/4}-\zeta_{+}(\mu))^{-1}-(s-\varepsilon{}ie^{i\pi/4}-\zeta_{-}(\mu))^{-1}-
\phantom{f_{\pm}(\mu)}\\
\phantom{\frac{1}{2}}-(s+\varepsilon{}ie^{i\pi/4}-\zeta_{+}(\mu))^{-1}+(s+\varepsilon{}ie^{i\pi/4}-\zeta_{-}(\mu))^{-1}\Big)
f_{+ -}(\mu)\,,
\end{multline*}
\begin{multline*}
g_{-+}(\mu;\,s,\varepsilon)=\\
= \frac{1}{2\zeta(\mu)}\Big(s-\varepsilon{}ie^{i\pi/4}-\zeta_{+}(\mu))^{-1}-(s-\varepsilon{}ie^{i\pi/4}-\zeta_{-}(\mu))^{-1}-
\phantom{f_{\pm}(\mu)}\\
\phantom{\frac{1}{2}}-(s+\varepsilon{}ie^{i\pi/4}-\zeta_{+}(\mu))^{-1}+(s+\varepsilon{}ie^{i\pi/4}-\zeta_{-}(\mu))^{-1}\Big)
f_{-+}(\mu)\,.
\end{multline*}
Thus the entries of the matrix in the right hand side of \eqref{FAdFOfOp} take the form
\begin{math}
\frac{1}{2\pi{}i}\int\limits_{{\textstyle\sigma}(\mathcal{M}_F)}\hspace{-2.0ex}
 h(s)g(\mu;\,s,\varepsilon)\,ds\,,
\end{math}
where \(g(\mu;\,s,\varepsilon)\) is one of the four
 functions \(g_{++}(\mu;\,s,\varepsilon)\),\,\(g_{+-}(\mu;\,s,\varepsilon)\),\,\(g_{-+}(\mu;\,s,\varepsilon)\)\,,
 \(g_{--}(\mu;\,s,\varepsilon)\).\\[2.0ex]
  Studying the limiting behavior of the integrals
 \begin{math}
\frac{1}{2\pi{}i}\int\limits_{{\textstyle\sigma}(\mathcal{M}_F)}\hspace{-2.0ex}
 h(s)g(\mu;\,s,\varepsilon)\,ds\,
\end{math}
as \(\varepsilon\to+0\),  we can ignore the factors \(f_{+-}(\mu)\),  \(f_{-+}(\mu)\) in the expressions
for \(g_{+-}(\mu;\,s,\varepsilon)\),\, \(g_{-+}(\mu;\,s,\varepsilon)\). These factors depend neither on \(s\),
nor on \(\varepsilon\). So we can curry these factors out the integrals. We can also ignore the dependence
of the values \(\zeta_{+}(\mu), \zeta_{+}(\mu)\) on the variable \(\mu\). We only have to assume that
\(\zeta_{+}=-\zeta_{+}=\zeta\), where \(\zeta\) is an arbitrary point of the spectrum
\(\textup{\large\(\sigma\)}( \mathcal{M}_F)\). So, we consider the integrals
\begin{subequations}
\label{SIi}
\begin{align}
\label{SIip}
I_{p}(h;\,\zeta,\varepsilon)&=\frac{1}{2\pi{}i}\int\limits_{{\textstyle\sigma}(\mathcal{M}_F)}\hspace{-2.0ex}
 h(s)p(\zeta;\,s,\varepsilon)\,ds,\\
 \label{SIiq}
I_{q}(h;\,\zeta,\varepsilon)&=\frac{1}{2\pi{}i}\int\limits_{{\textstyle\sigma}(\mathcal{M}_F)}\hspace{-2.0ex}
 h(s)q(\zeta;\,s,\varepsilon)\,ds\,,
\end{align}
\end{subequations}
where
\begin{multline*}
p(\zeta;\,s,\varepsilon)
= \frac{1}{2}\Big(s-\varepsilon{}ie^{i\pi/4}-\zeta)^{-1}+(s-\varepsilon{}ie^{i\pi/4}+\zeta)^{-1}-\hfill\\
\hfill-(s+\varepsilon{}ie^{i\pi/4}-\zeta)^{-1}-(s+\varepsilon{}ie^{i\pi/4}+\zeta)^{-1}\Big)\,,
\end{multline*}
\vspace{-5.0ex}
\begin{multline*}
q(\zeta;\,s,\varepsilon)
= \frac{1}{2\zeta}\Big(s-\varepsilon{}ie^{i\pi/4}-\zeta)^{-1}-(s-\varepsilon{}ie^{i\pi/4}+\zeta)^{-1}-
\hfill\\
\hfill-(s+\varepsilon{}ie^{i\pi/4}-\zeta)^{-1}+(s+\varepsilon{}ie^{i\pi/4}+\zeta)^{-1}\Big)\,.
\end{multline*}
It is clear that
\begin{subequations}
\begin{align}
g_{++}(\mu;\,s,\varepsilon)&=p(\zeta(\mu);\,s,\varepsilon)\,,\\
g_{--}(\mu;\,s,\varepsilon)&=p(\zeta(\mu);\,s,\varepsilon)\,,\\
g_{+-}(\mu;\,s,\varepsilon)&=q(\zeta(\mu);\,s,\varepsilon)f_{+-}(\mu)\,,\\
g_{-+}(\mu;\,s,\varepsilon)&=q(\zeta(\mu);\,s,\varepsilon)f_{-+}(\mu)\,.
\end{align}
\end{subequations}
According to Lemma \ref{StrConL}, the equality \ref{FAdFOfOp} will be probed if we prove that for almost every \(\mu\in[0,\,\infty)\)
\begin{equation}
\label{TLiR}
h(F(\mu))=\lim_{\varepsilon\to+0}
\begin{bmatrix}
I_{p}(h;\,\zeta(\mu),\varepsilon)\phantom{f_{-+}(\mu)\ \ }&I_{q}(h;\,\zeta(\mu),\varepsilon)f_{+-}(\mu)\\[1.5ex]
I_{q}(h;\,\zeta(\mu),\varepsilon)f_{-+}(\mu)\ \ &I_{p}(h;\,\zeta(\mu),\varepsilon)\phantom{f_{+-}(\mu)}
\end{bmatrix}\,,
\end{equation}
and moreover the matrix functions in the right hand side of \eqref{TLiR} are bounded for \(\mu\in[0,\,\infty)\)
uniformly with respect to \(\varepsilon>0\):
\begin{multline}
\label{TLiRu}
\sup_{\mu\in[0,\infty)}\,\left\|\,
\begin{bmatrix}
I_{p}(h;\,\zeta(\mu),\varepsilon)\phantom{f_{-+}(\mu)\ \ }&I_{q}(h;\,\zeta(\mu),\varepsilon)f_{+-}(\mu)\\[1.5ex]
I_{q}(h;\,\zeta(\mu),\varepsilon)f_{-+}(\mu)\ \ &I_{p}(h;\,\zeta(\mu),\varepsilon)\phantom{f_{+-}(\mu)}
\end{bmatrix}\,\right\|\leq{}C<\infty\,\\
\textup{for every}\ \ \varepsilon>0\,,
\end{multline}
where \(C<\infty\) is a value which does not depend on \(\varepsilon>0\).

The relations \eqref{TLiR}, \eqref{TLiRu} are consequences of the expression \eqref{WoEx} for \(h(F(\mu))\)
end the following Lemma:
\begin{lemma}
\label{Jptl}
Let \(h(s)\) be a summable  function on the interval \(\textup{\large\(\sigma\)}( \mathcal{M}_F)=
\Big[-\frac{1}{\sqrt{2}}\,e^{i\pi/4},\,
\frac{1}{\sqrt{2}}\,e^{i\pi/4}\Big]\):
\begin{math}
\int\limits_{\sigma( \mathcal{M}_F)}|h(s)|\,|ds|<\infty.
\end{math}
\begin{enumerate}
\item[\textup{1.}]
If \(h\) satisfies the condition
\begin{equation}
C_p(h)\stackrel{\textup{\tiny def}}{=}\underset{s\in\sigma(\mathcal{M}_F)}{\textup{ess\,sup}}\,
\frac{|h(s)+h(-s)|}{2}<\infty,
\end{equation}
then the integrals \(I_{p}(h;\,\zeta,\varepsilon)\) are bounded for \(\zeta\in\textup{\large\(\sigma\)}( \mathcal{M}_F)\)
uniformly with respect to  \(\varepsilon>0\):
\begin{equation}
\label{UBop}
|I_{p}(h;\,\zeta,\varepsilon)|\leq{}C_p(h),\quad \forall\,\zeta\in\textup{\large\(\sigma\)}( \mathcal{M}_F),
\ \ \forall\,\varepsilon>0\,,
\end{equation}
and for almost every \(\zeta\in\textup{\large\(\sigma\)}( \mathcal{M}_F)\) the limiting relation
\begin{equation}
\label{LRIp}
\lim_{\varepsilon\to+0}I_{p}(h;\,\zeta,\varepsilon)=\frac{h(\zeta)+h(-\zeta)}{2}
\end{equation}
holds.
\item[\textup{2.}]
If \(h\) satisfies the condition
\begin{equation}
C_q(h)\stackrel{\textup{\tiny def}}{=}\underset{s\in\sigma(\mathcal{M}_F)}{\textup{ess\,sup}}\,
\frac{|h(s)-h(-s)|}{2|s|}<\infty,
\end{equation}
then the integrals \(I_{q}(h;\,\zeta,\varepsilon)\) are bounded for \(\zeta\in\textup{\large\(\sigma\)}( \mathcal{M}_F)\)
uniformly with respect to  \(\varepsilon>0\):
\begin{equation}
\label{UBoq}
|I_{q}(h;\,\zeta,\varepsilon)|\leq{}4C_q(h),\quad \forall\,\zeta\in\textup{\large\(\sigma\)}( \mathcal{M}_F),
\ \ \forall\,\varepsilon>0\,,
\end{equation}
and for almost every \(\zeta\in\textup{\large\(\sigma\)}( \mathcal{M}_F)\) the limiting relation
\begin{equation}
\label{LiResd}
\lim_{\varepsilon\to+0}I_{q}(h;\,\zeta,\varepsilon)=\frac{h(\zeta)-h(-\zeta)}{2\zeta}
\end{equation}
holds.
\end{enumerate}
\end{lemma}
The reference to Lemma \ref{Jptl} finalizes the proof of Theorem \ref{ExFCal}.
\end{proof}
\begin{proof}[Proof of Lemma \ref{Jptl}] The singular integrals \eqref{SIi}, which appear in Lemma, are cognate
to the Poisson integral. However in \eqref{SIi} the integration is performed over the slopping interval
\(\scriptstyle\Big[-\frac{1}{\sqrt{2}}\,e^{i\pi/4},\,
\frac{1}{\sqrt{2}}\,e^{i\pi/4}\Big]\) and an evaluation point \(\zeta\) also belongs to this interval.
To pass to the more conventional setting, where the interval of integration is real, and the evaluation point is real,
we change variables. We put
\begin{subequations}
\label{CVa}
\begin{gather}
\label{CVav}
s=e^{i\pi/4}\rho,\quad\zeta=e^{i\pi/4}r,\qquad -1/\sqrt{2}\leq{}r,\,\rho\leq1/\sqrt{2},\\
\label{CVaf}
h(e^{i\pi/4}\rho)=H(\rho)\,.
\end{gather}
\end{subequations}
Then the integrals \eqref{SIip} takes the form
\begin{subequations}
\label{SIiNV}
\begin{align}
\label{SIipJ}
J_{P}(H;\,r,\varepsilon)&=\int\limits_{-1/\sqrt{2}}^{1/\sqrt{2}}
 H(\rho)\,\frac{P(r,\,\rho;\,\varepsilon)+P(r,\,-\rho;\,\varepsilon)}{2}\,d\rho,\\
 \intertext{and the integral \eqref{SIiq} takes the form}
 \label{SIiqJ}
J_{Q}(H;\,r,\varepsilon)&=\int\limits_{-1/\sqrt{2}}^{1/\sqrt{2}}
 H(\rho)\,\frac{P(r,\,\rho;\,\varepsilon)-P(r,\,-\rho;\,\varepsilon)}{2re^{i\pi/4}}\,d\rho\,,
\end{align}
where \(P(r,\,\rho;\,\varepsilon)\) is the Poisson kernel:
\end{subequations}
\begin{equation}
\label{PoKe}
P(r,\,\rho;\,\varepsilon)=\frac{1}{\pi}\cdot\frac{\varepsilon}{(r-\rho)^2+{\varepsilon}^{2}}\,,
\quad -\infty<r,\,\rho<\infty,\ \ \varepsilon>0.
\end{equation}
Splitting the integral in the right hand side of \eqref{SIip} into the sum of two integrals end changing
\(\rho\to-\rho\) in the second of these integrals, we transform the expression for \(J_{P}(H;\,r,\varepsilon)\) to
the form
\begin{equation}
\label{SIipT}
J_{P}(H;\,r,\varepsilon)=\int\limits_{-1/\sqrt{2}}^{1/\sqrt{2}}
 \frac{H(\rho)+H(-\rho)}{2}\,P(r,\,\rho;\,\varepsilon)\,d\rho
 \end{equation}
 Changing \(\rho\to-\rho\) in the integral in \eqref{SIiq}, we transform the  expression for \(J_{Q}(H;\,r,\varepsilon)\) to
the form
\begin{equation}
\label{SIiqT}
J_{Q}(H;\,r,\varepsilon)=\int\limits_{0}^{1/\sqrt{2}}
 \frac{H(\rho)-H(-\rho)}{2\rho{}e^{i\pi/4}}\,Q(r,\,\rho;\,\varepsilon)\,d\rho\,,
 \end{equation}
 where
 \begin{equation*}
Q(r,\,\rho;\,\varepsilon)= \frac{\rho}{r}(P(r,\,\rho;\,\varepsilon)-
 P(r,\,-\rho;\,\varepsilon))\,,
 \end{equation*}
 or, in more detail,
 \begin{multline}
 \label{KerQ}
\hfill  Q(r,\,\rho;\,\varepsilon)=P(|r|,\rho;\,\varepsilon)\,\frac{4\rho^{2}}{(|r|+\rho)^2+\varepsilon^2}\,,
\hfill \hfill \\
 -\infty<r<\infty,\,-\infty<\rho<\infty,\ \varepsilon>0\,.
 \end{multline}
 Since \(\dfrac{4\rho^{2}}{(|r|+\rho)^2+\varepsilon^2}\leq4\) for \(\rho>0\), the inequality
 \begin{equation}
 \label{Four}
0< Q(r,\,\rho;\,\varepsilon)\leq4\,P(|r|,\rho;\,\varepsilon)
 \end{equation}
 holds for \(-\infty<r<\infty,\,0<\rho<\infty,\ \varepsilon>0\).
 Since \(\int\limits_{-\infty}^{\infty}P(r,\rho;\,\varepsilon)d\rho=1\)
 for every \(r\in(-\infty,\infty)\) and \(\varepsilon>0\), it follows from
 \eqref{SIipT} that
 \begin{equation}
 \label{UBopt}
 |J_{P}(H;\,r,\varepsilon)|\leq
 \underset{\rho\in[-1/\sqrt{2},1/\sqrt{2}]}{\textup{ess\,sup}}\,
\frac{|H(\rho)+H(-\rho)|}{2},\quad -\infty<r<\infty,\,\varepsilon>0\,.
 \end{equation}
 From \eqref{SIiqT} and \eqref{Four} it follows that
 \begin{equation}
 \label{UBoqt}
 |J_{Q}(H;\,r,\varepsilon)|\leq
 4\!\!\!\!\!\underset{\rho\in[-1/\sqrt{2},1/\sqrt{2}]}{\textup{ess\,sup}}\,
\frac{|H(\rho)-H(-\rho)|}{2|\rho|},\quad -\infty<r<\infty,\,\varepsilon>0\,.
 \end{equation}
 Inequality \eqref{UBopt} is the same that the inequality \eqref{UBop},
 inequality \eqref{UBoqt} is the same that the inequality \eqref{UBoq}.

 Let \(f:\,\mathbb{R}\to\mathbb{R}\) be a summable function: \(\int\limits_{\mathbb{R}}|f(\rho)|\,d\rho<\infty\). We recall
 that \emph{the point \(r\in\mathbb{R}\) is said to be the Lebesgue point for the function \(f\)} if
 \begin{equation}
 \label{DLePo}
 \lim_{h\to+0}\frac{1}{h}\int\limits_{r-h}^{r+h}|f(\rho)-f(r)|\,d\rho=0\,.
 \end{equation}
 The following fact is one of the main facts of the Lebesgue integration theory:\\
 \emph{Given a summable function \(f:\,\mathbb{R}\to\mathbb{R}\), then almost every point \(r\in\mathbb{R}\) is
 a Lebesgue point for the function \(f\).}

 We also use the following fundamental fact.\\
 \begin{nonumtheorem}\,\textsf{\textup{(On limiting behavior of the Poisson integral)}}. \\
 Let \(f:\,\mathbb{R}\to\mathbb{R}\) be a summable function, and \(P\) be a Poisson kernel, \eqref{PoKe}.
 Assume that \(r,\,r\in\mathbb{R}\), is a Lebesgue point for \(f\).

 Then
 \begin{equation}
 \label{BbPI}
 \lim_{\varepsilon\to+0}\int\limits_{-\infty}^{\infty}P(r,\rho,\,\varepsilon)\,f(\rho)\,d\rho=f(r)\,
 \end{equation}
 In particular, the equality \eqref{BbPI} holds for almost every \(r\in\mathbb{R}\).
 \end{nonumtheorem}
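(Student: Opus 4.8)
The plan is to recognize this as the classical statement that the Poisson kernel is an approximate identity and that convergence of the corresponding averages holds at every Lebesgue point. First I would reduce to the case $r=0$: the kernel \eqref{PoKe} satisfies $P(r,\rho;\varepsilon)=P(0,\rho-r;\varepsilon)$, so translating the integration variable replaces the Lebesgue point $r$ by the origin and $f$ by $f(\,\cdot\,+r)$, which is again summable and has $0$ as a Lebesgue point. Then I would use the normalization $\int_{-\infty}^{\infty}P(0,\rho;\varepsilon)\,d\rho=1$, valid for every $\varepsilon>0$ by an elementary $\arctan$ computation, to write
\begin{equation*}
\int_{-\infty}^{\infty}P(0,\rho;\varepsilon)f(\rho)\,d\rho-f(0)=
\int_{-\infty}^{\infty}P(0,\rho;\varepsilon)\big(f(\rho)-f(0)\big)\,d\rho,
\end{equation*}
so it suffices to prove that the right-hand side tends to $0$ as $\varepsilon\to+0$.

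Next, fixing $\eta>0$, I would split the integral at $|\rho|=\delta$, with $\delta=\delta(\eta)>0$ chosen from the Lebesgue point condition \eqref{DLePo}: concretely, $\delta$ so small that $\Phi(t):=\int_{0}^{t}\big(|f(\rho)-f(0)|+|f(-\rho)-f(0)|\big)\,d\rho\le\eta\,t$ for all $0<t\le\delta$. On the tail $|\rho|>\delta$ I would use the crude bound $P(0,\rho;\varepsilon)\le\varepsilon/(\pi\delta^{2})$ together with $\int_{\mathbb{R}}|f|<\infty$ to control the $f(\rho)$ part, and the exact value $\int_{|\rho|>\delta}P(0,\rho;\varepsilon)\,d\rho=1-\tfrac{2}{\pi}\arctan(\delta/\varepsilon)$ to control the $f(0)$ part; for this fixed $\delta$, both quantities tend to $0$ as $\varepsilon\to+0$.

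The heart of the matter — and the step I expect to be the main obstacle — is the inner piece $\int_{|\rho|\le\delta}P(0,\rho;\varepsilon)\,|f(\rho)-f(0)|\,d\rho$, because the Poisson kernel decays only like $\rho^{-2}$ and blows up as $\varepsilon\to0$, so a naive supremum estimate near the origin is hopeless. Here I would exploit that $\rho\mapsto P(0,\rho;\varepsilon)$ is even and nonincreasing in $|\rho|$: by evenness the inner integral equals $\int_{0}^{\delta}P(0,\rho;\varepsilon)\,d\Phi(\rho)$, and a Stieltjes integration by parts gives
\begin{equation*}
\int_{0}^{\delta}P(0,\rho;\varepsilon)\,d\Phi(\rho)=P(0,\delta;\varepsilon)\,\Phi(\delta)-\int_{0}^{\delta}\Phi(\rho)\,\partial_{\rho}P(0,\rho;\varepsilon)\,d\rho.
\end{equation*}
Since $\partial_{\rho}P(0,\rho;\varepsilon)\le0$ and $0\le\Phi(\rho)\le\eta\rho$ on $(0,\delta]$, the first term is at most $\eta\,\varepsilon/(\pi\delta)\to0$, while the second is at most $-\eta\int_{0}^{\delta}\rho\,\partial_{\rho}P(0,\rho;\varepsilon)\,d\rho$, which after one more integration by parts is $\le\eta\int_{0}^{\delta}P(0,\rho;\varepsilon)\,d\rho\le\eta\int_{0}^{\infty}P(0,\rho;\varepsilon)\,d\rho=\eta/2$. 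Hence $\limsup_{\varepsilon\to+0}$ of the full expression is $\le\eta/2$, and since $\eta>0$ is arbitrary the limit is $0$, establishing \eqref{BbPI}. Finally, the ``in particular'' clause is immediate from the fact, recalled just above the theorem, that almost every point of $\mathbb{R}$ is a Lebesgue point of a summable function.
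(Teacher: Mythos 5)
Your proof is correct. Note that the paper does not prove this statement at all: it is quoted as a ``fundamental fact'' of classical analysis and used as a black box inside the proof of Lemma \ref{Jptl}, so there is no in-paper argument to compare against. What you give is the standard textbook proof of Lebesgue-point convergence for an even, monotone approximate identity: translation to \(r=0\), subtraction of \(f(r)\) via the normalization \(\int_{\mathbb{R}}P(0,\rho;\varepsilon)\,d\rho=1\), the split at \(|\rho|=\delta\) with \(\delta\) chosen from \eqref{DLePo} so that \(\Phi(t)\le\eta t\), crude tail bounds \(P(0,\rho;\varepsilon)\le\varepsilon/(\pi\delta^{2})\) for \(|\rho|>\delta\), and the Stieltjes integration by parts on \([0,\delta]\) exploiting \(\partial_{\rho}P\le 0\) to get the bound \(\eta\varepsilon/(\pi\delta)+\eta/2\); all steps check out, and the final ``almost every \(r\)'' clause follows, as you say, from the Lebesgue differentiation theorem already recalled in the paper.
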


 We apply this theorem to study the limiting behavior of the integral \(J_{P}(H;\,r,\varepsilon)\), \eqref{SIipT}, as
 \(\varepsilon\to+0\). This is an integral of the form \eqref{BbPI}, where
 \begin{equation*}
 f(\rho)=1/2(H(\rho)+H(-\rho)),\ |\rho|\leq1/\sqrt{2};\ \ f(\rho)=0,\ \ |\rho|>1/\sqrt{2}\,.
 \end{equation*}

 According to the above mentioned theorem,
 \begin{multline}
 \label{LBJ1}
 \lim_{\varepsilon\to+0}J_{P}(H;\,r,\varepsilon)=\frac{H(r)+H(r)}{2}\ \
 \\ \textup{for almost every}\ \
 r\in[-1/\sqrt{2}\,,\,1/\sqrt{2}]\,.
 \end{multline}
 The relation \eqref{LBJ1} is the same that \eqref{LRIp}.

 The integral \eqref{SIiqT} is not a Poisson integral, however the study of the  integral \eqref{SIiqT}
 can by reduced to the study of some Poisson integral. From \eqref{KerQ} it follows that
 \begin{equation}
 \label{RtoP}
 Q(r,\rho;\,\varepsilon)=P(|r|,\rho;\,\varepsilon)+T(r,\rho;\,\varepsilon),
 \end{equation}
where the kernel \(T(r,\rho;\,\varepsilon)\) is of the form
\begin{equation*}
  T(r,\rho;\,\varepsilon)=P(|r|,\rho;\,\varepsilon)\,\frac{(|r|-\rho)(3|r|+\rho)-
 \varepsilon^{2}}{(|r|+\rho)^{2}+\varepsilon^{2}}
 \end{equation*}
 It is clear that
 \begin{equation}
 \label{KerT}
 \big|T(r,\rho;\,\varepsilon)\big|\leq{}3P(|r|,\rho;\,\varepsilon)\frac{\big||r|-\rho\big|}{|r|+\rho}+
 P(|r|,\rho;\,\varepsilon)\frac{\varepsilon^{2}}{r^{2}}
 \end{equation}
 From \eqref{SIiqT}, \eqref{RtoP} and \eqref{KerT} it follows that
 \begin{equation}
 \label{SpIQ}
 J_{Q}(H;\,r,\varepsilon)=\int\limits_{0}^{1/\sqrt{2}}
 \frac{H(\rho)-H(-\rho)}{2\rho{}e^{i\pi/4}}\,P(|r|,\,\rho;\,\varepsilon)\,d\rho+J_{T}(H;\,r,\varepsilon)\,,
 \end{equation}
 where
 \begin{equation*}
 J_{T}(H;\,r,\varepsilon)=\int\limits_{0}^{1/\sqrt{2}}
 \frac{H(\rho)-H(-\rho)}{2\rho{}e^{i\pi/4}}\,T(|r|,\,\rho;\,\varepsilon)\,d\rho\,,
 \end{equation*}
 and according to \eqref{KerT}
 \begin{multline*}
 \big|J_{T}(H;\,r,\varepsilon)\big|\leq
 \underset{\rho\in[0,1/\sqrt{2}]}{\textup{ess\,sup}}\,
\frac{|H(\rho)-H(-\rho)|}{2|\rho|}\times\\
\times3\bigg(\int\limits_{-\infty}^{\infty}\!\!P(|r|,\,\rho;\,\varepsilon)\,\frac{\big||r|-\rho\big|}{|r|+\rho}\,d\rho\,+
\frac{\varepsilon^{2}}{r^{2}}\bigg)\,.
\end{multline*}
For each fixed \(r\not=0\), the function \(\varphi(\rho)=\frac{||r|-\rho|}{|r|+\rho}\) is a continuous\
bounded function of \(\rho\) for \(\rho\in[-\infty,\infty]\)
 vanishing at the point \(\rho~=~r\). According to the elementary property of the Poisson integral,
 \begin{equation*}
 \lim_{\varepsilon\to+0}
 \int\limits_{-\infty}^{\infty}\!\!P(|r|,\,\rho;\,\varepsilon)
 \frac{\big||r|-\rho\big|}{|r|+\rho}\,d\rho=0
 \end{equation*}
 for every \(r\not=0\). Thus,
 \begin{equation}
 \label{Remtz}
 \lim_{\varepsilon\to+0}J_{T}(H;\,r,\varepsilon)=0\ \ \textup{for every}\ r\not=0\,.
 \end{equation}
 According to Theorem on limiting behavior of Poisson integral,
 \begin{multline}
 \label{Bas}
 \lim_{\varepsilon\to+0}\int\limits_{0}^{1/\sqrt{2}}
 \frac{H(\rho)-H(-\rho)}{2\rho{}e^{i\pi/4}}\,P(|r|,\,\rho;\,\varepsilon)\,d\rho=
 \frac{H(r)-H(-r)}{2r{}e^{i\pi/4}}\\[-1.0ex]
 \textup{for almost every}\ r\in[-1/\sqrt{2}\,,\,1/\sqrt{2}]\,.
 \end{multline}
 From \eqref{SpIQ}, \eqref{Remtz} and \eqref{Bas} it follows that
 \begin{multline}
 \label{FiLiRe}
 \lim_{\varepsilon\to+0}J_{Q}(H;\,r,\varepsilon)=\frac{H(r)-H(-r)}{2r{}e^{i\pi/4}}\\[0.0ex]
 \textup{for almost every}\ r\in[-1/\sqrt{2}\,,\,1/\sqrt{2}]\,.
 \end{multline}
 The relation \eqref{FiLiRe} is the same that the relation \eqref{LiResd}. Lemma \ref{Jptl} is proved.
 \end{proof}
 \begin{remark} \label{pkq}\hspace{3.0ex}\\[-3.0ex]
 \begin{enumerate}
\item[{\textup{1}.}]
 If the function \(h\) is continuous on the interval \([-e^{i\pi/4}\,,\,e^{i\pi/4}]\) and vanishes at the endpoints
  \(\zeta=\pm{}e^{i\pi/4}/\sqrt{2}\) of this interval, then the limit in \eqref{LRIp} is uniform with respect to \(\zeta\).
This follows from the elementary properties of the Poisson integral.
\item[{\textup{2}.}]
Let us assume \emph{moreover} that the function \((h(\zeta)-h(-\zeta))/\zeta\) also is continuous on the interval
\([-e^{i\pi/4}\,,\,e^{i\pi/4}]\). The integral \eqref{SIiqT} is not a Poisson integral. However the kernel
\(Q(r,\rho;\,\varepsilon)\), like the Poisson kernel \(P(r,\rho;\,\varepsilon)\),  possesses the properties of an approximative identity. In particular,
\begin{equation}
\label{ExCa}
\int\limits_{0}^{\infty}Q(r,\rho;\,\varepsilon)\,d\rho=1\ \ \textup{for every } r\in(-\infty\,,\,\infty),\, \varepsilon>0\,.
\end{equation}
 Therefore the limit in \eqref{LiResd} is uniform with respect to \(\zeta\).
 \end{enumerate}
 \end{remark}
  In view of this remark, \textup{Theorem \ref{ExFCalM}} can be supplemented.
 \begin{remark}
 \label{ComTe}  Assume that the function \(h\) possesses the properties:
 \begin{enumerate}
\item[{\textup{1}.}]
The function \(h(\zeta)\) is continuous on the interval
\(\Big[\frac{-e^{i\pi/4}}{\sqrt{2}}\,,\frac{\,e^{i\pi/4}}{\sqrt{2}}\Big]\).
\item[{\textup{2}.}] The function \(\frac{h(\zeta)-h(-\zeta)}{\zeta}\) is continuous at the point \(\zeta=0\).
\item[{\textup{3}.}] \(h\Big(\pm\frac{e^{i\pi/4}}{\sqrt{2}}\Big)=0\)\,.
\end{enumerate}
Then the limit in \eqref{FAdFOfOpM}     also exists in the uniform operator topology.
\end{remark}

\vspace*{5.0ex}
\noindent
\begin{minipage}[h]{0.45\linewidth}
Victor Katsnelson\\[0.2ex]
Department of Mathematics\\
The Weizmann Institute\\
Rehovot, 76100, Israel\\[0.1ex]
e-mail:\\
{\small\texttt{victor.katsnelson@weizmann.ac.il}}
\end{minipage}\\

\vspace*{3.0ex} \noindent
\begin{minipage}[h]{0.45\linewidth}
Bobik Katzenbeisser\\[0.2ex]
Tsar Baalei Haim, 76100, Israel\\[0.1ex]
e-mail:\\
\texttt{bobik.katzenbeisser@gmail.com}
\end{minipage}

\begin{thebibliography}{MoMo1}
\bibitem[Car]{Car} \textsc{Carleman,\,T.} \textit{Sur les {\'e}quations
integrales singuli{\`e}res a noy{\`a}u r{\'e}el et
symm{\'e}trique}. (Uppsala Universitets {\AA}rksskrift 1923.
Matematik och Naturvetenskap, 3). Uppsala, Sweden: Almqvist \&
Wiksell, 1923. 228 S.
\bibitem[Dir]{Dir} \textsc{Dirac,\,P.A.M.} \textit{The Principles of Quantum Mechanics.}
  Clarendon Press. Oxford. First Edition\,-\, 1930. Forth Edition\,-\,1958.
\bibitem[DuSch]{DuSch} \mbox{\textsc{Dunford,\,N.,
Schwartz,\,J.T.}\ \textit{Linear Operators. Part~I.}}
\textit{General Theory.} \mbox{(Pure and
Applied Mathematics, Vol. 7).} \mbox{Interscience Publishers, %
New\,York\,--\,London,\,1958,} xiv+858pp.
\bibitem[GoKr]{GoKr} \textsc{Gohberg,\,I.C.,\,\,Kre\u{i}n,\,M.G.}
\textit{Introduction to the Theory of Linear Nonselfadjoint
Operators.} AMS, Providence, RI, 1969.
\bibitem[KaMa1]{KaMa1} \textsc{Katsnelson,\,V,\,Machluf,\,R.}
\textit{The truncated Fourier operator.}{I}. ArXiV:0901.2555.
\bibitem[KaMa2]{KaMa2} \textsc{Katsnelson,\,V,\,Machluf,\,R.}
\textit{The truncated Fourier operator.}{II}. ArXiV:0901.2709.
\bibitem[KaMa3]{KaMa3} \textsc{Katsnelson,\,V,\,Machluf,\,R.}
\textit{The truncated Fourier operator.}{IV}. ArXiV:0902.0568.
\bibitem[Mag]{Mag} \textsc{Magnus,\, W.}\textit{On the spectrum of Hilbert matrix.}
American Math. Journ., \textbf{72}:4, (1950), pp.\,699\,-\,704.
\bibitem[Mau]{Mau} \textsc{Mautner,\, F.I.} \textit{On eigenfunction expansions.}
 Proc. Nat. Acad. Sci. U. S. A. \textbf{39}, (1953). 49--53.\\
 Russuan translation:\\
\textsc{Маутнер, Ф.И.} \textit{О разложениях по собственным
функциям.} Успехи Мат. Наук, \textbf{10}:4 (1955), 127\,--\,132.
\bibitem[Pov1]{Pov1} \textsc{Повзнер,\,А.Я.} \textit{О разложении по
собственным функциям уравнения Шредингера.} Доклады АН СССР,
{\textbf79} (1951) , 193--196. (In Russian). \\ {}
 [\textsc{Povzner,
A.Ya.} \textit{On the differentiation of the spectral function of
the Schrцdinger equation.}  Doklady Akad. Nauk SSSR (N.S.)
{\textbf79}, (1951). 193--196.]
\bibitem[Pov2]{Pov2} \textsc{Повзнер,\,А.Я.} \textit{О разложении
произвольных функций по собственным функциям оператора
\(-\Delta{}u+cu\).}
Матем. Сборник, \textbf{32}:1 (\textbf{74}), (1953), 109--156.\\
English transl.:\\
 \textsc{Povzner,\,A.Ya.} \textit{The expansion of arbitrary
 functions in terms of eigenfunctions of the operator \(-\Delta{}u+cu.\)}
 Amer.\,Math. \,Soc.\,Transl.\, Ser.{\textbf2}, Vol.\,\textbf60\,,
 (1967), 1\,--\,49.
\bibitem[Ros]{Ros} \textsc{Rosenblum,\,M.}\textit{On the Hilbert matrix, II.}
\textit{Proc.\,Amer.\,Math.\,Soc.}, \textbf{9}:4 (1958),
pp.\,581\,-\,585.
\bibitem[Sl3]{Sl23} \textsc{Slepian,\,D.} \textit{On bandwidth.}
Proc. IEEE  \textbf{64}:3 (1976), 292--300.
\bibitem[Sl4]{Sl4} \textsc{Slepian,\,D.} \textit{Some comments on Fourier analysis, uncertainty
and modelling.} SIAM Review, \textbf{25}:3, 1983, 379\,-\,393.
\bibitem[SzNFo]{SzNFo} \textsc{Sz.-Nagy,\,B.} and \textsc{C.\,Foias.}
\textsl{Analyse Harmonique des Op\'erateurs de l'espace  de
Hilbert} (French). Masson and Acad\'emiae Kiado, 1967.
 English
transl.:\\ %
 \textsl{Harmonic Analysis of Operators in Hilbert
Space}. North Holland, Amsterdam 1970.
\bibitem[Wey1]{Wey1} \textsc{Weyl,\,H.} \textit{Singul\"are
Integralgleichungen mit besoderer Ber\"ucksichtigung des
Fourierischen Integraltheorems.} Dissertation, G\"ottingen (1908).
Reprinted in \cite{Wey2}, pp.\,1\,-\,87.
\bibitem[Wey2]{Wey2} \textit{Hermann Weyl Gesammelte
Abhandlungen. Band I.} \\ Springer-Verlag.
Berlin\(\boldsymbol\cdot\)Heidelberg\(\boldsymbol\cdot\)New\,York,
1968. 698 pp.
\bibitem[Wien]{Wien} \textsc{Wiener,\,N.} \textit{The Fourier Integral and certain of its
Applications}. Cambridge University Press. Cambridge, 1933.
\end{thebibliography}
\end{document}